\numberwithin{equation}{section}
\newtheorem{thm}{Theorem}[section]
 \newtheorem{cor}[thm]{Corollary}
 \newtheorem{lem}[thm]{Lemma}
 \newtheorem{prop}[thm]{Proposition}
 \newtheorem{Q}[thm]{Question}
\newenvironment{propbis}[1]
  {%
   \addtocounter{thm}{-1}%
   \begin{prop}}
  {\end{prop}}
  \newenvironment{propbiss}[1]
  {%
   \addtocounter{thm}{-1}%
   \begin{prop}}
  {\end{prop}}
\newtheorem{alphthm}{Theorem}			
\newtheorem{alphprop}[alphthm]{Proposition}
 \theoremstyle{definition}
  \newtheorem{defn}[thm]{Definition}
\newtheorem{alphdefn}[alphthm]{Definition}			
 \theoremstyle{remark}
 \newtheorem{rem}[thm]{Remark}
  \newtheorem{ex}[thm]{Example}
\newtheorem*{claim*}{Claim}
\def\NN{\mathbb{N}}
\def\RR{\mathbb{R}}
\def\CC{\mathbb{C}}
\def\ZZ{\mathbb{Z}}
\def\U{\mathcal{U}}
\def\H{\mathcal{H}}
\def\L{\mathcal{L}}
\def\Nd{\mathcal{N}}
\def\S{\mathcal{S}}
\def\G{\mathcal{G}}
\def\Gz{\mathcal{G}^{(0)}}
\def\P{\mathcal{P}}
\def\B{\mathfrak{B}}
\def\K{\mathfrak{K}}
\def\supp{\mathrm{supp}}
\def\ppg{\mathrm{prop}}
\def\diam{\mathrm{diam}}
\def\max{\mathrm{max}}
\def\gr{\mathrm{gr}}
\def\r{\mathrm{r}}
\def\s{\mathrm{s}}
\def\cb{\mathrm{cb}}
\def\L{{\bf L}}
\begin{document}

\title{Ghostly ideals in uniform Roe algebras}

\author{Qin Wang and Jiawen Zhang}

\address[Q. Wang]{Research Center for Operator Algebras, and Shanghai Key Laboratory of Pure Mathematics and Mathematical Practice, School of Mathematical Sciences, East China Normal University, Shanghai, 200241, China.}
\email{qwang@math.ecnu.edu.cn}

\address[J. Zhang]{School of Mathematical Sciences, Fudan University, 220 Handan Road, Shanghai, 200433, China.}
\email{jiawenzhang@fudan.edu.cn}

\date{}

\thanks{QW is partially supported by NSFC (No. 11831006, 12171156), and the Science and Technology Commission of Shanghai Municipality (No. 22DZ2229014). JZ is supported by NSFC11871342. }

\begin{abstract}
In this paper, we investigate the ideal structure of uniform Roe algebras for general metric spaces beyond the scope of Yu’s property A. Inspired by the ideal of ghost operators coming from expander graphs and in contrast to the notion of geometric ideal, we introduce a notion of ghostly ideal in a uniform Roe algebra, whose elements are locally invisible in certain directions at infinity. We show that the geometric ideal and the ghostly ideal are respectively the smallest and the largest element in the lattice of ideals with a common invariant open subset of the unit space of the coarse groupoid by Skandalis-Tu-Yu, and hence the study of ideal structure can be reduced to classifying ideals between the geometric and the ghostly ones. 
As an application, we provide a concrete description for the maximal ideals in a uniform Roe algebra in terms of the minimal points in the Stone-\v{C}ech boundary of the space. We also provide a criterion to ensure that the geometric and the ghostly ideals have the same $K$-theory, which helps to recover counterexamples to the Baum-Connes type conjectures. Moreover, we introduce a notion of partial Property A for a metric space to characterise the situation in which the geometric ideal coincides with the ghostly ideal.
\end{abstract}

\date{\today}
\maketitle

\parskip 4pt

\noindent\textit{Mathematics Subject Classification} (2020): 47L20, 46L80, 51F30.\\
\textit{Keywords: Uniform Roe algebras, Coarse groupoids, Geometric and ghostly ideals, Maximal ideals, Partial Property A}

\section{Introduction}\label{sec:intro}

Roe algebras are $C^*$-algebras associated to metric spaces, which encode the coarse geometry of the underlying spaces. They were introduced by Roe in his pioneering work on higher index theory \cite{Roe88}, where he discovered that the $K$-theory of Roe algebras serves as a receptacle for higher indices of elliptic differential operators on open manifolds. 
Hence the computation for the $K$-theory of Roe algebras becomes crucial in the study of higher index theory, and a pragmatic and practical approach is to consult the Baum-Connes type conjectures \cite{BC00, BCH94, HR95}. There is also a uniform version of the Roe algebra, which equally plays a key role in higher index theory (see \cite{STY02, Spa09}). Over the last four decades, there have been a number of excellent works around this topic (\emph{e.g.}, \cite{CWY13, HLS02, KY06, WY12, Yu00}), which lead to significant progresses in topology, geometry and analysis (see, \emph{e.g.}, \cite{Roe93, Roe96}).

On the other hand, the analytic structure of (uniform) Roe algebras reflects the coarse geometry of the underlying spaces, and the rigidity problem asks whether the coarse geometry of a metric space can be fully determined by the associated (uniform) Roe algebra. This problem was initially studied by \v{S}pakula and Willett in \cite{SW13}, followed by a series of works in the last decade \cite{BCL20, BF21, BFV20, BFV22, LSZ20}. Recently this problem is completely solved in the uniform case by the profound work \cite{BBFKVW22}, which again highlights the importance of uniform Roe algebras in coarse geometry. Meanwhile, uniform Roe algebras have also attained rapidly-growing interest from researchers in mathematical physics, especially in the theory of topological materials and topological insulators (see, \emph{e.g.}, \cite{EM19} and the references therein).

Due to their importance,
Chen and the first-named author initiated the study of the ideal structure for (uniform) Roe algebras \cite{CW01, CW04, CW04b, CW05, CW06, Wan07}. They succeeded in obtaining a full description for the ideal structure of the uniform Roe algebra when the underlying space has Yu's Property A (see \cite{CW04, CW05}). However, the general picture is far from clear beyond the scope of Property A.

Note that when the underlying space comes from a discrete group equipped with a word length metric, then the associated uniform Roe algebra has the form of a crossed product by the group itself acting on an abelian $C^*$-algebra coming from its Stone-\v{C}ech compactification (\cite{HR00, Oza00}). This class of $C^*$-algebras plays an important role in dynamic systems, and their ideal structures have also been extensively studied (see, \emph{e.g.}, \cite{KS19, Ren91, Sie10}). More generally as shown below, uniform Roe algebras for general metric spaces can be realised as reduced groupoid $C^*$-algebras. Recently, rapidly-growing interest has arisen in the study of ideal structures for general reduced groupoid $C^*$-algebras (see, \emph{e.g.}, \cite{BL20, BCS22, Ren91}), partially due to their importance in the study of purely infiniteness for $C^*$-algebras.

In the present paper, we aim to provide a systematic study on the ideal structure of uniform Roe algebras for general discrete metric spaces. To outline our main results, let us first explain some notions. 

Let $(X,d)$ be a discrete metric space of bounded geometry (see Section \ref{ssec:notions from coarse geometry} for precise definitions). Thinking of operators on $\ell^2(X)$ as $X$-by-$X$ matrices, we say that such an operator has \emph{finite propagation} if the non-zero entries appear only in an entourage of finite width (measured by the metric on $X$) around the main diagonal (see Section \ref{ssec:uniform Roe alg} for full details). The set of all finite propagation operators forms a $\ast$-subalgebra of $\B(\ell^2(X))$, and its norm closure is called the \emph{uniform Roe algebra of $X$} and denoted by $C^*_u(X)$. 

There is another viewpoint on the uniform Roe algebra based on groupoids. Recall from \cite{STY02} that Skandalis, Tu and Yu introduced a notion of coarse groupoid $G(X)$ associated to a discrete metric space $X$, and they succeeded in relating coarse geometry to the theory of groupoids. The coarse groupoid $G(X)$ is a locally compact, Hausdorff, \'{e}tale and principal groupoid (see Section \ref{ssec:coarse groupoid} for precise definitions), and the unit space of $G(X)$ coincides with the Stone-\v{C}ech compactification $\beta X$ of $X$. Moreover, the uniform Roe algebra $C^*_u(X)$ can be interpreted as the reduced groupoid $C^*$-algebra of $G(X)$ (see also \cite[Chapter 10]{Roe03}).


In \cite{CW04}, Chen and the first-named author concentrated on a class of ideals in the uniform Roe algebra in which finite propagation operators therein are dense, and they showed that these ideals can be described geometrically using the coarse groupoid. More precisely, recall that a subset $U \subseteq \beta X$ is \emph{invariant} if any element $\gamma$ in $G(X)$ with source in $U$ also has its range in $U$ (see Section \ref{ssec:groupoids}). As shown in \cite{CW04} (see also Section \ref{sec:geometric ideals}), for any ideal $I$ in $C^*_u(X)$ one can associate an invariant open subset $U(I)$ of $\beta X$, and conversely for any invariant open subset $U \subseteq \beta X$ one can associate an ideal $I(U)$ in $C^*_u(X)$. Furthermore, these two procedures provide a one-to-one correspondence between invariant open subsets of $\beta X$ and ideals in $C^*_u(X)$ in which finite propagation operators therein are dense.

Based on \cite{CW04}, the first-named author introduced the following notion in \cite[Definition 1.4]{Wan07}:

\begin{alphdefn}[Definition \ref{defn: geometric ideals}]\label{introdefn:geometric ideals}
Let $(X, d)$ be a discrete metric space of bounded geometry. An ideal $I$ in the uniform Roe algebra $C^*_u(X)$ is called \emph{geometric} if the set of all finite propagation operators in $I$ is dense in $I$.
\end{alphdefn}

As explained above, \cite[Theorem 6.3]{CW04} (see also Proposition \ref{prop: 1-1 btw ideals and open inv subsets}) indicates that the geometric ideals in $C^*_u(X)$ can be fully determined by invariant open subsets of $\beta X$, which explains the terminology. Consequently, the geometric ideals in $C^*_u(X)$ are easy to handle and they must have the form of $I(U)$ for some invariant open subset $U \subseteq \beta X$, called the \emph{geometric ideal associated to $U$} (see Definition \ref{defn:geometric ideal}). Moreover, it follows from \cite[Theorem 4.4]{CW05} that all ideals in $C^*_u(X)$ are geometric when $X$ has Yu's Property A.

However, things get complicated beyond the context of Property A. As noticed in \cite[Remark 6.5]{CW04}, when $X$ comes from a sequence of expander graphs then the ideal $I_G$ consisting of all ghost operators are \emph{not} geometric (see also \cite{HLS02}). Recall that an operator $T \in \B(\ell^2(X))$ is a \emph{ghost} if $T \in C_0(X \times X)$ when regarding $T$ as a function on $X \times X$. Ghost operators are introduced by Yu, and they are crucial to provide counterexamples to the coarse Baum-Connes conjecture (\cite{HLS02}). 

Direct calculations show that the associated invariant open subsets for $I_G$ and for the ideal of compact operators in $\B(\ell^2(X))$ are the same, both of which equal $X$ (see also Example \ref{eg: compact ideal} and \ref{eg: ghost ideal}). Hence for a general metric space $X$ and an invariant open subset $U \subseteq \beta X$, there might be more than one ideal $I$ in the uniform Roe algebra $C^*_u(X)$ satisfying $U(I)=U$. Therefore, the study of the ideal structure for $C^*_u(X)$ can be reduced to analyse the lattice (where the order is given by inclusion)
\begin{equation}\label{EQ:set of ideals}
\mathfrak{I}_U:=\{I \mbox{~is~an~ideal~in~} C^*_u(X): U(I)=U\}
\end{equation}
for each invariant open subset $U \subseteq \beta X$.

One of the main contributions of the present paper is to find the smallest and the largest elements in the lattice $\mathfrak{I}_U$. Following the discussions in \cite{CW04}, it is easy to see that $I(U(I)) \subseteq I$ for any ideal $I$ in $C^*_u(X)$, which implies that the geometric ideal $I(U)$ is the smallest element in $\mathfrak{I}_U$ (see Proposition \ref{cor: geometric ideals are smallest}). To explore the largest element, we have to include every ideal $I$ in $C^*_u(X)$ with $U(I)=U$. Inspired by the definition of $U(I)$ (see Equality (\ref{EQ: defn for U(I)})), we introduce the following key notion:

\begin{alphdefn}[Definition \ref{defn: ghostly ideals}]\label{introdefn:ghostly ideals}
Let $(X, d)$ be a discrete metric space of bounded geometry and $U$ be an invariant open subset of $\beta X$. The \emph{ghostly ideal associated to $U$} is defined to be
\[
\tilde{I}(U):=\{T \in C^*_u(X): \overline{\r(\supp_\varepsilon(T))} \subseteq U \mbox{~for~any~}\varepsilon>0\},
\]
where $\supp_\varepsilon(T):=\{(x,y)\in X \times X: |T(x,y)| \geq \varepsilon\}$ and $\r: X \times X \to X$ is the projection onto the first coordinate.
\end{alphdefn}

We show that $\tilde{I}(U)$ is indeed an ideal in the uniform Roe algebra $C^*_u(X)$ (see Lemma \ref{lem: ghostly ideals are ideals}) and moreover, we obtain the following desired result:

\begin{alphthm}[Theorem \ref{prop: ideals containment}]\label{introthm:inclusion of ideals}
Let $(X,d)$ be a discrete metric space of bounded geometry and $U$ be an invariant open subset of $\beta X$. Then any ideal $I$ in $C^*_u(X)$ with $U(I)=U$ sits between $I(U)$ and $\tilde{I}(U)$. More precisely, the geometric ideal $I(U)$ is the smallest element while the ghostly ideal $\tilde{I}(U)$ is the largest element in the lattice $\mathfrak{I}_U$ in (\ref{EQ:set of ideals}).
\end{alphthm}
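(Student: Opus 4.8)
The plan is to prove the two assertions separately: that $I(U)$ is the smallest element of $\mathfrak{I}_U$, and that $\tilde I(U)$ is the largest. The first part is essentially recorded already: by the discussion in \cite{CW04} one has $I(U(I)) \subseteq I$ for every ideal $I$, so if $U(I) = U$ then $I(U) \subseteq I$; it remains only to check that $I(U)$ itself lies in $\mathfrak{I}_U$, i.e. that $U(I(U)) = U$, which is part of the one-to-one correspondence in Proposition \ref{prop: 1-1 btw ideals and open inv subsets}. So the real content is the statement about $\tilde I(U)$, and for that I must show two things: (a) every ideal $I$ with $U(I) = U$ satisfies $I \subseteq \tilde I(U)$, and (b) $\tilde I(U)$ itself satisfies $U(\tilde I(U)) = U$, so that it genuinely belongs to $\mathfrak{I}_U$ and is therefore its maximum.

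For (a), I would unwind the definition of $U(I)$. Recall that $U(I)$ is built (via Equality (\ref{EQ: defn for U(I)})) from the supports of operators in $I$: roughly, a point $\omega \in \beta X$ lies in $U(I)$ precisely when some $T \in I$ is "visibly nonzero near $\omega$ in the range direction", and $U(I)$ is the union of the open sets $\overline{\r(\supp_\varepsilon(T))}^\circ$ or their characteristic-function analogues over all $T \in I$ and all $\varepsilon > 0$. Given this, fix $T \in I$ and $\varepsilon > 0$; then by construction the closure $\overline{\r(\supp_\varepsilon(T))}$ in $\beta X$ is contained in $U(I) = U$. But $\overline{\r(\supp_\varepsilon(T))} \subseteq U$ for every $\varepsilon > 0$ is exactly the membership condition defining $\tilde I(U)$, so $T \in \tilde I(U)$. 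Hence $I \subseteq \tilde I(U)$. This step is short once the precise form of (\ref{EQ: defn for U(I)}) is in hand; the only care needed is to make sure the topological closures are taken in $\beta X$ and that the definition of $U(I)$ matches the defining condition of $\tilde I(U)$ on the nose — which is precisely why Definition \ref{introdefn:ghostly ideals} was set up this way.

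For (b), I need $U(\tilde I(U)) = U$. The inclusion $U(\tilde I(U)) \subseteq U$ follows from (a) applied with $I = \tilde I(U)$, or directly from the defining condition. For the reverse inclusion $U \subseteq U(\tilde I(U))$, the natural move is to produce, for each point $\omega \in U$, an operator in $\tilde I(U)$ witnessing $\omega \in U(\tilde I(U))$. Since $U$ is open in $\beta X$ and $X$ is dense, pick $x \in X$ with $\bar x$ close to $\omega$ inside $U$; then the rank-one projection $e_{xx}$ (the diagonal matrix unit) has $\r(\supp_\varepsilon(e_{xx})) = \{x\}$ for $\varepsilon \le 1$, whose closure $\{\bar x\} \subseteq U$, so $e_{xx} \in \tilde I(U)$, and $e_{xx}$ witnesses $\bar x \in U(\tilde I(U))$. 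Running over all $x \in X$ with $\bar x \in U$ shows $U \cap X \subseteq U(\tilde I(U))$; since $U(\tilde I(U))$ is open and invariant and $U = \overline{U \cap X}^\circ$ — or more robustly, since $U \cap X$ is dense in $U$ and $U(\tilde I(U))$ is open — one concludes $U \subseteq U(\tilde I(U))$. (If the equality $U = \overline{U\cap X}^{\circ}$ is not available for general invariant open $U$, one instead argues that $U(\tilde I(U))$, being an invariant open set containing $U \cap X$, must contain all of $U$ by the structure of invariant open subsets of $\beta X$ coming from the coarse groupoid.) Combining, $\tilde I(U) \in \mathfrak{I}_U$, and together with (a) it is the largest element.

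The main obstacle I anticipate is purely bookkeeping rather than conceptual: making the identification in step (a) airtight requires the exact formula for $U(I)$ from (\ref{EQ: defn for U(I)}), and one must confirm that $\tilde I(U)$ as defined is closed (so that it is genuinely a closed two-sided ideal, not merely a $\ast$-algebra) — this is cited as Lemma \ref{lem: ghostly ideals are ideals}, so I may invoke it. The only slightly delicate point is the reverse inclusion in (b) for a general invariant open $U$: one must be sure that "containing the diagonal points $\bar x$ for $x \in X \cap U$" forces containment of all of $U$, which uses that invariant open subsets of $\beta X$ are determined by their trace on $X$ together with the groupoid action — a fact that should already be implicit in the Chen–Wang correspondence.
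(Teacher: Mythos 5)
Your treatment of the minimality half and of step (a) is correct and is essentially the paper's own argument: by (\ref{EQ: defn for U(I)}), $U(I)$ is precisely the union of the sets $\overline{\r(\supp_\varepsilon(T))}$ over $T\in I$ and $\varepsilon>0$, so $U(I)=U$ gives $I\subseteq\tilde I(U)$ on the nose, and $I(U)\subseteq I$ together with $U(I(U))=U$ (Proposition \ref{prop: 1-1 btw ideals and open inv subsets}) settles the smallest element.

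The genuine gap is in step (b), in the reverse inclusion $U\subseteq U(\tilde I(U))$. Your witnesses are the rank-one projections $e_{xx}$ with $x\in X\cap U$, and these only show $X\subseteq U(\tilde I(U))$, which carries no information: by Lemma \ref{lem: inv. open contains X} every non-empty invariant open subset of $\beta X$ contains $X$, so $U\cap X=X$ always. Neither of your proposed upgrades works. First, an open set containing a dense subset of $U$ need not contain $U$: the set $X$ itself is open and dense in $\beta X$ yet misses every boundary point; for the same reason $\overline{U\cap X}=\overline{X}=\beta X$, so $U=\overline{U\cap X}^{\,\circ}$ fails unless $U=\beta X$. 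Second, the structural claim that an invariant open set containing $U\cap X$ must contain $U$ is false: $X$ is an invariant open set containing $U\cap X$ but not $U$ whenever $U\neq X$. What is missing are elements of $\tilde I(U)$ whose $\varepsilon$-supports reach the \emph{boundary} points of $U$; finite-rank operators can never do this. Two quick repairs: (1) follow the paper --- since $U(I(U))=U$, step (a) applied to $I=I(U)$ gives $I(U)\subseteq\tilde I(U)$, and monotonicity of $I\mapsto U(I)$ then yields $U(\tilde I(U))\supseteq U(I(U))=U$; or (2) argue directly --- the clopen sets $\overline{Y}$ with $Y\subseteq X$ form a basis of $\beta X$, so each $\omega\in U$ has a neighbourhood $\overline{Y}\subseteq U$; the diagonal projection $\chi_Y$ satisfies $\r(\supp_\varepsilon(\chi_Y))=Y$ for $0<\varepsilon\leq 1$, hence $\chi_Y\in\tilde I(U)$, and it witnesses $\omega\in\overline{Y}\subseteq U(\tilde I(U))$.
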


Theorem \ref{introthm:inclusion of ideals} draws the border of the lattice $\mathfrak{I}_U$ in (\ref{EQ:set of ideals}), as an important step to study the ideal structure of uniform Roe algebras for general metric spaces. More precisely, once we can bust every ideal between $I(U)$ and $\tilde{I}(U)$ for each invariant open subset $U \subseteq \beta X$, then we will obtain a full description for the ideal structure of the uniform Roe algebra $C^*_u(X)$. We pose it as an open question in Section \ref{sec:open questions} and hope this will be done in some future work.

Concerning the ghostly ideal $\tilde{I}(U)$, we also provide an alternative picture in terms of limit operators developed in \cite{SW17}, showing that $\tilde{I}(U)$ consists of operators which vanish in the $(\beta X \setminus U)$-direction (see Proposition \ref{prop: geometric char for ghostly ideals}). Note that ghost operators vanish in all directions (see Corollary \ref{cor:char for ghost}), and hence operators in $\tilde{I}(U)$ can be regarded as ``partial'' ghosts, which clarifies its terminology. Thanks to this viewpoint, we discover the deep reason behind the counterexample to the conjecture in \cite{CW04}, constructed by the first-named author in \cite[Section 3]{Wan07} (see Example \ref{ex: Wan07}).

As an application, we manage to describe maximal ideals in the uniform Roe algebra. More precisely, it follows directly from Theorem \ref{introthm:inclusion of ideals} that maximal ideals correspond to minimal invariant closed subsets of the Stone-\v{C}ech boundary $\partial_\beta X:=\beta X \setminus X$. Moreover using the theory of limit spaces\footnote{Note that the theory of limit spaces and limit operators developed in \cite{SW17} only concerns strongly discrete metric spaces of bounded geometry (see Section \ref{ssec:notions from coarse geometry} for precise definitions). Although as noticed in \cite{SW17} this will not lose any generality, we put this assumption to simplify proofs.} developed in \cite{SW17}, we prove the following:

\begin{alphprop}[Proposition \ref{lem:maximal ideals using maximal open subsets}, Corollary \ref{cor:maximal ideals using minimal closed subsets} and Lemma \ref{lem:property of minimal points}]\label{introprop:maximal ideals}
Let $(X,d)$ be a strongly discrete metric space of bounded geometry and $I$ be a maximal ideal in the uniform Roe algebra $C^*_u(X)$. Then there exists a point $\omega \in \partial_\beta X$ such that $I$ coincides with the ghostly ideal $\tilde{I}(\beta X \setminus \overline{X(\omega)})$, where $X(\omega)$ is the limit space of $\omega$. 
\end{alphprop}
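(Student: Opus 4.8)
The plan is to use Theorem~\ref{introthm:inclusion of ideals} to pin a maximal ideal down as a ghostly ideal, then exploit maximality to force the complementary closed set to be minimal, and finally invoke the limit space machinery of \cite{SW17} to identify that minimal set with the closure of a limit space.

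\emph{Reducing to a ghostly ideal.} Let $I$ be a maximal ideal in $C^*_u(X)$ and set $U := U(I)$. First, $U \neq \beta X$: otherwise $I(U(I)) = I(\beta X) = C^*_u(X)$ (finite propagation operators being dense in $C^*_u(X)$), and since $I(U(I)) \subseteq I$ (noted above), this would contradict properness of $I$. By Theorem~\ref{introthm:inclusion of ideals} we have $I \subseteq \tilde{I}(U)$, and $\tilde{I}(U)$ is proper: since $\tilde{I}(U) \in \mathfrak{I}_U$ we have $U(\tilde{I}(U)) = U \neq \beta X = U(C^*_u(X))$, so $\tilde{I}(U) \neq C^*_u(X)$. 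Maximality of $I$ then yields $I = \tilde{I}(U)$.

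\emph{Minimality of the complement.} Put $Y := \beta X \setminus U$, a nonempty $G(X)$-invariant closed subset of $\beta X$. Suppose $\emptyset \neq Y' \subseteq Y$ is invariant and closed, and let $U' := \beta X \setminus Y'$, so $U \subseteq U'$. From the definition of $\tilde{I}(\cdot)$ it is clear that $\tilde{I}$ is monotone, so $I = \tilde{I}(U) \subseteq \tilde{I}(U')$; and $U' \neq \beta X$ forces $\tilde{I}(U') \neq C^*_u(X)$ as above. By maximality $\tilde{I}(U') = I$, and applying $U(\cdot)$ gives $U' = U(\tilde{I}(U')) = U(I) = U$, i.e.\ $Y' = Y$. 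Hence $Y$ is minimal among nonempty invariant closed subsets of $\beta X$. (Conversely, a symmetric argument — using that $U(\cdot)$ is monotone and $I(\beta X) = C^*_u(X)$ — shows that $\tilde{I}(\beta X \setminus Y)$ is maximal for every such minimal $Y$; these two directions are Proposition~\ref{lem:maximal ideals using maximal open subsets} and Corollary~\ref{cor:maximal ideals using minimal closed subsets}.)

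\emph{Identification with a limit space, and the main obstacle.} Since $X$ is open in $\beta X$ and the $G(X)$-orbit of any $x \in X$ is dense in the closure of its (infinite) coarse component, which properly contains the nonempty invariant closed set of boundary points of that component, no minimal nonempty invariant closed subset can meet $X$; thus $Y \subseteq \partial_\beta X$. Fix any $\omega \in Y$. As $G(X)$ is principal and \'etale, $\overline{G(X) \cdot \omega}$ is a nonempty invariant closed subset of $Y$, hence equal to $Y$ by minimality of $Y$. By the theory of limit spaces of \cite{SW17} (as recalled before this proposition), the orbit $G(X) \cdot \omega$, regarded as a subset of $\beta X$, is canonically identified with the limit space $X(\omega)$; therefore $Y = \overline{X(\omega)}$ and $I = \tilde{I}(\beta X \setminus \overline{X(\omega)})$, with $\omega$ a minimal point. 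This last identification is Lemma~\ref{lem:property of minimal points}, and it is the substantive step: the first two steps are formal consequences of Theorem~\ref{introthm:inclusion of ideals} and the order-reversing correspondence $U \leftrightarrow \tilde{I}(U)$ (with only a routine Zorn's lemma argument, via compactness of $\beta X$, needed to guarantee that minimal points $\omega$ exist at all), whereas the hard part is describing the dynamics of $G(X)$ on $\partial_\beta X$ finely enough to see that each orbit there carries exactly the metric structure of some $X(\omega)$ and that minimal invariant closed subsets of $\beta X$ are precisely the orbit closures $\overline{X(\omega)}$ of minimal points. This is where strong discreteness of $X$ enters, through the construction of \cite{SW17}.
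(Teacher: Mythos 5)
Your proposal is correct and follows essentially the same route as the paper: you first use the sandwich theorem and maximality to get $I=\tilde I(U(I))$ with $U(I)$ a maximal invariant open set (the paper's Proposition \ref{lem:maximal ideals using maximal open subsets}), pass to the minimal invariant closed complement (Corollary \ref{cor:maximal ideals using minimal closed subsets}), and then identify that minimal set as $\overline{X(\omega)}$ via Lemma \ref{lem:limit space via coarse groupoids} (the content of Lemma \ref{lem:property of minimal points}, whose proof the paper omits as straightforward). Your spelled-out argument that a minimal invariant closed set avoids $X$ and equals the orbit closure of any of its points is exactly the intended justification, so there is nothing to add.
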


A point $\omega \in \partial_\beta X$ satisfying the condition in Proposition \ref{introprop:maximal ideals} is called a \emph{minimal point} (see Definition \ref{defn:minimal point}). We show that there exist a number of non-minimal points in the boundary even for the simple case of $X=\ZZ$:

\begin{alphthm}[Theorem \ref{thm:non-min point for Z}]\label{introthm:maximal ideals for Z}
For the integer group $\ZZ$ with the usual metric, there exist non-minimal points in the boundary $\partial_\beta \ZZ$. More precisely, for any sequence $\{h_n\}_{n\in \NN}$ in $\ZZ$ tending to infinity such that $|h_n -h_m| \to +\infty$ when $n+m \to \infty$ and $n\neq m$, and any $\omega \in \partial_\beta \ZZ$ with $\omega(\{h_n\}_{n\in \NN})=1$, then $\omega$ is \emph{not} a minimal point.
\end{alphthm}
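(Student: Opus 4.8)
The plan is to show that the point $\omega$ is not minimal by exhibiting an invariant open subset $U \subsetneq \beta X$ strictly larger than the complement of $\overline{X(\omega)}$ but still such that the corresponding ghostly ideal $\tilde I(U)$ is proper, thereby contradicting the characterization of minimal points in Proposition~\ref{introprop:maximal ideals}. Concretely, I would work directly with the sparse sequence $S:=\{h_n\}_{n\in\NN}$ and split it into two infinite subsequences $S_0$ (even-indexed terms) and $S_1$ (odd-indexed terms), each still satisfying the divergence-of-gaps condition. Since $\omega$ concentrates on $S$ (that is, $\omega(S)=1$) but $S$ is partitioned into $S_0\sqcup S_1$, $\omega$ sees exactly one of them, say $\omega(S_0)=1$. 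The limit space $X(\omega)$ will then be built entirely from translates of points near $S_0$, and because the gaps in $S$ diverge, the limit space is a disjoint union of singletons (a ``coarsely trivial'' limit space) — in particular $\overline{X(\omega)}$ sits inside a ``small'' invariant set determined by $S_0$.

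The key steps, in order: (1) Recall from \cite{SW17} how the limit space $X(\omega)$ is constructed and identify $\overline{X(\omega)}$ as an invariant closed subset of $\partial_\beta\ZZ$; here I would use that when $|h_n-h_m|\to\infty$, no two points of $S$ can be linked by a fixed-propagation path, so $X(\omega)$ is a single point (or a discrete space of uniformly isolated points), hence $\overline{X(\omega)}$ is properly contained in the invariant closure of the whole sequence $S$ in $\beta\ZZ$. (2) Use the partition $S=S_0\sqcup S_1$ to produce a strictly intermediate invariant open set: take $U:=\beta\ZZ\setminus\overline{\{h_n : n \text{ odd}\}}$-type set, or more precisely the invariant open set whose complement is the invariant closure of the subsequence on which $\omega$ does \emph{not} concentrate. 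Then $\overline{X(\omega)}\subseteq \beta\ZZ\setminus U$ can be arranged to be strict. (3) Verify $\tilde I(U)\neq C^*_u(\ZZ)$: since $U\neq\beta\ZZ$ (its complement contains a nonempty invariant closed piece coming from $S_1$), the identity operator is not in $\tilde I(U)$, so $\tilde I(U)$ is a proper ideal. (4) Observe that $\tilde I(\beta\ZZ\setminus\overline{X(\omega)})$ is then \emph{not} maximal, because it is properly contained in $\tilde I(U)$ (ghostly ideals are monotone in $U$ by Definition~\ref{introdefn:ghostly ideals}), which by Proposition~\ref{introprop:maximal ideals} forces $\omega$ to not be a minimal point.

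The main obstacle I anticipate is step (1)–(2): pinning down exactly what $X(\omega)$ and its closure are, and checking that the intermediate invariant set $U$ can genuinely be chosen strictly between $\beta\ZZ\setminus\overline{X(\omega)}$ and $\beta\ZZ$. This requires care because $\omega$ is an arbitrary ultrafilter-type point concentrating on $S$, so I cannot assume it concentrates on $S_0$ \emph{a priori} — I only know it concentrates on one of $S_0,S_1$, and the argument must be symmetric in the two. The cleanest route is probably to fix the partition first, note $\omega$ concentrates on exactly one part, relabel so that it concentrates on $S_0$, and then show both that $\overline{X(\omega)}$ is disjoint from (the invariant closure of) $S_1$ — using the gap-divergence to prevent any groupoid element from connecting the $S_0$-cluster to the $S_1$-cluster — and that the invariant closure of $S_1$ is nonempty. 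Establishing this ``separation at infinity'' of $S_0$ from $S_1$ inside the coarse groupoid, via the hypothesis $|h_n-h_m|\to\infty$ as $n+m\to\infty$, is the technical heart; once it is in place, monotonicity of $\tilde I(\cdot)$ and properness of $\tilde I(U)$ finish the argument routinely.
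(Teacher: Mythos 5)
Your proposal has several genuine gaps, and the overall strategy cannot be repaired without changing it into something close to the paper's actual argument. First, the claim in step (1) that $X(\omega)$ is ``a single point (or a discrete space of uniformly isolated points)'' is false: by Lemma~\ref{lem:limit space for group case}, the limit space of a group at \emph{any} boundary point is isometric to the group itself, so $\ZZ(\omega)\cong\ZZ$ regardless of how sparse the set $H=\{h_n\}$ is. You are conflating the coarse geometry of the subset $H$ with the limit space, which records the local geometry of $\ZZ$ around $\omega$ via partial translations (and the full translations $\rho_g$ are compatible with every ultrafilter). Second, and more fundamentally, the logic of non-minimality is inverted. To show $\omega$ is not minimal you must produce a \emph{nonempty invariant closed subset properly contained in} $\overline{X(\omega)}$, equivalently a proper invariant open $V$ with $\beta\ZZ\setminus\overline{X(\omega)}\subsetneq V\subsetneq\beta\ZZ$, equivalently a proper ideal properly \emph{containing} $\tilde{I}(\beta\ZZ\setminus\overline{X(\omega)})$. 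Your candidate set, built from the subsequence $S_1$ on which $\omega$ does \emph{not} concentrate, lies \emph{outside} $\overline{X(\omega)}$: if $\alpha\in\overline{\ZZ(\omega)}$ had $\alpha(S_1)=1$, then by Lemma~\ref{lem:char for closure of limit space} some translate $S_1-g$ would be $\omega$-large, forcing $(S_1-g)\cap S_0$ to be infinite, which contradicts the finiteness of $(H-g)\cap H$ for $g\neq 0$ (and $S_0\cap S_1=\emptyset$ for $g=0$). So the ``separation at infinity'' you correctly anticipate as the technical heart actually kills your construction: it shows your $K_1$ is disjoint from $\overline{X(\omega)}$ and hence irrelevant to its minimality. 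Step (4) compounds this by asserting $\tilde{I}(\beta\ZZ\setminus\overline{X(\omega)})\subsetneq\tilde{I}(U)$ when your stated inclusion $\overline{X(\omega)}\subsetneq\beta\ZZ\setminus U$ gives the reverse containment of ideals.

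What is actually needed is a point $\alpha$ \emph{inside} $\overline{\ZZ(\omega)}$ with $\omega\notin\overline{\ZZ(\alpha)}$, so that $\overline{\ZZ(\alpha)}$ is the required proper invariant closed subset. The paper builds such an $\alpha$ by a diagonal ultrafilter construction: it chooses $\omega$-large sets $B_n\subseteq H$ and integers $g_n$ so that the translates $B_n+g_n$ are pairwise disjoint (this uses that $(H+g_1)\cap(H+g_2)$ is finite for $g_1\neq g_2$), glues the localised ultrafilters $\U_n$ on the $B_n+g_n$ along an auxiliary ultrafilter $\omega_0$ on $\NN$ (Lemmas~\ref{lem:localisation ultrafilter}--\ref{lem:ultraunion}), and then checks via Lemma~\ref{lem:char for closure of limit space} that $\alpha\in\overline{\ZZ(\omega)}$ while no single translate of $H$ can absorb an $\alpha$-large set, so $\omega\notin\overline{\ZZ(\alpha)}$. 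Your partition of $H$ into two subsequences does not approach this construction, so the proposal does not establish the theorem.
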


We provide two approaches to prove Theorem \ref{introthm:maximal ideals for Z}. One is topological, which makes use of several constructions and properties of ultrafilters (recalled in Appendix \ref{app:ultrafilters}). The other is $C^*$-algebraic, which replies on a description of maximal ideals in terms of limit operators (see Lemma \ref{lem:simplification of maximal ideal}) together with a recent result by Roch \cite{Roc22}.

Returning to the lattice $\mathfrak{I}_U$ defined in (\ref{EQ:set of ideals}), we already notice that generally $\mathfrak{I}_U$ consists of more than one element. Hence it will be interesting and important to explore when $\mathfrak{I}_U$ has only a single element, or equivalently (thanks to Theorem \ref{introthm:inclusion of ideals}), when the geometric ideal $I(U)$ coincides with the ghostly ideal $\tilde{I}(U)$. 

To study this problem, we start with an extra picture for geometric and ghostly ideals using the associated groupoid $C^*$-algebras (see Lemma \ref{lem: groupoid C*-alg char for I(U)} and Proposition \ref{prop: char for Ker qU}). Based on these descriptions, we show that the amenability of the restriction $G(X)_{\partial_\beta X \setminus U}$ of the coarse groupoid ensures that $I(U) = \tilde{I}(U)$ (see Proposition \ref{prop: criteria to ensure I(U) = tilde I(U)}). Meanwhile, we also discuss the $K$-theory of the geometric and ghostly ideals and provide a criterion to ensure that $K_\ast(I(U)) = K_\ast(\tilde{I}(U))$ for $\ast=0,1$:

\begin{alphprop}[Proposition \ref{prop: criteria to ensure the same K-theory}]\label{introprop:K-theory}
Let $X$ be a discrete metric space of bounded geometry which can be coarsely embedded into some Hilbert space. Then for any invariant open subset $U \subseteq \beta X$, we have an isomorphism
\[
(\iota_U)_\ast: K_\ast(I(U)) \longrightarrow K_\ast(\tilde{I}(U))
\]
for $\ast =0,1$, where $\iota_U$ is the inclusion map. 
\end{alphprop}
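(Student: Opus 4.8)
The plan is to deduce the $K$-theory isomorphism from a suitable excision/Mayer–Vietoris argument combined with the Higson–Kasparov–Trout machinery for spaces coarsely embeddable into Hilbert space. First I would invoke the groupoid picture from Lemma \ref{lem: groupoid C*-alg char for I(U)} and Proposition \ref{prop: char for Ker qU} to identify $I(U)$ and $\tilde I(U)$ with kernels of suitable $\ast$-homomorphisms out of $C^*_r(G(X)) \cong C^*_u(X)$, so that the quotient $\tilde I(U)/I(U)$ becomes an ideal in $C^*_r\big(G(X)_{\beta X \setminus U}\big)$ (or more precisely a subquotient associated to the restriction groupoid over the boundary part $\partial_\beta X \setminus U$). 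The goal then reduces to showing that this quotient $\tilde I(U)/I(U)$ is $K$-theoretically trivial, since the six-term exact sequence associated to $0 \to I(U) \to \tilde I(U) \to \tilde I(U)/I(U) \to 0$ would then force $(\iota_U)_\ast$ to be an isomorphism.

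The key input is that coarse embeddability of $X$ into Hilbert space implies the coarse groupoid $G(X)$ is a-T-menable (this is exactly the Skandalis–Tu–Yu result), hence so is every restriction $G(X)_Y$ for $Y \subseteq \beta X$ closed and invariant. By the Higson–Kasparov theorem, a-T-menable groupoids satisfy the Baum–Connes conjecture with coefficients, and in particular the relevant assembly maps are isomorphisms. The strategy is then to run the argument that Willett–Yu and others use for ghost ideals: the quotient $\tilde I(U)/I(U)$, being built from "partial ghost" operators vanishing in the $(\beta X \setminus U)$-directions, is a subalgebra that is Morita-equivalent to (or at least has the same $K$-theory as) something of the form $C_0$-of-a-space tensored with compacts, whose contribution is cancelled by the a-T-menability. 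Concretely I would compare the assembly map for $G(X)$ with that for $G(X)_U$ and use naturality of the Baum–Connes assembly together with the fact that the "difference" is governed by $G(X)_{\partial_\beta X \setminus U}$, which satisfies Baum–Connes; one shows the relevant reduced-group-$C^*$-algebra $K$-theory agrees with its "full/geometric" counterpart, i.e. that passing from $I(U)$ to $\tilde I(U)$ does not change $K$-theory.

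More carefully, I expect the cleanest route is: (1) write $C^*_u(X)$, $\tilde I(U)$, $I(U)$ all as reduced crossed products / groupoid algebras using Lemma \ref{lem: groupoid C*-alg char for I(U)}, obtaining a commuting diagram of short exact sequences $0 \to I(U) \to C^*_u(X) \to C^*_u(X)/I(U) \to 0$ and $0 \to \tilde I(U) \to C^*_u(X) \to C^*_u(X)/\tilde I(U) \to 0$; (2) observe $C^*_u(X)/I(U) = C^*_r(G(X)_{\beta X \setminus U})$ while $C^*_u(X)/\tilde I(U)$ is its maximal-ghost quotient, and that both the relevant assembly maps (for $G(X)$ and for $G(X)_{\beta X \setminus U}$) are isomorphisms by Higson–Kasparov applied to the a-T-menable restriction groupoids; (3) use a five-lemma / naturality argument on the resulting ladder of six-term sequences to conclude $(\iota_U)_\ast$ is an isomorphism, the point being that the quotient map $C^*_r(G(X)_{\beta X\setminus U}) \to C^*_u(X)/\tilde I(U)$ induces a $K$-theory isomorphism because both sides compute the same $K$-homology / Baum–Connes group for the a-T-menable piece.

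The main obstacle, I expect, is step (2)–(3): identifying $C^*_u(X)/\tilde I(U)$ precisely and showing the quotient map from $C^*_r(G(X)_{\beta X \setminus U})$ to it is a $K$-equivalence. This is the analogue, in the relative setting, of the statement that for an expander-type space the ghost ideal and the compacts have the same $K$-theory — and there one really does invoke the a-T-menability of $G(X)$ to kill the difference between the reduced groupoid algebra and its ghost quotient. Making this work uniformly over all invariant open $U$, keeping track of which restriction groupoids must be a-T-menable (all of them, since a-T-menability passes to restrictions over closed invariant subsets), and handling the fact that $\tilde I(U)$ is defined via $\varepsilon$-supports rather than as an obvious groupoid ideal, will require care; but conceptually the whole proof is "Higson–Kasparov for a-T-menable groupoids + naturality of assembly + five lemma," with no genuinely new analytic ingredient beyond what the earlier sections supply.
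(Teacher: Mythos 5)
Your overall strategy --- a ladder of six-term exact sequences driven by a-T-menability of the coarse groupoid, closed by the five lemma --- is the skeleton of the paper's proof of Proposition \ref{prop: criteria to ensure the same K-theory}. But as written the plan has two genuine gaps. First, you have the two quotients the wrong way round: by Proposition \ref{prop: char for Ker qU} it is $C^*_u(X)/\tilde{I}(U)$ that equals the reduced algebra $C^*_r(G(X)_{U^c})$, whereas $C^*_u(X)/I(U)$ is an \emph{exotic} completion of $C_c(G(X)_{U^c})$, which coincides with the reduced one exactly when $I(U)=\tilde{I}(U)$. This matters: your ladder comparing the two extensions of $C^*_u(X)$ needs the map $K_\ast(C^*_u(X)/I(U)) \to K_\ast(C^*_r(G(X)_{U^c}))$ to be an isomorphism, which by the five lemma is \emph{equivalent} to the statement you are trying to prove, and neither $K$-amenability nor Baum--Connes for $G(X)_{U^c}$ says anything about an exotic completion --- they only control the map from the \emph{maximal} algebra to the reduced one. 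The paper gets the actual content by putting the (automatically exact) maximal sequence $0 \to C^*_{\rm max}(G(X)_U) \to C^*_{\rm max}(G(X)) \to C^*_{\rm max}(G(X)_{U^c})\to 0$ on top of the reduced sequence $0 \to \tilde{I}(U) \to C^*_r(G(X)) \to C^*_r(G(X)_{U^c}) \to 0$; Tu's theorem (Proposition \ref{prop: a-T-menable groupoids are K-amenable}) makes the middle and right vertical maps $K$-equivalences, so the five lemma shows that $\pi_U: C^*_{\rm max}(G(X)_U) \to \tilde{I}(U)$, which factors through $I(U)$ via Lemma \ref{lem: groupoid C*-alg char for I(U)}, is a $K$-equivalence.

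Second, even with that correction you cannot simply quote Tu/Higson--Kasparov for $G(X)_U$: Proposition \ref{prop: a-T-menable groupoids are K-amenable} requires $\sigma$-compactness, which holds for $G(X)$ and for the closed restriction $G(X)_{U^c}$, but fails in general for the open restriction $G(X)_U$ (open subsets of $\beta X$ need not be $\sigma$-compact). Showing that $C^*_{\rm max}(G(X)_U) \to C^*_r(G(X)_U) \cong I(U)$ is a $K$-equivalence --- the step that converts the conclusion about $\pi_U$ into the conclusion about $\iota_U$ --- is handled in the paper by writing $G(X) = \beta X \rtimes \G'$ with $\G'$ second countable, expressing $C_0(U)$ as an inductive limit of separable $\G'$-algebras, applying Tu's $K$-amenability to each piece, and passing to the limit (see Remark \ref{rem:thanks to Kang}). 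Your proposal does not address this, and the suggestion that $\tilde{I}(U)/I(U)$ is Morita equivalent to something of the form $C_0(\cdot)\otimes\K(\H)$ is unsubstantiated and not how the cancellation occurs. In short: right conceptual ingredients, but the specific ladder you set up does not close without replacing the exotic quotient by the maximal completions and without a separate argument circumventing the failure of $\sigma$-compactness for $G(X)_U$.
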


Applying Proposition \ref{introprop:K-theory} to the case of $U=X$, we partially recover \cite[Proposition 35]{Fin14}. This is crucial in the constructions of counterexamples to the Baum-Connes type conjectures (see \cite{STY02} for the coarse version and \cite[Section 5]{FW14} for the boundary version, the latter of which is based on the example considered in \cite[Section 3]{Wan07}). Hence presumably Proposition \ref{introprop:K-theory} will find further applications in higher index theory.

Conversely, it is natural to ask whether $I(U) = \tilde{I}(U)$ implies that the restriction groupoid $G(X)_{\partial_\beta X \setminus U}$ is amenable. Note that when $U=X$, \cite[Theorem 1.3]{RW14} implies that $I(X) = \tilde{I}(X)$ is equivalent to that $X$ has Property A (see also Example \ref{eg: compact ideal} and \ref{eg: ghost ideal}), which is further equivalent to that the coarse groupoid $G(X)$ is amenable thanks to \cite[Theorem 5.3]{STY02}. Inspired by these works, we introduce the following partial version of Property A:

\begin{alphdefn}[Definition \ref{defn:partial Property A}]\label{introdefn:partial A}
Let $(X,d)$ be a discrete metric space of bounded geometry and $U \subseteq \beta X$ be an invariant open subset. We say that $X$ has \emph{partial Property A towards $\partial_\beta X \setminus U$} if $G(X)_{\partial_\beta X \setminus U}$ is amenable.
\end{alphdefn}

Finally we reach the following, which recovers \cite[Theorem 1.3]{RW14} when $U=X$:

\begin{alphthm}[Theorem \ref{thm:I(U)=tI(U)}]\label{introthm:I(U)=tI(U)}
Let $(X,d)$ be a strongly discrete metric space of bounded geometry and $U \subseteq \beta X$ be a countably generated invariant open subset.
Then the following are equivalent:
\begin{enumerate}
 \item $X$ has partial Property A towards $\beta X \setminus U$;
 \item $\tilde{I}(U) = I(U)$;
 \item the ideal $I_G$ of all ghost operators is contained in $I(U)$.
\end{enumerate}
\end{alphthm}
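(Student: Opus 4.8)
The plan is to prove the cycle $(1) \Rightarrow (2) \Rightarrow (3) \Rightarrow (1)$. The implication $(1) \Rightarrow (2)$ is the criterion already recorded as Proposition~\ref{prop: criteria to ensure I(U) = tilde I(U)}: amenability of the restriction groupoid $G(X)_{\partial_\beta X \setminus U}$ forces $I(U) = \tilde{I}(U)$, so nothing new is needed there. The implication $(2) \Rightarrow (3)$ should be essentially formal: since the ideal $I_G$ of ghost operators has associated invariant open subset $U(I_G) = X \subseteq U$ (as recalled after Definition~\ref{introdefn:geometric ideals}, via Example~\ref{eg: ghost ideal}), one checks directly from Definition~\ref{introdefn:ghostly ideals} that every ghost operator lies in $\tilde{I}(U)$ — indeed for a ghost $T$ and any $\varepsilon > 0$ the set $\supp_\varepsilon(T)$ is finite, so $\overline{\r(\supp_\varepsilon(T))}$ is a finite subset of $X \subseteq U$. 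Hence $I_G \subseteq \tilde{I}(U) = I(U)$.

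The real work is $(3) \Rightarrow (1)$, and this is where I expect the main obstacle. The strategy is to reduce to the known case $U = X$, i.e.\ to \cite[Theorem 1.3]{RW14}, which says $I_G \subseteq I(X)$ (equivalently $I(X) = \tilde{I}(X)$) iff $X$ has Property~A iff $G(X)$ is amenable. Concretely: $I_G \subseteq I(U)$ should be shown to imply that the ``ghost part'' of the restriction of $C^*_u(X)$ to the closed invariant set $\partial_\beta X \setminus U$ is trivial, which via the groupoid $C^*$-algebra descriptions (Lemma~\ref{lem: groupoid C*-alg char for I(U)} and Proposition~\ref{prop: char for Ker qU}) should translate into the statement that $C^*_u(X)/I(U) \cong C^*_r(G(X)_{\partial_\beta X \setminus U})$ has no ghost-type elements, i.e.\ the reduced and maximal completions of this restriction groupoid agree, which for a principal étale groupoid with the relevant compactness properties is equivalent to amenability. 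Here the hypothesis that $U$ is \emph{countably generated} and $X$ is \emph{strongly discrete} is presumably what licenses the limit-space machinery of \cite{SW17}: one writes $\partial_\beta X \setminus U$ as built from countably many limit spaces $X(\omega)$ and argues the Property~A condition one direction at a time, gluing via a metrizability/second-countability argument so that an $\varepsilon$--$R$ Property~A kernel valid ``towards $\partial_\beta X \setminus U$'' can be assembled. The candidate ghost witnessing failure of amenability would be produced, as in the expander case, from a sequence of approximately-invariant but non-genuinely-invariant vectors supported near $\partial_\beta X \setminus U$, showing that if $X$ did \emph{not} have partial Property~A towards $\partial_\beta X \setminus U$ then there is a nonzero ghost operator outside $I(U)$.

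The main obstacle, as I see it, is precisely this last construction: upgrading the classical ``expander $\Rightarrow$ nonzero ghost outside the compacts'' argument to the \emph{relative} setting, where one must ensure the ghost one builds has its $\varepsilon$-support ranging (in the $\overline{\r(\cdot)}$ sense) outside $U$, i.e.\ genuinely sees the non-amenable direction $\partial_\beta X \setminus U$ rather than leaking into the amenable part. Controlling this requires the countable-generation hypothesis to pin down a cofinal sequence of ``bad'' directions and a diagonal argument to produce a single operator detecting all of them; the strong discreteness lets one pass freely between the metric picture and the limit-operator picture of \cite{SW17} so that ``vanishing in the $(\beta X \setminus U)$-direction'' (Proposition~\ref{prop: geometric char for ghostly ideals}) becomes a usable criterion. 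I would expect the proof to be organized so that $(3) \Rightarrow (1)$ is itself split: first show $I_G \subseteq I(U)$ implies each limit operator fibre over $\partial_\beta X \setminus U$ individually satisfies a Property~A-type condition (via the $U=X$ result applied to limit spaces), then assemble these into amenability of $G(X)_{\partial_\beta X \setminus U}$ using that amenability of an étale groupoid is detected on its unit space together with a countable exhaustion.
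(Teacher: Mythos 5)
Your implications $(1)\Rightarrow(2)$ and $(2)\Rightarrow(3)$ are correct and coincide with the paper's proof (Proposition~\ref{prop: criteria to ensure I(U) = tilde I(U)}, and the trivial inclusion $I_G=\tilde{I}(X)\subseteq\tilde{I}(U)$). The genuine gap is in $(3)\Rightarrow(1)$, and it is twofold. First, your proposed reduction runs through the chain ``no ghost-type elements in $C^*_u(X)/I(U)$ $\Rightarrow$ the maximal and reduced completions of $G(X)_{\partial_\beta X\setminus U}$ agree $\Rightarrow$ this groupoid is amenable.'' The last step is false in general: weak containment does not imply amenability for \'{e}tale groupoids (there are non-amenable \'{e}tale groupoids whose maximal and reduced $C^*$-algebras coincide), and the paper only ever uses the implication in the harmless direction (amenability $\Rightarrow$ exactness of the reduced sequence, Proposition~\ref{prop: char for Ker qU} and Proposition~\ref{prop: criteria to ensure I(U) = tilde I(U)}); the converse is precisely what cannot be obtained by soft $C^*$-algebraic arguments. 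Second, your fallback strategy --- verify a Property~A-type condition fibrewise on the limit spaces $X(\omega)$, $\omega\in\partial_\beta X\setminus U$, and then ``assemble'' these into amenability of $G(X)_{\partial_\beta X\setminus U}$ by a countable-exhaustion/gluing argument --- hits exactly the obstruction the paper flags after Corollary~\ref{cor:uniform A}: since $\overline{X(\omega)}$ may contain points outside $X(\omega)$, functions defined direction-by-direction need not glue to a continuous function on $G(X)_{\partial_\beta X\setminus U}$, and amenability of an \'{e}tale groupoid is not detected fibrewise in this way.

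What actually carries the paper's proof of $(3)\Rightarrow(1)$ is a contrapositive argument with machinery your sketch does not supply: the equivalence of partial Property~A with a partial operator norm localisation property (Proposition~\ref{prop:partial A = partial ONL}, a Sako-type theorem), a decomposition/permanence step (Proposition~\ref{prop:permanence for partial ONL}), and the key Lemma~\ref{lem:pre for I(U)=tI(U)}, where countable generatedness enters concretely: via Lemma~\ref{lem:description of countably generated} one lists a cofinal sequence $(Y_n)$ in $\L(U)$ and, from the failure of partial ONL, produces norm-one positive blocks $T_n$ supported on finite sets $B_n$ with $B_n\cap Y_n=\emptyset$ and with no localised vector nearly attaining the norm. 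Then $f(T)$, for $T=\bigoplus_n T_n$ and a bump function $f$ supported near $1$, is a ghost (the RW14 argument) which satisfies $\|\chi_{B_n}f(T)\chi_{B_n}\|=1$, hence stays at distance $1$ from every finite-propagation operator supported in $Y\times Y$ with $Y\in\L(U)$, so $f(T)\notin I(U)$, contradicting $(3)$. Your sketch gestures at ``approximately-invariant vectors supported near $\partial_\beta X\setminus U$'' but gives no mechanism converting failure of partial Property~A into such data (this is exactly the ONL bridge), and no mechanism forcing the resulting ghost's $\varepsilon$-supports to avoid every member of $\L(U)$ rather than merely the finite sets; without these two ingredients the implication $(3)\Rightarrow(1)$ is not established.
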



Note that there is a technical condition of countable generatedness (see Definition \ref{defn:countably generated}) used in Theorem \ref{introthm:I(U)=tI(U)}, which holds for a number of examples (see Example \ref{ex:countably generated}) including $X$ itself. However as shown in Example \ref{ex:noncountably generated ideal}, there does exist an invariant open subset which is \emph{not} countably generated. 

The proof of Theorem \ref{introthm:I(U)=tI(U)} follows the outline of the case that $U=X$ (cf. \cite[Theorem 1.3]{RW14}), and is divided into several steps. Firstly, we unpack the groupoid language of Definition \ref{introdefn:partial A} and provide a concrete geometric description similar to the definition of Property A (see Proposition \ref{prop:char for partial A}). Then we introduce a notion of partial operator norm localisation property (Definition \ref{defn:partial ONL}), which is a partial version of the operator norm localisation property (ONL) introduced in \cite{CTWY08}. Parallel to Sako's result that Property A is equivalent to ONL (\cite{Sak14}), we show that partial Property A is equivalent to partial ONL (see Proposition \ref{prop:partial A = partial ONL}). Finally thanks to the assumption of countable generatedness, we conclude Theorem \ref{introthm:I(U)=tI(U)}. 

We also remark that in the proof of Theorem \ref{introthm:I(U)=tI(U)}, we make use of the notion of ideals in spaces introduced by Chen and the first-named author in \cite{CW04} (see also Definition \ref{defn: ideals in space}) instead of using invariant open subsets of $\beta X$ directly. This has the advantage of playing within the given space rather than going to the mysterious Stone-\v{C}ech boundary, which allows us to step over several technical gaps (see, \emph{e.g.}, Remark \ref{rem:importance of ideals in space}).

The paper is organised as follows. In Section \ref{sec:pre}, we recall necessary background knowledge in coarse geometry and groupoid theory. In Section \ref{sec:limit sp. and op.}, we recall the theory of limit spaces and limit operators developed in \cite{SW17}, which will be an important tool used throughout the paper. Section \ref{sec:geometric ideals} is devoted to the notion of geometric ideals (Definition \ref{introdefn:geometric ideals}) studied in \cite{CW04, Wan07}, and we also discuss their minimality in the lattice of ideals $\mathfrak{I}_U$ from (\ref{EQ:set of ideals}). We introduce the key notion of ghostly ideals (Definition \ref{introdefn:ghostly ideals}) in Section \ref{sec:ghostly ideals}, prove Theorem \ref{introthm:inclusion of ideals} and provide several characterisations for later use. Then we discuss maximal ideals in uniform Roe algebras in Section \ref{sec:maximal ideals}, and prove Proposition \ref{introprop:maximal ideals} and Theorem \ref{introthm:maximal ideals for Z}. In Section \ref{sec:geometric vs ghostly ideals}, we study the problem when the geometric ideal coincides with the ghostly ideal, discuss their $K$-theories and prove Proposition \ref{introprop:K-theory}. Then in Section \ref{sec:partial A and partial ONL}, we introduce the notion of partial Property A (Definition \ref{introdefn:partial A}) and prove Theorem \ref{introthm:I(U)=tI(U)}. Finally, we list some open questions in Section \ref{sec:open questions}, and provide Appendix \ref{app:ultrafilters} to record the notion of ultrafilters and their properties used throughout the paper.

\noindent{\bf Acknowledgement.} We would like to thank Kang Li for reading an early draft of this paper and pointing out that the assumption of countable generatedness is redundant for Proposition \ref{introprop:K-theory} (see Remark \ref{rem:thanks to Kang}). We would also like to thank Baojie Jiang and J\'{a}n \v{S}pakula for some helpful discussions.

\section{Preliminaries}\label{sec:pre}

\subsection{Standard notation} Here we collect the notation used throughout the paper.

For a set $X$, denote by $|X|$ the cardinality of $X$. For a subset $A \subseteq X$, denote by $\chi_{A}$ the characteristic function of $A$, and set $\delta_x:=\chi_{\{x\}}$ for $x\in X$.

When $X$ is a locally compact Hausdorff space, we denote by $C(X)$ the set of complex-valued continuous functions on $X$, and by $C_b(X)$ the subset of bounded continuous functions on $X$. Recall that the \textit{support} of a function $f\in C(X)$ is the closure of $\{ x\in X: f(x)\neq 0\}$, written as $\supp(f)$, and denote by $C_c(X)$ the set of continuous functions with compact support. We also denote by $C_0(X)$ the set of continuous functions vanishing at infinity, which is the closure of $C_c(X)$ with respect to the supremum norm $\|f\|_\infty:=\sup\{|f(x)|: x\in X\}$.

When $X$ is discrete, denote $\ell^\infty(X):=C_b(X)$ and $\ell^2(X)$ the Hilbert space of complex-valued square-summable functions on $X$. Denote by $\B(\ell^2(X))$ the $C^*$-algebra of all bounded linear operators on $\ell^2(X)$, and by $\K(\ell^2(X))$ the $C^*$-subalgebra of all compact operators on $\ell^2(X)$. 

For a discrete space $X$, denote by $\beta X$ its Stone-\v{C}ech compactification and $\partial_\beta X:=\beta X \setminus X$ the Stone-\v{C}ech boundary.

\subsection{Notions from coarse geometry}\label{ssec:notions from coarse geometry}

Here we collect necessary notions from coarse geometry, and guide readers to \cite{NY12, Roe03} for more details. 

For a discrete metric space $(X,d)$, denote the closed ball by $B_{X}(x,r):=\{y\in X: d(x,y)\leq r\}$ for $x\in X$ and $r\geq 0$. For a subset $A\subseteq X$ and $r>0$, denote the \emph{$r$-neighbourhood of $A$ in $X$} by $\Nd_r(A):=\{x\in X: d_X(x,A) \leq r\}$. For $R>0$, denote the \emph{$R$-entourage} by $E_R :=\{(x,y) \in X \times X: d(x,y) \leq R\}$. 

We say that $(X,d)$ has \emph{bounded geometry} if for any $r>0$, the number $\sup_{x\in X} |B_{X}(x,r)|$ is finite. Also say that $(X,d)$ is \emph{strongly discrete} if the set $\{d(x,y): x,y \in X\}$ is a discrete subset of $\RR$. 


\noindent \textbf{Convention.} We say that ``\emph{$X$ is a space}'' as shorthand for ``$X$ is a strongly discrete metric space of bounded geometry'' (as in \cite{SW17}) throughout the rest of this paper. 

We remark that although our results hold without the assumption of strong discreteness, we choose to add it so as to simplify the proofs. As discussed in \cite[Section 2]{SW17}, this will not lose any generality since one can always modify a discrete metric space (using a coarse equivalence) to satisfy this assumption.


Now we recall the notion of Property A introduced by Yu (see, \emph{e.g.}, \cite[Proposition 1.2.4]{Wil09} for the equivalence to Yu’s original definition):

\begin{defn}[\cite{Yu00}]\label{defn: Property A}
A space $(X,d)$ is said to have \emph{Property A} if for any $\varepsilon, R>0$ there exist an $S>0$ and a function $f: X \times X \to [0,+\infty)$ satisfying:
\begin{enumerate}
  \item $\supp(f) \subseteq E_S$;
  \item for any $x\in X$, we have $\sum_{z\in X} f(z,x) =1$;
  \item for any $x,y\in X$ with $d(x,y) \leq R$, then $\sum_{z\in X} |f(z,x) - f(z,y)| \leq \varepsilon$.
\end{enumerate}
\end{defn}

Using a standard normalisation argument, we have the following:

\begin{lem}\label{lem:char for A}
A space $(X,d)$ has Property A \emph{if and only if} for any $\varepsilon, R>0$ there exist an $S>0$ and a function $f: X \times X \to [0,+\infty)$ satisfying:
\begin{enumerate}
  \item $\supp(f) \subseteq E_S$;
  \item for any $x\in X$, we have $|\sum_{z\in X} f(z,x) - 1 | \leq \varepsilon$;
  \item for any $x,y\in X$ with $d(x,y) \leq R$, then $\sum_{z\in X} |f(z,x) - f(z,y)| \leq \varepsilon$.
\end{enumerate}
\end{lem}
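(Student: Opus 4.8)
The plan is to prove both implications by a standard normalisation argument, exploiting the fact that condition (2) of Lemma \ref{lem:char for A} only requires the column sums to be \emph{approximately} $1$ rather than exactly $1$.

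First I would handle the easy direction: if $X$ has Property A in the sense of Definition \ref{defn: Property A}, then for given $\varepsilon, R>0$ the witnessing function $f$ already satisfies the three conditions of Lemma \ref{lem:char for A} verbatim (condition (2) becomes the trivial estimate $|1-1|=0\le\varepsilon$). So there is nothing to do.

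For the converse, suppose $f\colon X\times X\to[0,+\infty)$ with $\supp(f)\subseteq E_S$ satisfies the weakened conditions (2) and (3) for parameters to be chosen below. For each $x\in X$ set $c(x):=\sum_{z\in X}f(z,x)$, so $|c(x)-1|\le\varepsilon'$ by (2), and in particular $c(x)\ge 1-\varepsilon' > 1/2$ once $\varepsilon'<1/2$, so $c(x)>0$ and we may define the normalisation $g(z,x):=f(z,x)/c(x)$. Then $\supp(g)\subseteq E_S$ and $\sum_z g(z,x)=1$ exactly, giving conditions (1) and (2) of Definition \ref{defn: Property A}. For condition (3), fix $x,y$ with $d(x,y)\le R$ and estimate $\sum_z|g(z,x)-g(z,y)|$ by inserting the intermediate term $f(z,y)/c(x)$: the triangle inequality gives a bound of $\frac{1}{c(x)}\sum_z|f(z,x)-f(z,y)| + \sum_z f(z,y)\bigl|\frac{1}{c(x)}-\frac{1}{c(y)}\bigr|$. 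The first summand is at most $\frac{\varepsilon'}{1-\varepsilon'}$ using (3) for $f$. The second summand equals $c(y)\cdot\frac{|c(y)-c(x)|}{c(x)c(y)} = \frac{|c(x)-c(y)|}{c(x)}$, and $|c(x)-c(y)|\le\sum_z|f(z,x)-f(z,y)|\le\varepsilon'$, so this summand is at most $\frac{\varepsilon'}{1-\varepsilon'}$ as well. Altogether $\sum_z|g(z,x)-g(z,y)|\le\frac{2\varepsilon'}{1-\varepsilon'}$, so choosing $\varepsilon'$ small enough in terms of $\varepsilon$ (and using the \emph{same} $R$, so $S$ is unchanged) makes this $\le\varepsilon$, completing the proof.

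There is no real obstacle here; the only point requiring a little care is to make sure the estimate $|c(x)-c(y)|\le\sum_z|f(z,x)-f(z,y)|$ is driven by condition (3) for $f$ and not circularly by (2), and to choose the auxiliary parameter $\varepsilon'$ uniformly (e.g.\ $\varepsilon' = \varepsilon/(2+\varepsilon)$ works) before invoking the hypothesis, so that a single application of the weakened property for the pair $(\varepsilon', R)$ yields the desired $f$ for the pair $(\varepsilon, R)$.
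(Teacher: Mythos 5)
Your proof is correct and is precisely the ``standard normalisation argument'' that the paper invokes without spelling out: the forward direction is trivial, and for the converse you normalise by the column sums $c(x)$ and control the error via the triangle inequality, with the key bound $|c(x)-c(y)|\le\sum_z|f(z,x)-f(z,y)|$ coming from condition (3) rather than (2), exactly as intended. The explicit choice $\varepsilon'=\varepsilon/(2+\varepsilon)$ and the check that $c(x)>0$ are the right details to record; nothing is missing.
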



We also need a characterisation for Property A using kernels. Recall that a \emph{kernel} on $X$ is a function $k\colon X\times X\rightarrow\RR$. We say that $k$ is of \textit{positive type} if for any $n\in\NN$, $x_{1},\ldots,x_{n}\in X$ and $\lambda_1,\ldots,\lambda_{n}\in\RR$, we have:
\[
    \sum_{i,j=1}^{n}\lambda_{i}\lambda_{j}k(x_{i},x_{j})\geq 0.
\]
The following is well-known (see, \emph{e.g.}, \cite[Proposition 1.2.4]{Wil09}):

\begin{lem}\label{lem:char for A}
A space $(X,d)$ has \textit{Property A} if and only if for any $R>0$ and $\varepsilon>0$, there exist $S>0$ and a kernel $k: X \times X \to \RR$ of positive type satisfying the following:
\begin{enumerate}
 \item for $x,y\in X$, we have $k(x,y)=k(y,x)$ and $k(x,x) =1$;
 \item for $x,y\in X$ with $d(x,y) \geq S$, we have $k(x,y)=0$;
 \item for $x,y\in X$ with $d(x,y) \leq R$, we have $|1-k(x,y)| \leq \varepsilon$.
\end{enumerate}
\end{lem}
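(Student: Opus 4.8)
The plan is to prove the two implications separately; ``$\Rightarrow$'' is elementary and ``$\Leftarrow$'' is where the work lies.

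For ``$\Rightarrow$'' I would run the square-root (Gram matrix) construction. Assume $(X,d)$ has Property A, fix $R,\varepsilon>0$, and set $\varepsilon_0:=\min\{\varepsilon,1\}$. Definition~\ref{defn: Property A} applied to $R$ and $\varepsilon_0$ gives $S_0>0$ and $f\colon X\times X\to[0,+\infty)$ with $\supp(f)\subseteq E_{S_0}$, $\sum_{z}f(z,x)=1$ for all $x$, and $\sum_{z}|f(z,x)-f(z,y)|\le\varepsilon_0$ whenever $d(x,y)\le R$. Put $\xi_x:=\big(\sqrt{f(z,x)}\big)_{z\in X}\in\ell^2(X)$, a real unit vector supported in $B_X(x,S_0)$ with $\|\xi_x-\xi_y\|_2^2\le\sum_z|f(z,x)-f(z,y)|\le\varepsilon_0$ for $d(x,y)\le R$ (using $(\sqrt a-\sqrt b)^2\le|a-b|$). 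Then $k(x,y):=\langle\xi_x,\xi_y\rangle_{\ell^2(X)}$, a Gram matrix of real vectors, is a real-valued kernel of positive type with $k(x,y)=k(y,x)$ and $k(x,x)=1$; it vanishes once $d(x,y)\ge 2S_0$ (disjoint supports), so (1)--(2) hold with $S:=2S_0$; and $1-k(x,y)=\tfrac12\|\xi_x-\xi_y\|_2^2\le\varepsilon_0/2\le\varepsilon$ on $E_R$, so (3) holds.

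For ``$\Leftarrow$'' I would first pass to a Hilbert-space realisation and then discretise it. Fix $R,\varepsilon>0$ and apply the hypothesis to $R$ and a small tolerance $\delta>0$, obtaining $S>0$ and a positive-type kernel $k$ satisfying (1)--(3); replacing $k$ by its Hadamard square $(x,y)\mapsto k(x,y)^2$ (still of positive type by the Schur product theorem, still supported in $E_S$, equal to $1$ on the diagonal, with $1-k(x,y)^2=(1-k(x,y))(1+k(x,y))\le 2\delta$ on $E_R$), I may also assume $k\ge 0$ everywhere. The Kolmogorov (GNS) construction then furnishes a Hilbert space $H$ and unit vectors $\{\xi_x\}_{x\in X}\subseteq H$ with $\langle\xi_x,\xi_y\rangle=k(x,y)$, so that $\langle\xi_x,\xi_y\rangle=0$ for $d(x,y)\ge S$ while $\|\xi_x-\xi_y\|^2=2(1-k(x,y))\le2\delta$ for $d(x,y)\le R$, the latter bound being independent of the pair. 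To get back into $\ell^2(X)$ I would use bounded geometry: the graph on $X$ with an edge $\{x,y\}$ whenever $0<d(x,y)<S$ has uniformly bounded degree, so $X$ decomposes into a finite number $C$ of $S$-separated subsets, and on each of them the orthogonality relation makes $\{\xi_w\}$ an orthonormal family; assembling the associated partial isometries $\ell^2(\mathrm{piece})\to H$ carries the $\xi_x$ to nonnegative vectors in $\ell^2$ of a finite blow-up of $X$, supported within a bounded neighbourhood of $x$ and of squared norm in $[1,C]$. Squaring moduli, summing over the copies, and normalising produces $f\colon X\times X\to[0,+\infty)$ of finite propagation with $\sum_z f(z,x)=1$ and variation controlled by $\|\xi_x-\xi_y\|$; pulling back along the coarse equivalence between $X$ and its blow-up, Definition~\ref{defn: Property A} then yields Property A.

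The main obstacle is precisely this last step: the discretisation costs a multiplicative constant governed by the number of colours $C$ needed at scale $S$ (a local doubling constant of $X$), while $S$ itself grows as the tolerance $\delta$ shrinks, so one must check that the variation of the resulting $f$ can nevertheless be forced below $\varepsilon$ — possible because Property A need only be verified one scale at a time, provided the parameters are chosen in the right order. A cleaner route, natural in the groupoid framework of this paper, sidesteps that bookkeeping: a finite-propagation positive-type kernel that is close to $1$ near the diagonal is exactly a compactly supported positive-type function on the coarse groupoid $G(X)$ that is close to the constant function $1$ on compact sets, and the existence of such a net expresses amenability of $G(X)$, hence Property A by \cite[Theorem 5.3]{STY02}; equivalently, via the Schur multiplier $T\mapsto k\cdot T$, such kernels yield completely positive finite-propagation maps on $C^*_u(X)$ approximating the identity on finite-propagation operators. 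The forward implication is of course also subsumed by this equivalence.
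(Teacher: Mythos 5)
Your forward implication is correct and is the standard Gram-matrix argument; note, though, that the paper does not prove this lemma at all — it simply cites \cite[Proposition 1.2.4]{Wil09} — so the only substantive comparison to make is with the standard proof in that reference.

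The converse direction has a genuine gap, and it is exactly the one you flag yourself. Your colouring/partial-isometry discretisation produces vectors $\eta_x(w)=k(w,x)^2$ supported in $B(x,S)$, but the resulting variation bound on $E_R$ carries a multiplicative factor of roughly $\sqrt{N_S}$, where $N_S=\sup_x|B(x,S)|$. The hypothesis only asserts that for each tolerance $\delta$ \emph{some} propagation bound $S=S(\delta)$ exists; nothing prevents $N_{S(\delta)}$ from growing faster than $\delta^{-1}$, and iterating (apply the hypothesis again with $\delta'=\delta/N_{S(\delta)}$) only produces a new, possibly larger $S(\delta')$, so the circularity never closes. ``Choosing the parameters in the right order'' therefore does not repair the argument. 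The fallback you sketch does not close it either: converting compactly supported positive-type functions that are close to $1$ into the $\ell^1$-normalised, almost-invariant functions required by Definition \ref{defn: amenable groupoid} (or by Definition \ref{defn: Property A}) is precisely the same difficulty in disguise, so one would have to invoke the positive-type-function characterisation of groupoid amenability (Anantharaman-Delaroche--Renault) or the ``nuclear $\Rightarrow$ Property A'' theorem — at which point the lemma is being deduced from results far deeper than itself.

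The standard repair, and the one underlying \cite[Proposition 1.2.4]{Wil09}, avoids the GNS space entirely. The matrix $K=[k(x,y)]$ is a \emph{positive element of $C^*_u(X)$}: it lies in $\CC_u^S[X]$ with entries bounded by $1$ (Cauchy--Schwarz), hence is bounded by the Schur test, and positive-typeness gives $K\geq 0$. Functional calculus inside the $C^*$-algebra $C^*_u(X)$ then puts $K^{1/2}$ in $C^*_u(X)$, so $K^{1/2}$ is approximated \emph{in operator norm} by a self-adjoint finite-propagation operator $A$, with propagation some finite $S'$ and error $\delta'$ chosen freely and \emph{independently of $S$}. The columns $\xi_x:=A\delta_x$ are supported in $B(x,S')$, satisfy $\langle\xi_x,\xi_y\rangle= k(x,y)+O(\delta')$ uniformly, hence $\|\xi_x-\xi_y\|^2\leq 2\varepsilon+O(\delta')$ on $E_R$ and $\|\xi_x\|^2=1+O(\delta')$; normalising and setting $f(z,x)=|\xi_x(z)|^2/\|\xi_x\|^2$ gives the function required by Definition \ref{defn: Property A}. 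This is the step your proposal is missing: the quantity you need to control is an operator-norm approximation error inside $C^*_u(X)$, not a combinatorial constant at scale $S$.
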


We also recall the notion of coarse embedding:

\begin{defn}\label{defn: coarse equivalence}
Let $(X,d_X)$ and $(Y,d_Y)$ be metric spaces and $f: X \to Y$ be a map. We say that $f$ is a \emph{coarse embedding} if there exist functions $\rho_{\pm}: [0,\infty) \to [0,\infty)$ with $\lim_{t\to +\infty}\rho_{\pm}(t) = +\infty$ such that for any $x,y\in X$ we have
\[
\rho_-(d_X(x,y)) \leq d_Y(f(x),f(y)) \leq \rho_+(d_X(x,y)).
\]
If additionally there exists $C>0$ such that $Y=\Nd_C(f(X))$, then we say that $f$ is a \emph{coarse equivalence} and $(X,d_X), (Y,d_Y)$ are \emph{coarsely equivalent}.
\end{defn}

\subsection{Uniform Roe algebras}\label{ssec:uniform Roe alg}
Let $(X,d)$ be a discrete metric space. Each operator $T \in \B(\ell^2(X))$ can be written in the matrix form $T=(T(x,y))_{x,y\in X}$, where $T(x,y)=\langle T \delta_y, \delta_x \rangle \in \CC$. We also regard $T \in \B(\ell^2(X))$ as a bounded function on $X \times X$, \emph{i.e.}, an element in $\ell^\infty(X \times X)$. 
Denote by $\|T\|$ the operator norm of $T$ in $\B(\ell^2(X))$, and $\|T\|_\infty$ the supremum norm when regarding $T$ as a function in $\ell^\infty(X \times X)$. It is clear that $\|T\|_\infty \leq \|T\|$ for any $T \in \B(\ell^2(X))$.

Given an operator $T \in \B(\ell^2(X))$, we define the \emph{support} of $T$ to be
\[
\supp(T):=\{(x,y) \in X \times X: T(x,y) \neq 0\},
\]
and the \emph{propagation} of $T$ to be 
\[
\ppg(T):= \sup\{d(x,y): (x,y) \in \supp(T)\}.
\]

\begin{defn}\label{defn: unif. Roe alg.}
Let $(X,d)$ be a space. 
\begin{enumerate}
 \item The set of all finite propagation operators in $\B(\ell^2(X))$ forms a $\ast$-algebra, called the \emph{algebraic uniform Roe algebra of $X$} and denoted by $\CC_u[X]$. For each $R\geq 0$, denote the subset 
 \[
 \CC_u^R[X]:=\{T\in \B(\ell^2(X)): \ppg(T) \leq R\}.
 \]
 It is clear that $\CC_u[X]=\bigcup_{R \geq 0} \CC_u^R[X]$.
 \item The \emph{uniform Roe algebra of $X$} is defined to be the operator norm closure of $\CC_u[X]$ in $\B(\ell^2(X))$, which forms a $C^*$-algebra and is denoted by $C^*_u(X)$.
\end{enumerate}
\end{defn}

The following notion was originally introduced by Yu:

\begin{defn}\label{defn: ghost operator}
An operator $T \in C^*_u(X)$ is called a \emph{ghost} if $T \in C_0(X \times X)$ when regarding $T$ as a function in $\ell^\infty(X \times X)$. In other words, for any $\varepsilon$ there exists a finite subset $F \subseteq X$ such that for any $(x,y) \notin F \times F$, we have $|T(x,y)| < \varepsilon$.
\end{defn}


It is easy to see that all the ghost operators in $C^*_u(X)$ form an ideal in $C^*_u(X)$, denoted by $I_G$. Intuitively speaking, a ghost operator is locally invisible at infinity in all directions. This will be made more precise in the sequel.

\subsection{Groupoids and $C^*$-algebras} \label{ssec:groupoids}

We collect here some basic notions and terminology on groupoids. Details can be found in \cite{Ren80}, or \cite{Sim17} in the \'{e}tale case.

Recall that a \emph{groupoid} is a small category, in which every morphism is invertible. More precisely, a groupoid consists of a set $\G$, a subset $\Gz$ called the \emph{unit space}, two maps $\s,\r: \G \to \Gz$ called the \emph{source} and \emph{range} maps respectively, a \emph{composition law}:
\[
\G^{(2)}:=\{(\gamma_1,\gamma_2) \in \G \times \G: \s(\gamma_1)=\r(\gamma_2)\}\ni(\gamma_1,\gamma_2) \mapsto \gamma_1\gamma_2 \in \G,
\]	
and an \emph{inverse} map $\gamma \mapsto \gamma^{-1}$. These operations satisfy a couple of axioms, including associativity law and the fact that elements in $\Gz$ act as units. 

For $x \in \Gz$, denote $\G^x:=\r^{-1}(x)$ and $\G_x:=\s^{-1}(x)$. 
For $Y\subseteq \Gz$, denote $\G_Y^Y:=\r^{-1}(Y) \cap \s^{-1}(Y)$. 
Note that $\G_Y^Y$ is a subgroupoid of $\G$ (in the sense that it is stable under multiplication and inverse), called the \emph{reduction of $\G$ by $Y$}. 
A subset $Y$ is said to be \emph{invariant} if $\r^{-1}(Y)=\s^{-1}(Y)$, and we write $\G_Y$ instead of $\G_Y^Y$ in this case.

A \emph{locally compact Hausdorff} groupoid is a groupoid equipped with a locally compact and Hausdorff topology such that the structure maps (composition and inverse) are continuous with respect to the induced topologies. 
Such a groupoid is called \emph{\'{e}tale} (also called \emph{$r$-discrete}) if the range (hence the source) map is a local homeomorphism. Clearly in this case, each fibre $\G^x$ (and $\G_x$) is discrete with the induced topology, and $\Gz$ is clopen in $\G$. The notion of \'{e}taleness for a groupoid can be regarded as an analogue of discreteness in the group case.

\begin{ex}\label{eg: pair groupoid}
Let $X$ be a set. The \emph{pair groupoid} of $X$ is $X \times X$ as a set, whose unit space is $\{(x,x) \in X \times X: x\in X\}$ and identified with $X$ for simplicity. The source map is the projection onto the second coordinate and the range map is the projection onto the first coordinate. The composition is given by $(x,y) \cdot (y,z)=(x,z)$ for $x,y,z \in X$. When $X$ is a discrete Hausdorff space, then $X \times X$ is a locally compact Hausdorff \'{e}tale groupoid. 
\end{ex}

Now we introduce the algebras associated to groupoids. Here we only focus on the case of \'{e}taleness, and guide readers to \cite{Ren80} for the general case.

Let $\G$ be a locally compact, Hausdorff and \'{e}tale groupoid with unit space $\Gz$. Note that the space $C_c(\G)$ is a $\ast$-involutive algebra with respect to the following operations: for $f,g \in C_c(\G)$,
\begin{eqnarray*}
	(f \ast g)(\gamma) &=& \sum_{\alpha \in \G_{\s(\gamma)}} f(\gamma \alpha^{-1}) g(\alpha), \\
	f^*(\gamma) &=& \overline{f(\gamma^{-1})}.
\end{eqnarray*}
Consider the following algebraic norm on $C_c(\G)$ defined by:
\[
\|f\|_I:=\max\left\{\sup_{x \in \Gz} \sum_{\gamma \in \G_x} |f(\gamma)|, ~\sup_{x \in \Gz} \sum_{\gamma \in \G_x} |f^*(\gamma)|\right\}.
\]
The completion of $C_c(\G)$ with respect to the norm $\|\cdot\|_I$ is denoted by $L^1(\G)$.

The \emph{maximal (full) groupoid $C^*$-algebra} $C^*_{\max}(\G)$ is defined to be the completion of $C_c(\G)$ with respect to the norm:
\[
\|f\|_{\max}:=\sup \|\pi(f)\|,
\]
where the supremum is taken over all $\ast$-representations $\pi$ of $L^1(\G)$.

In order to define the reduced counterpart, we recall that for each $x \in \Gz$ the \emph{regular representation at $x$}, denoted by $\lambda_x: C_c(\G) \to \B(\ell^2(\G_x))$, is defined as follows:
\begin{equation}\label{reduced algebra defn}
\big(\lambda_x(f)\xi\big)(\gamma):=\sum_{\alpha \in \G_x} f(\gamma \alpha^{-1})\xi(\alpha), \quad \mbox{where}~ f \in C_c(\G)\mbox{~and~}\xi \in \ell^2(\G_x).
\end{equation}
It is routine work to check that $\lambda_x$ is a well-defined $\ast$-homomorphism. The \emph{reduced norm} on $C_c(\G)$ is
\[
\|f\|_r:=\sup_{x \in \Gz} \|\lambda_x(f)\|,
\]
and the \emph{reduced groupoid $C^*$-algebra} $C^*_r(\G)$ is defined to be the completion of the $\ast$-algebra $C_c(\G)$ with respect to this norm. Clearly, each regular representation $\lambda_x$ can be extended to a homomorphism $\lambda_x: C^*_r(\G) \to \B(\ell^2(\G_x))$ automatically. It is also routine to check that there is a canonical surjective homomorphism from $C^*_{\max}(\G)$ to $C^*_r(\G)$.

\subsection{Coarse groupoids} \label{ssec:coarse groupoid}
Let $(X,d)$ be a space as in Section \ref{ssec:notions from coarse geometry}. The coarse groupoid $G(X)$ on $X$ was introduced by Skandalis, Tu and Yu in \cite{STY02} (see also \cite[Chapter 10]{Roe03}) to relate coarse geometry to the theory of
groupoids. As a topological space,
\[
G(X):=\bigcup_{r>0}{\overline{E_r}}^{\beta (X \times X)} \subseteq \beta (X \times X).
\]
Recall from Example \ref{eg: pair groupoid} that $X \times X$ is the pair groupoid with source and range maps $\s(x,y)=y$ and $\r(x,y)=x$. These maps extend to maps $G(X) \to \beta X$, still denoted by $\r$ and $\s$.

Now consider $(\r,\s): G(X) \to \beta X \times \beta X$. It was shown in \cite[Lemma 2.7]{STY02} that the map $(\r,\s)$ is injective, and hence $G(X)$ can be endowed with a groupoid structure induced by the pair groupoid $\beta X \times \beta X$, called the \emph{coarse groupoid} of $X$. Therefore, $G(X)$ can also be equivalently defined by
\[
G(X):=\bigcup_{r>0}{\overline{E_r}}^{\beta X \times \beta X} \subseteq \beta X \times \beta X,
\]
with the weak topology. It was also shown in \cite[Proposition 3.2]{STY02} that the coarse groupoid $G(X)$ is locally compact, Hausdorff, \'{e}tale and principal. Clearly, the unit space of $G(X)$ can be identified with $\beta X$. 

Given $f \in C_c(G(X))$, then $f$ is a continuous function supported on $\overline{E_r}$ for some $r>0$; equivalently, we can interpret $f$ as a bounded function on $E_r$. Hence we define an operator $\theta(f)$ on $\ell^2(X)$ by setting its matrix coefficients to be $\theta(f)(x,y):=f(x,y)$ for $x,y\in X$. We have the following:

\begin{prop}[{\cite[Proposition 10.29]{Roe03}}]\label{prop: groupoid char for uniform Roe alg}
The map $\theta$ provides a $\ast$-isomorphism from $C_c(G(X))$ to $\CC_u[X]$, and extends to a $C^*$-isomorphism $\Theta: C^*_r(G(X)) \to C^*_u(X)$. Note that $\Theta$ maps the $C^*$-subalgebra $C^*_r(X \times X)$ onto the compact operators $\mathfrak{K}(\ell^2(X))$. 
\end{prop}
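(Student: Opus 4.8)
The plan is to verify directly that $\theta$ is a $\ast$-algebra isomorphism onto $\CC_u[X]$, then show it is isometric for the reduced norm on the groupoid side and the operator norm on the Roe-algebra side, and finally extend by continuity and identify the compacts. First I would observe that any $f \in C_c(G(X))$ is supported in $\overline{E_r}^{\beta X \times \beta X}$ for some $r > 0$; since $E_r$ is open and closed in $G(X)$ (by étaleness, as the $E_r$ are clopen), restricting $f$ to $E_r \subseteq X \times X$ gives a bounded function on $E_r$, and bounded geometry ensures that the associated matrix $\theta(f)(x,y) := f(x,y)$ (zero off $E_r$) defines a bounded operator on $\ell^2(X)$ of propagation at most $r$ — this is the standard Schur-test estimate $\|\theta(f)\| \le \sup_x \sum_y |f(x,y)| \cdot \big(\sup_y \sum_x |f(x,y)|\big)$-type bound, finite because rows and columns have at most $\sup_{x}|B_X(x,r)|$ nonzero entries. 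Conversely, every $T \in \CC_u^R[X]$ is a bounded function on $E_R$, and since $\overline{E_R}^{\beta X \times \beta X}$ is a compact open subset of $G(X)$, the function $T$ extends (uniquely, by density of $E_R$ and the universal property of Stone-\v{C}ech) to a continuous compactly supported function on $G(X)$; this gives the inverse of $\theta$. That $\theta$ intertwines the convolution product on $C_c(G(X))$ with matrix multiplication, and the groupoid $\ast$-operation with the adjoint, is a direct unwinding of the formulas in Section \ref{ssec:groupoids}: for $f, g$ supported on $\overline{E_r}, \overline{E_s}$, the sum $\sum_{\alpha \in G(X)_{\s(\gamma)}} f(\gamma\alpha^{-1})g(\alpha)$ is over finitely many terms when $\gamma \in X \times X$ and reduces exactly to $\sum_{z \in X} T_f(x,z) T_g(z,y)$.

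Next I would identify the norms. The key point is that for the coarse groupoid $G(X)$ with unit space $\beta X$, the regular representation $\lambda_{\omega}$ at a boundary point $\omega \in \beta X$ acts on $\ell^2(G(X)_\omega)$, and there is a canonical identification $G(X)_\omega = \s^{-1}(\omega)$; when $\omega = x \in X$ this $\ell^2(G(X)_x)$ is naturally $\ell^2(X)$ (via $\gamma \mapsto \r(\gamma)$, using that $G(X)$ is principal so $(\r,\s)$ is injective), and under this identification $\lambda_x(f)$ is exactly $\theta(f)$. Thus $\sup_{x \in X} \|\lambda_x(f)\| = \|\theta(f)\|$. It then remains to see that the boundary points do not increase the reduced norm, i.e. $\sup_{\omega \in \beta X} \|\lambda_\omega(f)\| = \sup_{x \in X} \|\lambda_\omega(f)\|$; this follows because $X$ is dense in $\beta X$, the map $\omega \mapsto \|\lambda_\omega(f)\|$ is lower semicontinuous for $f \in C_c(G(X))$ (a standard fact for étale groupoids), and for a fixed $f$ supported in $\overline{E_r}$ one can use that $\|\lambda_\omega(f)\|$ is controlled by the finitely many matrix coefficients of $f$, which are limits of those on $X \times X$ — more precisely one realizes $\ell^2(G(X)_\omega)$ as a limit of $\ell^2(X)$'s and $\lambda_\omega(f)$ as a corresponding limit of compressions. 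Therefore $\|f\|_r = \|\theta(f)\|$, so $\theta$ is isometric and extends to a $C^*$-isomorphism $\Theta : C^*_r(G(X)) \to C^*_u(X)$.

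Finally, for the statement about compacts: $X \times X$ sits inside $G(X)$ as an open (not closed) subgroupoid, so $C^*_r(X \times X)$ is canonically a subalgebra of $C^*_r(G(X))$ via extension of functions by zero; under $\theta$, $C_c(X \times X)$ maps onto the finitely-supported matrices, i.e. the finite-rank operators with entries indexed by finite subsets of $X$, whose closure is exactly $\mathfrak{K}(\ell^2(X))$. So $\Theta(C^*_r(X \times X)) = \mathfrak{K}(\ell^2(X))$. The main obstacle I anticipate is the second paragraph — verifying carefully that passing to boundary units $\omega \in \partial_\beta X$ does not enlarge the reduced norm beyond what is seen on $X$ itself; this requires a clean description of the fibres $G(X)_\omega$ and the regular representations $\lambda_\omega$ in concrete $\ell^2(X)$-terms (essentially that each $\lambda_\omega(f)$ is approximated, in norm, by compressions of $\theta(f)$ to subsets of $X$, using bounded geometry to keep the relevant "balls" of uniformly bounded size). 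Everything else is bookkeeping with the groupoid structure maps and the universal property of $\beta(X\times X)$; since the excerpt cites this as Proposition 10.29 of \cite{Roe03}, I would in practice defer the delicate norm comparison to that reference and only sketch the algebraic isomorphism in detail.
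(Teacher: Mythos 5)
Your proposal is correct and is essentially the standard argument for this result; the paper itself gives no proof, simply citing \cite[Proposition 10.29]{Roe03}, whose proof proceeds exactly along your lines (algebraic identification of $C_c(G(X))$ with $\CC_u[X]$, the observation that $\lambda_x$ for $x\in X$ is unitarily equivalent to the defining representation on $\ell^2(X)$, and lower semicontinuity of $\omega\mapsto\|\lambda_\omega(f)\|$ together with density of $X$ in $\beta X$ to see that boundary fibres do not increase the reduced norm). The only blemish is the parenthetical ``$E_r$ is open and closed in $G(X)$'' --- it is $\overline{E_r}^{\beta(X\times X)}$ that is compact open, while $E_r$ itself is open but not closed --- but this does not affect the argument.
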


Recall from Section \ref{ssec:uniform Roe alg} that an operator $T \in \B(\ell^2(X))$ can be regarded as an element in $\ell^\infty(X \times X)$. Following the notation from \cite{CW04}, we denote by $\overline{T}$ the continuous extension of $T$ on $\beta(X \times X)$ when regarding $T \in \ell^\infty(X \times X)$. Then
\[
\supp(\overline{T}) = \overline{\supp(T)}.
\]
Note that $G(X)$ is open in $\beta(X \times X)$, hence $C_0(G(X))$ is a subalgebra in $C(\beta(X \times X))$. Restricting to $G(X)$, we also regard $\overline{T}$ as a function on $G(X)$ and hence we can talk of the value $\overline{T}(\alpha, \gamma)$ for $(\alpha, \gamma) \in G(X)$. Moreover, we have the following:

\begin{lem}\label{lem: unif. Roe belongs to C0}
For $T \in \CC_u[X]$, we have $\overline{T} \in C_c(G(X))$ and $\theta(\overline{T})=T$. For $T \in C^*_u(X)$, we have $\overline{T} \in C_0(G(X))$ and $\Theta(\overline{T}) = T$. 
\end{lem}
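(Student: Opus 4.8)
The plan is to reduce everything to the finite-propagation case and then pass to norm limits. First I would treat $T \in \CC_u[X]$: by definition of the algebraic uniform Roe algebra, $\ppg(T) \le R$ for some $R \ge 0$, so $\supp(T) \subseteq E_R$ and hence $\overline{\supp(T)} = \supp(\overline{T}) \subseteq \overline{E_R}^{\beta(X\times X)}$. Since $\overline{E_R}^{\beta(X\times X)}$ is a compact subset of $G(X)$ (it is one of the building blocks in the union defining $G(X)$, and $G(X)$ is open in $\beta(X\times X)$ with the subspace topology agreeing with the weak topology on each $\overline{E_r}$), the function $\overline{T}$ — which is continuous on all of $\beta(X\times X)$ by construction — restricts to a continuous compactly supported function on $G(X)$, i.e. $\overline{T} \in C_c(G(X))$. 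The identity $\theta(\overline{T}) = T$ is then immediate from the definition of $\theta$: both sides are operators on $\ell^2(X)$ whose $(x,y)$-matrix entry is $\overline{T}(x,y) = T(x,y)$ for $x,y \in X$ (here one uses that $X \subseteq X\times X$ sits inside $\beta(X\times X)$ and that $\overline{T}$ agrees with $T$ on $X \times X$).

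Next I would handle a general $T \in C^*_u(X)$ by approximation. Choose $T_n \in \CC_u[X]$ with $\|T_n - T\| \to 0$. Then $\|T_n - T\|_\infty \le \|T_n - T\| \to 0$, so $\overline{T_n} \to \overline{T}$ uniformly on $\beta(X \times X)$, hence uniformly on $G(X)$. Each $\overline{T_n}$ lies in $C_c(G(X)) \subseteq C_0(G(X))$ by the first part, and $C_0(G(X))$ is closed under uniform convergence, so $\overline{T} \in C_0(G(X))$. For the identity $\Theta(\overline{T}) = T$: by Proposition \ref{prop: groupoid char for uniform Roe alg}, $\Theta$ is the $C^*$-isomorphism $C^*_r(G(X)) \to C^*_u(X)$ extending $\theta$, so it is in particular norm-continuous; thus $\Theta(\overline{T}) = \lim_n \Theta(\overline{T_n}) = \lim_n \theta(\overline{T_n}) = \lim_n T_n = T$, where the middle equality is the first part of the lemma and the last limit is in operator norm. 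This requires knowing that $\overline{T_n} \to \overline{T}$ in the norm of $C^*_r(G(X)) \cong C^*_u(X)$, which holds because $\Theta(\overline{T_n}) = T_n \to T$ and $\Theta$ is isometric.

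The one point that needs a little care — and the main (mild) obstacle — is the interplay of the two topologies on $G(X)$: a priori $\overline{T}$ is continuous on $\beta(X\times X)$, and one must be sure that its restriction is continuous for the \emph{weak} topology on $G(X) = \bigcup_r \overline{E_r}^{\beta X \times \beta X}$ used to define the groupoid, and that ``compact support in $G(X)$'' means contained in some $\overline{E_R}$. Both follow from the facts recalled just before the lemma: $G(X)$ is open in $\beta(X\times X)$ and $(\r,\s)$ identifies it with $\bigcup_r \overline{E_r}^{\beta X \times \beta X}$ homeomorphically, and each $\overline{E_R}^{\beta(X\times X)}$ is clopen (hence compact open) inside $G(X)$; so continuity and the description of $C_c$ and $C_0$ transfer without trouble. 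Once this is pinned down, the rest is the routine ``finite propagation, then density'' argument sketched above.
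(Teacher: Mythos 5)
Your proposal is correct and follows essentially the same route as the paper, whose proof is just a terser version of yours: the finite-propagation case is exactly the content of Proposition \ref{prop: groupoid char for uniform Roe alg}, and the general case follows by density using the inequality $\|T\|_\infty \leq \|T\|$, which is precisely your approximation argument. The only point to phrase slightly more carefully is the identification of the reduced-norm limit of $(\overline{T_n})$ with the function $\overline{T}$ (via the canonical contractive inclusion of $C^*_r(G(X))$ into $C_0(G(X))$), but you have all the needed ingredients ($\overline{T_n}\to\overline{T}$ uniformly and $\Theta$ isometric), and the paper's own proof leaves this equally implicit.
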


\begin{proof}
The first statement is a direct corollary of Proposition \ref{prop: groupoid char for uniform Roe alg}. Since $\|T\|_\infty \leq \|T\|$ for any $T \in \B(\ell^2(X))$, the second follows from the first.
\end{proof}



\subsection{Amenability and a-T-menability for groupoids}\label{ssec: amenability}
Amenable groupoids comprise a large class of groupoids with relatively nice properties, which are literally the analogue of amenable groups in the world of groupoids. Here we only focus on the case of \'{e}taleness, in which the notion of amenability behaves quite well. A standard reference is \cite{ADR00} and another reference for just \'{e}tale groupoids is \cite[Chapter 5.6]{BO08}.

\begin{defn}[\cite{ADR00}]\label{defn: amenable groupoid}
A locally compact, Hausdorff and \'{e}tale groupoid $\G$ is said to be \emph{(topologically) amenable} if for any $\varepsilon>0$ and compact $K \subseteq \G$, there exists $f \in C_c(\G)$ with range in $[0,1]$ such that for any $\gamma \in K$ we have
\[
\sum_{\alpha \in \G_{\r(\gamma)}} f(\alpha) =1  \quad \mbox{and} \quad \sum_{\alpha \in \G_{\r(\gamma)}} |f(\alpha) - f(\alpha\gamma)| < \varepsilon.
\]
\end{defn}

Similar to the case of Property A, we have the following by a standard normalisation argument:

\begin{lem}\label{lem:elementary char for amenable groupoid}
A locally compact, Hausdorff and \'{e}tale groupoid $\G$ is amenable \emph{if and only if} for any $\varepsilon>0$ and compact $K \subseteq \G$, there exists $f \in C_c(\G)$ with range in $[0,+\infty)$ such that for any $\gamma \in K$ we have
\[
\big| \sum_{\alpha \in \G_{\r(\gamma)}} f(\alpha) - 1 \big| < \varepsilon  \quad \mbox{and} \quad \sum_{\alpha \in \G_{\r(\gamma)}} |f(\alpha) - f(\alpha\gamma)| < \varepsilon.
\]
\end{lem}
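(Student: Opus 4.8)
\textbf{Proof strategy for Lemma \ref{lem:elementary char for amenable groupoid}.} The plan is to mimic the classical normalisation trick used to pass between the ``exact'' and ``almost'' versions of invariance for Følner-type conditions. The non-trivial direction is to show that the weaker condition (almost normalised masses, with the norm bound $0 \le f$) implies the genuine amenability condition of Definition \ref{defn: amenable groupoid} (masses exactly $1$, range in $[0,1]$). The forward direction is immediate, since a function satisfying Definition \ref{defn: amenable groupoid} already satisfies the weaker conditions.

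For the substantive direction, suppose $\varepsilon > 0$ and a compact $K \subseteq \G$ are given; without loss of generality we may enlarge $K$ so that it is symmetric and contains $\r(K) \subseteq \Gz$ (the latter is legitimate since $\Gz$ is clopen in $\G$ by \'etaleness, so $\r(K)$ is compact in $\G$). First I would apply the hypothesis with parameters $\varepsilon' := \varepsilon/3$ (say) and the compact set $K$ to obtain $f \in C_c(\G)$, $f \ge 0$, with $\big|\sum_{\alpha \in \G_{\r(\gamma)}} f(\alpha) - 1\big| < \varepsilon'$ and $\sum_{\alpha \in \G_{\r(\gamma)}} |f(\alpha) - f(\alpha\gamma)| < \varepsilon'$ for all $\gamma \in K$. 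Write $S(x) := \sum_{\alpha \in \G_x} f(\alpha)$ for $x \in \Gz$; this is a continuous function on $\Gz$ (locally it is a finite sum, since $\G$ is \'etale and $f$ has compact support), and by enlarging $K$ as above we have $|S(x) - 1| < \varepsilon'$ for every $x$ in the compact set $\r(K) \subseteq \Gz$. The key point is that the function $x \mapsto S(x)$ may vanish somewhere on $\Gz$ away from the relevant region, so one cannot naively divide $f$ by $S \circ \r$ globally; instead I would perturb: pick a cutoff and set $\tilde S := \max\{S, 1 - \varepsilon'\}$ or similar, noting $\tilde S$ is continuous, bounded below by $1 - \varepsilon' > 0$, and agrees with $S$ on $\r(K)$. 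Then define $g(\alpha) := f(\alpha)/\tilde S(\s(\alpha))$ — wait, one must be careful which unit to divide by; since we want $\sum_{\alpha \in \G_{\r(\gamma)}} g(\alpha) = 1$, and all $\alpha$ in that sum share the same range $\r(\gamma)$, I would instead normalise by the range: $g(\alpha) := f(\alpha)/\tilde S(\r(\alpha))$, so that $\sum_{\alpha \in \G_x} g(\alpha) = S(x)/\tilde S(x)$, which equals $1$ for $x \in \r(K)$ and lies in $[0,1]$ everywhere (and is continuous and compactly supported). This $g$ has range in $[0,1]$ and satisfies the exact normalisation $\sum_{\alpha \in \G_{\r(\gamma)}} g(\alpha) = 1$ for all $\gamma \in K$.

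It then remains to estimate $\sum_{\alpha \in \G_{\r(\gamma)}} |g(\alpha) - g(\alpha\gamma)|$ for $\gamma \in K$. Here one writes $g(\alpha) - g(\alpha\gamma) = \frac{f(\alpha)}{\tilde S(\r(\alpha))} - \frac{f(\alpha\gamma)}{\tilde S(\r(\alpha\gamma))}$, and notes $\r(\alpha\gamma) = \r(\alpha)$ whenever the composition is defined, so the two denominators coincide and equal $\tilde S(\r(\gamma))$ for $\alpha \in \G_{\r(\gamma)}$; thus the sum collapses to $\tilde S(\r(\gamma))^{-1} \sum_{\alpha} |f(\alpha) - f(\alpha\gamma)| \le (1-\varepsilon')^{-1}\varepsilon'$, which is $< \varepsilon$ for $\varepsilon'$ chosen small enough in terms of $\varepsilon$. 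Combining, $g$ witnesses the amenability condition of Definition \ref{defn: amenable groupoid} for $(\varepsilon, K)$, completing the proof.

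\textbf{Main obstacle.} The only genuine subtlety is the global vanishing of $S = S\circ\id$: the naive division $f/(S\circ\r)$ is undefined or blows up where $S = 0$, so one needs the max-with-a-positive-constant perturbation of $S$ to get a continuous, bounded-below denominator that still agrees with $S$ on the region $\r(K)$ where the estimates are needed. One must also keep straight that, since all $\alpha$ summed over in $\G_{\r(\gamma)}$ (and all $\alpha\gamma$) have the \emph{same} range, normalising by the range rather than the source is what makes both the exact-mass condition and the near-invariance estimate go through cleanly; everything else is the routine $\varepsilon/3$ bookkeeping, and continuity/compact support of the sums $S$ and $g$ follow from \'etaleness exactly as for the $C_c(\G)$ convolution algebra.
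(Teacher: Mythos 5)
Your overall strategy (the max-with-a-constant perturbation $\tilde S$ of the fibrewise sum $S$, followed by division) is exactly the ``standard normalisation argument'' the paper invokes without proof, and the symmetric enlargement of $K$ with $\r(K)\subseteq K$ is the right preparation. However, the execution contains a genuine error coming from a mix-up of the source and range fibres. In this paper's convention $\G_x=\s^{-1}(x)$, so the elements $\alpha\in\G_{\r(\gamma)}$ all share the same \emph{source} $\s(\alpha)=\r(\gamma)$, not the same range; their ranges $\r(\alpha)$ vary. Consequently, with your choice $g(\alpha):=f(\alpha)/\tilde S(\r(\alpha))$ the identity $\sum_{\alpha\in\G_x}g(\alpha)=S(x)/\tilde S(x)$ is false (the denominators vary within the sum), the bound $0\le g\le 1$ is not justified (here $f(\alpha)$ is a summand of $S(\s(\alpha))$, not of $S(\r(\alpha))$), and the claim that in the invariance sum both denominators ``equal $\tilde S(\r(\gamma))$'' is also false, since $\r(\alpha)\neq\r(\gamma)$ in general. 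So as written, neither the exact-mass condition nor the $[0,1]$-range condition of Definition \ref{defn: amenable groupoid} is obtained.

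The repair is routine but changes the estimate: normalise by the source, $g(\alpha):=f(\alpha)/\tilde S(\s(\alpha))$. Then $\sum_{\alpha\in\G_x}g(\alpha)=S(x)/\tilde S(x)$, which equals $1$ for $x\in\r(K)$ and lies in $[0,1]$ everywhere, and $g(\alpha)\le S(\s(\alpha))/\tilde S(\s(\alpha))\le 1$. The price is that in $g(\alpha)-g(\alpha\gamma)$ the denominators are now $\tilde S(\r(\gamma))$ and $\tilde S(\s(\gamma))$, which do \emph{not} coincide; one must add and subtract to get the bound $\tilde S(\r(\gamma))^{-1}\sum_\alpha|f(\alpha)-f(\alpha\gamma)| + |\tilde S(\r(\gamma))^{-1}-\tilde S(\s(\gamma))^{-1}|\cdot S(\s(\gamma))$, and then use your symmetric enlargement of $K$ (so that $\s(\gamma)=\r(\gamma^{-1})\in\r(K)$, whence $S(\s(\gamma))\le 1+\varepsilon'$ and $|\tilde S(\r(\gamma))-\tilde S(\s(\gamma))|\le\sum_\alpha|f(\alpha)-f(\alpha\gamma)|<\varepsilon'$, with $\tilde S\ge 1-\varepsilon'$). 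With these cross-terms the $\varepsilon'$-bookkeeping closes as you intended; without them, the key displayed identities in your second and third paragraphs are simply not true under the paper's conventions.
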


Amenability for \'{e}tale groupoids enjoy similar permanence properties as in the case of groups. For example, open or closed subgroupoids of amenable \'{e}tale groupoids are amenable, and amenability is preserved under taking groupoid extensions. See \cite[Section 5]{ADR00} for details. Also recall that we have the following:

\begin{prop}[{\cite[Corollary 5.6.17]{BO08}}]\label{prop: etale amen}
Let $\G$ be a locally compact, Hausdorff, \'{e}tale and amenable groupoid. Then the natural quotient $C^*_{\max}(\G) \to C^*_r(\G)$ is an isomorphism.
\end{prop}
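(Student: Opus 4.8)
We sketch a proof plan. Since the universal property of $C^*_{\max}(\G)$ furnishes a canonical surjective $\ast$-homomorphism $q\colon C^*_{\max}(\G)\to C^*_r(\G)$ (the quotient map recalled at the end of Section~\ref{ssec:groupoids}), it suffices to prove that $q$ is injective. The plan is to produce a net of completely positive maps $\Phi_i\colon C^*_{\max}(\G)\to C^*_{\max}(\G)$ which are uniformly bounded, converge to the identity in the point-norm topology, and each factor through $C^*_r(\G)$; injectivity of $q$ is then immediate, since $q(a)=0$ forces $\Phi_i(a)=0$ for all $i$ and hence $a=\lim_i\Phi_i(a)=0$.

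The maps $\Phi_i$ will be multiplication operators built from amenability. First I would pass from Lemma~\ref{lem:elementary char for amenable groupoid} to the approximation data actually used: applying the elementary inequality $(\sqrt u-\sqrt v)^2\le|u-v|$ together with the standard normalisation of \cite{ADR00}, one extracts a net $(g_i)$ in $C_c(\G)$ with $g_i\ge 0$, with $C:=\sup_i\sup_{x\in\Gz}\sum_{\alpha\in\G_x}g_i(\alpha)^2<\infty$, with $\sum_{\alpha\in\G_x}g_i(\alpha)^2\to 1$ uniformly on compact subsets of $\Gz$, and with $\sum_{\alpha\in\G_{\r(\gamma)}}|g_i(\alpha)-g_i(\alpha\gamma)|^2\to 0$ uniformly for $\gamma$ in compact subsets of $\G$. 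Out of $g_i$ I would form $h_i(\gamma):=\sum_{\alpha\in\G_{\r(\gamma)}}g_i(\alpha\gamma)\,g_i(\alpha)$, which lies in $C_c(\G)$ (étaleness makes these fibrewise sums finite and locally finite), is of positive type in the sense of \cite{ADR00}, satisfies $\|h_i\|_\infty\le C$ by Cauchy--Schwarz, and satisfies $h_i\to 1$ uniformly on compact subsets of $\G$ (again Cauchy--Schwarz, using the two approximation conditions on $g_i$).

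The crucial input is then: for a positive-type $h\in C_c(\G)$ of this form, the pointwise-multiplication map $M_h\colon C_c(\G)\to C_c(\G)$, $(M_hf)(\gamma)=h(\gamma)f(\gamma)$, extends to a completely positive map $C^*_{\max}(\G)\to C^*_{\max}(\G)$ of norm at most $\|h\|_\infty$, and this extension factors through $q$, i.e.\ $M_h=\widehat M_h\circ q$ for some completely positive $\widehat M_h\colon C^*_r(\G)\to C^*_{\max}(\G)$. I would prove this by realising $M_h(a)$ as a compression $W^{\ast}(\rho(a)\otimes 1)W$, where $\rho$ is assembled from the regular representations $\lambda_x$ of $\G$ and $W$ is the (scaled) isometry manufactured from $g$ --- a groupoid version of Fell's absorption principle, which is essentially the mechanism underlying the approximation theorems of \cite{ADR00}. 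Since $\rho$ involves only the $\lambda_x$, the compression sees only $\|a\|_r$, which yields simultaneously the norm bound and the factorisation. Setting $\Phi_i:=M_{h_i}$ one gets $\sup_i\|\Phi_i\|\le C$, each $\Phi_i$ factors through $q$, and for $f\in C_c(\G)$ supported in a compact set $K_0$ the operator $\Phi_i(f)$ is supported in $K_0$ and satisfies $\|\Phi_i(f)-f\|_{\max}\le\|\Phi_i(f)-f\|_I\le N_{K_0}\,\|f\|_\infty\,\sup_{\gamma\in K_0}|h_i(\gamma)-1|\to 0$, where $N_{K_0}:=\sup_x|\G_x\cap(K_0\cup K_0^{-1})|<\infty$ by étaleness; combined with $\sup_i\|\Phi_i\|\le C$ and density of $C_c(\G)$, this gives $\Phi_i\to\mathrm{id}$ in point-norm on all of $C^*_{\max}(\G)$. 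The first paragraph then shows $q$ is injective, hence an isomorphism of $C^*$-algebras.

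The main obstacle is the compression lemma of the third paragraph: upgrading the well-known fact that positive-type functions yield completely positive multipliers on $C^*_r(\G)$ to the statement that, for $h$ of compact support, such a multiplier lifts to $C^*_{\max}(\G)$ and there factors through $C^*_r(\G)$. This is where Renault's disintegration theorem \cite{Ren80} (realising an arbitrary representation of $C^*_{\max}(\G)$ as the integrated form of a groupoid representation) and the étale structure genuinely enter. A secondary technical point is the normalisation producing a net $(g_i)$ with the uniform bound $\sup_i\sup_x\sum_{\alpha\in\G_x}g_i(\alpha)^2<\infty$; everything else is routine bookkeeping with fibrewise sums.
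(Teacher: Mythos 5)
The paper does not prove this proposition: it is quoted directly from \cite{BO08} (Corollary 5.6.17), so there is no internal argument to compare against. Your sketch is, in substance, the standard proof underlying that citation --- Lance's multiplier argument transported to \'{e}tale groupoids in the style of Anantharaman-Delaroche--Renault: extract from topological amenability a net $g_i\in C_c(\G)$ whose fibrewise $\ell^2$-sums are uniformly bounded and almost invariant, form the compactly supported positive-type functions $h_i\to 1$ uniformly on compacta, show each multiplier $M_{h_i}$ is completely positive on $C^*_{\max}(\G)$ and factors through the reduced norm via a compression against representations weakly contained in the regular ones, and conclude injectivity of the canonical quotient from uniform boundedness plus point-norm convergence $M_{h_i}\to\mathrm{id}$. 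The logic of the first paragraph is sound, and the estimates you state are correct: the Cauchy--Schwarz bounds for $\|h_i\|_\infty$ and for $h_i\to 1$, and the chain $\|\cdot\|_{\max}\le\|\cdot\|_I\le N_{K_0}\|\cdot\|_\infty$ on functions supported in a fixed compact set, with $N_{K_0}<\infty$ by \'{e}taleness. The two points you flag are indeed the only nontrivial ones, and both are standard: the uniform bound can even be arranged with $C=1$ by a fibrewise normalisation of $g_i$ (divide by $\max\{1,(\sum_{\beta\in\G_{s(\cdot)}}g_i(\beta)^2)^{1/2}\}$, which is continuous because $\G$ is \'{e}tale and does not disturb the conditions over the given compact set once $K$ is symmetrised); and the compression/factorisation lemma is exactly where Renault's disintegration theorem enters, to realise an arbitrary representation of $C^*_{\max}(\G)$ in a form where your operator $W$ can be built, after which ``the compression sees only $\|a\|_r$'' does yield the norm bound and the factorisation through $q$ simultaneously. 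So, modulo writing out that lemma in detail, your proposal is a correct proof, and it is essentially the proof of the cited result rather than a genuinely different route.
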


Now we recall the notion of a-T-menability for groupoids introduced by Tu \cite{Tu99}. Let $\G$ be a locally compact, Hausdorff and \'{e}tale groupoid. A continuous function $f: \G \to \RR$ is said to be of \emph{negative type} if 
\begin{enumerate}
 \item $f|_{\Gz}=0$;
 \item for any $\gamma\in \G$, $f(\gamma) = f(\gamma^{-1})$;
 \item Given $\gamma_1, \cdots, \gamma_n \in \G$ with the same range and $\lambda_1, \cdots, \lambda_n \in \RR$ with $\sum_{i=1}^n \lambda_i=0$, we have $\sum_{i,j} \lambda_i\lambda_jf(\gamma_i^{-1}\gamma_j) \leq 0$. 
\end{enumerate}
A continuous function $f: \G \to \RR$ is called \emph{locally proper} if for any compact subset $K \subseteq \Gz$, the restriction of $f$ on $\G_K^K$ is proper.

\begin{defn}[{\cite[Section 3.3]{Tu99}}]\label{defn: a-T-menable groupoids}
A locally compact, Hausdorff and \'{e}tale groupoid $\G$ is said to be \emph{a-T-menable} if there exists a continuous locally proper function $f: \G \to \RR$ of negative type on $\G$.
\end{defn}

Analogous to the case of groups, Tu \cite{Tu99} proved that a locally compact, $\sigma$-compact, Hausdorff and \'{e}tale groupoid $\G$ is a-T-menable \emph{if and only if} there exists a continuous field of Hilbert spaces over $\Gz$ with a proper affine action of $\G$. We also need the following significant result by Tu:

\begin{prop}[{\cite[Th\'{e}or\`{e}me 0.1]{Tu99}}]\label{prop: a-T-menable groupoids are K-amenable}
Let $\G$ be a locally compact, $\sigma$-compact, Hausdorff, \'{e}tale and a-T-menable groupoid. Then $\G$ is \emph{$K$-amenable}. In particular, the quotient map induces an isomorphism $K_\ast(C^*_{\max}(\G)) \to K_\ast(C^*_r(\G))$ for $\ast=0,1$.
\end{prop}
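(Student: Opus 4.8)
This is Tu's theorem, so the honest ``proof'' is the citation to \cite{Tu99}; nevertheless, let me sketch the shape of the argument one would reconstruct, which follows the Dirac--dual-Dirac strategy of Higson--Kasparov transplanted to the groupoid setting. The plan is to promote the locally proper negative-type function into a genuine piece of geometry and then run an equivariant $KK$-theory argument.

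First I would invoke the equivalent formulation of a-T-menability recorded just before Proposition~\ref{prop: a-T-menable groupoids are K-amenable}: for a $\sigma$-compact $\G$, the locally proper negative-type function produces a continuous field of (real) Hilbert spaces $\mathcal{H}=(\mathcal{H}_x)_{x\in\Gz}$ over $\Gz$ equipped with a continuous, proper affine isometric action of $\G$. Out of $\mathcal{H}$ one builds the Higson--Kasparov--Trout algebra of the field, namely the $\ZZ/2$-graded $\G$-$C^*$-algebra $\mathcal{A}$ whose fibre over $x$ is (a suitable $C_0$-completion of) $\mathcal{S}\,\widehat{\otimes}\,\mathcal{C}\ell(\mathcal{H}_x)$, on which $\G$ acts through the exponentiated cocycle. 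The properness of the affine action is exactly what makes $\mathcal{A}$ a proper $\G$-algebra, so that $\mathcal{A}\rtimes_{\max}\G = \mathcal{A}\rtimes_r\G$; call this algebra $B$.

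Next I would produce the Bott class $\beta \in KK^{\G}(C_0(\Gz),\mathcal{A})$ from the fibrewise Gaussian/unit-vector construction and the Dirac class $\alpha \in KK^{\G}(\mathcal{A},C_0(\Gz))$ from the fibrewise Dirac operator, and prove the identity
\[
\beta \,\widehat{\otimes}_{\mathcal{A}}\, \alpha \;=\; 1 \quad\text{in}\quad KK^{\G}(C_0(\Gz),C_0(\Gz)).
\]
This equality is equivariant Bott periodicity: it is proved by the rotation homotopy comparing the action on $\mathcal{H}$ with the one on $\mathcal{H}\oplus\mathcal{H}$. The infinite-dimensionality of the fibres and the continuous-field/groupoid bookkeeping are what make this delicate, and this is the technical heart of \cite{Tu99} (as it is of Higson--Kasparov in the group case); I expect it to be the main obstacle.

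Finally the conclusion is formal. Applying Kasparov descent to $\alpha$ and $\beta$ on both the maximal and the reduced side, using naturality of the canonical quotient map $\lambda$ together with the identification $\mathcal{A}\rtimes_{\max}\G = \mathcal{A}\rtimes_r\G = B$, one checks that
\[
y \;:=\; j_r(\beta)\,\widehat{\otimes}_B\, j_{\max}(\alpha) \;\in\; KK(C^*_r(\G), C^*_{\max}(\G))
\]
is a two-sided inverse for the class $[\lambda] \in KK(C^*_{\max}(\G), C^*_r(\G))$ of the quotient $C^*_{\max}(\G)\to C^*_r(\G)$; that is, $\G$ is $K$-amenable. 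The ``in particular'' clause follows immediately, since a $KK$-equivalence induces isomorphisms on $K$-theory, so $\lambda_\ast\colon K_\ast(C^*_{\max}(\G))\to K_\ast(C^*_r(\G))$ is an isomorphism for $\ast=0,1$.
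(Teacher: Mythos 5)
Your proposal is correct and matches the paper, which (like you) simply cites Tu's Th\'{e}or\`{e}me 0.1 rather than reproving it; your Dirac--dual-Dirac sketch is a faithful outline of Tu's actual argument. Nothing further is needed.
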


Finally, we record the following result for coarse groupoids:

\begin{prop}[{\cite[Theorem 5.3 and 5.4]{STY02}}]\label{prop:char for amen and a-T-men}
Let $(X,d)$ be a space and $G(X)$ be the associated coarse groupoid. Then:
\begin{enumerate}
 \item $X$ has Property A \emph{if and only if} $G(X)$ is amenable;
 \item $X$ can be coarsely embedded into Hilbert space \emph{if and only if} $G(X)$ is a-T-menable.
\end{enumerate}
\end{prop}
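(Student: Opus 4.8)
The plan is to prove both equivalences by transporting data between the metric space $X$ and the coarse groupoid $G(X)$ along the Stone-\v{C}ech extension, using three structural facts: $G(X)=\bigcup_{R>0}\overline{E_R}$ with each $\overline{E_R}$ compact and open; $E_R$ is dense in $\overline{E_R}$ (equivalently $\overline{E_R}\cap(X\times X)=E_R$); and, since $G(X)$ is \'etale, fibrewise sums of compactly supported continuous functions on $G(X)$ are again continuous. Throughout I will identify a bounded function supported on $E_S$ with its continuous extension to $\overline{E_S}\subseteq\beta(X\times X)$, which restricts to an element of $C_c(G(X))$, and conversely restrict elements of $C_c(G(X))$ to $X\times X$.

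For (1), the key observation is the pair-groupoid dictionary: for $\gamma=(x,y)\in X\times X\subseteq G(X)$ one has $G(X)_{\r(\gamma)}=\{(z,x):z\in X\}$ and $(z,x)\gamma=(z,y)$, so for any $g\in C_c(G(X))$ the two conditions of Lemma \ref{lem:elementary char for amenable groupoid} evaluated at $\gamma$ become, for $f:=g|_{X\times X}$, the conditions $\big|\sum_z f(z,x)-1\big|<\varepsilon$ and $\sum_z|f(z,x)-f(z,y)|<\varepsilon$. Assuming $G(X)$ amenable, given $\varepsilon,R>0$ I would apply amenability to the compact open set $K=\overline{E_R}$, obtaining $g\in C_c(G(X))$ supported in some $\overline{E_S}$; then $f=g|_{X\times X}$ is supported in $E_S$, and testing the amenability inequalities against the units $(x,x)\in K$ and the pairs $(x,y)\in K$ with $d(x,y)\le R$ exhibits a Property A function for $(\varepsilon,R)$ in the sense of Definition \ref{defn: Property A}. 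Conversely, assuming Property A, given $\varepsilon>0$ and compact $K\subseteq G(X)$ I would choose $R$ with $K\subseteq\overline{E_R}$, apply Definition \ref{defn: Property A} with parameters $(\varepsilon/2,R)$ to get $f\ge 0$ supported in $E_S$, and set $\bar f\in C_c(G(X))$. By \'etaleness the maps $\omega\mapsto\sum_{\alpha\in G(X)_\omega}\bar f(\alpha)$ and $\gamma\mapsto\sum_{\alpha\in G(X)_{\r(\gamma)}}|\bar f(\alpha)-\bar f(\alpha\gamma)|$ are continuous on $G(X)$; they meet the required bounds on the dense subset $E_R\subseteq\overline{E_R}$, hence on $K$, so Lemma \ref{lem:elementary char for amenable groupoid} yields amenability.

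For (2), I would use the conditionally negative-definite kernel pictures on both sides. It is standard (see, e.g., \cite{NY12}) that a countable space $X$ coarsely embeds into Hilbert space if and only if there is a symmetric negative-type kernel $\psi:X\times X\to[0,\infty)$ with $\psi(x,x)=0$ together with non-decreasing $\rho_{\pm}$ and $\rho_-(t)\to+\infty$ such that $\rho_-(d(x,y))\le\psi(x,y)\le\rho_+(d(x,y))$. Such a $\psi$ is bounded on each $E_R$, so its Stone-\v{C}ech extension defines a continuous function $F$ on $G(X)=\bigcup_R\overline{E_R}$. I claim $F$ is a continuous locally proper function of negative type: it vanishes on $\Gz=\overline{E_0}$ by continuity; $F(\gamma)=F(\gamma^{-1})$ by symmetry of $\psi$ and density; the conditional negative-definiteness on a finite tuple $\gamma_1,\dots,\gamma_n\in G(X)^\omega$ with $\sum_i\lambda_i=0$ reduces, via the identity $\gamma_i^{-1}\gamma_j=(y_i,y_j)$ in the pair groupoid and approximation of the tuple inside a single $\overline{E_R}$, to $\sum_{i,j}\lambda_i\lambda_j\psi(y_i,y_j)\le 0$; and $F^{-1}([0,N])\subseteq\overline{E_{R_N}}$ with $R_N:=\sup\{t:\rho_-(t)\le N\}<\infty$, which is compact, giving local properness (apply with $K=\beta X$). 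Conversely, from a continuous locally proper negative-type $F$ on $G(X)$ the restriction $\psi:=F|_{X\times X}$ is a negative-type kernel; boundedness of $F$ on the compact set $\overline{E_R}$ supplies the upper control, and local properness with $K=\beta X$ forces $\psi(x,y)\to+\infty$ as $d(x,y)\to+\infty$, so $\psi$ is controlled and $X$ coarsely embeds into Hilbert space.

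The main obstacle is the delicate direction of each equivalence: verifying the groupoid conditions, which quantify over elements of $G(X)$ whose common range point may lie in the Stone-\v{C}ech boundary and over arbitrary compact subsets of $G(X)$, from the purely metric data on $X$. The technical crux is to reduce every such verification to a statement about pairs of bounded propagation: compact subsets and finite same-range tuples are confined to some clopen $\overline{E_R}$, on which $E_R$ is dense, so continuity of the extended functions $\bar f$, $F$ and of the fibrewise sums lets metric estimates pass to the closure. The single most careful point is the conditional negative-definiteness of $F$ on fibres $G(X)^\omega$ with $\omega\in\partial_\beta X$, where one cannot evaluate directly at points of $X$ and must instead argue by approximating the tuple within a fixed $\overline{E_R}$ and invoking continuity.
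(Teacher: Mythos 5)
The paper does not prove this proposition at all: it is quoted verbatim from \cite{STY02} (Theorems 5.3 and 5.4), so there is no internal argument to compare against, and what you have written is essentially a reconstruction of the original Skandalis--Tu--Yu proof via Stone--\v{C}ech extension and density of $E_R$ in $\overline{E_R}$. Your part (1) is the standard argument (and is in fact the special case $U=\emptyset$, i.e.\ $U^c=\partial_\beta X$ together with $X\times X$, of the ``partial'' version the paper proves in Proposition \ref{prop:char for partial A}, where instead of your density-plus-fibrewise-continuity step the paper works through limit spaces and local coordinates, Proposition \ref{prop:local geometry}); both routes are fine, yours trading the limit-space bookkeeping for the standard fact that fibrewise sums of compactly supported continuous functions on an \'{e}tale groupoid are continuous. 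In part (2) the only step that genuinely needs more than you wrote is the conditional negative-definiteness of $F$ on a fibre $G(X)^\omega$ with $\omega\in\partial_\beta X$: density of $E_R$ in $\overline{E_R}$ approximates each $\gamma_i$ separately but not a priori with a \emph{common} range point in $X$, so you should say explicitly how the tuple is approximated --- either choose open bisections $U_i\ni\gamma_i$, intersect the open sets $\r(U_i)$, pick $x\in X$ in the intersection and take the unique lifts $\gamma_i^{(x)}\in U_i$ (which lie in $X\times X$ by Corollary \ref{cor:char for G(X)}), or equivalently use compatible partial translations as in Lemma \ref{lem:limit space via coarse groupoids} and write $F(\gamma_i^{-1}\gamma_j)=\lim_{x\to\omega}\psi(t_i(x),t_j(x))$; with that spelled out the inequality passes to the limit as you intend. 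Two cosmetic points: the inclusion $F^{-1}([0,N])\subseteq\overline{E_{R_N}}$ should be $F^{-1}([0,N])\subseteq\overline{E_{R_{N+1}}}$ (continuity only preserves non-strict inequalities), which changes nothing; and in the converse of (2) the negative-type condition for $\psi$ should be tested on tuples $\gamma_i=(x,x_i)$ with the \emph{range} $x$ fixed, exactly as you implicitly do. With these details filled in, the proposal is a correct proof.
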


\section{Limit spaces and limit operators}\label{sec:limit sp. and op.}

In this section, we recall the theory of limit spaces and limit operators for metric spaces developed by \v{S}pakula and Willett in \cite{SW17}, which becomes an important tool for later use. 

Throughout the section, we always assume that $(X,d)$ is a space (see ``Convention'' in Section \ref{ssec:notions from coarse geometry}) and $G(X)$ is its coarse groupoid. We will freely use the notion of ultrafilters on $X$, and related materials are recalled in Appendix \ref{app:ultrafilters}.

\subsection{Limit spaces}\label{ssec:limit space} First recall that a function $t: D \to R$ with $D,R \subseteq X$ is called a \emph{partial translation} if $t$ is a bijection from $D$ to $R$, and $\sup_{x\in X}d(x,t(x))$ is finite. The \emph{graph} of $t$ is $\{(t(x),x): x\in D\}$, denoted by $\gr(t)$. It is well-known that each entourage $E$ on $X$ can be decomposed into finitely many graphs of partial translations (see, \emph{e.g.}, \cite[Lemma 4.10]{Roe03}) thanks to the bounded geometry of $X$.

\begin{defn}[{\cite[Definition 3.2 and 3.6]{SW17}}]\label{defn:limit space compatible}
Fix an ultrafilter $\omega \in \beta X$. A partial translation $t:D \to R$ on $X$ is \emph{compatible with $\omega$} if $\omega(D)=1$. In this case, regarding $t$ as a function from $D$ to $\beta X$, we define the following thanks to Lemma \ref{lem:ultralimit}:
\[
t(\omega):=\lim_\omega t \in \beta X.
\]
In other words, consider the extension $\overline{t}: \overline{D} \to \overline{R}$ then $t(\omega) = \overline{t}(\omega)$.

An ultrafilter $\alpha \in \beta X$ is \emph{compatible with $\omega$} if there exists a partial translation $t$ compatible with $\omega$ and $t(\omega)=\alpha$. Denote by $X(\omega)$ the collection of all ultrafilters on $X$ compatible with $\omega$. A \emph{compatible family for $\omega$} is a collection of partial translations $\{t_\alpha\}_{\alpha \in X(\omega)}$ such that each $t_\alpha$ is compatible with $\omega$ and $t_\alpha(\omega)=\alpha$.
\end{defn}

Fix an ultrafilter $\omega$ on $X$, and a compatible family $\{t_\alpha\}_{\alpha \in X(\omega)}$. Define a function $d_{\omega}: X(\omega) \times X(\omega) \to [0,\infty)$ by
$$d_{\omega}(\alpha, \beta):=\lim_{x\to \omega}d(t_\alpha(x),t_\beta(x)).$$
It is shown in \cite[Proposition 3.7]{SW17} that $d_\omega$ is a uniformly discrete metric of bounded geometry on $X(\omega)$ which does not depend on the choice of $\{t_{\alpha}\}$.

This leads to the following:

\begin{defn}[{\cite[Definition 3.8]{SW17}}]\label{defn:limit space}
For each non-principal ultrafilter $\omega$ on $X$, the metric space $(X(\omega), d_\omega)$ is called the \emph{limit space} of $X$ at $\omega$, which is a space in the sense of ``Convention'' in Section \ref{ssec:notions from coarse geometry}.
\end{defn}

It is shown in \cite[Proposition 3.9]{SW17} that for any $\alpha \in X(\omega)$, we have $X(\alpha) = X(\omega)$ as metric spaces. Also note that when $\omega$ is principal, \emph{i.e.}, $\omega \in X$, then it is clear that $(X(\omega), d_\omega) = (X,d)$. 

We recall the following result, which reveals that the local geometry of $X$ can be recaptured by those of the limit spaces.

\begin{prop}[{\cite[Proposition 3.10]{SW17}}]\label{prop:local geometry}
Let $\omega$ be a non-principal ultrafilter on $X$, and $\{t_\alpha:D_\alpha \to R_\alpha\}$ a compatible family for $\omega$. Then for each finite $F \subseteq X(\omega)$, there exists a subset $Y \subseteq X$ with $\omega(Y)=1$ such that for each $y\in Y$, there is a finite subset $G(y) \subseteq X$ such that the map
$$f_y: F \to G(y), \alpha \mapsto t_\alpha(y)$$
is a surjective isometry. Such a collection $\{f_y\}_{y\in Y}$ is called a \emph{local coordinate systerm } for $F$, and the maps $f_y$ are called \emph{local coordinates}.

Furthermore, if $F$ is a metric ball $B(\omega,r)$, then there exist $Y \subseteq X$ with $\omega(Y)=1$ and a local coordinate system $\{f_y: F \to G(y)\}_{y\in Y}$ such that each $G(y)$ is the ball $B(y,r)$.
\end{prop}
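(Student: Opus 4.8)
The plan is to prove the statement by unpacking the definitions of the limit metric $d_\omega$ and of compatibility, and then extracting a single subset $Y$ with $\omega(Y)=1$ on which all the relevant finitely many conditions hold simultaneously, using that an ultrafilter is closed under finite intersections. First I would fix the finite set $F=\{\alpha_1,\dots,\alpha_k\}\subseteq X(\omega)$ and the compatible family $\{t_{\alpha_i}:D_{\alpha_i}\to R_{\alpha_i}\}$. Since each $t_{\alpha_i}$ is a partial translation with $\omega(D_{\alpha_i})=1$, set $D:=\bigcap_{i=1}^k D_{\alpha_i}$, so $\omega(D)=1$. On $D$ the map $x\mapsto (t_{\alpha_1}(x),\dots,t_{\alpha_k}(x))$ is defined, and for each pair $(i,j)$ the function $x\mapsto d(t_{\alpha_i}(x),t_{\alpha_j}(x))$ takes values in the finite set $\{d(x,y):x,y\in X,\ d(x,y)\le 2\max_i \|t_{\alpha_i}\|\}$ — here I use strong discreteness and bounded geometry to make this set finite — and by definition of the ultralimit it converges along $\omega$ to $d_\omega(\alpha_i,\alpha_j)$. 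A function into a finite set converging along $\omega$ is eventually constant along $\omega$, so there is $Y_{ij}\subseteq D$ with $\omega(Y_{ij})=1$ on which $d(t_{\alpha_i}(x),t_{\alpha_j}(x))=d_\omega(\alpha_i,\alpha_j)$ identically.

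Next I would take $Y:=\bigcap_{i<j} Y_{ij}$, a finite intersection of $\omega$-large sets, so $\omega(Y)=1$. For $y\in Y$ set $G(y):=\{t_{\alpha_1}(y),\dots,t_{\alpha_k}(y)\}\subseteq X$ and $f_y:F\to G(y)$, $\alpha_i\mapsto t_{\alpha_i}(y)$. By construction $d(f_y(\alpha_i),f_y(\alpha_j))=d_\omega(\alpha_i,\alpha_j)$ for all $i,j$; in particular $f_y(\alpha_i)=f_y(\alpha_j)$ forces $d_\omega(\alpha_i,\alpha_j)=0$, i.e. $\alpha_i=\alpha_j$, so $f_y$ is injective, hence bijective onto $G(y)$, and it is an isometry by the displayed equality. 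This gives the local coordinate system for a general finite $F$.

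For the last assertion, suppose $F=B(\omega,r)$ is a metric ball in $(X(\omega),d_\omega)$; since $d_\omega$ has bounded geometry, $F$ is finite, so the previous paragraph applies and yields $Y$ and $\{f_y:F\to G(y)\}$. It remains to arrange $G(y)=B(y,r)$, where $y$ is the base point $f_y(\omega)=t_\omega(y)$ — note $\omega\in B(\omega,r)$ since $d_\omega(\omega,\omega)=0$, and we may take $t_\omega=\mathrm{id}$, so $f_y(\omega)=y$. The inclusion $G(y)\subseteq B(y,r)$ is immediate from the isometry property: $d(t_{\alpha_i}(y),y)=d_\omega(\alpha_i,\omega)\le r$. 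For the reverse inclusion, I would invoke Proposition \ref{prop:local geometry}-type reasoning in the other direction, or rather the structural input from \cite{SW17} that $X(\omega)$ ``sees'' all of the local balls of $X$: given the finitely many points $z\in B(y,r)$ for $y$ ranging over an $\omega$-large set, bounded geometry bounds $|B(\cdot,r)|$ uniformly, and a pigeonhole/ultrafilter argument produces partial translations $y\mapsto z$ witnessing elements of $X(\omega)$ lying in $B(\omega,r)$ and mapping to $z$ under $f_y$; intersecting with $Y$ over the finitely many such elements keeps the relevant set $\omega$-large. The main obstacle I anticipate is precisely this last surjectivity onto the full ball $B(y,r)$: the general-$F$ argument is essentially bookkeeping with finite intersections of ultrafilter sets, but matching $G(y)$ exactly with $B(y,r)$ requires knowing that every nearby point of $X$ is hit, which is where one genuinely needs the definition of $X(\omega)$ as \emph{all} compatible ultrafilters together with a uniform finiteness bound to control how many partial translations must be chosen.
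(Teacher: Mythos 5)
The first half of your argument (general finite $F$) is correct and is essentially the standard proof: boundedness of the displacements plus strong discreteness make each function $x\mapsto d(t_{\alpha_i}(x),t_{\alpha_j}(x))$ finitely valued, hence $\omega$-a.e.\ equal to its ultralimit $d_\omega(\alpha_i,\alpha_j)$, and a finite intersection of $\omega$-large sets gives $Y$; injectivity of $f_y$ follows since $d_\omega$ is a genuine metric. (Note that the paper does not prove this statement itself but quotes it from \cite{SW17}, whose argument for this part runs along exactly these lines.)

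The ball case, however, contains a genuine gap, which you yourself flag: the inclusion $B(y,r)\subseteq G(y)$ is only gestured at. The missing ingredient is the decomposition of the entourage $E_r$ into finitely many graphs of partial translations, $E_r=\gr(s_1)\sqcup\cdots\sqcup\gr(s_m)$ (bounded geometry; this is precisely the fact recalled at the beginning of Section \ref{ssec:limit space}). Let $J:=\{j:\omega(D_{s_j})=1\}$. For $j\in J$ the translation $s_j$ is compatible with $\omega$, the point $\beta_j:=s_j(\omega)$ lies in $B(\omega,r)$ because $d(s_j(x),x)\le r$ on $D_{s_j}$, and --- this is a step your sketch skips --- $s_j$ agrees with the member $t_{\beta_j}$ of the \emph{given} compatible family on an $\omega$-large set, since two partial translations compatible with $\omega$ taking the same value at $\omega$ coincide $\omega$-a.e.\ (the same uniqueness fact is what justifies your normalisation $t_\omega=\mathrm{id}$). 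For $j\notin J$ one must delete $D_{s_j}$ from $Y$: otherwise, for $y$ in such a domain, a point $z\in B(y,r)$ with $(z,y)\in\gr(s_j)$ need not lie in the image of $f_y$. Setting
\[
Y':=Y\cap\bigcap_{j\in J}\{y: s_j(y)=t_{\beta_j}(y)\}\cap\bigcap_{j\notin J}\big(X\setminus D_{s_j}\big),
\]
which is still $\omega$-large, every $z\in B(y,r)$ with $y\in Y'$ satisfies $(z,y)\in\gr(s_j)$ for some $j\in J$, whence $z=s_j(y)=t_{\beta_j}(y)=f_y(\beta_j)$ with $\beta_j\in B(\omega,r)$, and surjectivity onto $B(y,r)$ follows. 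Your appeal to ``a pigeonhole/ultrafilter argument producing partial translations $y\mapsto z$'' does not by itself supply a coherent choice of $z$ across an $\omega$-large set of $y$'s, nor the identification of those translations with the $t_\alpha$'s that define $f_y$, nor the exclusion of the non-compatible pieces; all three are needed, and all three are exactly what the entourage decomposition together with the $\omega$-a.e.\ uniqueness lemma provide.
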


As shown in \cite[Appendix C]{SW17}, limit spaces can be described in terms of the coarse groupoid $G(X)$:

\begin{lem}[{\cite[Lemma C.3]{SW17}}]\label{lem:limit space via coarse groupoids}
Given a non-principal ultrafilter $ \omega \in \beta X$, the map
$$F : X(\omega) \rightarrow G(X)_\omega, \quad \alpha \mapsto (\alpha, \omega)$$
is a bijection. Hence $X(\omega)$ is the smallest invariant subset of $\beta X$ containing $\omega$. Here we consider $G(X)$ as a subset of $\beta X \times \beta X$, as explained in Section \ref{ssec:coarse groupoid}.
\end{lem}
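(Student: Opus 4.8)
The plan is to establish that $F\colon X(\omega)\to G(X)_\omega$, $\alpha\mapsto(\alpha,\omega)$, is a well-defined bijection, and then deduce the two corollaries from it. First I would check that $F$ lands in $G(X)_\omega$. Given $\alpha\in X(\omega)$, by definition there is a partial translation $t\colon D\to R$ compatible with $\omega$ (so $\omega(D)=1$) with $t(\omega)=\alpha$. Since $t$ has finite width, say $\sup_{x\in D}d(x,t(x))\le R_0$, the graph $\gr(t)$ is contained in the entourage $E_{R_0}$, hence its closure in $\beta X\times\beta X$ lies inside $\overline{E_{R_0}}\subseteq G(X)$. The point $(\alpha,\omega)=(\overline{t}(\omega),\omega)$ is the limit along $\omega$ of the net $\{(t(x),x):x\in D\}\subseteq\gr(t)$, so it lies in $\overline{\gr(t)}\subseteq\overline{E_{R_0}}$; and it obviously has source $\omega$, so $(\alpha,\omega)\in G(X)_\omega$. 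Injectivity of $F$ is immediate since the first coordinate recovers $\alpha$ and $(\r,\s)$ is injective on $G(X)$ by \cite[Lemma 2.7]{STY02}.

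The substantive point is surjectivity: every $(\eta,\omega)\in G(X)_\omega$ has the form $(\alpha,\omega)$ for some $\alpha\in X(\omega)$, i.e. the first coordinate $\eta$ is compatible with $\omega$ via some partial translation. Take $(\eta,\omega)\in G(X)_\omega$; by definition of $G(X)$ it lies in $\overline{E_r}^{\beta X\times\beta X}$ for some $r>0$. Using bounded geometry, decompose $E_r$ into finitely many graphs of partial translations, $E_r=\bigsqcup_{i=1}^k\gr(t_i)$ (using \cite[Lemma 4.10]{Roe03}). Then $\overline{E_r}=\bigcup_{i=1}^k\overline{\gr(t_i)}$, so $(\eta,\omega)\in\overline{\gr(t_i)}$ for some $i$. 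Now I would argue that $\omega(D_i)=1$ where $D_i$ is the domain of $t_i$: the clopen set $\overline{\gr(t_i)}$ meets $\{\eta\}\times\beta X$ only in points whose second coordinate lies in $\overline{D_i}$ (since $\gr(t_i)\subseteq D_i\times D_i$ reads off $\s(\gr(t_i))\subseteq D_i$, and closures of clopen sets behave well in Stone--\v{C}ech), forcing $\omega\in\overline{D_i}$, i.e. $\omega(D_i)=1$. Hence $t_i$ is compatible with $\omega$, and $(\overline{t_i}(\omega),\omega)\in\overline{\gr(t_i)}$ is the unique point of $\overline{\gr(t_i)}$ with second coordinate $\omega$ (again by injectivity of $(\r,\s)$), so $\eta=\overline{t_i}(\omega)=t_i(\omega)$, giving $\eta\in X(\omega)$ as required.

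The hardest step is the bookkeeping in the previous paragraph — pinning down that membership of $(\eta,\omega)$ in $\overline{\gr(t_i)}$ forces $\omega(D_i)=1$ and $\eta=t_i(\omega)$. The cleanest way is to exploit that each $\gr(t_i)$ is a clopen subset of $E_r$ (the decomposition can be taken with disjoint, hence relatively clopen, pieces), so its closure in $\beta X\times\beta X$ is exactly $\overline{\gr(t_i)}^{\beta(X\times X)}$ intersected with $G(X)$, and the source map $\s$, being continuous, carries this closure into $\overline{\s(\gr(t_i))}=\overline{D_i}$; then $\s(\eta,\omega)=\omega\in\overline{D_i}$ translates to $\omega(D_i)=1$ by the standard characterisation of closures in $\beta X$. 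Once surjectivity is in hand, $X(\omega)\cong G(X)_\omega$ shows $X(\omega)$, identified with $\{\alpha:(\alpha,\omega)\in G(X)\}=\r(G(X)_\omega)$, is invariant and is contained in any invariant subset containing $\omega$ (an invariant set $V\ni\omega$ satisfies $\r(\s^{-1}(V))\subseteq V$, and $\r(G(X)_\omega)\subseteq\r(\s^{-1}(V))$), which is precisely the minimality assertion.
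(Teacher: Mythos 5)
Your overall route is the standard one (and the one this paper itself uses for the neighbouring Lemmas \ref{lem:compatibility via coarse groupoid} and \ref{lem:closure of entourage}, following \cite[10.18--10.24]{Roe03} and \cite[Appendix C]{SW17}): well-definedness and injectivity are easy, and surjectivity is obtained by decomposing $E_r$ into finitely many graphs of partial translations and locating $(\eta,\omega)$ in the closure of one of them. The deduction of invariance/minimality at the end is also fine, since $G(X)$ is a subgroupoid of the pair groupoid $\beta X\times\beta X$, so $\r(G(X)_\omega)$ is the orbit of $\omega$.

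One justification is off, though the step itself is true. You claim that $(\overline{t_i}(\omega),\omega)$ is the \emph{unique} point of $\overline{\gr(t_i)}$ with second coordinate $\omega$ ``by injectivity of $(\r,\s)$''. Injectivity of $(\r,\s)$ (from \cite[Lemma 2.7]{STY02}) only says that $G(X)\subseteq\beta(X\times X)$ embeds into $\beta X\times\beta X$; it does not rule out two points $(\eta,\omega)\neq(\eta',\omega)$ of $\overline{\gr(t_i)}$ with the same source, since these already have distinct $(\r,\s)$-pairs. What you actually need is that the closure of the graph of a partial translation is again a graph over $\overline{D_i}$: the map $x\mapsto(t_i(x),x)$ from $D_i$ to the compact space $\beta X\times\beta X$ extends continuously to $\overline{D_i}\cong\beta D_i$, its image is compact and contains $\gr(t_i)$, hence equals $\overline{\gr(t_i)}=\{(\overline{t_i}(\mu),\mu):\mu\in\overline{D_i}\}$; equivalently, $\s$ restricts to a homeomorphism $\overline{\gr(t_i)}\to\overline{D_i}$ because the second-coordinate projection $\gr(t_i)\to D_i$ is a bijection. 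With that substitution (which is precisely the content of Lemma \ref{lem:compatibility via coarse groupoid}), your argument is complete; your treatment of $\omega(D_i)=1$ via continuity of $\s$ is already correct.
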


Consequently, we obtain the following:

\begin{cor}\label{cor:char for G(X)}
As a set, we have
\[
G(X) = \big(X \times X\big) \sqcup \bigcup_{\omega \in \partial_\beta X} \big( X(\omega) \times X(\omega) \big).
\]
\end{cor}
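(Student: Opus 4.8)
The plan is to deduce Corollary~\ref{cor:char for G(X)} directly from Lemma~\ref{lem:limit space via coarse groupoids} together with the elementary structure of the coarse groupoid as a subset of $\beta X \times \beta X$. First I would recall from Section~\ref{ssec:coarse groupoid} that $\Gz = \beta X$ and that $(\r,\s): G(X) \to \beta X \times \beta X$ is injective, so $G(X)$ is the disjoint union of its source-fibres: $G(X) = \bigsqcup_{\omega \in \beta X} G(X)_\omega$, where $G(X)_\omega = \s^{-1}(\omega)$. The task is then to identify each fibre $G(X)_\omega$.

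Next I would split the union over $\beta X = X \sqcup \partial_\beta X$ into the principal and non-principal parts. For a principal ultrafilter $\omega \in X$, one checks that $G(X)_\omega = \{(x,\omega) : x \in X\}$, i.e.\ the source-fibre over a point of $X$ is just the corresponding column of the pair groupoid $X \times X$; this follows because any $\gamma \in G(X)$ with $\s(\gamma) \in X$ lies in $X \times X$ (an element of $\overline{E_r}^{\beta X \times \beta X}$ whose second coordinate is in the open set $X$ must itself be in $E_r \subseteq X \times X$, using that $X$ is open in $\beta X$ and hence $X \times X$ is open in $\beta X \times \beta X$). Taking the union of these fibres over $\omega \in X$ recovers exactly the pair groupoid $X \times X$. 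For a non-principal $\omega \in \partial_\beta X$, Lemma~\ref{lem:limit space via coarse groupoids} gives a bijection $X(\omega) \to G(X)_\omega$, $\alpha \mapsto (\alpha,\omega)$, so $G(X)_\omega$ is identified with $X(\omega)$, and in particular $\r(\gamma) \in X(\omega)$ for every $\gamma \in G(X)_\omega$. Hence $\bigsqcup_{\omega \in \partial_\beta X} G(X)_\omega$ is identified with $\bigsqcup_{\omega \in \partial_\beta X} X(\omega)$.

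To match the precise statement, which writes $X(\omega) \times X(\omega)$ rather than a single copy of $X(\omega)$, I would note that since $X(\omega)$ is invariant (Lemma~\ref{lem:limit space via coarse groupoids} identifies it with the smallest invariant subset containing $\omega$), the reduction $G(X)_{X(\omega)}$ is a genuine subgroupoid, and because $G(X)$ is principal with unit space identified with $X(\omega)$ on this piece, the injective map $(\r,\s)$ identifies $G(X)_{X(\omega)}$ with the pair groupoid $X(\omega) \times X(\omega)$. The fibre $G(X)_\omega$ sits inside this as the column over $\omega$. Collecting the pieces: as $\omega$ ranges over a set of representatives for the equivalence classes of non-principal ultrafilters under ``$\alpha \in X(\omega)$'', the reductions $G(X)_{X(\omega)} \cong X(\omega) \times X(\omega)$ are pairwise disjoint and exhaust $\bigsqcup_{\omega \in \partial_\beta X} G(X)_\omega$; using Proposition~\cite[Proposition 3.9]{SW17} ($X(\alpha) = X(\omega)$ for $\alpha \in X(\omega)$) one sees the union $\bigcup_{\omega \in \partial_\beta X} (X(\omega) \times X(\omega))$ in the statement is this same set (each class being listed $|X(\omega)|$-many times but contributing the same subset). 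Together with the principal part this gives the claimed decomposition.

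The main obstacle is bookkeeping rather than depth: one must be careful that the union $\bigcup_{\omega \in \partial_\beta X}(X(\omega)\times X(\omega))$ is over \emph{all} non-principal $\omega$, so the same subset $X(\omega)\times X(\omega)$ is written many times, and one should make explicit (citing \cite[Proposition 3.9]{SW17}) that distinct classes give disjoint subsets while members of the same class give identical subsets, so that the union is still the correct set-theoretic description of the non-principal part of $G(X)$. The only genuinely substantive input is Lemma~\ref{lem:limit space via coarse groupoids}; everything else is unwinding definitions of the pair groupoid, étaleness, and openness of $X$ in $\beta X$.
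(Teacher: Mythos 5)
Your argument is correct and follows the same route the paper intends: the paper states the corollary as an immediate consequence of Lemma~\ref{lem:limit space via coarse groupoids}, i.e.\ one identifies each source-fibre $G(X)_\omega$ with $X(\omega)$ for $\omega \in \partial_\beta X$ (and with $X$ for principal $\omega$) and assembles the fibres, exactly as you do. Your extra bookkeeping with $X(\alpha)=X(\omega)$ from \cite[Proposition 3.9]{SW17} is the same unwinding the paper leaves implicit, so there is no substantive difference.
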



Now we would like to provide a quantitative version for Corollary \ref{cor:char for G(X)}. First we record the following observation, whose proof is straightforward and almost identical to that of Lemma \ref{lem:limit space via coarse groupoids} (originally from \cite[discussion in 10.18-10.24]{Roe03}):

\begin{lem}\label{lem:compatibility via coarse groupoid}
Let $t: D \to R$ be a partial translation on $X$. Then we have:
\[
\overline{\gr(t)}^{\beta X \times \beta X} = \gr(t) \sqcup \bigcup_{\omega \in \partial_\beta X} \big\{(\alpha, \omega): \omega (D) =1 \text{ and } \alpha = t(\omega)\big\}.
\]
\end{lem}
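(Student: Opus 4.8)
\textbf{Proof plan for Lemma \ref{lem:compatibility via coarse groupoid}.}

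The plan is to identify the boundary points of $\overline{\gr(t)}^{\beta X \times \beta X}$ by tracking how the graph of the partial translation behaves under the continuous extensions of the source and range maps. First I would note that $t: D \to R$ is a partial translation with, say, $\sup_{x \in D} d(x, t(x)) \leq S$ for some $S > 0$, so that $\gr(t) \subseteq E_S$. Consequently $\overline{\gr(t)}^{\beta(X\times X)} \subseteq \overline{E_S}^{\beta(X\times X)}$, and since the map $(\r,\s)$ is injective and identifies $G(X)$ with its image inside $\beta X \times \beta X$ (as recalled in Section \ref{ssec:coarse groupoid}), I can equally work with $\overline{\gr(t)}^{\beta X \times \beta X} \subseteq G(X)$. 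The points of this closure that lie in the interior $X \times X$ are exactly the points of $\gr(t)$ itself, since $\gr(t)$ is a subset of the discrete set $X \times X$ and $X \times X$ is open in $\beta(X\times X)$; this gives the first piece of the disjoint union.

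Next I would analyze the boundary points, i.e.\ those in $\bigcup_{\omega \in \partial_\beta X}(X(\omega)\times X(\omega))$ by Corollary \ref{cor:char for G(X)}. A point $(\alpha,\omega)$ with $\omega \in \partial_\beta X$ lies in $\overline{\gr(t)}^{\beta X\times\beta X}$ if and only if it lies in the closure of the set $\{(t(x),x): x \in D\}$. Since $\s$ restricted to $\gr(t)$ is the bijection $(t(x),x)\mapsto x$ onto $D$, its continuous extension sends $\overline{\gr(t)}$ onto $\overline{D}^{\beta X}$; hence $(\alpha,\omega)\in \overline{\gr(t)}$ forces $\omega \in \overline{D}$, i.e.\ $\omega(D)=1$. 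Conversely, when $\omega(D)=1$ the partial translation $t$ is compatible with $\omega$ in the sense of Definition \ref{defn:limit space compatible}, and I would invoke the description $t(\omega)=\overline{t}(\omega)$ together with Lemma \ref{lem:ultralimit} (net/ultrafilter convergence) to see that the point $(t(\omega),\omega)$ is precisely the limit along $\omega$ of the net $(t(x),x)_{x\in D}$, hence lies in the closure. A short argument using that $\overline{\gr(t)}$ is the graph of the partial homeomorphism $\overline{t}$ — the restriction of the principal groupoid structure — shows that $\alpha$ is then forced to equal $t(\omega)$, ruling out any other first coordinate over a given $\omega$. This yields exactly the claimed set $\{(\alpha,\omega): \omega(D)=1 \text{ and } \alpha = t(\omega)\}$.

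I expect the main (though modest) obstacle to be the bookkeeping around which topology is being used for the closure and making sure the identification of $G(X)$ with a subset of $\beta X \times \beta X$ via $(\r,\s)$ is applied consistently — in particular, checking that the closure of $\gr(t)$ taken in $\beta(X\times X)$ and the closure taken in $\beta X \times \beta X$ agree under this identification, which relies on $\gr(t)\subseteq E_S$ and the fact (from \cite[Lemma 2.7]{STY02}) that $(\r,\s)$ is a homeomorphism onto its image on each $\overline{E_r}$. Once that is set up, the argument is essentially the same as the proof of Lemma \ref{lem:limit space via coarse groupoids}, as the statement advertises, so I would keep the write-up brief and cite the analogous reasoning from \cite[10.18--10.24]{Roe03} rather than reproducing it in full.
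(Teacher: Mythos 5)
Your overall strategy coincides with what the paper has in mind (the paper itself only says the proof is ``straightforward and almost identical'' to that of Lemma \ref{lem:limit space via coarse groupoids}, i.e.\ the Roe/\v{S}pakula--Willett ultralimit argument): the identification of the interior part with $\gr(t)$, the use of the extended source map to get $\omega(D)=1$, and the ultralimit argument showing $(t(\omega),\omega)$ lies in the closure whenever $\omega(D)=1$ are all correct and are exactly the intended ingredients. The one step that does not hold up as written is the final uniqueness claim, that for a boundary point $(\alpha,\omega)$ of the closure the first coordinate is \emph{forced} to be $t(\omega)$. You justify this by saying that $\overline{\gr(t)}$ ``is the graph of the partial homeomorphism $\overline{t}$ --- the restriction of the principal groupoid structure''. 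Asserting that the closure is the graph of $\overline{t}$ is precisely (the nontrivial half of) the statement being proved, so as phrased this is circular; and principality of $G(X)$ cannot supply it: principality only says that $(\r,\s)$ is injective on $G(X)$ (trivial once $G(X)$ is viewed inside $\beta X\times\beta X$), and it does not by itself exclude two points $(\alpha,\omega)\neq(\alpha',\omega)$ of $\overline{\gr(t)}$ with the same source $\omega$ and different ranges.

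The gap is small and can be closed with the standard compactness argument, using only tools you already invoke. Let $\varphi\colon D\to \beta X\times\beta X$, $x\mapsto (t(x),x)$; since $\overline{D}^{\beta X}\cong\beta D$ and $\beta X\times\beta X$ is compact Hausdorff, $\varphi$ extends continuously to $\overline{\varphi}\colon\overline{D}\to\beta X\times\beta X$, $\omega\mapsto(\overline{t}(\omega),\omega)$. Its image is compact, hence closed, and contains $\gr(t)=\varphi(D)$, so $\overline{\gr(t)}^{\beta X\times\beta X}\subseteq\overline{\varphi}(\overline{D})$; conversely $\overline{\varphi}(\overline{D})\subseteq\overline{\varphi(D)}=\overline{\gr(t)}$ by continuity. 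Thus $\overline{\gr(t)}=\{(\overline{t}(\omega),\omega):\omega\in\overline{D}\}$, and comparing second coordinates shows that over a given $\omega$ with $\omega(D)=1$ the only boundary point is $(t(\omega),\omega)$, which is exactly the missing uniqueness. With this substitution for the ``principal groupoid'' sentence, your proof is complete and is essentially the paper's (i.e.\ \cite[Lemma C.3]{SW17} / \cite[10.18--10.24]{Roe03}) argument.
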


In general, we have the following:

\begin{lem}\label{lem:closure of entourage}
For any $S\geq 0$, we have:
\[
\overline{E_S}^{\beta X \times \beta X} = E_S \sqcup \bigcup_{\omega \in \partial_\beta X} \big\{(\alpha, \gamma)\in X(\omega) \times X(\omega): d_\omega(\alpha, \gamma) \leq S \big\}.
\]
\end{lem}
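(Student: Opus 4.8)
The plan is to reduce the statement about the closure of the entourage $E_S$ to the already-established description of closures of graphs of partial translations (Lemma~\ref{lem:compatibility via coarse groupoid}), using the bounded-geometry decomposition of entourages. First I would invoke the fact recalled at the start of Section~\ref{sec:limit sp. and op.} (see \cite[Lemma 4.10]{Roe03}): since $X$ has bounded geometry and $S \geq 0$ is fixed, the entourage $E_S$ decomposes as a finite union $E_S = \bigcup_{i=1}^n \gr(t_i)$ of graphs of partial translations $t_i: D_i \to R_i$. Taking closures in $\beta X \times \beta X$ commutes with finite unions, so $\overline{E_S} = \bigcup_{i=1}^n \overline{\gr(t_i)}$, and Lemma~\ref{lem:compatibility via coarse groupoid} gives each $\overline{\gr(t_i)}$ explicitly.

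The next step is to identify the ``boundary part'' $\overline{E_S} \cap (G(X) \setminus (X \times X))$. By Corollary~\ref{cor:char for G(X)}, a point of $G(X)$ over the boundary has the form $(\alpha, \gamma)$ with $\alpha, \gamma \in X(\omega)$ for a unique $\omega \in \partial_\beta X$; and for such a point to lie in $\overline{\gr(t_i)}$, Lemma~\ref{lem:compatibility via coarse groupoid} requires $\omega(D_i) = 1$, $\gamma = \omega$ is forced there — wait, more care is needed here, because $\gr(t_i)$ as written has pairs $(t_i(x), x)$, so its closure only meets fibers $G(X)_\omega$ over the \emph{second} coordinate $\omega$. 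To get all of $E_S$ we would also need the boundary points $(\alpha,\gamma)$ where $\gamma$ itself is a translate; the resolution is that $E_S$ is symmetric, so it suffices to handle one ``side'' and then note that for any $(\alpha,\gamma) \in X(\omega) \times X(\omega)$ we may write $(\alpha,\gamma) = (t_\alpha(\omega), t_\gamma(\omega))$ using a compatible family, and reduce to the graph-of-partial-translation computation after precomposing. Concretely, for fixed $\omega$ and a compatible family $\{t_\alpha\}_{\alpha \in X(\omega)}$, the point $(\alpha,\gamma)$ lies in $\overline{E_S}$ if and only if there is a net $x \to \omega$ with $d(t_\alpha(x), t_\gamma(x)) \leq S$ eventually along the net, which by definition of $d_\omega$ and the properties of ultralimits is equivalent to $d_\omega(\alpha,\gamma) \leq S$ (using that $d_\omega(\alpha,\gamma) = \lim_{x \to \omega} d(t_\alpha(x), t_\gamma(x))$ and that the set $\{x : d(t_\alpha(x), t_\gamma(x)) \leq S\}$ is either in $\omega$ or its complement is, with the limit distinguishing the two cases — here strong discreteness of $X$, or rather of $d_\omega$, ensures the value $S$ is ``isolated'' enough that $\{x : d(t_\alpha(x),t_\gamma(x)) \le S\}$ behaves well under $\omega$).

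Assembling these: the principal part of $\overline{E_S}$ is exactly $E_S$ (since $X \times X$ is open in $G(X)$ and $E_S$ is already closed in $X\times X$), and the boundary part over each $\omega$ is $\{(\alpha,\gamma) \in X(\omega) \times X(\omega) : d_\omega(\alpha,\gamma) \leq S\}$ by the equivalence just described. Since these boundary parts are disjoint over distinct $\omega$ (different fibers of $G(X)$, using Corollary~\ref{cor:char for G(X)}), the union is disjoint, giving the claimed formula
\[
\overline{E_S}^{\beta X \times \beta X} = E_S \sqcup \bigcup_{\omega \in \partial_\beta X} \big\{(\alpha,\gamma) \in X(\omega) \times X(\omega) : d_\omega(\alpha,\gamma) \leq S\big\}.
\]
The main obstacle I anticipate is the bookkeeping around which coordinate is ``frozen'' in Lemma~\ref{lem:compatibility via coarse groupoid}: the graph decomposition naturally controls closures of sets of the form $\{(t(x),x)\}$, so recovering the full symmetric picture of $\overline{E_S}$ requires either running the argument with $\omega$ in the second slot and invoking symmetry, or re-expressing an arbitrary boundary pair $(\alpha,\gamma)$ via a compatible family and carefully checking that the ultralimit computation of $d_\omega$ matches membership in $\overline{E_S}$ — in particular that one can pass between ``$d(t_\alpha(x),t_\gamma(x)) \le S$ for $\omega$-many $x$'' and ``$d_\omega(\alpha,\gamma) \le S$'' without losing or gaining boundary points. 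This is where strong discreteness is used to avoid pathologies at the threshold value $S$.
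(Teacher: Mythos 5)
Your proposal is correct and takes essentially the paper's route: decompose $E_S$ into finitely many graphs of partial translations, obtain the inclusion of $\overline{E_S}$ into the right-hand side from Lemma \ref{lem:compatibility via coarse groupoid}, and get the reverse inclusion from an ultralimit computation in which strong discreteness upgrades $d_\omega(\alpha,\gamma)\leq S$ to an $\omega$-large set on which the pointwise distances are at most $S$. The paper resolves your ``frozen second coordinate'' concern slightly differently---using $X(\omega)=X(\gamma)$ it re-bases at $\gamma$, takes a single partial translation $t$ with $t(\gamma)=\alpha$, restricts it to $D'=\{x\in D: d(t(x),x)\leq S\}$ and applies Lemma \ref{lem:compatibility via coarse groupoid} once more, rather than running your compatible-family net argument (and, a harmless slip on your part, the boundary pieces over distinct $\omega$ need not be disjoint, but only their disjointness from $E_S$ is needed for the displayed formula).
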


\begin{proof}
As explained at the beginning of this subsection, we can decompose 
\[
E_S= \gr(t_1) \sqcup \cdots \sqcup \gr(t_N)
\]
where each $t_i: D_i \to R_i$ is a partial translation. Hence we have
\[
\overline{E_S} = \overline{\gr(t_1)} \cup \cdots \cup \overline{\gr(t_N)}.
\]
Applying Lemma \ref{lem:compatibility via coarse groupoid}, we obtain that $\overline{E_S}$ is contained in the right hand side in the statement. 

On the other hand, given $\omega \in \partial_\beta X$ and $\alpha, \gamma \in X(\omega)$ with $d_\omega(\alpha, \gamma) \leq S$, then we have $(X(\omega), d_\omega) = (X(\gamma),d_\gamma)$. Take a partial translation $t: D \to R$ such that $\gamma(D) =1$ and $\alpha = t(\gamma)$. Note that
\[
d_\gamma(\alpha, \gamma) = \lim_{x \to \gamma} d(t(x),x) \leq S.
\]
Hence for $D':=\{x\in D: d(t(x),x) \leq S\}$, we have $\gamma(D')=1$. Consider the restriction of $t$ on $D'$, denoted by $t'$. Then $t'$ is also a partial translation and $\alpha = t'(\gamma)$. By Lemma \ref{lem:compatibility via coarse groupoid}, we obtain that $(\alpha, \gamma) \in \overline{\gr(t')}$, which is contained in $\overline{E_S}$ as desired.
\end{proof}

Now we compute concrete examples of limit spaces. First we recall the case of groups from \cite[Appendix B]{SW17}. Let $\Gamma$ be a countable discrete group, equipped with a left-invariant bounded geometry and strongly discrete metric $d$. For each $g\in \Gamma$, denote 
\[
\rho_g: \Gamma \to \Gamma, h \mapsto hg
\]
the right translation map. Each $\rho_g$ is a partial translation with full domain, and hence is compatible with every $\omega \in \beta \Gamma$. Moreover, we have the following:

\begin{lem}[{\cite[Lemma B.1]{SW17}}]\label{lem:limit space for group case}
For each non-principal ultrafilter $\omega \in \beta \Gamma$, the map
\[
b_\omega: \Gamma \longrightarrow \Gamma(\omega), g \mapsto \rho_g(\omega)
\]
is an isometric bijection.
\end{lem}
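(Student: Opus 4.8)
The plan is to show that the map $b_\omega\colon \Gamma \to \Gamma(\omega)$, $g\mapsto \rho_g(\omega)$, is a well-defined isometry onto $\Gamma(\omega)$, using only the machinery already recalled: the definition of compatibility, the formula for $d_\omega$, and left-invariance of $d$. First I would verify $b_\omega$ is well-defined and lands in $\Gamma(\omega)$: since each $\rho_g$ has full domain, $\omega(\Gamma)=1$, so $\rho_g$ is compatible with $\omega$ in the sense of Definition \ref{defn:limit space compatible}, and hence $\rho_g(\omega)\in\Gamma(\omega)$ by definition. A natural choice of compatible family for $\omega$ is precisely $\{\rho_g\}_{g\in\Gamma}$ (reindexed by the points $\rho_g(\omega)$), once we know the indexing is faithful, which will follow from injectivity below.

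Next I would compute distances. Using the compatible family $\{\rho_g\}$ and the definition of $d_\omega$,
\[
d_\omega(\rho_g(\omega),\rho_h(\omega)) = \lim_{x\to\omega} d(\rho_g(x),\rho_h(x)) = \lim_{x\to\omega} d(xg,xh).
\]
By left-invariance of $d$, $d(xg,xh) = d(g,h)$ for every $x$, so the ultralimit is the constant $d(g,h)$. This simultaneously shows $b_\omega$ is isometric and (since $d$ is a genuine metric, so $d(g,h)=0$ iff $g=h$) injective, which retroactively justifies that $\{\rho_g\}_{g\in\Gamma}$ is honestly a compatible family indexed by distinct points of $\Gamma(\omega)$.

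Finally I would prove surjectivity. Let $\alpha\in\Gamma(\omega)$; by definition there is a partial translation $t\colon D\to R$ compatible with $\omega$ (so $\omega(D)=1$) with $t(\omega)=\alpha$. The key point is that on $\Gamma$ any partial translation agrees, on an $\omega$-large set, with some right translation: since $\sup_{x}d(x,t(x))<\infty$ and $d$ has bounded geometry and is strongly discrete, the set $\{x^{-1}t(x): x\in D\}$ is finite, say $\{g_1,\dots,g_k\}$; partitioning $D$ accordingly and using that $\omega$ is an ultrafilter, exactly one piece $D_i=\{x\in D: t(x)=xg_i\}$ has $\omega(D_i)=1$. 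Then $t$ and $\rho_{g_i}$ agree on $D_i$, an $\omega$-large set, so $t(\omega)=\rho_{g_i}(\omega)$, i.e. $\alpha=b_\omega(g_i)$. The main obstacle — though a mild one — is this last step: making precise that a bounded-displacement partial translation on a strongly discrete bounded-geometry group is, ultrafilter-almost-everywhere, a right translation, so that the abstract definition of $\Gamma(\omega)$ via arbitrary partial translations collapses to the concrete picture via $\{\rho_g\}$. Everything else is a direct unwinding of definitions together with left-invariance.
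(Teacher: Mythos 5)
Your proof is correct and follows essentially the same route as the paper: the isometry computation is the same direct use of left-invariance, and your surjectivity argument (partitioning the domain of an arbitrary compatible partial translation $t$ by the finitely many values of $x^{-1}t(x)$ and selecting the $\omega$-large piece) is exactly the specialization to the family $\{\rho_g\}_{g\in\Gamma}$ of the paper's general Lemma \ref{lem:limit space general case}, which the paper invokes instead of inlining. The only cosmetic remark is that finiteness of $\{x^{-1}t(x):x\in D\}$ needs only left-invariance and finiteness of balls, not strong discreteness.
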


Inspired by Lemma \ref{lem:limit space for group case}, we provide the following general method:

\begin{lem}\label{lem:limit space general case}
Let $\{t_\lambda: D_\lambda \to R_\lambda\}_{\lambda \in \Lambda}$ be a family of partial translations on $X$ satisfying the following: for each $S>0$, there exists a finite subset $\Lambda_S \subseteq \Lambda$ such that $\gr(t_{\lambda}) \cap \gr(t_\mu)$ is finite for $\lambda \neq \mu$ in $\Lambda_S$ and $E_S \setminus \big( \bigcup_{\lambda \in \Lambda_S} \gr(t_\lambda) \big)$ is finite. Then for any non-principal ultrafilter $\omega$ on $X$ and $\alpha \in X(\omega)$, there exists $\lambda \in \Lambda$ such that $\omega(D_\lambda)=1$ and $\alpha = t_\lambda(\omega)$.
\end{lem}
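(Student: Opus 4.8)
The plan is to fix a non-principal ultrafilter $\omega$ on $X$ and an $\alpha \in X(\omega)$, and to produce the desired $\lambda \in \Lambda$ by a compactness/limiting argument built on Lemma \ref{lem:closure of entourage} and Lemma \ref{lem:compatibility via coarse groupoid}. Since $\alpha \in X(\omega)$, by definition $d_\omega(\alpha,\omega) = S_0 < \infty$ for some $S_0 \geq 0$, so $(\alpha,\omega) \in \overline{E_S}^{\beta X \times \beta X}$ for any $S \geq S_0$ by Lemma \ref{lem:closure of entourage}. Now apply the hypothesis with this $S$: there is a finite $\Lambda_S \subseteq \Lambda$ such that $E_S \setminus \bigcup_{\lambda \in \Lambda_S}\gr(t_\lambda)$ is finite and the pairwise intersections $\gr(t_\lambda)\cap\gr(t_\mu)$ are finite for distinct $\lambda,\mu \in \Lambda_S$. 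Taking closures in $\beta X \times \beta X$ and using that the closure of a finite set is itself (it consists only of principal points), we get
\[
\overline{E_S} \subseteq \Big(\text{finite set}\Big) \cup \bigcup_{\lambda \in \Lambda_S} \overline{\gr(t_\lambda)}.
\]
Since $(\alpha,\omega)$ lies in $\overline{E_S}$ and is not a principal point (as $\omega \in \partial_\beta X$), it must lie in $\overline{\gr(t_\lambda)}$ for some $\lambda \in \Lambda_S$.

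Next I would invoke Lemma \ref{lem:compatibility via coarse groupoid} for this particular $t_\lambda: D_\lambda \to R_\lambda$: since $(\alpha,\omega) \in \overline{\gr(t_\lambda)}^{\beta X \times \beta X}$ and $\omega \in \partial_\beta X$, the description there forces $\omega(D_\lambda) = 1$ and $\alpha = t_\lambda(\omega)$, which is exactly the conclusion sought. So the logical skeleton is short: (i) translate $\alpha \in X(\omega)$ into membership of $(\alpha,\omega)$ in some $\overline{E_S}$; (ii) use the covering hypothesis plus finiteness of exceptional sets to locate a single graph $\gr(t_\lambda)$ whose closure catches $(\alpha,\omega)$; (iii) read off $\omega(D_\lambda)=1$ and $\alpha = t_\lambda(\omega)$ from Lemma \ref{lem:compatibility via coarse groupoid}.

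The main obstacle is step (ii), and more precisely the bookkeeping that a closure in $\beta X \times \beta X$ distributes over the decomposition $E_S = \big(E_S \setminus \bigcup_{\lambda\in\Lambda_S}\gr(t_\lambda)\big) \sqcup \bigcup_{\lambda\in\Lambda_S}\big(\gr(t_\lambda)\cap E_S\big)$. One must be careful that $\overline{A \cup B} = \overline{A}\cup\overline{B}$ only for \emph{finite} unions — which is fine here since $\Lambda_S$ is finite — and that removing the (finite) overlaps $\gr(t_\lambda)\cap\gr(t_\mu)$ and the (finite) leftover set does not remove the non-principal point $(\alpha,\omega)$ from the relevant closure; this is automatic because a finite subset of $X\times X$ has closure equal to itself inside $\beta X \times \beta X$ and contains no point with a coordinate in $\partial_\beta X$. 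A secondary subtlety is ensuring $(\alpha,\omega)$ really is non-principal: this holds because $\omega\in\partial_\beta X$ and the second-coordinate projection $\s$ on $G(X)$ is continuous, so if $(\alpha,\omega)$ were a principal pair $(x,x)$ with $x\in X$ we would get $\omega = x \in X$, a contradiction. Once these topological cleanups are in place, the argument is essentially immediate from the two cited lemmas. I would also remark that the hypothesis is tailored so that the relevant $S$ can be chosen as large as needed (in particular $\geq d_\omega(\alpha,\omega)$), so there is no circularity in first choosing $\alpha$ and then choosing $S$.
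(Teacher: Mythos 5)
Your proof is correct, but it takes a different route from the paper's. The paper argues entirely at the level of ultrafilters on $X$: starting from a partial translation $t$ with $\omega(D)=1$ and $t(\omega)=\alpha$, it introduces the sets $\widetilde{D}_\lambda=\{x\in D\cap D_\lambda: t(x)=t_\lambda(x)\}$, covers $D$ by $\bigcup_{\lambda\in\Lambda_S}\widetilde{D}_\lambda$ up to a finite set, and uses finite additivity of the non-principal ultrafilter (together with the finiteness of the pairwise intersections) to find a \emph{unique} $\lambda$ with $\omega(\widetilde{D}_\lambda)=1$, whence $\omega(D_\lambda)=1$ and $t_\lambda(\omega)=t(\omega)=\alpha$. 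You instead encode $\alpha\in X(\omega)$ as $(\alpha,\omega)\in\overline{E_S}$ via Lemma \ref{lem:closure of entourage}, cover $\overline{E_S}$ by the finitely many $\overline{\gr(t_\lambda)}$ plus a finite (hence closed, purely principal) exceptional set, and read the conclusion off Lemma \ref{lem:compatibility via coarse groupoid}; all the topological bookkeeping you flag (finite unions of closures, the exceptional set containing no point with a coordinate in $\partial_\beta X$) is handled correctly, and there is no circularity since both cited lemmas precede this one. What each approach buys: yours is shorter modulo the cited lemmas, avoids the auxiliary sets $\widetilde{D}_\lambda$, and in fact never uses the hypothesis that $\gr(t_\lambda)\cap\gr(t_\mu)$ is finite (the paper only needs it for the uniqueness of $\lambda$, which the statement does not claim); the paper's argument is more elementary and self-contained, staying inside $X$ rather than $\beta X\times\beta X$, and yields that uniqueness as a bonus. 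One trivial imprecision: the points of $X\times X$ inside $\beta X\times\beta X$ are all pairs $(x,y)$ with $x,y\in X$, not just diagonal pairs $(x,x)$, but your actual argument (the second coordinate $\omega$ lies in $\partial_\beta X$, so $(\alpha,\omega)\notin X\times X$) is exactly what is needed.
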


\begin{proof}
By definition, we assume that $\alpha = t(\omega)$ for some partial translation $t: D \to R$ with $\omega(D)=1$. For $\lambda \in \Lambda$, set $\widetilde{D}_\lambda:=\{x\in D \cap D_\lambda: t(x) = t_\lambda(x)\}$. Choose $S>0$ such that $\gr(t) \subseteq E_S$, and hence there exists a finite subset $F \subseteq E_S$ such that
\[
\gr(t) \subseteq \big( \bigcup_{\lambda \in \Lambda_S} \gr(t_\lambda) \big) \sqcup F.
\]
This implies that $D \subseteq \big( \bigcup_{\lambda \in \Lambda_S} \widetilde{D}_\lambda \big) \sqcup F'$ for some finite $F' \subseteq X$. Since $\omega$ is non-principal, $\Lambda_S$ is finite and $\widetilde{D}_\lambda \cap \widetilde{D}_\mu$ is finite for any $\lambda, \mu \in \Lambda_S$, there exists a unique $\lambda \in \Lambda_S$ such that $\omega(\widetilde{D}_\lambda)=1$. This implies that $\omega(D_\lambda)=1$ and $\alpha = t_\lambda(\omega)$, which concludes the proof.
\end{proof}

Back to the case of the group $\Gamma$, the set $\{\rho_g: \Gamma \to \Gamma\}_{g\in \Gamma}$ satisfies the condition in Lemma \ref{lem:limit space general case}, and hence the map $b_\omega$ in Lemma \ref{lem:limit space for group case} is surjective. It is straightforward to check that $b_\omega$ is isometric, which recovers the proof for Lemma \ref{lem:limit space for group case}.

\begin{ex}\label{eg:limit spaces for N}
Consider $X=\NN$ with the usual metric. For each $k \in \ZZ$ with $k \geq 0$, define a partial translation
\[
\rho_k: \NN \longrightarrow \NN, \quad n \mapsto n+k.
\]
For $k\in \ZZ$ with $k<0$, define a partial translation
\[
\rho_k: [-k, \infty) \cap \NN \longrightarrow \NN, \quad n \mapsto n+k.
\]
Then it is clear that the set $\{\rho_k\}_{k\in \ZZ}$ satisfies the condition in Lemma \ref{lem:limit space general case}. Now for a non-principal ultrafilter $\omega$ on $\NN$, consider the map 
\[
b_\omega: \ZZ \longrightarrow \NN(\omega), \quad k \mapsto \rho_k(\omega).
\]
Note that for $k,l\in \ZZ$, we have
\[
d_\omega(\rho_k(\omega), \rho_l(\omega)) = \lim_{n\to \omega} |(k+n) - (l+n)| = |k-l|,
\]
which implies that $b_\omega$ is isometric. Moreover, Lemma \ref{lem:limit space general case} shows that $b_\omega$ is surjective. Therefore, every limit space of $\NN$ is isometric to $\ZZ$. This provides a detailed proof for \cite[Example 3.14(2)]{SW17}.
\end{ex}

Similar to the analysis in Example \ref{eg:limit spaces for N}, we can also apply Lemma \ref{lem:limit space general case} to obtain proofs for \cite[Example 3.14(3)-(5)]{SW17}. Details are left to readers.

\subsection{Limit operators}

Now we recall the notion of limit operators for metric spaces introduced by \v{S}pakula and Willett:

\begin{defn}[{\cite[Definition 4.4]{SW17}}]\label{defn:limit operator}
For a non-principal ultrafilter $\omega$ on $X$, fix a compatible family $\{t_\alpha\}_{\alpha \in X(\omega)}$ for $\omega$ and let $T \in C^*_u(X)$. The \emph{limit operator of $T$ at $\omega$}, denoted by $\Phi_\omega(T)$, is an $X(\omega)$-by-$X(\omega)$ indexed matrix defined by
\[
\Phi_\omega(T)_{\alpha\gamma}:=\lim_{x \to \omega}T_{t_{\alpha}(x)t_{\gamma}(x)} \quad \text{for} \quad \alpha, \gamma \in X(\omega).
\]
\end{defn}

It was studied in \cite[Chapter 4]{SW17} that the above definition does not depend on the choice of the compatible family $\{t_\alpha\}_{\alpha \in X(\omega)}$ for $\omega$. Furthermore, the limit operator $\Phi_\omega(T)$ is indeed a bounded operator on $\ell^2(X(\omega))$, and belongs to the uniform Roe algebra $C^*_u(X(\omega))$.

Recall from Lemma \ref{lem: unif. Roe belongs to C0} that for an operator $T \in C^*_u(X)$, the continuous extension $\overline{T} \in C_0(G(X))$. We have the following, which was implicitly mentioned in the proof of \cite[Lemma C.3]{SW17}.

\begin{lem}\label{lem:limit operator using extension}
For a non-principal ultrafilter $\omega$ on $X$ and $T \in C^*_u(X)$, we have 
\[
\Phi_\omega(T)_{\alpha\gamma} = \overline{T}(\alpha, \gamma) \quad \text{for} \quad \alpha, \gamma \in X(\omega).
\]
\end{lem}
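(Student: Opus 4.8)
The plan is to unwind both sides of the claimed equality back to concrete expressions on $X \times X$ and observe that they agree by the universal property of the Stone-\v{C}ech compactification. First I would fix a compatible family $\{t_\alpha\}_{\alpha \in X(\omega)}$ for $\omega$, which we may use since the limit operator $\Phi_\omega(T)$ does not depend on this choice. By Definition \ref{defn:limit operator}, the left-hand side is $\Phi_\omega(T)_{\alpha\gamma} = \lim_{x\to\omega} T_{t_\alpha(x)t_\gamma(x)}$, i.e.\ the ultralimit along $\omega$ of the net $x \mapsto T(t_\alpha(x), t_\gamma(x))$ defined on the set $D_\alpha \cap D_\gamma$ (which satisfies $\omega(D_\alpha \cap D_\gamma)=1$, so the ultralimit makes sense by Lemma \ref{lem:ultralimit}).

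Next I would identify the right-hand side. For $T \in C^*_u(X)$, Lemma \ref{lem: unif. Roe belongs to C0} gives $\overline{T} \in C_0(G(X))$, where $\overline{T}$ is the continuous extension to $\beta(X\times X)$ of $T$ viewed as a bounded function on $X \times X$, restricted to $G(X) \subseteq \beta X \times \beta X$. So $\overline{T}(\alpha,\gamma)$ is the value of this continuous extension at the point $(\alpha,\gamma) \in G(X)_\omega$. The key geometric input is that the pair $(\alpha,\gamma)$ is the limit, in $\beta X \times \beta X$, of the net $x \mapsto (t_\alpha(x), t_\gamma(x))$ along $\omega$: indeed $t_\alpha(\omega) = \alpha$ and $t_\gamma(\omega) = \gamma$ by definition of a compatible family (Definition \ref{defn:limit space compatible}), and taking ultralimits coordinatewise gives $\lim_\omega (t_\alpha(x),t_\gamma(x)) = (\alpha,\gamma)$ in the product $\beta X \times \beta X$. (One should note that $(t_\alpha(x),t_\gamma(x))$ lies in $\gr(t_\gamma^{-1}t_\alpha) \subseteq E_S$ for $S := \ppg(t_\alpha) + \ppg(t_\gamma)$, so this net already lives inside $\overline{E_S} \subseteq G(X)$, consistent with $(\alpha,\gamma) \in G(X)$ via Lemma \ref{lem:closure of entourage}.)

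Then continuity of $\overline{T}$ on $\beta(X\times X)$ — more precisely, the fact that $\overline{T}$ restricted to $\overline{E_S}^{\beta X \times \beta X}$ is continuous and agrees with $T$ on $E_S \subseteq X \times X$ — lets us pass the limit through:
\[
\overline{T}(\alpha,\gamma) = \overline{T}\Big(\lim_{x\to\omega}(t_\alpha(x),t_\gamma(x))\Big) = \lim_{x\to\omega} \overline{T}(t_\alpha(x),t_\gamma(x)) = \lim_{x\to\omega} T(t_\alpha(x),t_\gamma(x)),
\]
which is exactly $\Phi_\omega(T)_{\alpha\gamma}$. Here I would be slightly careful about which ambient space the closure and continuous extension are taken in: $\overline{T}$ is defined on $\beta(X\times X)$, whereas $(\alpha,\gamma)$ naturally lives in $\beta X \times \beta X$; the identification is mediated by the homeomorphism $\overline{E_S}^{\beta(X\times X)} \cong \overline{E_S}^{\beta X \times \beta X}$ coming from the injectivity of $(\r,\s)$ on $G(X)$ (\cite[Lemma 2.7]{STY02}) together with the discussion in Section \ref{ssec:coarse groupoid}, and under this identification the net $(t_\alpha(x),t_\gamma(x)) \in E_S$ converges to $(\alpha,\gamma)$ in both pictures.

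The main obstacle, and the only place requiring genuine care rather than bookkeeping, is making the ultralimit/continuity interchange rigorous: one must check that the net $x\mapsto (t_\alpha(x),t_\gamma(x))$ indexed by $D_\alpha\cap D_\gamma$ really does $\omega$-converge to $(\alpha,\gamma)$ in the relevant compactification and that $\overline{T}$ is continuous at that point. Both follow from standard facts about ultralimits in compact Hausdorff spaces (Appendix \ref{app:ultrafilters}) and from $\overline{T}$ being a genuinely continuous function on the compact set $\overline{E_S}$ with $S$ as above; since $T \in C^*_u(X)$ is approximated in supremum norm by finite-propagation operators and $\|T\|_\infty \le \|T\|$, the extension $\overline{T}$ is well-defined and continuous on all of $G(X)$. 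Everything else is a direct unwinding of Definitions \ref{defn:limit space compatible} and \ref{defn:limit operator}.
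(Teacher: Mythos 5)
Your proposal is correct and follows essentially the same route as the paper's own (much terser) proof: unwind $\Phi_\omega(T)_{\alpha\gamma}$ as the ultralimit of $T(t_\alpha(x),t_\gamma(x))$ along $\omega$ and identify it with the value of the continuous extension $\overline{T}$ at $(\alpha,\gamma)$, which the paper justifies by citing the discussion before Lemma \ref{lem: unif. Roe belongs to C0}; you merely make explicit the continuity/ultralimit interchange on $\overline{E_S}$ and the identification of $\overline{E_S}^{\beta(X\times X)}$ with $\overline{E_S}^{\beta X\times\beta X}$, which the paper leaves implicit (your parenthetical $\gr(t_\gamma^{-1}t_\alpha)$ should be $\gr(t_\alpha t_\gamma^{-1})$, but this is immaterial).
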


\begin{proof}
Choose partial translations $t_\alpha, t_\gamma$ compatible with $\alpha, \gamma$ such that $t_\alpha(\omega) = \alpha$, $t_\gamma(\omega) = \gamma$. By definition, we have
\[
\Phi_\omega(T)_{\alpha\gamma} = \lim_{x \to \omega}T_{t_{\alpha}(x)t_{\gamma}(x)} = \lim_{x \to \omega}T(t_{\alpha}(x),t_{\gamma}(x)) = \overline{T}(\alpha, \gamma)
\]
where the last equality comes from the discussion before Lemma \ref{lem: unif. Roe belongs to C0}.
\end{proof}

%
%
%

Since the limit operator $\Phi_\omega(T)$ contains the information of the asymptotic behaviour of $T$ ``in the $\omega$-direction'', we introduce the following:

\begin{defn}\label{defn:vanish at infinity}
For an $\omega\in \partial_\beta X$, we say that an operator $T \in C^*_u(X)$ is \emph{locally invisible (\emph{or} vanishes) in the $\omega$-direction} if $\Phi_\omega(T)=0$. For a subset $V \subseteq \partial_\beta X$, we say that $T$ is \emph{locally invisible (\emph{or} vanishes) in the $V$-direction} if $\Phi_\omega(T)=0$ for any $\omega \in V$.
\end{defn}

Finally, we recall from Proposition \ref{prop: groupoid char for uniform Roe alg} that there is a $C^*$-isomorphism $\Theta: C^*_r(G(X)) \to C^*_u(X)$. This allows us to relate limit operators to left regular representations of $C^*_r(G(X))$:

\begin{lem}[{\cite[Lemma C.3]{SW17}}]\label{lem:limit operators via regular representation}
For a non-principal ultrafilter $\omega$ on $X$, let $W_\omega: \ell^2(G(X)_\omega) \to \ell^2(X(\omega))$ be the unitary representation induced by $F$ in Lemma \ref{lem:limit space via coarse groupoids}. Then we have the following commutative diagram:
\begin{displaymath}
    \xymatrix@=3em{
        C^*_r(G(X)) \ar[r]^-{\textstyle \lambda_\omega} \ar[d]_-{\textstyle\Theta}^-{\textstyle\cong} & \B(\ell^2(G(X)_\omega)) \ar[d]^-{\textstyle \mathrm{Ad}_{W_\omega}}_-{\textstyle\cong} \\
        C^*_u(X) \ar[r]^-{\textstyle \Phi_\omega} & \B(\ell^2(X(\omega))),}
\end{displaymath}
where $\lambda_\omega$ is the left regular representation from (\ref{reduced algebra defn}).
\end{lem}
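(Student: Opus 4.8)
The strategy is to verify the commutativity of the diagram on the dense $\ast$-subalgebra $C_c(G(X)) \subseteq C^*_r(G(X))$ first, and then extend to the whole $C^*$-algebra by continuity. Concretely, I would pick $f \in C_c(G(X))$, so that $f$ is supported on $\overline{E_r}$ for some $r > 0$, and chase it around both paths of the square. Going right then down, I would compute $\mathrm{Ad}_{W_\omega}(\lambda_\omega(f))$ using the defining formula (\ref{reduced algebra defn}) for the regular representation together with the explicit form of the unitary $W_\omega$ induced by the bijection $F: X(\omega) \to G(X)_\omega$, $\alpha \mapsto (\alpha, \omega)$ from Lemma \ref{lem:limit space via coarse groupoids}. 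Going down then right, I would use Proposition \ref{prop: groupoid char for uniform Roe alg} to identify $\Theta(f) = \theta(f)$ as the operator with matrix coefficients $\theta(f)(x,y) = f(x,y)$, and then apply the limit-operator formula from Definition \ref{defn:limit operator}.

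The key computational point is an identification of matrix coefficients. On one side, $\big(\mathrm{Ad}_{W_\omega}(\lambda_\omega(f))\big)_{\alpha\gamma}$ unwinds, via the multiplication in $G(X) \subseteq \beta X \times \beta X$ and the fact that $(\alpha, \omega)(\omega, \gamma)^{-1}$-type products live in $G(X)_\omega$, to the value $\overline{f}(\alpha,\gamma)$ — that is, the value of the continuous extension of $f$ at the point $(\alpha,\gamma) \in X(\omega) \times X(\omega) \subseteq G(X)$ (using Corollary \ref{cor:char for G(X)}). On the other side, by Lemma \ref{lem: unif. Roe belongs to C0} we have $\overline{\theta(f)} = f$ on $C_c(G(X))$, and by Lemma \ref{lem:limit operator using extension} the limit operator satisfies $\Phi_\omega(\theta(f))_{\alpha\gamma} = \overline{\theta(f)}(\alpha,\gamma) = f(\alpha,\gamma)$. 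Matching these two expressions gives commutativity on $C_c(G(X))$. Since $\Theta$ is a $C^*$-isomorphism, $\lambda_\omega$ and $\Phi_\omega$ are $\ast$-homomorphisms (hence norm-contractive), and $\mathrm{Ad}_{W_\omega}$ is an isomorphism, all four maps are continuous, so the identity extends from the dense subalgebra $C_c(G(X))$ to all of $C^*_r(G(X))$.

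I expect the main obstacle to be purely bookkeeping rather than conceptual: correctly tracking how the unitary $W_\omega$ conjugates the regular representation, i.e. making sure the indexing via $F$ converts sums over $\alpha \in G(X)_\omega$ into sums over $X(\omega)$ with the right arguments $\gamma\alpha^{-1}$, and confirming that under the identification $G(X)_\omega \cong X(\omega)$ these products are exactly the pairs $(\alpha, \gamma) \in X(\omega) \times X(\omega)$ at which one evaluates $\overline{f}$. Once the matrix-coefficient identity $\big(\mathrm{Ad}_{W_\omega} \circ \lambda_\omega\big)(f)_{\alpha\gamma} = \overline{f}(\alpha,\gamma) = \big(\Phi_\omega \circ \Theta\big)(f)_{\alpha\gamma}$ is pinned down on $C_c(G(X))$, the rest is a routine density and continuity argument. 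Indeed, much of this is already implicit in the proof of \cite[Lemma C.3]{SW17}, so I would mostly be making that computation explicit.
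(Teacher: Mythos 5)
Your proposal is correct and follows essentially the same route as the source the paper cites for this statement (the paper itself only quotes \cite[Lemma C.3]{SW17} without reproducing a proof): check the two compositions on the dense subalgebra $C_c(G(X))$ by computing matrix coefficients — using that $\gamma\alpha^{-1}=(c,\omega)(\omega,a)=(c,a)$ identifies both sides with $\overline{f}(\alpha,\gamma)$ via Lemma \ref{lem: unif. Roe belongs to C0} and Lemma \ref{lem:limit operator using extension} — and then extend by norm continuity of all four maps. The only slip is cosmetic: the products $\gamma\alpha^{-1}$ with $\gamma,\alpha\in G(X)_\omega$ land in $X(\omega)\times X(\omega)\subseteq G(X)$ (as you correctly use a sentence later), not in $G(X)_\omega$; and if one wishes to avoid invoking continuity of $\Phi_\omega$, one can instead apply Lemma \ref{lem:limit operator using extension} directly to a general $T\in C^*_u(X)$ and compare matrix coefficients using $\|\cdot\|_\infty\leq\|\cdot\|$.
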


\section{Geometric ideals}\label{sec:geometric ideals}

In this section, we recall the notion of geometric ideals, which was originally introduced by the first-named author in \cite{Wan07} (see also \cite{CW04}). 
Throughout the section, let $X$ be a space in the sense of ``Convention'' in Section \ref{ssec:notions from coarse geometry}.

\begin{defn}[{\cite[Definition 3.1, 3.3]{CW04}}]\label{defn: epsilon supp}
For an operator $T \in C^*_u(X)$ and $\varepsilon>0$, the \emph{$\varepsilon$-support of $T$} is defined to be
\[
\supp_\varepsilon(T):=\{(x,y)\in X \times X: |T(x,y)| \geq \varepsilon\}.
\]
Also define the \emph{$\varepsilon$-truncation of $T$} to be
\[
T_\varepsilon(x,y):=
\begin{cases}
~T(x,y), & \mbox{if~} |T(x,y)| \geq \varepsilon; \\
~0, & \mbox{otherwise}.
\end{cases}
\]
\end{defn}

It is clear that $\supp(T_\varepsilon) = \supp_\varepsilon(T)$. We also record the following elementary result for later use. The proof is straightforward, hence omitted.
\begin{lem}\label{lem: support containment}
Given $T \in C^*_u(X)$ and $\varepsilon>0$, we have
\[
\overline{\supp(T_\varepsilon)} \subseteq \{\tilde{\omega} \in \beta(X \times X): |\overline{T}(\tilde{\omega})| \geq \varepsilon\} \subseteq  \overline{\supp(T_{\varepsilon/2})}.
\]
\end{lem}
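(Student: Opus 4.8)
The statement is Lemma~\ref{lem: support containment}, asserting for $T \in C^*_u(X)$ and $\varepsilon > 0$ the chain of inclusions
\[
\overline{\supp(T_\varepsilon)} \subseteq \{\tilde{\omega} \in \beta(X \times X): |\overline{T}(\tilde{\omega})| \geq \varepsilon\} \subseteq  \overline{\supp(T_{\varepsilon/2})}.
\]
The plan is to work entirely inside $\beta(X \times X)$, exploiting the fact that $\overline{T}$ is the (unique) continuous extension of $T \in \ell^\infty(X \times X)$ to the Stone-\v{C}ech compactification, so that $|\overline{T}|$ is continuous and the set $\{|\overline{T}| \geq \varepsilon\}$ is closed. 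For the first inclusion, I would start from the obvious pointwise inclusion $\supp(T_\varepsilon) = \supp_\varepsilon(T) = \{(x,y): |T(x,y)| \geq \varepsilon\} \subseteq \{\tilde\omega : |\overline T(\tilde\omega)| \geq \varepsilon\}$ (valid because $\overline T$ restricts to $T$ on $X \times X$), and then take closures: since the right-hand set is closed in $\beta(X\times X)$, it contains $\overline{\supp(T_\varepsilon)}$.

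For the second inclusion, the key point is that $\supp_{\varepsilon/2}(T)$ is a \emph{dense} subset of $\{\tilde\omega : |\overline T(\tilde\omega)| \geq \varepsilon\}$ in the relative topology, so its closure in $\beta(X \times X)$ contains that set. Concretely, let $\tilde\omega \in \beta(X\times X)$ with $|\overline T(\tilde\omega)| \geq \varepsilon$; I must show $\tilde\omega \in \overline{\supp(T_{\varepsilon/2})}$. Take any basic open neighbourhood $\overline{A}$ of $\tilde\omega$ in $\beta(X\times X)$ coming from a subset $A \subseteq X\times X$ with $\tilde\omega \in \overline{A}$. By continuity of $|\overline T|$, shrinking if necessary we may assume $|\overline T| > \varepsilon/2$ on $\overline A$ (using the open set $\{|\overline T| > \varepsilon/2\}$, which contains $\tilde\omega$). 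Then every honest point $(x,y) \in A$ satisfies $|T(x,y)| = |\overline T(x,y)| > \varepsilon/2$, hence $(x,y) \in \supp_{\varepsilon/2}(T)$; and $A$ is nonempty (indeed infinite, as $\tilde\omega$ lies in its closure) because $X\times X$ is dense in $\beta(X\times X)$. Therefore $\overline A \cap \supp(T_{\varepsilon/2}) \neq \emptyset$, and since the neighbourhood was arbitrary, $\tilde\omega \in \overline{\supp(T_{\varepsilon/2})}$.

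I do not anticipate a serious obstacle here; the only mild care required is the topological bookkeeping in the second inclusion — namely that one genuinely needs the strict inequality ``$\geq \varepsilon$ on the closed set versus $\geq \varepsilon/2$ on the support'' to have room to apply continuity (the $\varepsilon$-vs-$\varepsilon/2$ gap is exactly what lets the open set $\{|\overline T| > \varepsilon/2\}$ swallow a neighbourhood of the point where $|\overline T| \geq \varepsilon$), and that points of $X \times X$ are dense in every nonempty open subset of $\beta(X\times X)$. Both are standard features of Stone-\v{C}ech compactifications of discrete spaces. One could alternatively phrase the argument with nets/ultrafilters converging to $\tilde\omega$, but the open-neighbourhood formulation above is cleanest and is presumably why the authors omit the proof as ``straightforward''.
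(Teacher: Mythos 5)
Your proof is correct, and it is exactly the straightforward argument the paper has in mind when it omits the proof: the first inclusion follows because $\{\tilde\omega : |\overline{T}(\tilde\omega)| \geq \varepsilon\}$ is closed and contains $\supp_\varepsilon(T)$, and the second follows because any neighbourhood of a point where $|\overline{T}| \geq \varepsilon$ meets the open set $\{|\overline{T}| > \varepsilon/2\}$, hence by density of $X \times X$ contains an honest point of $\supp_{\varepsilon/2}(T)$. The only (harmless) slip is the parenthetical claim that $A$ is infinite, which can fail when $\tilde\omega$ is a principal point of $\beta(X \times X)$; but your argument only uses that $A$ is nonempty, so nothing is affected.
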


%

The following is a key result in \cite{CW04}:

\begin{prop}[{\cite[Theorem 3.5]{CW04}}]\label{prop: Thm 3.5 in CW04}
Let $I$ be an ideal in the uniform Roe algebra $C^*_u(X)$. For each $T \in I$ and $\varepsilon>0$, we have $T_\varepsilon \in I \cap \CC_u[X]$. Moreover, we have
\[
\overline{I \cap \CC_u[X]} = \overline{\{T_\varepsilon: T \in I, \varepsilon>0\}}
\]
where the closures are taken with respect to the operator norm.
\end{prop}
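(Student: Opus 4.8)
The plan is to prove the two assertions separately: first that $T_\varepsilon \in I \cap \CC_u[X]$ for every $T \in I$ and $\varepsilon > 0$, and then the norm-density statement, which is essentially a formal consequence once we know how $T_\varepsilon$ approximates $T$.

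For the first part, the key point is a functional-calculus trick applied to $|T| = (T^*T)^{1/2}$. Given $\varepsilon > 0$, choose a continuous function $g_\varepsilon \colon [0,\infty) \to [0,1]$ that vanishes on $[0, \varepsilon^2/4]$ (say) and equals $1$ on $[\varepsilon^2, \infty)$, and consider the element $P_\varepsilon := g_\varepsilon(T^*T) \in C^*_u(X)$. Since $I$ is an ideal and $T^*T \in I$, we have $P_\varepsilon \in I$ because $g_\varepsilon$ vanishes at $0$ (so $g_\varepsilon(T^*T)$ lies in the hereditary subalgebra generated by $T^*T$, hence in $I$). Now I would compare $T_\varepsilon$ with $T P_\varepsilon$ (or a symmetrized version). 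Entrywise, one checks that where $|T(x,y)|$ is large the operator $P_\varepsilon$ acts almost like a support projection in the relevant directions, and where $|T(x,y)|$ is small the product $TP_\varepsilon$ is small; the difference $T_\varepsilon - TP_\varepsilon$ can be controlled in operator norm using bounded geometry (the matrices have boundedly many nonzero entries per row/column on the relevant supports, so the $\ell^\infty$ bound on entries upgrades to an operator-norm bound). This realises $T_\varepsilon$ as a limit of elements of $I$, hence $T_\varepsilon \in I$; and $T_\varepsilon$ has finite propagation because $\overline{\supp(T_\varepsilon)} = \overline{\supp_\varepsilon(T)}$ is contained in $\overline{E_S}$ for suitable $S$ — here one uses that $T \in C^*_u(X)$ forces $\supp_\varepsilon(T)$ to lie in a finite-width entourage (approximate $T$ by a finite-propagation operator within $\varepsilon/3$, say, and note points with $|T(x,y)| \geq \varepsilon$ must be close to the support of the approximant). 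So $T_\varepsilon \in I \cap \CC_u[X]$.

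For the second part, the inclusion $\overline{\{T_\varepsilon : T \in I, \varepsilon > 0\}} \subseteq \overline{I \cap \CC_u[X]}$ is immediate from the first part. For the reverse inclusion, take $S \in I \cap \CC_u[X]$; since $S$ has finite propagation, only finitely many diagonal bands are involved, and $S_\varepsilon \to S$ in operator norm as $\varepsilon \to 0$ (again using bounded geometry: $\|S - S_\varepsilon\|$ is bounded by a constant depending on the propagation times $\varepsilon$, since $S - S_\varepsilon$ has entries of modulus $< \varepsilon$ supported in $E_{\ppg(S)}$). Thus $S$ lies in the closure of $\{T_\varepsilon : T \in I, \varepsilon > 0\}$, taking $T = S$.

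The main obstacle I expect is the first part — specifically, producing $T_\varepsilon$ from $I$ cleanly. The naive guess $T_\varepsilon = T \cdot \chi_{\supp_\varepsilon(T)}$ does not obviously lie in $I$ because the cutoff function $\chi_{\supp_\varepsilon(T)}$ need not be in $C^*_u(X)$ at all (it is a function on $X \times X$, not a multiplier), so one cannot just multiply. The right move is the functional-calculus argument above, but matching $T_\varepsilon$ to $T g_\varepsilon(T^*T)$ up to small error requires care: $g_\varepsilon(T^*T)$ is not literally a projection onto $\supp_\varepsilon$, and its off-diagonal spread must be shown not to destroy the comparison. The bounded-geometry input (converting $\ell^\infty$-smallness of matrix entries on a fixed-width entourage into operator-norm smallness, via a Schur-test type estimate) is the technical engine that makes all the error terms go away, and it should be invoked carefully but is standard.
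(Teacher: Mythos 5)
Your second half (the equality of closures) is fine and is the easy part: once $T_\varepsilon\in I\cap\CC_u[X]$ is known, one inclusion is trivial and the other follows from $\|A-A_\varepsilon\|\le N_{\ppg(A)}\,\varepsilon\to 0$ for $A\in I\cap\CC_u[X]$, exactly as you say. Note also that the paper itself does not prove this proposition but imports it from \cite[Theorem 3.5]{CW04}, so what has to be judged is whether your argument would serve as a proof of the hard assertion $T_\varepsilon\in I$ --- and there it has a genuine gap. The gap is the claimed comparison between $T_\varepsilon$ and $TP_\varepsilon$ with $P_\varepsilon=g_\varepsilon(T^*T)$: functional calculus of $T^*T$ implements a spectral (singular--value) cutoff, which is blind to matrix entries, whereas $T_\varepsilon$ is an entrywise cutoff, and no choice of $g_\varepsilon$ (nor a symmetrised variant $g(TT^*)\,T\,h(T^*T)$) brings the two within small norm distance. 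Concretely, place on infinitely many far-apart two-point clusters the blocks $B_1=\varepsilon I_2$ and $B_2=\tfrac{\varepsilon}{\sqrt{2}}\left(\begin{smallmatrix}1&1\\ 1&-1\end{smallmatrix}\right)$, and let $T\in\CC_u[X]$ be their block-diagonal sum. Then $B_1^*B_1=B_2^*B_2=B_1B_1^*=B_2B_2^*=\varepsilon^2I_2$, so any $g(T^*T)$ acts on both kinds of blocks by the same scalar $g(\varepsilon^2)$; but $(B_1)_\varepsilon=B_1$ while $(B_2)_\varepsilon=0$. Hence $\|Tg(T^*T)-T_\varepsilon\|\ \ge\ \varepsilon\max\{|1-g(\varepsilon^2)|,\,|g(\varepsilon^2)|\}\ \ge\ \varepsilon/2$ for \emph{every} continuous $g$ (with your specific $g_\varepsilon$ the error is exactly $\varepsilon$), so $T_\varepsilon$ is never approximated by your candidates, even though here $T_\varepsilon\in I$ for a trivial reason ($T_\varepsilon=qT$ for a diagonal projection $q\in\ell^\infty(X)$). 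Likewise the averaging projection $P=\bigoplus_k\frac1n J_n$ over $n$-point clusters (all entries $1/n<\varepsilon$, so $P_\varepsilon=0$, yet $P\,g(P^*P)=g(1)P$ has norm $|g(1)|$) refutes the sentence ``where $|T(x,y)|$ is small the product $TP_\varepsilon$ is small''; this is precisely the expander-type projection of Example \ref{ex: Wan07}, so the failure occurs on the very operators this subject is about, not on pathologies.

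The ``technical engine'' you invoke cannot absorb these errors either: for an operator $A$ supported in $E_S$ the Schur estimate gives $\|A\|\le N_S\|A\|_\infty$ with $N_S=\sup_x|B(x,S)|$, so entrywise smallness below the \emph{fixed} threshold $\varepsilon$ only yields a bound of order $N_S\varepsilon$, which is not small; in the first half of your argument there is no parameter tending to $0$ (in the second half you let $\varepsilon\to0$ with the propagation fixed, and there the estimate is used correctly). So the membership $T_\varepsilon\in I$ is not established. Any correct proof has to exploit the matrix/band structure of $T$ itself --- e.g.\ working with the diagonal subalgebra $\ell^\infty(X)\subseteq C^*_u(X)$ and the decomposition of $\supp_\varepsilon(T)\subseteq E_S$ into graphs of partial translations, as in the cited source --- information that spectral calculus of $T^*T$ cannot detect. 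The peripheral steps you do carry out are correct: $g_\varepsilon(T^*T)\in I$, and $\supp_\varepsilon(T)\subseteq E_S$ for some finite $S$, so that $T_\varepsilon\in\CC_u[X]$.
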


Now we recall the notion of geometric ideals from \cite{Wan07}:

\begin{defn}\label{defn: geometric ideals}
An ideal $I$ in the uniform Roe algebra $C^*_u(X)$ is called \emph{geometric} if $I \cap \CC_u[X]$ is dense in $I$.
\end{defn}

In \cite{CW04}, Chen and the first-named author provide a full description for geometric ideals in $C^*_u(X)$ in terms of invariant open subsets of $G(X)^{(0)}=\beta X$.
To outline their work, let us start with the following elementary observation:


\begin{lem}\label{lem: inv. open contains X}
Let $U$ be a non-empty invariant open subset of $\beta X$, then $X \subseteq U$. 
\end{lem}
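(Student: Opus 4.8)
The plan is to show that any non-empty invariant open subset $U \subseteq \beta X$ must contain every principal ultrafilter $x \in X$. Recall that $X$ sits inside $\beta X$ as the set of isolated (principal) points, and that the unit space of the coarse groupoid $G(X)$ is $\beta X$, with $X \times X \subseteq G(X)$ the pair groupoid part. The key observation is that for any two points $x, y \in X$, the pair $(x, y)$ is an element of $G(X)$ with $\s(x,y) = y$ and $\r(x,y) = x$, since $(x,y) \in E_{d(x,y)} \subseteq G(X)$.

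First I would pick a point $\omega \in U$; since $U$ is open and non-empty, and $X$ is dense in $\beta X$, I can find some $x_0 \in X \cap U$ (alternatively, if one wants to avoid invoking density here, one argues directly: $U$ being non-empty contains \emph{some} ultrafilter $\omega$, and one uses invariance to propagate; but the cleanest route uses that the principal ultrafilters form a dense subset). Once $x_0 \in X \cap U$ is in hand, the second step is to invoke invariance: for any $y \in X$, the element $\gamma = (y, x_0) \in X \times X \subseteq G(X)$ has source $\s(\gamma) = x_0 \in U$, so by the defining property of an invariant set, $\r(\gamma) = y \in U$. Since $y \in X$ was arbitrary, this gives $X \subseteq U$.

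The only genuinely delicate point is securing the initial $x_0 \in X \cap U$. If $U$ is merely known to be non-empty and invariant, pick any $\omega \in U$. If $\omega \in X$ we are done; otherwise $\omega \in \partial_\beta X$, and then one uses that $U$ is \emph{open} in $\beta X$ together with the fact that $X$ is dense in $\beta X$: every open neighbourhood of $\omega$ meets $X$, so $U \cap X \neq \emptyset$. This yields the required $x_0$, and the invariance argument above finishes the proof. I expect this density-plus-openness step to be the crux; everything else is a direct unwinding of the definitions of the pair groupoid structure on $G(X)$ and of invariance.
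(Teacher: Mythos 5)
Your proposal is correct and follows essentially the same route as the paper: density of $X$ in $\beta X$ together with openness gives a point $x_0 \in X \cap U$, and then invariance applied to pairs in $X \times X \subseteq G(X)$ propagates membership to every point of $X$. The only cosmetic difference is that you use the pair $(y,x_0)$ (source in $U$) while the paper uses $(x_0,y)$ (range in $U$), which is immaterial since invariance means $\r^{-1}(U)=\s^{-1}(U)$.
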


\begin{proof}
Since $X$ is dense in $\beta X$ and $U$ is open and non-empty, we obtain that $U \cap X \neq \emptyset$. Take an $x \in U \cap X$, then the pair $(x,y) \in G(X)$ for any $y\in X$. Thanks to the invariance of $U$, we obtain that $y\in U$. This implies that $X \subseteq U$.
\end{proof}

Given an invariant open subset $U \subseteq \beta X$, denote $G(X)_U:=G(X) \cap s^{-1}(U)$. Following \cite{CW04}, we define
\begin{align*}
I_c(U):&=\{f\in C_c(G(X)): f(\tilde{\omega})=0 \mbox{~for~any~} \tilde{\omega} \notin G(X)_U\}\\
&= \{T \in \CC_u[X]: \overline{T}(\tilde{\omega})=0 \mbox{~for~any~} \tilde{\omega} \notin G(X)_U\}.
\end{align*}
Obviously, $I_c(U)$ is a two-sided ideal in $C_c(G(X))$. Denote its closure in $C^*_r(G(X))$ by $I(U)$, which is a geometric ideal in $C^*_r(G(X)) \cong C^*_u(X)$ from the definition (see also \cite[Lemma 5.1]{CW04}). This leads to the following:

\begin{defn}\label{defn:geometric ideal}
For an invariant open subset $U \subseteq \beta X$, the ideal $I(U)$ is called the \emph{geometric ideal associated to $U$}.
\end{defn}

For later use, we record the following alternative description for the geometric ideal $I(U)$:

\begin{lem}\label{lem: groupoid C*-alg char for I(U)}
Let $U$ be an invariant open subset of $\beta X$. Then the ideal $I(U)$ is isomorphic to the reduced groupoid $C^*$-algebra $C^*_r(G(X)_U)$.
\end{lem}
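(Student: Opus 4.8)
The statement identifies the geometric ideal $I(U) \subseteq C^*_r(G(X)) \cong C^*_u(X)$ with the reduced groupoid $C^*$-algebra of the reduction $G(X)_U$. Since $U$ is invariant, $G(X)_U = \G_U$ is itself a locally compact, Hausdorff, \'etale (and principal) groupoid, so $C^*_r(G(X)_U)$ makes sense. The plan is to produce the isomorphism at the level of the dense $\ast$-subalgebras $C_c(G(X)_U)$ and $I_c(U)$, and then argue that it extends to a $C^*$-isomorphism on the completions by checking it respects the reduced norms.

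\textbf{Key steps.} First I would observe that because $U$ is open and invariant, $G(X)_U = \r^{-1}(U) = \s^{-1}(U)$ is open in $G(X)$; hence restriction/extension-by-zero gives an identification of $C_c(G(X)_U)$ with exactly the subalgebra of $f \in C_c(G(X))$ supported in $G(X)_U$. By the second description of $I_c(U)$ in the excerpt (the one phrased via $\{T \in \CC_u[X] : \overline T(\tilde\omega) = 0 \text{ for } \tilde\omega \notin G(X)_U\}$, transported through $\theta$), this subalgebra is precisely $I_c(U)$; moreover the convolution and involution on $C_c(G(X)_U)$ agree with those inherited from $C_c(G(X))$ — here one uses that for $\gamma \in G(X)_U$ and $\alpha \in G(X)_{\s(\gamma)}$ one automatically has $\gamma\alpha^{-1}, \alpha \in G(X)_U$ by invariance, so no terms are lost in the sums. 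Thus $\Theta$ (equivalently $\theta$) restricts to a $\ast$-isomorphism $C_c(G(X)_U) \to I_c(U)$.

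Next I would compare the reduced norms. For $x \in U = G(X_U)^{(0)}$, the fibre $(G(X)_U)_x = G(X)_x$ coincides with the full fibre of $G(X)$ over $x$ (again by invariance: every $\gamma$ with $\s(\gamma) = x \in U$ has $\r(\gamma) \in U$). Hence the regular representation $\lambda_x^{G(X)_U}$ of $C_c(G(X)_U)$ on $\ell^2((G(X)_U)_x) = \ell^2(G(X)_x)$ is literally the restriction of $\lambda_x^{G(X)}$. Therefore $\|f\|_r^{G(X)_U} = \sup_{x \in U}\|\lambda_x(f)\| \le \|f\|_r^{G(X)}$ for $f \in C_c(G(X)_U)$, so the isomorphism is norm-decreasing and extends to a surjection $C^*_r(G(X)_U) \to I(U)$. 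For the reverse inequality one restricts attention to fibres over $U$: an element of $I_c(U)$, viewed in $C_c(G(X))$, has $\lambda_x = 0$ for $x \notin U$ (its matrix coefficients vanish off $G(X)_U$, and for $x \notin U$ the fibre $G(X)_x$ meets $G(X)_U$ trivially), whence $\|f\|_r^{G(X)} = \sup_{x \in U}\|\lambda_x(f)\| = \|f\|_r^{G(X)_U}$; this should perhaps be stated as the key lemma and proved by a short computation with the formula (\ref{reduced algebra defn}). Combining the two inequalities shows the map is isometric on the dense subalgebra, hence extends to the desired $C^*$-isomorphism $C^*_r(G(X)_U) \cong I(U)$.

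\textbf{Main obstacle.} The genuinely delicate point is the reverse norm inequality, i.e. showing that for $f \in I_c(U)$ the reduced norm computed in the big groupoid sees only the fibres over $U$ — equivalently that $\lambda_x(f) = 0$ for $x \in \beta X \setminus U$. This is where invariance of $U$ is used crucially (without it $G(X)_U$ would not even be a groupoid), and one must be a little careful that $f \in I_c(U)$ really does kill all of $\ell^2(G(X)_x)$ for such $x$, not merely vanish on the diagonal; the formula (\ref{reduced algebra defn}) makes this transparent but it is worth spelling out. Everything else (the algebraic identification of $C_c(G(X)_U)$ with $I_c(U)$, compatibility of the operations, and the passage to completions) is routine and can be cited from or modelled on \cite[Lemma 5.1]{CW04} and the standard theory of \'etale groupoid $C^*$-algebras.
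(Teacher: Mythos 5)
Your fibrewise norm comparison is fine (and correctly uses invariance of $U$ both to see that $(G(X)_U)_x=G(X)_x$ for $x\in U$ and that $\lambda_x(f)=0$ for $x\notin U$, $f\in I_c(U)$); this is essentially a proof of the standard fact, which the paper simply quotes, that $C^*_r(G(X)_U)$ embeds isometrically onto the closure of $C_c(G(X)_U)$ inside $C^*_r(G(X))$ when $U$ is open and invariant. The genuine gap is earlier, in the algebraic identification: your claim that the extension-by-zero image of $C_c(G(X)_U)$ ``is precisely $I_c(U)$'' is false. An element of $I_c(U)$ is only required to \emph{vanish} outside $G(X)_U$; its support, being the \emph{closure} of its non-vanishing set, may well meet the boundary of the open set $G(X)_U$, whereas elements of $C_c(G(X)_U)$ (extended by zero) have compact support contained in $G(X)_U$. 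For a concrete failure take $U=X$ and $T$ the diagonal operator on $\ell^2(\NN)$ with entries $1/n$: then $\overline{T}\in I_c(X)$, but $\supp(\overline{T})=\overline{E_0}\not\subseteq X\times X=G(X)_X$. So in general $C_c(G(X)_U)\subsetneq I_c(U)$, and your argument only identifies $C^*_r(G(X)_U)$ with $\overline{C_c(G(X)_U)}$, not yet with $I(U)=\overline{I_c(U)}$.

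What is missing is exactly the step the paper's proof is devoted to: showing that $C_c(G(X)_U)$ and $I_c(U)$ have the \emph{same closure} in $C^*_r(G(X))$. The paper does this with the $\varepsilon$-truncations: for $T\in I_c(U)$ (which has finite propagation), bounded geometry gives $\|T-T_\varepsilon\|\to 0$ as $\varepsilon\to 0$, and $\supp(\overline{T_\varepsilon})=\overline{\supp_\varepsilon(T)}$ is a compact set on which $|\overline{T}|\geq\varepsilon$, hence contained in $G(X)_U$ because $\overline{T}$ vanishes off $G(X)_U$; thus $T_\varepsilon\in C_c(G(X)_U)$ and $T\in\overline{C_c(G(X)_U)}$. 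Adding this truncation argument (or some equivalent density argument) closes the gap; note also that what you flagged as the ``main obstacle'' (the reverse norm inequality) is in fact the routine, citable part, while the point you passed over is the one that needs the specific coarse-geometric input (finite propagation plus bounded geometry).
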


\begin{proof}
This was implicitly contained in the proof of \cite[Proposition 5.5]{CW04}. For convenience to the readers, we include a proof here. By definition, $C^*_r(G(X)_U)$ is isomorphic to the norm closure of $C_c(G(X)_U)$ in $C^*_r(G(X))$. Note that
\[
C_c(G(X)_U) = \{T \in \CC_u[X]: \supp(\overline{T}) \subseteq G(X)_U\} \subseteq I_c(U). 
\]
On the other hand, for $T \in I_c(U)$ we have $T = \lim_{\varepsilon \to 0} T_\varepsilon$ since $T$ has finite propagation. Note that $\supp(\overline{T_\varepsilon}) \subseteq G(X)_U$, which implies $T_\varepsilon \in C_c(G(X)_U)$. Hence we obtain that $C_c(G(X)_U)$ and $I_c(U)$ have the same closure in $C^*_r(G(X))$, which concludes the proof.
\end{proof}

\begin{ex}\label{eg: compact ideal}
For $U=X$, it follows directly from definition that $G(X)_X = X \times X$. Hence combining Proposition \ref{prop: groupoid char for uniform Roe alg} and Lemma \ref{lem: groupoid C*-alg char for I(U)}, we obtain that the geometric ideal associated to $X$ is $I(X)=\K(\ell^2(X))$. On the other hand, for $U=\beta X$ it is clear that $I(\beta X) = C^*_u(X)$.
\end{ex}

Conversely, following \cite[Section 4 and 5]{CW04} we can associate an invariant open subset of $\beta X$ to any ideal in the uniform Roe algebra. More precisely, let $I$ be an ideal in the uniform Roe algebra $C^*_u(X)$. Define:
\begin{equation}\label{EQ: defn for U(I)}
U(I) :=\bigcup_{T \in I, \varepsilon >0} \overline{\r(\supp_\varepsilon(T))} = \bigcup_{T \in I \cap \CC_u[X], \varepsilon >0} \overline{\r(\supp_\varepsilon(T))},
\end{equation}
where the second equality follows directly from Proposition \ref{prop: Thm 3.5 in CW04}. Also \cite[Lemma 5.2]{CW04} implies that $U(I)$ is an invariant open subset of $\beta X$. Furthermore, as a special case of \cite[Theorem 6.3]{CW04}, we have the following:

\begin{prop}\label{prop: 1-1 btw ideals and open inv subsets}
For a space $(X,d)$, the map $I \mapsto U(I)$ provides an isomorphism between the lattice of all geometric ideals in $C^*_u(X)$ and the lattice of all invariant open subsets of $\beta X$, with the inverse map given by $U \mapsto I(U)$.
\end{prop}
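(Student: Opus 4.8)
The plan is to verify that the two maps $I \mapsto U(I)$ and $U \mapsto I(U)$ are mutually inverse lattice isomorphisms, separately checking (a) $U(I(U)) = U$ for every invariant open $U \subseteq \beta X$, (b) $I(U(I)) = I$ for every geometric ideal $I$, and (c) both maps are order-preserving (which, together with bijectivity, gives that they are lattice isomorphisms since a bijection between posets that preserves order in both directions automatically preserves meets and joins).

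First I would establish the order-preserving statements, which are the easiest. If $U_1 \subseteq U_2$ are invariant open subsets then $G(X)_{U_1} \subseteq G(X)_{U_2}$, so $I_c(U_1) \subseteq I_c(U_2)$ and hence $I(U_1) \subseteq I(U_2)$ after taking closures. Conversely if $I_1 \subseteq I_2$ are ideals then directly from the definition \eqref{EQ: defn for U(I)} of $U(I)$ as a union over $T \in I$ we get $U(I_1) \subseteq U(I_2)$.

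Next, for (a): fix an invariant open $U$. The inclusion $U(I(U)) \subseteq U$ follows because for $T \in I_c(U)$ we have $\overline{T}$ supported in $G(X)_U$, so $\overline{\r(\supp_\varepsilon(T))} \subseteq \overline{\r(G(X)_U)} $; one checks using invariance of $U$ and Lemma~\ref{lem: inv. open contains X} that $\r(G(X)_U) \subseteq U$ and, since $U$ is open hence... actually $U$ need not be closed, so one must be slightly careful — the point is that $\r(\supp_\varepsilon(T))$ is a subset of $X \cap U$ which is discrete, and its closure in $\beta X$ lands in $U$ precisely because $\overline{\supp(T_\varepsilon)}$ is a compact subset of the open set $G(X)_U$ (as $T_\varepsilon$ has finite propagation and finitely many nonzero entries after truncation is \emph{not} true, but $\overline{\supp(T_\varepsilon)} \subseteq \overline{E_R}$ for $R = \ppg(T)$, and one intersects with $s^{-1}(U)$...). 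The cleanest route is: $\overline{\r(\supp_\varepsilon(T))} = \r\big(\overline{\supp(T_\varepsilon)}\big)$ since $\r$ is continuous and the source is compact, and $\overline{\supp(T_\varepsilon)} \subseteq G(X)_U$ gives $\r(\overline{\supp(T_\varepsilon)}) \subseteq \r(G(X)_U) \subseteq U$, the last inclusion by invariance. For the reverse inclusion $U \subseteq U(I(U))$, one takes $\omega \in U$ and must produce an operator $T \in I_c(U)$ and $\varepsilon > 0$ with $\omega \in \overline{\r(\supp_\varepsilon(T))}$; the natural candidate is a suitable diagonal projection $\chi_A$ for $A \subseteq X$ with $\omega \in \overline{A}^{\beta X}$ and $\overline{A}^{\beta X} \subseteq U$ — such $A$ exists because $\beta X$ has a basis of clopen sets $\overline{A}$ and $U$ is open, and then one verifies $\chi_A \in I_c(U)$ using that $\overline{A} \times \overline{A} \cap G(X) \subseteq G(X)_U$, again by invariance.

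Finally (b): given a geometric ideal $I$, I would show $I(U(I)) = I$. The inclusion $I(U(I)) \supseteq I$: since $I$ is geometric it suffices to show $I \cap \CC_u[X] \subseteq I_c(U(I))$; for $T \in I \cap \CC_u[X]$ and any $\tilde\omega = (\alpha,\gamma) \in G(X)$ with $\overline{T}(\tilde\omega) \neq 0$ one has, picking $\varepsilon$ with $|\overline{T}(\tilde\omega)| > \varepsilon$, that $\tilde\omega \in \overline{\supp_\varepsilon(T)}$ hence $\alpha = \r(\tilde\omega) \in \overline{\r(\supp_\varepsilon(T))} \subseteq U(I)$, and by invariance $\gamma \in U(I)$ too, so $\tilde\omega \in G(X)_{U(I)}$; thus $T \in I_c(U(I))$. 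The inclusion $I(U(I)) \subseteq I$: this is where the real content lies, and I expect \textbf{this to be the main obstacle}. It is the statement, essentially from \cite[Theorem 6.3]{CW04}, that $I$ already contains every finite-propagation operator whose extended support lies in $G(X)_{U(I)}$. The strategy is to take $T \in I_c(U(I)) \cap \CC_u[X]$, cut it into a finite sum of partial-translation pieces $T = \sum_{i} T_i$ with $\supp(T_i) \subseteq \gr(t_i)$, reduce to showing each $T_i \in I$, and then for a single partial translation approximate $\r(\supp(T_i))$ from inside $U(I)$: by definition of $U(I)$ as a union of sets $\overline{\r(\supp_\eta(S))}$ over $S \in I \cap \CC_u[X]$, compactness of $\overline{\r(\supp(T_i))}$ lets one cover it by finitely many such sets, and a partition-of-unity / functional-calculus argument on the diagonal (multiplying $T_i$ on the left by diagonal operators built from the $S$'s, which lie in $I$ since $I$ is an ideal) shows $T_i$ is a norm-limit of elements of $I$. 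I would lean on Proposition~\ref{prop: Thm 3.5 in CW04} and the structure of $I_c$ throughout, and cite \cite[Theorem 6.3]{CW04} for the bijectivity if a self-contained argument becomes too long. Combining (a), (b), (c) yields that $I \mapsto U(I)$ is a bijection preserving order in both directions, hence a lattice isomorphism with inverse $U \mapsto I(U)$.
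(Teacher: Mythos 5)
Your proposal is correct where it gives detail, but be aware that the paper offers no argument at all for this statement: Proposition \ref{prop: 1-1 btw ideals and open inv subsets} is simply quoted as a special case of \cite[Theorem 6.3]{CW04}. So your verification of order-preservation in both directions, of $U(I(U))=U$, and of the easy inclusion $I\subseteq I(U(I))$ already goes beyond what the paper records, and your declared fallback of citing \cite[Theorem 6.3]{CW04} for the remaining inclusion is exactly the paper's route. Two small points in the parts you did write out: in showing $U(I(U))\subseteq U$, the inclusion $\overline{\supp(T_\varepsilon)}\subseteq G(X)_U$ for $T\in I_c(U)$ is not immediate from the definition of $I_c(U)$ (the support of $\overline{T}$ is a closure and could a priori touch the boundary of $G(X)_U$); it follows from Lemma \ref{lem: support containment}, since $\overline{\supp(T_\varepsilon)}\subseteq\{\tilde\omega:|\overline{T}(\tilde\omega)|\ge\varepsilon\}\subseteq\{\overline{T}\neq 0\}\subseteq G(X)_U$. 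Similarly your use of $\varepsilon$ versus $\varepsilon/2$ truncations in the easy half of (b) needs the two-sided form of that lemma, but this is cosmetic.

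The one genuinely thin spot is the mechanism for $I(U(I))\subseteq I$. Cutting $T$ into partial-translation pieces and invoking a ``partition-of-unity / functional-calculus argument'' does not by itself produce elements of $I$; what you actually need is the lemma that $\chi_A\in I$ whenever $A=\r(\supp_\eta(S))$ for some $S\in I\cap\CC_u[X]$ and $\eta>0$. This can be proved concretely: with $R=\ppg(S)$, partition $X=Y_1\sqcup\cdots\sqcup Y_M$ into finitely many $2R$-separated subsets (bounded geometry), so that each $\chi_{Y_k}SS^*\chi_{Y_k}$ is a \emph{diagonal} element of $I$ whose entries on $A\cap Y_k$ are at least $\eta^2$; compressing by $\chi_{A\cap Y_k}$ and multiplying by the bounded diagonal operator with entries $1/(SS^*)(x,x)$ on $A\cap Y_k$ gives $\chi_{A\cap Y_k}\in I$, and summing over $k$ gives $\chi_A\in I$. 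Granting this, your compactness step finishes the proof cleanly and the partial-translation decomposition is unnecessary: $\overline{\r(\supp_\varepsilon(T))}$ is compact, the sets $\overline{\r(\supp_\eta(S))}$ are clopen and cover $U(I)$, so $\r(\supp_\varepsilon(T))\subseteq A_1\cup\cdots\cup A_n$ with each $\chi_{A_j}\in I$, whence $T_\varepsilon=\chi_{A_1\cup\cdots\cup A_n}T_\varepsilon\in I$ and $T=\lim_{\varepsilon\to 0}T_\varepsilon\in I$ (using Proposition \ref{prop: Thm 3.5 in CW04} and finite propagation). With that lemma supplied, your argument is a complete, self-contained proof, which is more than the paper itself provides.
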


Proposition \ref{prop: 1-1 btw ideals and open inv subsets} shows that geometric ideals in $C^*_u(X)$ can be fully determined by invariant open subsets of $\beta X$. In contrast, general ideals in $C^*_u(X)$ cannot be characterised merely by the associated subsets of $\beta X$. For example, direct calculations show that the associated invariant open subsets for the ideal $I_G$ defined in Section \ref{ssec:uniform Roe alg} and for the ideal of compact operators in $\B(\ell^2(X))$ are the same, both of which equal $X$ (see also Example \ref{eg: compact ideal} and \ref{eg: ghost ideal} below).

Hence as pointed out in Section \ref{sec:intro}, the study of the ideal structure for the uniform Roe algebra can be reduced to analyse the lattice (where the order is given by inclusion)
\[
\mathfrak{I}_U=\{I \mbox{~is~an~ideal~in~} C^*_u(X): U(I)=U\}
\]
in (\ref{EQ:set of ideals}) for each invariant open subset $U \subseteq \beta X$. The following result busts the smallest element in $\mathfrak{I}_U$:

\begin{prop}\label{cor: geometric ideals are smallest}
Let $U$ be an invariant open subset of $\beta X$.
Then the geometric ideal $I(U)$ is the smallest element in the lattice $\mathfrak{I}_U$ in (\ref{EQ:set of ideals}).
\end{prop}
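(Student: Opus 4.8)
The plan is to show two things: first, that $U(I(U)) = U$, so that $I(U)$ genuinely belongs to the lattice $\mathfrak{I}_U$; and second, that $I(U) \subseteq I$ for every ideal $I$ with $U(I) = U$, so that $I(U)$ is the smallest such ideal. The second part is where the real content lies, and it will follow from the characterization of $U(I)$ in (\ref{EQ: defn for U(I)}) together with the one-to-one correspondence of Proposition \ref{prop: 1-1 btw ideals and open inv subsets}.

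For the first part, I would argue as follows. On the one hand, for any finite-propagation $T \in I_c(U)$ and any $\varepsilon > 0$, the set $\supp_\varepsilon(T) \subseteq \supp(T) \subseteq G(X)_U \cap (X \times X)$, and since $U$ is invariant one checks $\r(\supp_\varepsilon(T)) \subseteq U \cap X \subseteq U$; as $U$ is open we may not immediately conclude $\overline{\r(\supp_\varepsilon(T))} \subseteq U$, but in fact the finitely many partial translations decomposing $\supp(T)$ have graphs whose closures land in $\overline{G(X)_U}$... more cleanly: $\overline{\supp_\varepsilon(T)}^{\beta(X\times X)} \subseteq \overline{\supp(\overline{T})} \subseteq \overline{G(X)_U}$, and combined with invariance of $U$ and Lemma \ref{lem: support containment} one gets $\overline{\r(\supp_\varepsilon(T))} \subseteq U$. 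Taking the union over $T \in I_c(U)$ and $\varepsilon > 0$, and using (\ref{EQ: defn for U(I)}) together with Proposition \ref{prop: Thm 3.5 in CW04} (so that only finite-propagation elements of $I(U)$ matter, and these are limits of truncations of elements of $I_c(U)$), we obtain $U(I(U)) \subseteq U$. For the reverse inclusion $U \subseteq U(I(U))$, given $\omega \in U$, since $U$ is open there is a basic clopen neighborhood $\overline{A} \subseteq U$ with $A \subseteq X$; then the rank-one-ish diagonal operator $\chi_A \in I_c(U)$ (indeed its matrix is supported on the diagonal, which lies in $G(X)_U$ since $A \subseteq U \cap X$ and $U$ is invariant), and $\overline{\r(\supp_{1/2}(\chi_A))} = \overline{A} \ni \omega$. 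Hence $U(I(U)) = U$.

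For the second part, let $I$ be any ideal with $U(I) = U$. By Proposition \ref{prop: 1-1 btw ideals and open inv subsets}, the geometric ideal $I(U(I))$ corresponds to $U(I)$, and the standard fact from \cite{CW04} (alluded to in Section \ref{sec:intro}: ``$I(U(I)) \subseteq I$ for any ideal $I$'') gives what we want. Concretely, I would prove $I(U) = I(U(I)) \subseteq I$ directly: it suffices to show $I_c(U) \subseteq I$, since $I(U)$ is the closure of $I_c(U)$ and $I$ is closed. So take $S \in I_c(U)$, a finite-propagation operator with $\overline{S}$ supported in $G(X)_U$. Decompose $\supp(S)$ into finitely many graphs of partial translations; it then suffices to treat the case where $S$ is a partial translation operator $V$ with $\gr(V) \subseteq G(X)_U \cap (X\times X)$, i.e. $\r(\gr(V)) \subseteq U \cap X$. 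Since $U = U(I) = \bigcup_{T \in I\cap\CC_u[X],\,\varepsilon>0}\overline{\r(\supp_\varepsilon(T))}$ and $\overline{\r(\gr(V))}$ is a compact subset of this open cover, there are finitely many $T_i \in I \cap \CC_u[X]$ and $\varepsilon_i>0$ with $\overline{\r(\gr(V))} \subseteq \bigcup_i \overline{\r(\supp_{\varepsilon_i}(T_i))}$; a partition-of-unity / clopen-refinement argument on $\beta X$ lets us assume a single $T \in I$ with $\overline{\r(\gr(V))} \subseteq \overline{\r(\supp_{\varepsilon}(T))}$, hence (working in $X$, using that $\r(\gr(V))$ is already closed-and-open relative enough, or taking the trace on $X$) $\r(\gr(V)) \subseteq \r(\supp_\varepsilon(T))$. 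Now for each $x$ in the (bounded-geometry) domain of $V$ there is some $z$ with $|T(z,x)| \ge \varepsilon$ and $d(z,x)$ bounded; using bounded geometry one selects, via a finite coloring, finitely many partial translations $w_j$ with $\gr(w_j) \subseteq \supp_\varepsilon(T_{\varepsilon}) $ covering these pairs, so that $V = \sum_j V U_{w_j}^* (\text{something}) \cdots$ — more precisely $V$ factors as $V = A \cdot T_\varepsilon \cdot B$ for suitable finite-propagation $A, B$ (left-multiply $T_\varepsilon$ by a partial isometry moving $z \mapsto V(x)$-indices, right-multiply by a diagonal projection), and since $T_\varepsilon \in I$ by Proposition \ref{prop: Thm 3.5 in CW04}, we get $V \in I$. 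Summing over the finitely many pieces gives $S \in I$, hence $I_c(U) \subseteq I$ and $I(U) \subseteq I$.

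\textbf{Main obstacle.} The delicate point is the factorization $V = A T_\varepsilon B$ in the second part: one must pass from the mere containment of ranges $\r(\gr(V)) \subseteq \r(\supp_\varepsilon(T))$ to an actual operator identity expressing $V$ through $T_\varepsilon$ using only finite-propagation multipliers, and this requires a careful bounded-geometry coloring argument to organize the ``witnessing'' pairs $(z,x)$ into finitely many partial translations of bounded width. This is exactly the mechanism behind \cite[Theorem 6.3]{CW04}, so in the actual write-up I expect the cleanest route is simply to invoke Proposition \ref{prop: 1-1 btw ideals and open inv subsets} and the inclusion $I(U(I)) \subseteq I$ recorded there, reducing the whole proposition to the short observation $U(I(U)) = U$ from the first paragraph. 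I would also need to double-check the subtlety that $\r(\supp_\varepsilon(T))$ need not be closed in $X$ while $U$ is defined via closures in $\beta X$; this is handled by the fact that a clopen subset of $\beta X$ meets $X$ in a set whose closure is that clopen set, so intersecting everything with $X$ loses nothing.
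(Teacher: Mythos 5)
Your overall structure is right, and the first half (checking $U(I(U))=U$, so that $I(U)$ really lies in $\mathfrak{I}_U$) is fine --- it is immediate from Proposition \ref{prop: 1-1 btw ideals and open inv subsets}, and your clopen-neighbourhood argument, while more laborious than necessary, works. The problem is the second half. The inclusion $I(U(I))\subseteq I$ is \emph{not} ``recorded'' in Proposition \ref{prop: 1-1 btw ideals and open inv subsets}: that proposition only asserts a bijection between \emph{geometric} ideals and invariant open subsets, and says nothing about how $I(U(I))$ sits inside a general (possibly non-geometric) ideal $I$. The sentence from the introduction that you quote is precisely the assertion that has to be proved at this point in the paper, so invoking it is circular. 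Your fallback --- re-deriving the hard direction of \cite[Theorem 6.3]{CW04} by factoring a partial translation $V$ with $\r(\gr(V))\subseteq\r(\supp_\varepsilon(T))$ as $V=AT_\varepsilon B$ --- is exactly the step you flag as the ``main obstacle'' and is left unproved; as written it is a placeholder rather than an argument, and filling it in would require the full bounded-geometry colouring machinery of \cite{CW04}.

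The intended proof sidesteps all of this with one observation you did not make: set $\mathring{I}:=\overline{I\cap\CC_u[X]}$. Since $I$ is closed one has $\mathring{I}\cap\CC_u[X]=I\cap\CC_u[X]$, which is dense in $\mathring{I}$ by construction, so $\mathring{I}$ is a \emph{geometric} ideal; moreover the second equality in (\ref{EQ: defn for U(I)}) (a consequence of Proposition \ref{prop: Thm 3.5 in CW04}) gives $U(\mathring{I})=U(I)=U$. Now Proposition \ref{prop: 1-1 btw ideals and open inv subsets} may legitimately be applied to the geometric ideal $\mathring{I}$, yielding $I(U)=I(U(\mathring{I}))=\mathring{I}=\overline{I\cap\CC_u[X]}\subseteq I$. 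This is the content of Lemma \ref{prop: geometric ideals are smallest}, and it uses the cited bijection only where it is actually available, with no need to reconstruct any factorisation.
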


The proof of Proposition \ref{cor: geometric ideals are smallest} follows directly from the following lemma:

\begin{lem}\label{prop: geometric ideals are smallest}
Let $(X,d)$ be a space and $I$ an ideal in $C^*_u(X)$. Then we have 
\[
I(U(I)) = \overline{I \cap \CC_u[X]},
\]
where the closure is taken in $C^*_u(X)$. Hence we have $I(U(I)) \subseteq I$.
\end{lem}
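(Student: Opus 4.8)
The plan is to establish the two-sided inclusion $I(U(I)) = \overline{I \cap \CC_u[X]}$, after which $I(U(I)) \subseteq I$ follows trivially since $\overline{I \cap \CC_u[X]} \subseteq \overline{I} = I$. For the inclusion $\overline{I \cap \CC_u[X]} \subseteq I(U(I))$, I would argue directly from the definitions: given $T \in I \cap \CC_u[X]$, it suffices to show $T \in I(U(I))$, and since $I(U(I))$ is closed it is enough to check this on a dense subset. As $T$ has finite propagation, $T = \lim_{\varepsilon \to 0} T_\varepsilon$ in operator norm (the $\varepsilon$-truncations converge because the entries are uniformly bounded and supported in a fixed entourage), so it suffices to show each $T_\varepsilon \in I_c(U(I))$. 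But $\supp(\overline{T_\varepsilon}) = \overline{\supp_\varepsilon(T)}$, and by the very definition of $U(I)$ in Equation~(\ref{EQ: defn for U(I)}) we have $\overline{\r(\supp_\varepsilon(T))} \subseteq U(I)$; combined with the fact that $\supp_\varepsilon(T)$ lies in some entourage $E_R$ and a small argument about the source map, this forces $\supp(\overline{T_\varepsilon}) \subseteq G(X)_{U(I)}$, hence $T_\varepsilon \in I_c(U(I))$ and so $T \in I(U(I))$.

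For the reverse inclusion $I(U(I)) \subseteq \overline{I \cap \CC_u[X]}$, recall that $I(U(I))$ is the closure of $I_c(U(I))$, so it suffices to show $I_c(U(I)) \subseteq \overline{I \cap \CC_u[X]}$. Take $S \in I_c(U(I))$; this is a finite propagation operator with $\supp(\overline{S}) \subseteq G(X)_{U(I)}$. The strategy is to approximate $S$ by operators of the form $ATB$ or finite sums of truncations of elements of $I$, exploiting that $U(I)$ is covered by the sets $\overline{\r(\supp_\varepsilon(T))}$ over $T \in I \cap \CC_u[X]$ and $\varepsilon > 0$. Concretely, since $\r(\supp(\overline{S}))$ is a compact subset of the open set $U(I)$ and $U(I) = \bigcup_{T,\varepsilon} \overline{\r(\supp_\varepsilon(T))}$, one extracts a finite subcover and then, using bounded geometry and the partial-translation decomposition of the finite entourage containing $\supp(S)$, writes $S$ (up to operator-norm error) as a combination involving diagonal multiplication operators and the $T$'s and their truncations, all of which lie in $I \cap \CC_u[X]$ since $I$ is an ideal and $T_\varepsilon \in I$ by Proposition~\ref{prop: Thm 3.5 in CW04}. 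This is essentially the content of \cite[Theorem 6.3]{CW04} specialized to our situation, and I would either invoke that theorem directly or reproduce its short argument here.

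The main obstacle is the reverse inclusion, specifically the passage from "$\r(\supp(\overline{S}))$ is covered by finitely many $\overline{\r(\supp_\varepsilon(T_i))}$" to an actual operator-norm approximation of $S$ by elements of $I \cap \CC_u[X]$. The subtlety is that knowing the \emph{range projection} of the support of $S$ is controlled by those of the $T_i$ does not immediately give an approximation of the \emph{matrix entries} of $S$; one needs to use that the pair groupoid structure lets you multiply $T_i$ on the left and right by appropriate partial isometries or diagonal cutoffs to move mass into the correct matrix positions, and bounded geometry is what keeps the number of such operations and the norms under control. I expect the cleanest route is to lean on Proposition~\ref{prop: 1-1 btw ideals and open inv subsets} and Lemma~\ref{lem: groupoid C*-alg char for I(U)}: since both $I(U(I))$ and $\overline{I \cap \CC_u[X]}$ are geometric ideals, it suffices to check they have the same associated invariant open subset, i.e. $U(I(U(I))) = U(\overline{I \cap \CC_u[X]}) = U(I)$, the first equality being part of Proposition~\ref{prop: 1-1 btw ideals and open inv subsets} and the second following because $\overline{I \cap \CC_u[X]}$ and $I$ manifestly have the same $\varepsilon$-supports by Proposition~\ref{prop: Thm 3.5 in CW04}. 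This reduces the whole lemma to the already-established correspondence and avoids re-deriving the approximation argument from scratch.
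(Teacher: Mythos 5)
Your final reduction---observing that $\overline{I \cap \CC_u[X]}$ is a geometric ideal whose associated invariant open subset equals $U(I)$ (since its finite-propagation part is exactly $I \cap \CC_u[X]$), and then invoking the correspondence of Proposition \ref{prop: 1-1 btw ideals and open inv subsets}---is precisely the paper's proof, so the proposal is correct and takes essentially the same approach. The hands-on two-sided-inclusion sketch in your first two paragraphs becomes unnecessary once you take that route.
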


\begin{proof}
Denoting $\mathring{I}:=\overline{I \cap \CC_u[X]}$, it is clear that $\mathring{I} \cap \CC_u[X] = I \cap \CC_u[X]$. This implies that $\mathring{I}$ is a geometric ideal in $C^*_u(X)$, and hence $\mathring{I} = I(U(\mathring{I}))$ by Proposition \ref{prop: 1-1 btw ideals and open inv subsets}. By definition, we have
\[
U(\mathring{I}) =  \bigcup_{T \in \mathring{I} \cap \CC_u[X], \varepsilon >0} \overline{\r(\supp_\varepsilon(T))} = \bigcup_{T \in I \cap \CC_u[X], \varepsilon >0} \overline{\r(\supp_\varepsilon(T))} = U(I).
\]
Therefore, we obtain that $I(U(I)) = I(U(\mathring{I})) = \mathring{I}=\overline{I \cap \CC_u[X]}$ as required.
\end{proof}

We would like to recall another description for geometric ideals based on the notion of ideals in spaces introduced in \cite{CW04}. It has the advantage of playing within the given metric space, rather than going to the \emph{mysterious} Stone-\v{C}ech boundary, and hence will help us to step over several technical gaps in Section \ref{sec:partial A and partial ONL}.

\begin{defn}[{\cite[Definition 6.1]{CW04}}]\label{defn: ideals in space}
An \emph{ideal} in a space $(X,d)$ is a collection $\L$ of subsets of $X$ satisfying the following:
\begin{enumerate}
 \item if $Y \in \L$ and $Z \subseteq Y$, then $Z \in \L$;
 \item if $R\geq 0$ and $Y \in \L$, then $\Nd_R(Y) \in \L$;
 \item if $Y,Z \in \L$, then $Y \cup Z \in \L$.
\end{enumerate}
\end{defn}

For an ideal $\L$ in $X$, we define
\[
U(\L):= \bigcup_{Y \in \L} \overline{Y}^{\beta X}.
\]
Conversely, given an invariant open subset $U$ of $\beta X$, we define
\[
\L(U):=\{Y \subseteq X: \overline{Y}^{\beta X} \subseteq U\}.
\]
As a special case of \cite[Theorem 6.3]{CW04}, we have the following:

\begin{prop}\label{prop: 1-1 btw ideals in spaces and open inv subsets}
For a space $(X,d)$, the map $\L \mapsto U(\L)$ provides an isomorphism between the lattice of all ideals in $X$ and the lattice of all invariant open subsets of $\beta X$, with the inverse map given by $U \mapsto \L(U)$.
\end{prop}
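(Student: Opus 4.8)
The plan is to establish Proposition~\ref{prop: 1-1 btw ideals in spaces and open inv subsets} by constructing the two maps explicitly, checking that each output lands in the correct lattice, verifying they are mutually inverse, and confirming that both maps are order-preserving (so that the induced bijection is a lattice isomorphism). Since the statement is advertised as ``a special case of \cite[Theorem 6.3]{CW04}'', the cleanest route is to reduce it to Proposition~\ref{prop: 1-1 btw ideals and open inv subsets} (the analogous correspondence between geometric ideals and invariant open subsets) by interposing a third object, namely the ideal $\L$ attached to a geometric ideal of $C^*_u(X)$; but I will instead give the direct argument, which is more transparent and self-contained.

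First I would check the two ``well-definedness'' claims. Given an ideal $\L$ in $X$, I must show $U(\L) = \bigcup_{Y\in\L}\overline{Y}^{\beta X}$ is an invariant open subset of $\beta X$. Openness: in $\beta X$ the sets $\overline{Y}^{\beta X}$ for $Y \subseteq X$ are exactly the clopen sets, so $U(\L)$ is a union of clopen sets, hence open. Invariance: if $\gamma \in G(X)$ has source $\s(\gamma) \in \overline{Y}^{\beta X}$ for some $Y \in \L$, then $\gamma$ lies in $\overline{E_R}^{\beta X \times \beta X}$ for some $R$, and by Lemma~\ref{lem:closure of entourage} its range lies in $\overline{\Nd_R(Y)}^{\beta X}$; since $\Nd_R(Y) \in \L$ by axiom~(2) of Definition~\ref{defn: ideals in space}, the range is in $U(\L)$. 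Conversely, given an invariant open $U \subseteq \beta X$, I must check $\L(U) = \{Y \subseteq X : \overline{Y}^{\beta X} \subseteq U\}$ is an ideal in $X$: axiom~(1) is monotonicity of closure; axiom~(3) is $\overline{Y\cup Z} = \overline{Y}\cup\overline{Z}$; axiom~(2), that $\overline{\Nd_R(Y)}^{\beta X} \subseteq U$ whenever $\overline{Y}^{\beta X}\subseteq U$, again uses Lemma~\ref{lem:closure of entourage} together with the invariance of $U$ — this is the one place the groupoid structure genuinely enters, and it is the technical heart of the argument.

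Next I would verify that the two maps are mutually inverse. For $\L(U(\L)) = \L$: the inclusion $\L \subseteq \L(U(\L))$ is immediate since $\overline{Y}^{\beta X} \subseteq U(\L)$ for any $Y \in \L$; for the reverse, if $\overline{Y}^{\beta X} \subseteq U(\L) = \bigcup_{Z\in\L}\overline{Z}^{\beta X}$, then by compactness of the clopen set $\overline{Y}^{\beta X}$ it is covered by finitely many $\overline{Z_1}^{\beta X},\dots,\overline{Z_n}^{\beta X}$ with $Z_i \in \L$, so $\overline{Y}^{\beta X} \subseteq \overline{Z_1\cup\cdots\cup Z_n}^{\beta X}$; intersecting with $X$ (which is dense) gives $Y \subseteq Z_1\cup\cdots\cup Z_n \in \L$, whence $Y \in \L$ by axioms~(3) and~(1). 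For $U(\L(U)) = U$: the inclusion $\subseteq$ is clear; for $\supseteq$, let $\eta \in U$. If $\eta \in X$, then $\{\eta\}\in\L(U)$ and $\eta \in U(\L(U))$. If $\eta \in \partial_\beta X$, the subtlety is to produce a subset $Y\subseteq X$ with $\eta \in \overline{Y}^{\beta X}\subseteq U$; since $U$ is open it contains a basic clopen neighbourhood $\overline{Y}^{\beta X}$ of $\eta$, but such a neighbourhood need not lie inside $U$ — one must enlarge/shrink using invariance. The fix is to replace $Y$ by the smallest invariant-compatible piece: by Lemma~\ref{lem:limit space via coarse groupoids} the smallest invariant subset of $\beta X$ containing $\eta$ is $X(\eta)$, which lies in $U$ by invariance; one then checks, using Proposition~\ref{prop:local geometry} or a partial-translation decomposition argument, that a suitable $Y$ with $\omega(Y)=1$ and $\overline{Y}^{\beta X}\subseteq U$ exists. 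This is the step I expect to be the main obstacle: threading the local-coordinate / limit-space machinery to show every point of $U$ is ``seen'' by some genuine subset $Y\subseteq X$ with closure inside $U$. Finally, both $U(\cdot)$ and $\L(\cdot)$ visibly preserve inclusions, so the bijection is an isomorphism of lattices, completing the proof. Alternatively — and this is probably the intended shortcut — one composes the correspondence $\L \mapsto I(U(\L))$ with Proposition~\ref{prop: 1-1 btw ideals and open inv subsets} and Proposition~\ref{prop: Thm 3.5 in CW04} to read off the claim directly from \cite[Theorem 6.3]{CW04}, avoiding the delicate surjectivity argument altogether.
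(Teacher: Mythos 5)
Most of your proposal checks out: the well-definedness of both maps, the identity $\L(U(\L))=\L$ via compactness of the clopen set $\overline{Y}$, and order-preservation are all correct as written (for the invariance of $U(\L)$ and axiom (2) for $\L(U)$ you do need the small supplement to Lemma \ref{lem:closure of entourage} that $(\alpha,\omega)\in\overline{E_R}$ with $\omega(Y)=1$ forces $\alpha\in\overline{\Nd_R(Y)}$, via a partial-translation decomposition, but that is routine and you name the right ingredients). Note also that the paper itself offers no argument beyond the citation of \cite[Theorem 6.3]{CW04}, so your closing ``shortcut'' is literally the paper's treatment. The one genuine gap is precisely the step you flag as the main obstacle and leave unfinished: showing $U\subseteq U(\L(U))$, i.e.\ producing for each $\eta\in U$ a subset $Y\subseteq X$ with $\eta\in\overline{Y}\subseteq U$. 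Your sketched fix via the limit space is moreover misguided: while $X(\eta)\subseteq U$ by invariance, the closure $\overline{X(\eta)}$ need not be contained in $U$ (take $U=\beta X\setminus\overline{X(\alpha)}$ and $\eta=\omega$, where $\alpha\in\overline{X(\omega)}$ but $\omega\notin\overline{X(\alpha)}$ as in Theorem \ref{thm:non-min point for Z}), so no amount of local-coordinate approximation of $X(\eta)$ obviously yields such a $Y$.

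In fact there is no obstacle at all, and no invariance or limit-space machinery is needed for this inclusion. The sets $\overline{D}$, $D\subseteq X$, are not merely a generating family but a \emph{basis} for the topology of $\beta X$: they generate the topology (Appendix \ref{app:ultrafilters}) and are closed under finite intersections, since $\overline{D_1}\cap\overline{D_2}=\overline{D_1\cap D_2}$ (both sides consist of the ultrafilters giving $D_1$ and $D_2$ measure $1$). Hence every open $U\subseteq\beta X$ is the union of the basic clopen sets it contains, so any $\eta\in U$ lies in some $\overline{Y}\subseteq U$, i.e.\ $Y\in\L(U)$ and $\eta\in U(\L(U))$. Your worry that ``such a neighbourhood need not lie inside $U$'' would only be justified if the $\overline{D}$ formed a mere subbasis. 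Invariance of $U$ enters only where you correctly use it elsewhere (invariance of $U(\L)$, and closure of $\L(U)$ under $R$-neighbourhoods). With this one-line replacement your direct argument is complete and agrees in substance with the cited result of \cite{CW04}.
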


Combining Proposition \ref{prop: 1-1 btw ideals and open inv subsets} and \ref{prop: 1-1 btw ideals in spaces and open inv subsets}, we obtain an isomorphism between the lattice of all ideals in $X$ and the lattice of all geometric ideals in $C^*_u(X)$. Direct calculation shows (see also \cite[Theorem 6.4]{CW04}):
\begin{equation}\label{EQ: I(U(L))}
I(U(\L))=\overline{\{T \in \CC_u[X]: \supp(T) \subseteq Y \times Y \mbox{~for~some~}Y \in \L\}},
\end{equation}
where the closure is taken in $C^*_u(X)$.

Now we consider a special class of geometric ideals coming from subspaces. Given a subspace $A \subseteq X$, recall from \cite[Section 5]{HRY93} that there is an associated ideal $I_A$ in $C^*_u(X)$ whose $K$-theory is isomorphic to that of the uniform Roe algebra $C^*(A)$. More precisely, recall that an operator $T \in \B(\ell^2(X))$ is \emph{near $A$} if there exists $R>0$ such that $\supp(T) \subseteq \Nd_R(A) \times \Nd_R(A)$, and the ideal $I_A$ is defined to be the operator norm closure of all operators in $\CC_u[X]$ near $A$.

The ideal $I_A$ is called \emph{spatial} in \cite{CW04b} since it is related to a subspace in $X$. However, as shown in \cite[Example 2.1]{CW04b}, there exist non-spatial ideals in general. On the other hand, spatial ideals play an important role in the computation of the $K$-theory of Roe algebras via the Mayer-Vietoris sequence argument (see \cite{HRY93}).

To show that spatial ideals are geometric, we observe that the smallest ideal in $X$ containing $A$ is $\L_A:=\{Z \subseteq \Nd_R(A): R>0\}$. Hence applying Proposition \ref{prop: 1-1 btw ideals in spaces and open inv subsets}, we immediately obtain the following:


\begin{lem}\label{lem: invariant subset}
Let $A$ be a subset of $X$. Then the set
\begin{equation}\label{EQ:UA}
U_A:=U(\L_A) = \bigcup_{R>0} \overline{\Nd_R(A)} 
\end{equation}
is an invariant open subset of $\beta X$. Moreover, if $U$ is an invariant open subset of $\beta X$ containing $A$, then $U_A \subseteq U$.
\end{lem}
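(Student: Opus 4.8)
The plan is to verify the three ideal axioms from Definition \ref{defn: ideals in space} directly for the collection $\L_A = \{Z \subseteq X: Z \subseteq \Nd_R(A) \text{ for some } R>0\}$, then apply Proposition \ref{prop: 1-1 btw ideals in spaces and open inv subsets} to conclude that $U(\L_A)$ is an invariant open subset of $\beta X$, and finally handle the minimality assertion by unwinding definitions.

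First I would check that $\L_A$ is an ideal in $X$. Downward closure (axiom (1)) is immediate: if $Z \subseteq Y \subseteq \Nd_R(A)$ then $Z \subseteq \Nd_R(A)$. For axiom (2), if $Y \subseteq \Nd_R(A)$ then $\Nd_{R'}(Y) \subseteq \Nd_{R+R'}(A)$ by the triangle inequality, so $\Nd_{R'}(Y) \in \L_A$. For axiom (3), if $Y \subseteq \Nd_R(A)$ and $Z \subseteq \Nd_{R'}(A)$, then $Y \cup Z \subseteq \Nd_{\max\{R,R'\}}(A) \in \L_A$. Hence $\L_A$ is an ideal in $X$, and moreover it is visibly the smallest ideal in $X$ containing $A$ (since $A = \Nd_0(A) \in \L_A$, and any ideal containing $A$ must by axioms (2) and (1) contain every subset of every $\Nd_R(A)$). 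Then Proposition \ref{prop: 1-1 btw ideals in spaces and open inv subsets} gives that $U_A := U(\L_A) = \bigcup_{Y \in \L_A} \overline{Y}^{\beta X}$ is an invariant open subset of $\beta X$; and since $\Nd_R(A) \in \L_A$ for every $R>0$ while every $Y \in \L_A$ is contained in some $\Nd_R(A)$, we get $U(\L_A) = \bigcup_{R>0}\overline{\Nd_R(A)}^{\beta X}$, which is the displayed formula (\ref{EQ:UA}).

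For the final claim, suppose $U$ is an invariant open subset of $\beta X$ with $A \subseteq U$. Then $A \in \L(U)$ by definition of $\L(U)$ (noting $\overline{A}^{\beta X}$ need not a priori be in $U$ — here is the one subtle point), so I would instead argue as follows: $\L(U)$ is an ideal in $X$ by Proposition \ref{prop: 1-1 btw ideals in spaces and open inv subsets}, and I claim $A \in \L(U)$, i.e. $\overline{A}^{\beta X} \subseteq U$. Since $U$ is invariant and open and nonempty (it contains $A$, which is nonempty provided $A \neq \emptyset$; the case $A = \emptyset$ is trivial as then $U_A = \emptyset \subseteq U$), Lemma \ref{lem: inv. open contains X} forces $X \subseteq U$, hence $\overline{A}^{\beta X} \subseteq \overline{X}^{\beta X} = \beta X$ — this does not immediately give containment in $U$. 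The cleaner route, which I would actually take: $\L(U)$ contains every subset of $X$ whose $\beta X$-closure lies in $U$; in particular since $A \subseteq X \subseteq U$ and $U$ is invariant, the smallest invariant open set containing $A$ is contained in $U$, and by Lemma \ref{lem:limit space via coarse groupoids} (applied pointwise) together with the identification $U(\L_A) = \bigcup_R \overline{\Nd_R(A)}$, one sees $U_A$ is precisely this smallest invariant open set; alternatively, observe directly that $\L_A \subseteq \L(U)$ because for any $R>0$, $\Nd_R(A) \subseteq X \subseteq U$ and $\Nd_R(A)$ together with its limit points (which lie in $G(X)$-orbits of points of $\Nd_R(A)$, hence in $U$ by invariance) — this uses Lemma \ref{lem:closure of entourage} or the orbit description. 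Then $\L_A \subseteq \L(U)$ yields $U_A = U(\L_A) \subseteq U(\L(U)) = U$ by the order-isomorphism of Proposition \ref{prop: 1-1 btw ideals in spaces and open inv subsets}.

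The main obstacle is the last paragraph: showing $\overline{\Nd_R(A)}^{\beta X} \subseteq U$ for an arbitrary invariant open $U \supseteq A$. The point is that a limit point $\omega \in \overline{\Nd_R(A)}^{\beta X} \cap \partial_\beta X$ is an ultrafilter concentrated on $\Nd_R(A)$; each such $\omega$ lies in the closure inside $\beta X$ and one must see it belongs to $U$. The slick way is to note that invariance of $U$ means $U$ is a union of $G(X)$-orbits intersected with $\beta X = G(X)^{(0)}$, equivalently $U = s(G(X)_U) = r(G(X)_U)$; since $\Nd_R(A) \subseteq X \subseteq U$ and $\overline{\Nd_R(A)}^{\beta X}$ is contained in $r\big(\overline{E_R \cap (X \times \Nd_R(A))}^{\beta X \times \beta X}\big) \subseteq r(s^{-1}(U) \cap G(X)) = U$ using that $s$ extends continuously and $\overline{\Nd_R(A)}^{\beta X} \subseteq \overline{X}^{\beta X}$ maps into $U$ under the identity — I would make this precise via Lemma \ref{lem:closure of entourage}, writing each point of $\overline{\Nd_R(A)}^{\beta X}$ with $R \le S$ as $\alpha \in X(\omega)$ for some $\omega$ supported on $\Nd_R(A) \subseteq U$, whence $\alpha \in U$ by invariance of $U$ and the fact that $X(\omega)$ is the smallest invariant set containing $\omega$ (Lemma \ref{lem:limit space via coarse groupoids}). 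Everything else is routine bookkeeping with neighbourhoods and the triangle inequality.
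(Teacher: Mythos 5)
Your verification that $\L_A$ is an ideal, that it is the smallest ideal in $X$ containing $A$, and that $U_A = U(\L_A) = \bigcup_{R>0}\overline{\Nd_R(A)}$ is invariant and open via Proposition \ref{prop: 1-1 btw ideals in spaces and open inv subsets}, is correct and is exactly the (unwritten) route the paper takes. The problem is the ``moreover'' clause. You correctly isolate the subtle point --- that $A \subseteq U$ does not obviously give $A \in \L(U)$, i.e.\ $\overline{A}^{\beta X} \subseteq U$ --- but each argument you then offer to bridge it fails. The claim that the limit points of $\Nd_R(A)$ in $\beta X$ ``lie in $G(X)$-orbits of points of $\Nd_R(A)$'' is false: by Corollary \ref{cor:char for G(X)} the orbit of a point $x\in X$ is $X$ itself, while a boundary point $\omega \in \overline{\Nd_R(A)}\cap \partial_\beta X$ has orbit $X(\omega) \subseteq \partial_\beta X$, so no boundary point lies in the orbit of any point of $X$. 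The argument via $r\big(s^{-1}(U)\cap G(X)\big)$ is circular, since deciding whether $\overline{E_R \cap (X\times \Nd_R(A))}$ lies in $s^{-1}(U)$ is exactly the question of whether $\overline{\Nd_R(A)}\subseteq U$. And ``$\omega$ supported on $\Nd_R(A)\subseteq U$, whence $\alpha\in U$ by invariance'' is a non sequitur: $\omega(\Nd_R(A))=1$ does not place the ultrafilter $\omega$ itself in $U$.

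In fact no such argument can work, because under the literal reading (``$A\subseteq U$'' as subsets of points) the statement is false: take $A=X$ and $U=X$. Then $U$ is open and invariant ($G(X)_X=X\times X$, as in Example \ref{eg: compact ideal}) and contains $A$, yet $U_A=\overline{X}=\beta X\not\subseteq X$. The hypothesis actually needed --- and the one in force when the lemma is invoked in the proof of Lemma \ref{lem: U(I) for principal ideals}, where one already knows $U(I)\supseteq \overline{\r(\supp_\varepsilon(T))}$ --- is $\overline{A}^{\beta X}\subseteq U$. Under that hypothesis the proof is one line: $\overline{A}\subseteq U$ means precisely $A\in\L(U)$, so minimality of $\L_A$ gives $\L_A\subseteq\L(U)$, and the order isomorphism of Proposition \ref{prop: 1-1 btw ideals in spaces and open inv subsets} gives $U_A=U(\L_A)\subseteq U(\L(U))=U$. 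You should either prove the lemma under this corrected hypothesis or flag the discrepancy; what you cannot do is derive $\overline{A}\subseteq U$ from $A\subseteq U$ and invariance.
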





Consequently, combining with (\ref{EQ: I(U(L))}) we reach the following:

\begin{cor}\label{cor: ideals to subspace}
Let $A$ be a subset of $X$, then $I(U_A)=I_A$. Hence the spatial ideal $I_A$ is geometric.
\end{cor}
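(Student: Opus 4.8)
\textbf{Proof proposal for Corollary \ref{cor: ideals to subspace}.}

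The plan is to identify $I_A$ with the geometric ideal $I(U_A)$ by chasing both through their descriptions as operator norm closures of finite propagation operators, and then invoking Lemma \ref{lem: invariant subset} together with the formula (\ref{EQ: I(U(L))}). First I would record the description of $I_A$ in terms of finite propagation operators: by definition $I_A$ is the operator norm closure of the set of $T \in \CC_u[X]$ that are near $A$, i.e.\ those $T$ for which $\supp(T) \subseteq \Nd_R(A) \times \Nd_R(A)$ for some $R > 0$. On the other hand, $\L_A := \{Z \subseteq \Nd_R(A): R > 0\}$ is exactly the ideal in $X$ consisting of all subsets contained in some $R$-neighbourhood of $A$; one checks the three axioms of Definition \ref{defn: ideals in space} directly (downward closure is immediate, $\Nd_R(\Nd_S(A)) \subseteq \Nd_{R+S}(A)$, and a union of two neighbourhoods of $A$ sits inside a single larger one).

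Next I would apply the formula (\ref{EQ: I(U(L))}) with $\L = \L_A$: it gives
\[
I(U(\L_A)) = \overline{\{T \in \CC_u[X]: \supp(T) \subseteq Y \times Y \text{ for some } Y \in \L_A\}}.
\]
But $\supp(T) \subseteq Y \times Y$ for some $Y \in \L_A$ is precisely the same as saying $\supp(T) \subseteq \Nd_R(A) \times \Nd_R(A)$ for some $R > 0$ (if $Y \subseteq \Nd_R(A)$ then $Y \times Y \subseteq \Nd_R(A) \times \Nd_R(A)$, and conversely $\Nd_R(A)$ itself lies in $\L_A$), which is exactly the condition that $T$ is near $A$. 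Hence the set whose closure is taken on the right coincides with the set of finite propagation operators near $A$, so $I(U(\L_A)) = I_A$. Since Lemma \ref{lem: invariant subset} gives $U_A = U(\L_A)$, this reads $I(U_A) = I_A$, and in particular $I_A$, being of the form $I(U)$ for an invariant open subset, is geometric by Definition \ref{defn:geometric ideal} (equivalently, by Proposition \ref{prop: 1-1 btw ideals and open inv subsets}).

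I do not anticipate a serious obstacle here: the statement is essentially an unwinding of definitions, and all the nontrivial content is packaged into (\ref{EQ: I(U(L))}) (i.e.\ \cite[Theorem 6.4]{CW04}) and Lemma \ref{lem: invariant subset}, both of which may be assumed. The only point requiring a modicum of care is the equivalence between ``$T$ near $A$'' and ``$\supp(T) \subseteq Y \times Y$ for some $Y \in \L_A$''; this is where the precise definition of $\L_A$ (subsets of $\Nd_R(A)$, not the $\Nd_R(A)$ themselves) matters, but the two-sided containment above settles it. One should also note in passing, for the closing sentence of the corollary, that $\L_A$ is genuinely the \emph{smallest} ideal in $X$ containing $A$ (any ideal containing $A$ must contain all $\Nd_R(A)$ by axiom (2) and hence all their subsets by axiom (1)), which is what justifies the minimality remark preceding the statement, though this is not strictly needed for the displayed equality $I(U_A) = I_A$.
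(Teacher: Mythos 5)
Your proposal is correct and follows essentially the same route as the paper, which obtains the corollary precisely by combining Lemma \ref{lem: invariant subset} (giving $U_A = U(\L_A)$) with the formula (\ref{EQ: I(U(L))}); your explicit check that ``$\supp(T) \subseteq Y \times Y$ for some $Y \in \L_A$'' is equivalent to ``$T$ is near $A$'' is exactly the unwinding the paper leaves implicit.
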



For later use, we record the following result concerning the set $U_A$ defined in (\ref{EQ:UA}). Recall from Corollary \ref{cor:subspace in SC comp.} that for a subset $Z \subseteq X$, the closure $\overline{Z}$ in $\beta X$ is homeomorphic to $\beta Z$. 

\begin{lem}\label{lem:dec for groupoids UA}
Let $A$ be a subset of $X$. Then we have:
\[
G(X)_{U_A} = \bigcup_{R>0} G(\Nd_R(A)).
\]
\end{lem}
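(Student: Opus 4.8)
I want to prove $G(X)_{U_A} = \bigcup_{R>0} G(\Nd_R(A))$. The strategy is to understand both sides fibrewise over the unit space, using the concrete description of the coarse groupoid from Corollary~\ref{cor:char for G(X)} together with Lemma~\ref{lem: invariant subset}, which tells us $U_A = \bigcup_{R>0}\overline{\Nd_R(A)}$. First I would set $A_R:=\Nd_R(A)$, so each $A_R$ is a subspace of $X$ and, by Corollary~\ref{cor:subspace in SC comp.}, $\overline{A_R}^{\beta X}\cong \beta A_R$ sits inside $\beta X$; moreover $G(A_R)=\bigcup_{r>0}\overline{E_r^{A_R}}^{\beta A_R\times \beta A_R}$ is naturally a subgroupoid of $G(X)$ supported over $\overline{A_R}$. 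The key point to nail down is that $G(X)_{\overline{A_R}}$ (the reduction of $G(X)$ to the closed, but not invariant, subset $\overline{A_R}$) coincides with $G(A_R)$, and that for the \emph{invariant} open set $U_A=\bigcup_R \overline{A_R}$ the reduction $G(X)_{U_A}$ equals the union of these.

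\smallskip

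\noindent\textbf{Step 1: reduce to a statement about closures of neighbourhoods.}
Since $U_A=\bigcup_{R>0}\overline{A_R}$ and $\s$ is continuous, $G(X)_{U_A}=\s^{-1}(U_A)=\bigcup_{R>0}\s^{-1}(\overline{A_R})$. So it suffices to show, for each $R>0$, that $\s^{-1}(\overline{A_R})\cap G(X) = G(A_R)$ as subsets of $\beta X\times\beta X$ — wait, one must be slightly careful: $\overline{A_R}$ is not invariant, so $\s^{-1}(\overline{A_R})$ need not equal $\r^{-1}(\overline{A_R})$. But I claim that for an element $(\alpha,\omega)\in G(X)$ with $\omega\in\overline{A_R}$, automatically $\alpha\in\overline{A_{R'}}$ for a slightly larger $R'$, because propagation is bounded: if $(\alpha,\omega)\in\overline{E_S}^{\beta X\times\beta X}$ then, using Lemma~\ref{lem:compatibility via coarse groupoid}, $\alpha=t(\omega)$ for a partial translation $t$ of propagation $\le S$, whence $\alpha\in\overline{t(A_R\cap D)}\subseteq\overline{\Nd_{S}(A_R)}=\overline{A_{R+S}}$. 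Thus near $\overline{A_R}$ the source-reduction and range-reduction agree up to enlarging $R$, and we are free to compute $\bigcup_R G(X)_{U_A}$ as $\bigcup_R G(X)_{\overline{A_R}}^{\overline{A_R}}$.

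\smallskip

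\noindent\textbf{Step 2: identify $G(X)_{\overline{A_R}}^{\overline{A_R}}$ with $G(A_R)$.}
By definition $G(A_R)=\bigcup_{S>0}\overline{E_S^{A_R}}^{\beta(A_R\times A_R)}$, where $E_S^{A_R}=E_S^X\cap(A_R\times A_R)$. Using Corollary~\ref{cor:subspace in SC comp.} to identify $\beta(A_R\times A_R)$ with $\overline{A_R\times A_R}^{\beta(X\times X)}$, and using that $(\r,\s):G(X)\hookrightarrow\beta X\times\beta X$ is injective (\cite[Lemma~2.7]{STY02}), one checks $\overline{E_S^{A_R}}^{\beta(X\times X)}=\overline{E_S^X}^{\beta(X\times X)}\cap\overline{A_R\times A_R}^{\beta(X\times X)}$; this is the routine ``closure of an intersection of a set with a clopen-in-$X\times X$-relative set'' computation, valid because $A_R\times A_R$ is relatively clopen in $X\times X$ and closures in Stone–Čech compactifications distribute over such intersections (cf.\ the basic facts in Appendix~\ref{app:ultrafilters}). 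Taking the union over $S$ and intersecting with $G(X)$ gives $G(A_R)=G(X)\cap\overline{A_R\times A_R}=\s^{-1}(\overline{A_R})\cap\r^{-1}(\overline{A_R})\cap G(X)=G(X)_{\overline{A_R}}^{\overline{A_R}}$, where the middle equality again uses that $\overline{A_R\times A_R}=\overline{A_R}\times\overline{A_R}$ inside $\beta X\times\beta X$ (a standard Stone–Čech fact).

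\smallskip

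\noindent\textbf{Step 3: assemble.}
Combining Steps 1 and 2: $G(X)_{U_A}=\bigcup_{R>0}\s^{-1}(\overline{A_R})\cap G(X)=\bigcup_{R>0}G(X)_{\overline{A_R}}^{\overline{A_R}}=\bigcup_{R>0}G(A_R)=\bigcup_{R>0}G(\Nd_R(A))$, where the second equality uses the propagation-bound argument from Step~1 (any $(\alpha,\omega)$ with $\omega\in\overline{A_R}$ lies in $G(X)_{\overline{A_{R'}}}^{\overline{A_{R'}}}$ for some $R'\ge R$, and conversely $G(X)_{\overline{A_R}}^{\overline{A_R}}\subseteq\s^{-1}(\overline{A_R})\subseteq G(X)_{U_A}$). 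This completes the proof.

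\smallskip

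\noindent\textbf{The main obstacle.} The delicate point is Step~2: justifying that the closure in $\beta(X\times X)$ of the relative entourage $E_S^{A_R}=E_S^X\cap(A_R\times A_R)$ is exactly $\overline{E_S^X}\cap\overline{A_R\times A_R}$, i.e.\ that passing to Stone–Čech closure commutes with intersecting against the relatively clopen set $A_R\times A_R$. One must also be careful that the two a priori different models of $G(A_R)$ — the subspace one inside $\beta(A_R\times A_R)$ and the one inside $\beta X\times\beta X$ — agree under the identification $(\r,\s)$, which is where injectivity of $(\r,\s)$ from \cite[Lemma~2.7]{STY02} and the identification $\overline{Z}^{\beta X}\cong\beta Z$ from Corollary~\ref{cor:subspace in SC comp.} both get used. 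Everything else is bookkeeping with continuity of $\s$, $\r$ and the bounded-propagation observation.
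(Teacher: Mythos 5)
Your argument is correct and takes essentially the same route as the paper: the paper's proof likewise writes $G(X)_{U_A}=\bigcup_{R>0}G(X)_{\overline{\Nd_R(A)}}$, uses Lemma \ref{lem:closure of entourage} to place a given $(\alpha,\omega)$ in some $\overline{E_S}$, and enlarges $R$ to $R+S$ so that the pair lies over $\overline{\Nd_{R+S}(A)}\times\overline{\Nd_{R+S}(A)}$ and hence in $G(\Nd_{R+S}(A))$. The only difference is one of detail: the identification you spell out in Step 2 (that $G(B)$, viewed in $\beta X\times\beta X$ via the injectivity of $(\r,\s)$ and Corollary \ref{cor:subspace in SC comp.}, equals $G(X)\cap(\overline{B}\times\overline{B})$), together with the converse inclusion, is treated as immediate in the paper.
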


\begin{proof}
By definition, we have $G(X)_{U_A} = \bigcup_{R>0} G(X)_{\overline{\Nd_R(A)}}$. Note that for each $R>0$ and $(\alpha, \omega) \in G(X)_{\overline{\Nd_R(A)}}$, we have $\omega \in \overline{\Nd_R(A)}$ and there exists $S>0$ such that $(\alpha, \omega) \in \overline{E_S}$ due to Lemma \ref{lem:closure of entourage}. Hence the pair $(\alpha, \omega)$ in $\overline{\Nd_{R+S}(A)} \times \overline{\Nd_{R+S}(A)}$ belongs to $G(\Nd_{R+S}(A))$, which concludes the proof.
\end{proof}


To end this section, we remark that for a given ideal $I$ in $C^*_u(X)$, it is usually hard to compute the associated $U(I)$ directly from definition. However, this is always achievable for principal ideals:

\begin{lem}\label{lem: U(I) for principal ideals}
Let $I=\langle T \rangle$ be the principal ideal in $C^*_u(X)$ generated by $T \in C^*_u(X)$. Denote
\[
U:= \bigcup_{\varepsilon >0, R>0} \overline{\Nd_R(\r(\supp_\varepsilon(T)))}.
\]
Then $U$ is an invariant open subset of $\beta X$, and we have $U(I)=U$.
\end{lem}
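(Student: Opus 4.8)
The plan is to prove the equality $U = U(I)$ by establishing the two inclusions separately; the invariance and openness of $U$ then follow immediately, since $U(I)$ is always an invariant open subset of $\beta X$ by \cite[Lemma 5.2]{CW04}. (Alternatively, I would note that $\L := \{Z \subseteq X : Z \subseteq \Nd_R(\r(\supp_\varepsilon(T))) \text{ for some } \varepsilon, R > 0\}$ is readily checked to satisfy the axioms of Definition \ref{defn: ideals in space}, with $U(\L) = U$, so that Proposition \ref{prop: 1-1 btw ideals in spaces and open inv subsets} already delivers invariance and openness of $U$ directly.) Throughout, write $A_\varepsilon := \r(\supp_\varepsilon(T))$, so that $U = \bigcup_{\varepsilon, R > 0}\overline{\Nd_R(A_\varepsilon)}^{\beta X}$.

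For the inclusion $U \subseteq U(I)$ I would argue as follows. Since $T \in I$, the definition \eqref{EQ: defn for U(I)} of $U(I)$ gives $\overline{A_\varepsilon}^{\beta X} \subseteq U(I)$, and in particular $A_\varepsilon \subseteq U(I)$, for every $\varepsilon > 0$. As $U(I)$ is an invariant open subset of $\beta X$ containing $A_\varepsilon$, Lemma \ref{lem: invariant subset} applied to $A_\varepsilon \subseteq X$ yields $U_{A_\varepsilon} = \bigcup_{R > 0}\overline{\Nd_R(A_\varepsilon)}^{\beta X} \subseteq U(I)$. Taking the union over all $\varepsilon > 0$ then gives $U \subseteq U(I)$.

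The reverse inclusion $U(I) \subseteq U$ is the crux, and here the point is to control where the large entries of an arbitrary element $S$ of the closed ideal $\langle T \rangle$ can sit. Using density of $\CC_u[X]$ in $C^*_u(X)$ and continuity of multiplication, $S$ is a norm-limit of finite sums of the form $\sum_{i=1}^{m} A_i T B_i$ with $A_i, B_i \in \CC_u[X]$. Given $\varepsilon > 0$, I would fix one such sum $F = \sum_{i=1}^{m} A_i T B_i$ with $\|S - F\| < \varepsilon/2$, so that (using $\|\cdot\|_\infty \le \|\cdot\|$) $\supp_\varepsilon(S) \subseteq \supp_{\varepsilon/2}(F)$. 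Setting $P := \max_i\{\ppg(A_i), \ppg(B_i)\}$ and $N := \sup_{x \in X}|B_X(x,P)|$, which is finite by bounded geometry, and expanding a matrix coefficient $(A_i T B_i)(x,y) = \sum_{z,w} A_i(x,z) T(z,w) B_i(w,y)$, one sees that this sum has at most $N^2$ nonzero terms, each with $d(x,z) \leq P$, $|A_i(x,z)| \leq \|A_i\|$ and $|B_i(w,y)| \leq \|B_i\|$. Hence if $(x,y) \in \supp_{\varepsilon/2}(F)$, then there are an index $i$ and a pair $(z,w)$ with $d(x,z) \leq P$ and $|T(z,w)| \geq \delta$, where $\delta := \varepsilon / \big(2 m N^2 \max_j \|A_j\|\|B_j\|\big) > 0$ depends only on $\varepsilon$ and $F$, not on $(x,y)$. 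Thus $z \in A_\delta$ and $x \in \Nd_P(A_\delta)$, i.e. $\r(\supp_\varepsilon(S)) \subseteq \Nd_P(A_\delta)$; passing to closures in $\beta X$ gives $\overline{\r(\supp_\varepsilon(S))}^{\beta X} \subseteq \overline{\Nd_P(A_\delta)}^{\beta X} \subseteq U$, and taking the union over $S \in I$ and $\varepsilon > 0$ yields $U(I) \subseteq U$. Combining the two inclusions completes the proof.

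I expect the main obstacle to be precisely the estimate in the third paragraph: the challenge is to extract, from a member of the \emph{closed} ideal generated by $T$, a bound — uniform in the matrix coordinates — on the location of its $\varepsilon$-large entries expressed in terms of $T$ alone. This is where bounded geometry of $X$ is indispensable, since it is what keeps the number of terms in each matrix product (and hence the loss $\delta$) under control; everything else is an unwinding of the definition of $U(I)$ together with Lemma \ref{lem: invariant subset} and the invariance of $U(I)$ from \cite[Lemma 5.2]{CW04}.
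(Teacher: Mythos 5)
Your proof is correct and follows essentially the same route as the paper: the easy inclusion $U \subseteq U(I)$ via the definition (\ref{EQ: defn for U(I)}) and Lemma \ref{lem: invariant subset}, and the hard inclusion by approximating elements of $\langle T \rangle$ in norm by finite sums $\sum_i A_i T B_i$ with finite propagation factors and then locating the $\varepsilon$-large entries of such sums inside a bounded neighbourhood of $\r(\supp_\delta(T))$ for a suitable $\delta>0$. The only cosmetic difference is that the paper first decomposes the factors into operators supported on partial translations, so that each matrix entry of $a_iTb_i$ is a single product, whereas you keep general $A_i, B_i \in \CC_u[X]$ and invoke bounded geometry to bound the number of terms per entry by $N^2$ — both yield the same uniform $\delta$ and the argument goes through either way.
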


\begin{proof}
By Lemma \ref{lem: invariant subset}, it is clear that $U$ is an invariant open subset of $\beta X$. By (\ref{EQ: defn for U(I)}), $U(I)$ contains $\overline{\r(\supp_\varepsilon(T))}$ for any $\varepsilon>0$. Since $U(I)$ is invariant, we obtain that $U(I)$ contains $U$ again by Lemma \ref{lem: invariant subset}.

On the other hand, we consider $S=\sum_{i=1}^n a_iTb_i$ where $a_i, b_i$ are non-zero with supports being partial translations contained in $E_R$ for some $R>0$. Hence for any $\varepsilon>0$, we have
\[
\r\left(\supp_\varepsilon(S)\right) \subseteq \bigcup_{i=1}^n \r\left(\supp_{\frac{\varepsilon}{n}}(a_iTb_i)\right) \subseteq \bigcup_{i=1}^n \Nd_R\left(\r\left(\supp_{\frac{\varepsilon}{n\|a_i\|\cdot\|b_i\|}}(T)\right)\right),
\]
which implies that $\overline{\r\left(\supp_\varepsilon(S)\right)} \subseteq U$. Note that operators of the form $\sum_{i=1}^n a_iTb_i$ as above are dense in $I$. Hence for a general element $\tilde{S} \in I$ and $\varepsilon>0$, there exists $S=\sum_{i=1}^n a_iTb_i$ where $a_i, b_i$ are non-zero with supports being partial translations such that $\|\tilde{S} - S\| < \varepsilon/2$. Hence $\supp_{\varepsilon}(\tilde{S}) \subseteq \supp_{\varepsilon/2}(S)$, which concludes the proof.
\end{proof}

\section{Ghostly ideals}\label{sec:ghostly ideals}

In the previous section, we observe that for an invariant open subset $U$ of $\beta X$, the associated geometric ideal $I(U)$ is the smallest element in the lattice
\[
\mathfrak{I}_U=\{I \mbox{~is~an~ideal~in~} C^*_u(X): U(I)=U\}.
\]
In this section, we would like to explore the largest element in $\mathfrak{I}_U$. As revealed in Section \ref{sec:intro}, a natural idea is to include all operators $T \in C^*_u(X)$ sitting in some ideal $I$ with $U(I)=U$, which leads to the following:

\begin{defn}\label{defn: ghostly ideals}
For a space $(X,d)$ and an invariant open subset $U$ of $\beta X$, denote
\[
\tilde{I}(U):=\{T \in C^*_u(X): \overline{\r(\supp_\varepsilon(T))} \subseteq U \mbox{~for~any~}\varepsilon>0\}.
\]
We call $\tilde{I}(U)$ the \emph{ghostly ideal associated to $U$}.
\end{defn}

The terminology will become clear later (see Proposition \ref{prop: geometric char for ghostly ideals} and Remark \ref{rem: terminology exp. for ghostly ideals} below). First let us verify that $\tilde{I}(U)$ is indeed an ideal in $C^*_u(X)$.

\begin{lem}\label{lem: ghostly ideals are ideals}
For an invariant open subset $U$ of $\beta X$, $\tilde{I}(U)$ is an ideal in $C^*_u(X)$.
\end{lem}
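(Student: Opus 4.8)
The plan is to verify directly from Definition \ref{defn: ghostly ideals} that $\tilde{I}(U)$ is a norm-closed, self-adjoint, two-sided ideal in $C^*_u(X)$. The key technical device throughout will be Lemma \ref{lem: support containment}, which identifies $\overline{\r(\supp_\varepsilon(T))}$ (up to shrinking $\varepsilon$) with $\r$ applied to the set where the continuous extension $\overline{T}$ on $\beta(X\times X)$ has modulus at least $\varepsilon$; this lets us translate the defining condition into a statement about the honest continuous function $\overline{T}$.

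First I would prove that $\tilde{I}(U)$ is a linear subspace and is closed under adjoints. Self-adjointness is immediate since $\supp_\varepsilon(T^*) = \{(y,x) : (x,y)\in \supp_\varepsilon(T)\}$, so $\r(\supp_\varepsilon(T^*)) = \s(\supp_\varepsilon(T))$; one then uses that $U$ is \emph{invariant}, hence (by the discussion in Section \ref{ssec:groupoids}) $\overline{\s(\supp_\varepsilon(T))}\subseteq U$ iff $\overline{\r(\supp_\varepsilon(T))}\subseteq U$ — more carefully, $\supp_\varepsilon(T)\subseteq E_R$ for some $R$, so by Lemma \ref{lem:closure of entourage} the closure of $\supp_\varepsilon(T)$ in $\beta X\times\beta X$ lands inside $\overline{E_R}\subseteq G(X)$, and then applying the range and source maps of the coarse groupoid together with invariance of $U$ interchanges the two conditions. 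For additivity, given $T,T'\in\tilde{I}(U)$ and $\varepsilon>0$, use $\supp_\varepsilon(T+T')\subseteq \supp_{\varepsilon/2}(T)\cup\supp_{\varepsilon/2}(T')$, hence $\overline{\r(\supp_\varepsilon(T+T'))}\subseteq \overline{\r(\supp_{\varepsilon/2}(T))}\cup\overline{\r(\supp_{\varepsilon/2}(T'))}\subseteq U$. Scalar multiples are clear. Norm-closedness follows from Lemma \ref{lem: support containment}: if $T_n\to T$ in $C^*_u(X)$ with $T_n\in\tilde{I}(U)$, then $\overline{T_n}\to\overline{T}$ uniformly on $\beta(X\times X)$ (since $\|\cdot\|_\infty\le\|\cdot\|$), so $\{|\overline{T}|\ge\varepsilon\}\subseteq\{|\overline{T_n}|\ge\varepsilon/2\}\subseteq\overline{\supp_{\varepsilon/4}(T_n)}$ for $n$ large, and hence $\overline{\supp_\varepsilon(T)}\subseteq\{|\overline{T}|\ge\varepsilon\}\subseteq\overline{\supp_{\varepsilon/4}(T_n)}$, whose range-projection's closure lies in $U$.

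The main step is the two-sided ideal property: for $T\in\tilde{I}(U)$ and $S\in C^*_u(X)$, show $ST, TS\in\tilde{I}(U)$. By the previous paragraph $\tilde{I}(U)$ is norm-closed and $C^*_u(X)=\overline{\CC_u[X]}$, so it suffices to treat $S\in\CC_u[X]$ with $\ppg(S)\le R$; and by decomposing $S$ into finitely many partial translations (as recalled at the start of Section \ref{ssec:limit space}) and using additivity, it suffices to take $S$ supported on a single graph $\gr(\tau)$ of a partial translation $\tau$, i.e.\ $S\delta_y = c(y)\delta_{\tau(y)}$. Then a direct matrix-entry estimate gives the key containment at the level of $\varepsilon$-supports: $\r(\supp_\varepsilon(ST))\subseteq \tau\big(\r(\supp_{\varepsilon/\|S\|}(T))\big)$ and $\r(\supp_\varepsilon(TS))\subseteq \r(\supp_{\varepsilon/\|S\|}(T))$ (the second because left-multiplication-type reindexing by $S$ on the right only permutes columns, not rows). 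Taking closures in $\beta X$, the continuous extension $\overline{\tau}$ maps $\overline{\r(\supp_{\varepsilon/\|S\|}(T))}$ into its image, and since $\overline{\r(\supp_{\varepsilon/\|S\|}(T))}\subseteq U$ with $U$ invariant and $\overline{\gr(\tau)}\subseteq G(X)$ (Lemma \ref{lem:closure of entourage}/\ref{lem:compatibility via coarse groupoid}), invariance of $U$ forces the image to stay in $U$; thus $\overline{\r(\supp_\varepsilon(ST))}\subseteq U$, and the $TS$ case is even easier. Combining over the finitely many partial translations and passing to the norm limit in $S$ finishes the argument.

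The step I expect to be the main obstacle is the bookkeeping in the multiplication estimate — precisely, getting the inclusions of $\varepsilon$-supports under matrix multiplication right, since entries of $ST$ are sums $\sum_z S(x,z)T(z,y)$ and one must control how a large entry of $ST$ is "charged" to a large entry of $T$. The reduction to a single partial translation is exactly what makes this clean: when $S$ is carried by $\gr(\tau)$ the sum collapses to one term, $(ST)(x,y) = S(x,\tau^{-1}(x))T(\tau^{-1}(x),y)$ when $x\in\mathrm{range}(\tau)$ and $0$ otherwise, so $|(ST)(x,y)|\ge\varepsilon$ forces $|T(\tau^{-1}(x),y)|\ge\varepsilon/\|S\|_\infty\ge\varepsilon/\|S\|$, i.e.\ $(\tau^{-1}(x),y)\in\supp_{\varepsilon/\|S\|}(T)$, giving $x\in\tau(\r(\supp_{\varepsilon/\|S\|}(T)))$ as claimed. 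The only remaining subtlety is verifying that applying a partial translation (equivalently, a groupoid element's range map) to a subset of $\beta X$ contained in the invariant set $U$ keeps it inside $U$, which is precisely the definition of invariance combined with $\overline{\gr(\tau)}\subseteq G(X)$, so this causes no real difficulty.
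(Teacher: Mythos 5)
Your proposal is correct and follows essentially the same route as the paper: linearity and norm-closedness via inclusions of $\varepsilon$-supports, the adjoint via $\r(\supp_\varepsilon(T^*))=\s(\supp_\varepsilon(T))$ and invariance of $U$, and the ideal property by reducing to $S\in\CC_u[X]$ and then to a single partial translation, where $\r(\supp_\varepsilon(TS))\subseteq\r(\supp_{\varepsilon/\|S\|}(T))$. The only cosmetic difference is that the paper disposes of $ST$ via $ST=(T^*S^*)^*$ and self-adjointness of $\tilde{I}(U)$, whereas you estimate $\r(\supp_\varepsilon(ST))\subseteq\tau\big(\r(\supp_{\varepsilon/\|S\|}(T))\big)$ directly and invoke invariance of $U$ through $\overline{\gr(\tau)}\subseteq G(X)$, which is equally valid.
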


\begin{proof}
For $T,S \in C^*_u(X)$ and $\varepsilon>0$, we have $\supp_\varepsilon(T+S) \subseteq \supp_{\varepsilon/2}(T) \cup \supp_{\varepsilon/2}(S)$.
It follows that $\tilde{I}(U)$ is a linear space in $C^*_u(X)$. Given $T \in \tilde{I}(U)$ and $\varepsilon>0$, note that $\r(\supp_\varepsilon(T^*)) = \s(\supp_\varepsilon(T))$ and hence $T^* \in \tilde{I}(U)$ since $U$ is invariant. Now given a sequence $\{T_n\}_n$ in $\tilde{I}(U)$ converging to $T \in C^*_u(X)$ and an $\varepsilon>0$, there exists $n\in \NN$ such that $\supp_\varepsilon(T) \subseteq \supp_{\varepsilon/2}(T_n)$. Hence we obtain that $T \in \tilde{I}(U)$.

Finally, given $T \in \tilde{I}(U)$ and $S \in C^*_u(X)$ we need show that $TS$ and $ST$ belong to $\tilde{I}(U)$. Note that $ST=(T^*S^*)^*$ and $\tilde{I}(U)$ is closed under taking the $\ast$-operation, hence it suffices to show that $TS \in \tilde{I}(U)$. 
Since $\tilde{I}(U)$ is closed in $C^*_u(X)$ with respect to the operator norm, it suffices to consider the case that $S \in \CC_u[X]$.
We can further assume that $S$ is a partial translation since $\tilde{I}(U)$ is closed under taking addition. In this case, we have $\r(\supp_\varepsilon(TS)) \subseteq \r(\supp_{\varepsilon/\|S\|}(T))$ for any $\varepsilon>0$. Hence we conclude the proof.
\end{proof}

Using the language of ideals in $X$, we record that for an ideal $\L$ in $X$ and $T \in C^*_u(X)$, then $T \in \tilde{I}(U(\L))$ \emph{if and only if} for any $\varepsilon>0$ there exist $R>0$ and $Y \in \L$ such that for any $(x,y) \notin E_R \cap (Y \times Y)$ then $|T(x,y)| < \varepsilon$.

The following result shows that the ghostly ideal $\tilde{I}(U)$ is indeed the largest element in the lattice $\mathfrak{I}_U$:

\begin{lem}\label{lem: ghostly ideas are largest}
Given an invariant open subset $U$ of $\beta X$, we have the following:
\begin{enumerate}
 \item for an ideal $I$ in $C^*_u(X)$ with $U(I)=U$, then $I \subseteq \tilde{I}(U)$.
 \item $U(\tilde{I}(U))=U$.
\end{enumerate}
\end{lem}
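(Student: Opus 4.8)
The plan is to prove the two assertions in sequence, using the description of $U(I)$ in (\ref{EQ: defn for U(I)}) and the characterisation of $I(U)$ from Section \ref{sec:geometric ideals}.

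For (1), let $I$ be an ideal with $U(I) = U$ and take $T \in I$. Fix $\varepsilon > 0$. By Proposition \ref{prop: Thm 3.5 in CW04} we have $T_\varepsilon \in I \cap \CC_u[X]$, and by definition $\supp(T_\varepsilon) = \supp_\varepsilon(T)$. Then $\overline{\r(\supp_\varepsilon(T))} = \overline{\r(\supp(T_\varepsilon))}$ is one of the sets appearing in the union (\ref{EQ: defn for U(I)}) defining $U(I)$, hence $\overline{\r(\supp_\varepsilon(T))} \subseteq U(I) = U$. Since this holds for every $\varepsilon > 0$, we conclude $T \in \tilde{I}(U)$, so $I \subseteq \tilde{I}(U)$.

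For (2), one inclusion is immediate: from the definition of $\tilde{I}(U)$, every $T \in \tilde{I}(U)$ satisfies $\overline{\r(\supp_\varepsilon(T))} \subseteq U$ for all $\varepsilon > 0$, so the union (\ref{EQ: defn for U(I)}) defining $U(\tilde{I}(U))$ is contained in $U$; thus $U(\tilde{I}(U)) \subseteq U$. For the reverse inclusion, the key point is that $\tilde{I}(U)$ already contains the geometric ideal $I(U)$: indeed, if $T \in I_c(U)$, i.e. $T \in \CC_u[X]$ with $\overline{T}$ supported in $G(X)_U$, then for each $\varepsilon > 0$ the set $\overline{\supp_\varepsilon(T)} = \overline{\supp(T_\varepsilon)}$ lies in $\overline{\supp(\overline{T})} \cap (\beta X \times \beta X) \subseteq \overline{G(X)_U}$, and applying $\r$ and using invariance of $U$ (together with the fact that $\overline{\r(\supp_\varepsilon(T))}$ is the closure of a subset of $X$ that maps into $U$ under the groupoid's range map) gives $\overline{\r(\supp_\varepsilon(T))} \subseteq U$; hence $I_c(U) \subseteq \tilde{I}(U)$, and since $\tilde{I}(U)$ is norm-closed (Lemma \ref{lem: ghostly ideals are ideals}), $I(U) \subseteq \tilde{I}(U)$. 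Therefore $U = U(I(U)) \subseteq U(\tilde{I}(U))$, where the equality is Proposition \ref{prop: 1-1 btw ideals and open inv subsets} (or Lemma \ref{prop: geometric ideals are smallest}) and the inclusion follows from monotonicity of $I \mapsto U(I)$ under inclusion of ideals. Combining the two inclusions yields $U(\tilde{I}(U)) = U$.

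The step I expect to be the main obstacle is verifying carefully, inside the proof of (2), that $I_c(U) \subseteq \tilde{I}(U)$ — that is, translating the groupoid-side condition ``$\overline{T}$ vanishes off $G(X)_U$'' into the metric-side condition ``$\overline{\r(\supp_\varepsilon(T))} \subseteq U$ for all $\varepsilon$''. This requires being precise about how the closure $\overline{\supp_\varepsilon(T)}$ in $\beta X \times \beta X$ relates to $G(X)$ (it lands in $\overline{E_R}^{\beta X \times \beta X} \subseteq G(X)$ for $R = \ppg(T)$), and about the interplay between the topological projection $\r \colon X \times X \to X$ extended to $\beta(X\times X)$ and the groupoid range map $\r \colon G(X) \to \beta X$; one should observe that for $T$ of finite propagation these agree on $\overline{\supp_\varepsilon(T)}$, so invariance of $U$ does the rest. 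An alternative, possibly cleaner route for this inclusion is to invoke Lemma \ref{lem: support containment} to control $\overline{\supp(T_\varepsilon)}$ via the values of $|\overline{T}|$, which makes the containment in $G(X)_U$ transparent.
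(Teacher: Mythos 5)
Your proof is correct, and part (1) as well as the inclusion $U(\tilde{I}(U)) \subseteq U$ in part (2) are exactly the paper's argument (your detour through $T_\varepsilon$ and Proposition \ref{prop: Thm 3.5 in CW04} in (1) is harmless but unnecessary, since $\overline{\r(\supp_\varepsilon(T))}$ already appears verbatim in the union (\ref{EQ: defn for U(I)}) for every $T \in I$). Where you diverge is the reverse inclusion in (2): you establish $I(U) \subseteq \tilde{I}(U)$ by a direct groupoid-side verification that $I_c(U) \subseteq \tilde{I}(U)$ (via Lemma \ref{lem: support containment}, the identification of the extended projection with the range map on $\overline{E_R}$, and invariance of $U$) followed by norm-closedness of $\tilde{I}(U)$. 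This works, but the paper takes a shortcut you could have used too: since Proposition \ref{prop: 1-1 btw ideals and open inv subsets} gives $U(I(U)) = U$, the containment $I(U) \subseteq \tilde{I}(U)$ is an immediate application of your own part (1) to the ideal $I = I(U)$, with no support analysis needed; monotonicity of $I \mapsto U(I)$ then finishes as in your write-up. What your longer route buys is an explicit translation of ``$\overline{T}$ supported in $G(X)_U$'' into ``$\overline{\r(\supp_\varepsilon(T))} \subseteq U$ for all $\varepsilon$'', which in effect anticipates one implication of Proposition \ref{prop: geometric char for ghostly ideals}; the paper defers that content to the later proposition and keeps this lemma purely formal.
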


\begin{proof}
(1). Given $T \in I$, the condition $U(I)=U$ implies that $\overline{\r(\supp_\varepsilon(T))} \subseteq U$ for each $\varepsilon>0$. Hence by definition, we have $T \in \tilde{I}(U)$. 

(2). Note that
\[
U(\tilde{I}(U)) = \bigcup_{T \in \tilde{I}(U), \varepsilon >0} \overline{\r(\supp_\varepsilon(T))} \subseteq U.
\]
On the other hand, Proposition \ref{prop: 1-1 btw ideals and open inv subsets} shows that $U(I(U))=U$, which implies that $\tilde{I}(U) \supseteq I(U)$ thanks to (1). Hence we have $U(\tilde{I}(U)) \supseteq U(I(U)) =U$ again by Proposition \ref{prop: 1-1 btw ideals and open inv subsets}, which concludes the proof.
\end{proof}


Combining with Proposition \ref{cor: geometric ideals are smallest}, we reach the following desired result:
\begin{thm}\label{prop: ideals containment}
Let $(X,d)$ be a space as in Section \ref{ssec:notions from coarse geometry}, and $U$ be an invariant open subset of $\beta X$. Then any ideal $I$ in $C^*_u(X)$ with $U(I)=U$ sits between $I(U)$ and $\tilde{I}(U)$. More precisely, the geometric ideal $I(U)$ is the smallest element while the ghostly ideal $\tilde{I}(U)$ is the largest element in the lattice $\mathfrak{I}_U$ in  (\ref{EQ:set of ideals}).
\end{thm}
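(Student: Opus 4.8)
The plan is to show Theorem~\ref{prop: ideals containment} by assembling the two halves already established in the section. The statement asserts two things: that $I(U)$ is the smallest element of $\mathfrak{I}_U$ and that $\tilde{I}(U)$ is the largest. The smallest-element claim is exactly Proposition~\ref{cor: geometric ideals are smallest}, which in turn rests on Lemma~\ref{prop: geometric ideals are smallest}: for any ideal $I$ with $U(I)=U$ we have $I(U(I))\subseteq I$, i.e.\ $I(U)\subseteq I$; and since $U(I(U))=U$ by Proposition~\ref{prop: 1-1 btw ideals and open inv subsets}, the geometric ideal $I(U)$ itself lies in $\mathfrak{I}_U$, so it is genuinely the minimum. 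The largest-element claim is Lemma~\ref{lem: ghostly ideas are largest}: part~(2) gives $U(\tilde{I}(U))=U$, so $\tilde{I}(U)\in\mathfrak{I}_U$, and part~(1) gives $I\subseteq\tilde{I}(U)$ for every $I\in\mathfrak{I}_U$.

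So the proof is essentially a citation-and-bookkeeping argument: first I would recall that, by Proposition~\ref{cor: geometric ideals are smallest} (equivalently Lemma~\ref{prop: geometric ideals are smallest}), for any ideal $I$ in $C^*_u(X)$ one has $I(U(I))\subseteq I$, and that $U(I(U))=U$ by Proposition~\ref{prop: 1-1 btw ideals and open inv subsets}, so $I(U)\in\mathfrak{I}_U$ and $I(U)\subseteq I$ whenever $U(I)=U$. Then I would invoke Lemma~\ref{lem: ghostly ideas are largest} to get that $\tilde{I}(U)\in\mathfrak{I}_U$ and that $I\subseteq\tilde{I}(U)$ for every $I$ with $U(I)=U$. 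Combining the two, any $I\in\mathfrak{I}_U$ satisfies $I(U)\subseteq I\subseteq\tilde{I}(U)$, with both bounds themselves members of $\mathfrak{I}_U$; this is precisely the assertion of the theorem.

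Since all the substantive work has been front-loaded into Proposition~\ref{cor: geometric ideals are smallest} and Lemmas~\ref{prop: geometric ideals are smallest}, \ref{lem: ghostly ideals are ideals} and \ref{lem: ghostly ideas are largest}, there is no real obstacle left at this stage — the only thing to be careful about is that $\tilde{I}(U)$ is an honest two-sided ideal (so that it is a legitimate element of the lattice $\mathfrak{I}_U$), but that has already been checked in Lemma~\ref{lem: ghostly ideals are ideals}. If one wanted the proof to be self-contained rather than a pointer, the genuinely non-trivial ingredient worth re-deriving would be the inclusion $I(U(I))\subseteq I$ of Lemma~\ref{prop: geometric ideals are smallest}, which uses Proposition~\ref{prop: Thm 3.5 in CW04} (that $\varepsilon$-truncations of elements of an ideal stay in the ideal) together with the correspondence of Proposition~\ref{prop: 1-1 btw ideals and open inv subsets}; but as that lemma is stated and proved above, the present theorem follows formally.

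\begin{proof}
By Proposition~\ref{cor: geometric ideals are smallest} (which follows from Lemma~\ref{prop: geometric ideals are smallest}), for any ideal $I$ in $C^*_u(X)$ we have $I(U(I)) \subseteq I$. In particular, if $U(I)=U$ then $I(U) \subseteq I$. Moreover, Proposition~\ref{prop: 1-1 btw ideals and open inv subsets} gives $U(I(U))=U$, so that $I(U) \in \mathfrak{I}_U$. Hence $I(U)$ is the smallest element of $\mathfrak{I}_U$.

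On the other hand, Lemma~\ref{lem: ghostly ideals are ideals} shows that $\tilde{I}(U)$ is an ideal in $C^*_u(X)$, and Lemma~\ref{lem: ghostly ideas are largest} shows that $U(\tilde{I}(U))=U$, so $\tilde{I}(U) \in \mathfrak{I}_U$; the same lemma shows that any ideal $I$ in $C^*_u(X)$ with $U(I)=U$ satisfies $I \subseteq \tilde{I}(U)$. Hence $\tilde{I}(U)$ is the largest element of $\mathfrak{I}_U$.

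Combining the two paragraphs, for any $I \in \mathfrak{I}_U$ we have $I(U) \subseteq I \subseteq \tilde{I}(U)$, with $I(U)$ and $\tilde{I}(U)$ themselves lying in $\mathfrak{I}_U$. This completes the proof.
\end{proof}
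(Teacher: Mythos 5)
Your proposal is correct and follows exactly the paper's route: the theorem is stated there as the immediate combination of Proposition~\ref{cor: geometric ideals are smallest} (minimality of $I(U)$, via Lemma~\ref{prop: geometric ideals are smallest}) with Lemma~\ref{lem: ghostly ideas are largest} (maximality of $\tilde{I}(U)$), which is precisely your argument. The extra care you take to verify that both bounds actually lie in $\mathfrak{I}_U$ is a sensible bookkeeping point that the paper handles in the same cited results.
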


As mentioned in Section \ref{sec:intro}, Theorem \ref{prop: ideals containment} draws the border of the lattice $\mathfrak{I}_U$. Therefore, once we can bust every ideal between $I(U)$ and $\tilde{I}(U)$ for each invariant open subset $U \subseteq \beta X$, then we will obtain a full description for the ideal structure of the uniform Roe algebra $C^*_u(X)$ (see Question \ref{Q1}).

Now we aim to provide a geometric description for ghostly ideals, which helps to explain the terminology. Let us start with an easy example.

\begin{ex}\label{eg: ghost ideal}
Taking $U=X$, then $\tilde{I}(X)$ is the ideal $I_G$ defined in Section \ref{ssec:uniform Roe alg}. Indeed, $T \in \tilde{I}(X)$ if and only if for any $\varepsilon>0$, $\r(\supp_\varepsilon(T))$ is finite. This is equivalent to that $T \in C_0(X \times X)$ since $T \in C^*_u(X)$. On the other hand, it is clear that $\tilde{I}(\beta X) = C^*_u(X)$.
\end{ex}

More generally, we have the following result. Note that for any invariant open subset $U$ of $\beta X$, $G(X)_U$ is an open subset of $\beta(X \times X)$. Hence both $C_0(G(X)_U)$ and $I_c(U)$ can be regarded as subalgebras in $C(\beta(X \times X)) \cong \ell^\infty(X \times X)$.

\begin{prop}\label{prop: geometric char for ghostly ideals}
For an invariant open subset $U \subseteq \beta X$ and $T \in C^*_u(X)$, the following are equivalent:
\begin{enumerate}
 \item $T \in \tilde{I}(U)$;
 \item $\overline{T} \in C_0(G(X)_U)$;
 \item $\overline{T} \in \overline{I_c(U)}^{\|\cdot\|_\infty}$;
 \item $T$ vanishes in the $(\beta X \setminus U)$-direction, \emph{i.e.}, $\Phi_\omega(T) = 0$ for any $\omega \in \beta X \setminus U$.
\end{enumerate}
\end{prop}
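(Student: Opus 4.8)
The plan is to establish the cycle of implications $(1)\Leftrightarrow(2)$, $(2)\Leftrightarrow(3)$, and $(1)\Leftrightarrow(4)$, using the characterisation of $G(X)$ from Corollary \ref{cor:char for G(X)} and the identification $\Phi_\omega(T)_{\alpha\gamma}=\overline{T}(\alpha,\gamma)$ from Lemma \ref{lem:limit operator using extension}.

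For $(1)\Leftrightarrow(2)$: recall from Lemma \ref{lem: unif. Roe belongs to C0} that $\overline{T}\in C_0(G(X))$ for $T\in C^*_u(X)$. Now $\overline{T}\in C_0(G(X)_U)$ means precisely that for every $\varepsilon>0$ the set $\{\tilde\omega\in G(X): |\overline{T}(\tilde\omega)|\geq\varepsilon\}$ is a compact subset of the open set $G(X)_U$. By Lemma \ref{lem: support containment}, this set sits between $\overline{\supp(T_\varepsilon)}=\overline{\supp_\varepsilon(T)}$ and $\overline{\supp_{\varepsilon/2}(T)}$ (closures in $\beta(X\times X)$, which for finite propagation operators land inside $G(X)$). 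So I would argue: $\overline{T}\in C_0(G(X)_U)$ iff $\overline{\supp_\varepsilon(T)}\subseteq G(X)_U$ for all $\varepsilon>0$ (compactness being automatic since $\overline{\supp_\varepsilon(T)}$ is closed in the compact space $\beta(X\times X)$ and, for $T$ with finite propagation after truncation, contained in some $\overline{E_R}$). Finally one translates $\overline{\supp_\varepsilon(T)}\subseteq G(X)_U=G(X)\cap s^{-1}(U)$: since $\s(\overline{\supp_\varepsilon(T)})=\overline{\s(\supp_\varepsilon(T))}$ and similarly for $\r$, and since $U$ is invariant, $\overline{\supp_\varepsilon(T)}\subseteq s^{-1}(U)$ iff $\overline{\s(\supp_\varepsilon(T))}\subseteq U$ iff $\overline{\r(\supp_\varepsilon(T))}\subseteq U$ (using $T^*\in\tilde I(U)$ symmetry, or directly that invariance makes source and range conditions equivalent). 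This last equivalence is exactly condition (1).

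For $(2)\Leftrightarrow(3)$: the inclusion $I_c(U)\subseteq C_0(G(X)_U)$ is immediate from the definitions, and $C_0(G(X)_U)$ is $\|\cdot\|_\infty$-closed, so $\overline{I_c(U)}^{\|\cdot\|_\infty}\subseteq C_0(G(X)_U)$, giving $(3)\Rightarrow(2)$. For $(2)\Rightarrow(3)$, given $\overline{T}\in C_0(G(X)_U)$, approximate: the truncations $\overline{T_\varepsilon}$ have $\supp(\overline{T_\varepsilon})=\overline{\supp_\varepsilon(T)}\subseteq G(X)_U$ and finite propagation, so $\overline{T_\varepsilon}\in I_c(U)$; and $\|\overline{T}-\overline{T_\varepsilon}\|_\infty\leq\varepsilon$, so $\overline{T_\varepsilon}\to\overline{T}$ in $\|\cdot\|_\infty$, placing $\overline{T}$ in $\overline{I_c(U)}^{\|\cdot\|_\infty}$.

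For $(1)\Leftrightarrow(4)$: by Lemma \ref{lem:limit operator using extension}, $\Phi_\omega(T)_{\alpha\gamma}=\overline{T}(\alpha,\gamma)$ for $\alpha,\gamma\in X(\omega)$, and by Corollary \ref{cor:char for G(X)} the pairs $(\alpha,\gamma)$ with $\alpha,\gamma\in X(\omega)$ for $\omega\in\partial_\beta X$, together with $X\times X$, exhaust $G(X)$. So $\Phi_\omega(T)=0$ for all $\omega\in\beta X\setminus U$ says exactly that $\overline{T}$ vanishes on $G(X)\setminus G(X)_U$ — i.e. $\supp(\overline{T})\subseteq \overline{G(X)_U}$, and since the condition is about $\overline{T}\in C_0(G(X))$, this is the same as $\overline{T}\in C_0(G(X)_U)$, which is $(2)$. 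I should be slightly careful here about whether ``vanishes in the $\omega$-direction'' ranges over $\omega\in\partial_\beta X\setminus U$ versus $\omega\in\beta X\setminus U$; since $X\subseteq U$ by Lemma \ref{lem: inv. open contains X} whenever $U$ is nonempty (and the statement is trivial for $U=\emptyset$), these coincide, and on principal $\omega\in X$ the limit operator is just $T$ itself which need not vanish — so the correct reading is $\omega\in\partial_\beta X\setminus U$, matching $G(X)\setminus G(X)_U = \bigsqcup_{\omega\notin U}X(\omega)\times X(\omega)$. The main obstacle I anticipate is the bookkeeping in $(1)\Leftrightarrow(2)$: carefully justifying that taking closures in $\beta(X\times X)$ commutes with the coordinate projections $\r,\s$ (which holds because $\r,\s$ extend continuously), and that the source-side and range-side conditions are genuinely equivalent under invariance of $U$ — this uses that $(\alpha,\gamma)\in\overline{\supp_\varepsilon(T)}$ implies $(\gamma,\alpha)\in\overline{\supp_\varepsilon(T^*)}$ and an appeal to the already-established fact (Lemma \ref{lem: ghostly ideals are ideals}) that $\tilde I(U)$ is $\ast$-closed, or alternatively a direct groupoid argument that for $\tilde\omega\in G(X)$ one has $\s(\tilde\omega)\in U\Leftrightarrow\r(\tilde\omega)\in U$.
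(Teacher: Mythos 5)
Your proof is correct and takes essentially the same route as the paper's: both hinge on Lemma \ref{lem: support containment} to sandwich the level sets of $\overline{T}$ between closures of $\varepsilon$-supports for (1)$\Leftrightarrow$(2), on the identification of $\overline{I_c(U)}^{\|\cdot\|_\infty}$ with the functions in $C_0(G(X))$ vanishing off $G(X)_U$ (your truncation argument just makes this explicit), and on Lemmas \ref{lem:limit operator using extension} and \ref{lem:limit space via coarse groupoids} to translate (4) into vanishing of $\overline{T}$ off $G(X)_U$. The only cosmetic differences are that you connect (4) to (2) rather than to (3) and spell out the source/range bookkeeping via invariance, which the paper leaves implicit.
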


\begin{proof}
``(1) $\Rightarrow$ (2)'': By definition, for any $\varepsilon>0$ we have 
\[
\r(\supp(\overline{T_\varepsilon})) = \r(\overline{\supp(T_\varepsilon)}) = \overline{\r(\supp_\varepsilon(T))} \subseteq U.
\]
Consider the compact set $K:=\{\tilde{\omega} \in \beta(X \times X): |\overline{T}(\tilde{\omega})| \geq 2\varepsilon\}$. By Lemma \ref{lem: support containment}, we have $\r(K) \subseteq \r(\supp(\overline{T_\varepsilon})) \subseteq U$, which implies that $K\subseteq G(X)_U$. 
Moreover, we have $|\overline{T}(\tilde{\omega})|<2\varepsilon$ for any $\tilde{\omega} \in \beta(X \times X) \setminus K$, which concludes that $\overline{T} \in C_0(G(X)_U)$.

``(2) $\Rightarrow$ (1)'': Assume that $\overline{T} \in C_0(G(X)_U) \subseteq C(\beta(X \times X))$. Then for any $\varepsilon>0$ there exists a compact subset $K \subseteq G(X)_U$ such that for any $\tilde{\omega} \in \beta(X \times X) \setminus K$, we have $|\overline{T}(\tilde{\omega})| < \varepsilon$. This implies that $\{\tilde{\omega} \in \beta(X \times X): |\overline{T}(\tilde{\omega})| \geq \varepsilon\} \subseteq K$. Using Lemma \ref{lem: support containment}, we obtain that $\overline{\supp(T_\varepsilon)} \subseteq K$, which implies that $\overline{\r(\supp_\varepsilon(T))} \subseteq U$. Hence $T \in \tilde{I}(U)$.

``(2) $\Leftrightarrow$ (3)'': This is due to the fact that 
\[
\overline{I_c(U)}^{\|\cdot\|_\infty} = \{f \in C_0(G(X)): f(\tilde{\omega})=0 \text{ for } \tilde{\omega} \notin G(X)_U\}.
\]

``(3) $\Leftrightarrow$ (4)'': By Lemma \ref{lem:limit operator using extension}, (4) holds if and only if $\overline{T}(\alpha, \gamma) = 0$ for any $\alpha, \gamma \in X(\omega)$ and $\omega \in \beta X \setminus U$. Applying Lemma \ref{lem: unif. Roe belongs to C0} and Lemma \ref{lem:limit space via coarse groupoids}, this holds if and only if $\overline{T}(\tilde{\omega}) = 0$ whenever $\tilde{\omega} \notin G(X)_U$, which describes elements in $\overline{I_c(U)}^{\|\cdot\|_\infty}$. Hence we conclude the proof.
\end{proof}

%

Note that $G(X)_X = X \times X$. Hence as a direct corollary, we recover the following characterisation for ghost operators:

\begin{cor}[{\cite[Proposition 8.2]{SW17}}]\label{cor:char for ghost}
An operator $T \in C^*_u(X)$ is a ghost \emph{if and only if} $\Phi_\omega(T)=0$ for any non-principal ultrafilter $\omega$ on $X$.
\end{cor}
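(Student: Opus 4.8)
The plan is to specialise Proposition \ref{prop: geometric char for ghostly ideals} to the case $U = X$. Recall from Example \ref{eg: ghost ideal} that $\tilde{I}(X) = I_G$, the ideal of all ghost operators. So the statement ``$T$ is a ghost'' is precisely ``$T \in \tilde{I}(X)$'', which by Proposition \ref{prop: geometric char for ghostly ideals} (the equivalence (1) $\Leftrightarrow$ (4)) is equivalent to $\Phi_\omega(T) = 0$ for every $\omega \in \beta X \setminus X$. The only thing left to check is that ``for every $\omega \in \beta X \setminus X$'' in condition (4) can be replaced by ``for every non-principal ultrafilter $\omega$ on $X$'' — but these are literally the same thing, since $\beta X \setminus X = \partial_\beta X$ is exactly the set of non-principal ultrafilters on $X$. (When $\omega$ is principal, $X(\omega) = X$ and $\Phi_\omega$ is the identity, so demanding $\Phi_\omega(T) = 0$ there would be wrong; fortunately condition (4) correctly excludes principal ultrafilters.)

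Concretely, I would write: by Example \ref{eg: ghost ideal}, $T \in C^*_u(X)$ is a ghost if and only if $T \in \tilde{I}(X)$. Applying the equivalence (1) $\Leftrightarrow$ (4) of Proposition \ref{prop: geometric char for ghostly ideals} with $U = X$, this holds if and only if $\Phi_\omega(T) = 0$ for all $\omega \in \beta X \setminus X$, and $\beta X \setminus X$ is by definition the set of non-principal ultrafilters on $X$. This completes the proof.

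There is essentially no obstacle here — the corollary is an immediate consequence of results already established in the section, and the proof is two lines. The only thing one must be slightly careful about is the bookkeeping between ``$\omega \in \beta X \setminus U$'' (the way condition (4) is phrased) and ``non-principal ultrafilter'' (the way the corollary is phrased), together with the observation $G(X)_X = X \times X$ that makes Example \ref{eg: ghost ideal} applicable; both are already spelled out in the text immediately preceding the statement.

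\begin{proof}
By Example \ref{eg: ghost ideal}, an operator $T \in C^*_u(X)$ is a ghost if and only if $T \in \tilde{I}(X)$. Since $G(X)_X = X \times X$, applying the equivalence ``(1) $\Leftrightarrow$ (4)'' of Proposition \ref{prop: geometric char for ghostly ideals} to the invariant open subset $U = X$ shows that $T \in \tilde{I}(X)$ if and only if $T$ vanishes in the $(\beta X \setminus X)$-direction, \emph{i.e.}, $\Phi_\omega(T) = 0$ for every $\omega \in \beta X \setminus X$. As $\beta X \setminus X = \partial_\beta X$ is precisely the set of non-principal ultrafilters on $X$, this is exactly the assertion of the corollary.
\end{proof}
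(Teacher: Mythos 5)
Your proof is correct and is exactly the argument the paper intends: the text immediately preceding the corollary notes $G(X)_X = X \times X$ and presents the statement as the specialisation of Proposition \ref{prop: geometric char for ghostly ideals} (via Example \ref{eg: ghost ideal}) to $U = X$. The identification of $\beta X \setminus X = \partial_\beta X$ with the non-principal ultrafilters is the same bookkeeping the paper relies on (Lemma \ref{lem:ultrafilter to SC comp.}), so there is nothing to add.
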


\begin{rem}\label{rem: terminology exp. for ghostly ideals}
In other words, Corollary \ref{cor:char for ghost} shows that a ghost in $C^*_u(X)$ is locally invisible in all directions. This suggests us to consider operators in $\tilde{I}(U)$ as ``partial'' ghosts, which clarifies the terminology of ``ghostly ideals''.
\end{rem}


As an application of Proposition \ref{prop: geometric char for ghostly ideals}, we now provide another description for ghostly ideals in terms of operator algebras, which will be used later in Section \ref{sec:geometric vs ghostly ideals}. Let us start with the short exact sequences studied in \cite{HLS02} (see also \cite[Section 2]{FW14}). 


Given an invariant open subset $U \subseteq \beta X$, notice that $U^c=\beta X \setminus U$ is also invariant. Denote by $G(X)_{U^c}:=G(X) \cap \s^{-1}(U^c)$ and clearly, we have a decomposition:
\[
G(X) = G(X)_U \sqcup G(X)_{U^c}.
\]
Note that $G(X)_U$ is open in $G(X)$, hence the above induces the following short exact sequence of $\ast$-algebras:
\begin{equation}\label{EQ: short exact seq. for alg.}
0 \longrightarrow C_c(G(X)_U) \longrightarrow C_c(G(X)) \longrightarrow C_c(G(X)_{U^c}) \longrightarrow 0
\end{equation}
where the map $C_c(G(X)_U) \longrightarrow C_c(G(X))$ is the inclusion and the map $C_c(G(X)) \longrightarrow C_c(G(X)_{U^c})$ is the restriction. 

We may complete the sequence (\ref{EQ: short exact seq. for alg.}) with respect to the maximal groupoid $C^*$-norms and obtain the following sequence:
\begin{equation}\label{EQ: short exact seq. for max. alg.}
0 \longrightarrow C^*_{\rm max}(G(X)_U) \longrightarrow C^*_{\rm max}(G(X)) \longrightarrow C^*_{\rm max}(G(X)_{U^c}) \longrightarrow 0,
\end{equation}
which is easy to check by definition to be automatically exact (see, \emph{e.g.}, \cite[Lemma 2.10]{MRW96}). We may also complete this sequence with respect to the reduced groupoid $C^*$-norms and obtain the following sequence:
\begin{equation}\label{EQ: short exact seq. for red. alg.}
0 \longrightarrow C^*_r(G(X)_U) \stackrel{i_U}{\longrightarrow} C^*_r(G(X)) \stackrel{q_U}{\longrightarrow} C^*_r(G(X)_{U^c}) \longrightarrow 0.
\end{equation}
By construction, $i_U$ is injective, $q_U$ is surjective and $q_U \circ i_U =0$. Also recall from Lemma \ref{lem: groupoid C*-alg char for I(U)} that $i_U(C^*_r(G(X)_U)) = I(U)$, the geometric ideal associated to $U$. However in general, (\ref{EQ: short exact seq. for red. alg.}) fails to be exact at the middle item. This is crucial in \cite{HLS02} to provide a counterexample to the Baum-Connes conjecture with coefficients.

More precisely when $U=X$, it is proved in \cite[Proposition 2.11]{Roe05} for the group case and \cite[Proposition 4.4]{FW14} for the Roe algebraic case (see also \cite[Proposition 8.2]{SW17}) that ${\rm Ker}(q_X)=I_G$, \emph{i.e.}, the ideal consisting of all ghost operators in $C^*_u(X)$. Hence from Example \ref{eg: compact ideal} and Example \ref{eg: ghost ideal}, the sequence (\ref{EQ: short exact seq. for red. alg.}) is exact for $U=X$ \emph{if and only if} $I(X) = \tilde{I}(X)$. 
More generally, we have the following:

\begin{prop}\label{prop: char for Ker qU}
Given an invariant open subset $U \subseteq \beta X$, the kernel of $q_U: C^*_r(G(X)) \to C^*_r(G(X)_{U^c})$ coincides with the ghostly ideal $\tilde{I}(U)$. Hence the sequence (\ref{EQ: short exact seq. for red. alg.}) is exact \emph{if and only if} $I(U) = \tilde{I}(U)$.
\end{prop}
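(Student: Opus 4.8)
The plan is to identify $\mathrm{Ker}(q_U)$ with $\tilde I(U)$ by transporting $q_U$ through the isomorphism $\Theta \colon C^*_r(G(X)) \to C^*_u(X)$ of Proposition \ref{prop: groupoid char for uniform Roe alg}, and then using the characterisation of $\tilde I(U)$ in terms of limit operators from Proposition \ref{prop: geometric char for ghostly ideals}. Recall that $G(X)_{U^c}$ decomposes, by Corollary \ref{cor:char for G(X)} and the invariance of $U^c$, as $\bigsqcup_{\omega \in U^c \cap \partial_\beta X} \big(X(\omega) \times X(\omega)\big)$, since $U \supseteq X$ by Lemma \ref{lem: inv. open contains X}. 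Thus $C^*_r(G(X)_{U^c})$ is naturally a subalgebra of $\prod_{\omega \in U^c \cap \partial_\beta X} C^*_u(X(\omega))$, and the restriction map $q_U$ should, fibrewise, send $T \in C^*_u(X)$ (identified with $\overline T \in C_0(G(X))$ via Lemma \ref{lem: unif. Roe belongs to C0}) to the family of its limit operators $\big(\Phi_\omega(T)\big)_{\omega \in U^c \cap \partial_\beta X}$.

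Here are the steps in order. First I would make precise the identification of $C^*_r(G(X)_{U^c})$ with the closure of $C_c(G(X)_{U^c})$ inside $\prod_\omega \B(\ell^2(X(\omega)))$, using that the regular representations $\lambda_\omega$ of $G(X)$ restrict to regular representations of $G(X)_{U^c}$ at units $\omega \in U^c$, and that by principality of $G(X)$ the units in $U^c$ suffice to compute the reduced norm on $C_c(G(X)_{U^c})$. Second, I would verify that under $\Theta$ and the unitaries $W_\omega$ from Lemma \ref{lem:limit operators via regular representation}, the composition $q_U$ corresponds to $T \mapsto \big(\Phi_\omega(T)\big)_{\omega \in U^c \cap \partial_\beta X}$; this follows because $q_U$ restricted to $C_c(G(X))$ is the restriction-of-functions map, which on matrix coefficients is exactly $\overline T(\alpha,\gamma) = \Phi_\omega(T)_{\alpha\gamma}$ for $(\alpha,\gamma) \in X(\omega)\times X(\omega)$ by Lemma \ref{lem:limit operator using extension}, and both sides are norm-continuous. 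Third, with this identification in hand, $\mathrm{Ker}(q_U) = \{T \in C^*_u(X) : \Phi_\omega(T) = 0 \text{ for all } \omega \in U^c \cap \partial_\beta X\}$, which is precisely condition (4) of Proposition \ref{prop: geometric char for ghostly ideals} (noting $\Phi_\omega(T) = 0$ is automatic — indeed vacuous — for principal $\omega \in X \subseteq U$), hence equals $\tilde I(U)$. Finally, for the ``if and only if'' clause: since $i_U(C^*_r(G(X)_U)) = I(U)$ by Lemma \ref{lem: groupoid C*-alg char for I(U)} and $q_U \circ i_U = 0$, we always have $I(U) \subseteq \mathrm{Ker}(q_U) = \tilde I(U)$; exactness of (\ref{EQ: short exact seq. for red. alg.}) at the middle term means exactly $\mathrm{Im}(i_U) = \mathrm{Ker}(q_U)$, i.e. $I(U) = \tilde I(U)$.

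The main obstacle I expect is the second step: carefully justifying that the reduced-completion restriction map $q_U$ genuinely implements $T \mapsto (\Phi_\omega(T))_\omega$, rather than something a priori larger or smaller. One has to be careful that the reduced norm on $C_c(G(X)_{U^c})$ is computed using only base points $\omega$ in the \emph{closed} invariant set $U^c$ (this uses that $q_U$ is by construction the reduced completion of the restriction $\ast$-homomorphism, plus the description of the reduced norm as a supremum of $\|\lambda_\omega\|$ over $\omega \in U^c$), and that the extension from $C_c$ to the $C^*$-completions is compatible with taking limit operators — which is exactly the content of Lemma \ref{lem:limit operators via regular representation} combined with the norm-density of $\CC_u[X]$ in $C^*_u(X)$ and the fact (from \cite{SW17}) that $T \mapsto \Phi_\omega(T)$ is norm-contractive. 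Once these continuity and norm-computation points are nailed down, the rest is bookkeeping.
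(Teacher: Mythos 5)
Your proposal is correct and follows essentially the same route as the paper: identify $\mathrm{Ker}(q_U)$ with $\{T : \lambda_\omega(T)=0 \text{ for all } \omega \in U^c\}$, translate $\lambda_\omega$ into the limit operator $\Phi_\omega$ via Lemma \ref{lem:limit operators via regular representation}, and conclude by condition (4) of Proposition \ref{prop: geometric char for ghostly ideals}. The only superfluous point is your appeal to principality of $G(X)$: the reduced norm on $C_c(G(X)_{U^c})$ is by definition the supremum of $\|\lambda_\omega\|$ over units $\omega \in U^c$, so no extra argument is needed there.
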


\begin{proof}
It is easy to see that an operator $T \in C^*_u(X) \cong C^*_r(G(X))$ belongs to the kernel of $q_U$ if and only if $\lambda_\omega(T)=0$ for any $\omega \in \beta X \setminus U$, where $\lambda_\omega$ is the left regular representation from (\ref{reduced algebra defn}). Now Lemma \ref{lem:limit operators via regular representation} implies that $\Phi_\omega(T) = 0$ for any $\omega \in \beta X \setminus U$. Finally, we conclude the proof thanks to Proposition \ref{prop: geometric char for ghostly ideals}.
\end{proof}

We end this section with an illuminating example from \cite[Section 3]{Wan07} (see also \cite[Section 5]{FW14}), which is important to construct counterexamples to Baum-Connes type conjectures:

\begin{ex}\label{ex: Wan07}
Let $\{X_i\}_{i\in \NN}$ be a sequence of expander graphs or pertubed expander graphs (see \cite{Wan07} for the precise definition). Let $Y_{i,j}=X_i$ for all $j\in \NN$ and set $Y:=\bigsqcup_{i,j} Y_{i,j}$. We endow $Y$ with a metric $d$ such that it is the graph metric on each $Y_{i,j}$ and satisfies $d(Y_{i,j}, Y_{k,l}) \to \infty$ as $i+j+k+l \to \infty$.

Let $P_{i,j} \in \B(\ell^2(Y_{i,j}))$ be the orthogonal projection onto constant functions on $Y_{i,j}$, and we set $P$ to be the direct sum of $P_{i,j}$ in the strong operator topology. By the assumption on the expansion of $\{X_i\}_{i\in \NN}$, it is clear that $P \in C^*_u(Y)$. It is explained in \cite[Section 3]{Wan07} (see also \cite[Lemma 5.1]{FW14}) that $P$ is \emph{not} a ghost, \emph{i.e.}, $P \notin \tilde{I}(X)$. However intuitively, $P$ should vanish ``in the $i$-direction''. We will make it more precisely in the following.

Recall from \cite[Section 5.1]{FW14} that we have a surjective map $\beta Y \to \beta X \times \beta \NN$ induced by the bijection of $Y$ with $X \times \NN$ and the universal property of $\beta Y$. Define:
\[
f: \beta Y \longrightarrow \beta X \times \beta \NN \longrightarrow \beta X
\]
where the second map is just the projection onto the first coordinate. Denote $U=f^{-1}(X)$, which is open in $\beta Y$. Note that $U=\bigsqcup_i f^{-1}(X_i)$, where each $f^{-1}(X_i)$ is homeomorphic to $X_i \times \beta \NN$. On the other hand, note that
\[
U= \bigsqcup_i f^{-1}(X_i) = \bigcup_i \overline{\bigsqcup_j Y_{i,j}} = \bigcup_{\varepsilon >0} \overline{\r(\supp_\varepsilon(P))} = \bigcup_{\varepsilon >0, R>0} \overline{\Nd_R(\r(\supp_\varepsilon(P)))}.
\]
Hence it follows from Lemma \ref{lem: U(I) for principal ideals} that $U$ is invariant (comparing with \cite[Lemma 5.2]{FW14}), and $U(\langle P \rangle) = U$. (Note that $P \in \tilde{I}(U)$ was already implicitly proved in \cite[Theorem 5.5]{FW14}, thanks to Proposition \ref{prop: char for Ker qU}.) Since $U$ contains $Y$ as a proper subset, we reprove that $P$ is not a ghost. 

Moreover, it follows from Proposition \ref{prop: geometric char for ghostly ideals} that $P$ vanishes in the $(\partial_\beta Y \setminus U)$-direction. In particular, fixing an index $j_0 \in \NN$ and taking a sequence $\{x_i \in Y_{i,j_0}\}_{i\in \NN}$, we choose a cluster point $\omega \in \overline{\{x_i\}_i}$. It is clear that $\omega \notin U$. Intuitively speaking, this means that $P$ vanishes ``in the $i$-direction''. 

We remark that the first-named author proved in \cite[Section 3]{Wan07} that the principal ideal $\langle P \rangle$ cannot be decomposed into $I(U) + (I_G\cap \langle P \rangle)$, which provided a couterexample to the conjecture at the end of \cite{CW04}. Our explanation above reveals that the reason behind this counterexample is that the ghostly part of $\langle P \rangle$ could not be ``exhausted'' merely by ghostly elements associated to $X$ (rather than $U$). 

Finally, we remark that the groupoid $G(Y)_U$ also plays a key role in constructing a counterexample to the boundary coarse Baum-Connes conjecture introduced in \cite{FW14} (see Section 5.2 therein).
\end{ex}


\section{Maximal ideals}\label{sec:maximal ideals}

In this section, we would like to study maximal ideals in uniform Roe algebras using the tools developed in Section \ref{sec:ghostly ideals}. Throughout this section, let $(X,d)$ be a space as in Section \ref{ssec:notions from coarse geometry}.

\subsection{Minimal points in the boundary}\label{ssec:min points}

Recall from previous sections that ideals are closely related to invariant open subsets of the unit space $\beta X$. Hence we introduce the following:

\begin{defn}\label{defn:maximal subset}
An invariant open subset $U \subseteq \beta X$ is called \emph{maximal} if $U \neq \beta X$ and $U$ is not properly contained in any proper invariant open subset of $\beta X$. Similarly, an invariant closed subset $K \subseteq \beta X$ is called \emph{minimal} if $K \neq \emptyset$ and $K$ does not properly contain any non-empty invariant closed subset of $\beta X$.
\end{defn}

\begin{prop}\label{lem:maximal ideals using maximal open subsets}
For any maximal invariant open subset $U \subset \beta X$, the ghostly ideal $\tilde{I}(U)$ is a maximal ideal in the uniform Roe algebra $C^*_u(X)$. Conversely for any maximal ideal $I$ in $C^*_u(X)$, the associated invariant open subset $U(I)$ is maximal and we have $I=\tilde{I}(U(I))$.
\end{prop}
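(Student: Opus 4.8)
The plan is to establish a correspondence between maximal ideals in $C^*_u(X)$ and maximal invariant open subsets of $\beta X$, using Theorem \ref{prop: ideals containment} as the main tool. The key observation is that every ideal $I$ sits between $I(U(I))$ and $\tilde{I}(U(I))$, so maximality of $I$ forces strong constraints on both $U(I)$ and on where $I$ sits in $\mathfrak{I}_{U(I)}$.

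First I would prove the forward direction: given a maximal invariant open subset $U \subsetneq \beta X$, show that $\tilde{I}(U)$ is a maximal ideal. Since $U \neq \beta X$, Proposition \ref{prop: 1-1 btw ideals and open inv subsets} (or Example \ref{eg: ghost ideal}) gives $\tilde{I}(U) \subsetneq C^*_u(X) = \tilde{I}(\beta X)$, so $\tilde{I}(U)$ is a proper ideal. Suppose $J$ is an ideal with $\tilde{I}(U) \subseteq J \subseteq C^*_u(X)$. Then $U = U(\tilde{I}(U)) \subseteq U(J) \subseteq \beta X$ by monotonicity of the assignment $I \mapsto U(I)$ (which follows directly from the defining formula (\ref{EQ: defn for U(I)})). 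Since $U(J)$ is an invariant open subset and $U$ is maximal, either $U(J) = U$ or $U(J) = \beta X$. In the first case $J \subseteq \tilde{I}(U(J)) = \tilde{I}(U)$ by Theorem \ref{prop: ideals containment}, forcing $J = \tilde{I}(U)$. In the second case $J \supseteq I(\beta X) = C^*_u(X)$ by Proposition \ref{cor: geometric ideals are smallest}, forcing $J = C^*_u(X)$. Hence $\tilde{I}(U)$ is maximal.

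Next I would handle the converse: given a maximal ideal $I$ in $C^*_u(X)$, show $U(I)$ is maximal and $I = \tilde{I}(U(I))$. Set $U := U(I)$. Since $I$ is proper, $I \subseteq \tilde{I}(U)$ (Theorem \ref{prop: ideals containment}), and $\tilde{I}(U)$ is a proper ideal (if $U = \beta X$ then $I \subseteq \tilde{I}(\beta X) = C^*_u(X)$ would still need checking — but one sees $U \ne \beta X$, else $I \supseteq I(\beta X) = C^*_u(X)$ contradicting properness). So $I \subseteq \tilde{I}(U) \subsetneq C^*_u(X)$, and maximality of $I$ gives $I = \tilde{I}(U)$. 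It remains to show $U$ is maximal: if $U \subsetneq V \subsetneq \beta X$ for some invariant open $V$, then $\tilde{I}(U) \subseteq \tilde{I}(V)$ and moreover the inclusion is proper because $U(\tilde{I}(V)) = V \neq U = U(\tilde{I}(U))$ by Lemma \ref{lem: ghostly ideas are largest}(2). But $\tilde{I}(V) \subsetneq C^*_u(X)$ since $V \neq \beta X$, contradicting the maximality of $I = \tilde{I}(U)$. Hence no such $V$ exists and $U$ is maximal.

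I do not anticipate a serious obstacle here; the argument is essentially a formal consequence of Theorem \ref{prop: ideals containment} together with the monotonicity and the identities $U(I(U)) = U = U(\tilde{I}(U))$. The one point requiring a little care is ruling out $U(I) = \beta X$ in the converse direction and, correspondingly, checking that $\tilde{I}(U)$ is genuinely \emph{proper} whenever $U \neq \beta X$; this follows from $I(U) \subseteq \tilde{I}(U)$ and the fact that $U \mapsto I(U)$ is injective on invariant open subsets (Proposition \ref{prop: 1-1 btw ideals and open inv subsets}), so $\tilde{I}(U) = C^*_u(X) = \tilde{I}(\beta X)$ would force $U = U(\tilde{I}(U)) = U(\tilde{I}(\beta X)) = \beta X$. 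Everything else is bookkeeping with the lattice $\mathfrak{I}_U$.
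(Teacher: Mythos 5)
Your proposal is correct and follows essentially the same route as the paper's proof: both arguments are formal consequences of Theorem \ref{prop: ideals containment} together with $U(\tilde{I}(U))=U$ (Lemma \ref{lem: ghostly ideas are largest}) and $I(U(J))\subseteq J$ (Lemma \ref{prop: geometric ideals are smallest}), with the same case split $U(J)=U$ or $U(J)=\beta X$ in the forward direction and the same comparison with $\tilde{I}(V)$ in the converse. The only difference is that you spell out the properness of $\tilde{I}(U)$ for $U\neq\beta X$ and the fact that $U(I)\neq\beta X$, which the paper leaves implicit (and your justification there really rests on $U(\tilde{I}(\cdot))=\cdot$, not on injectivity of $U\mapsto I(U)$).
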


\begin{proof}
For any ideal $J$ in $C^*_u(X)$ containing $\tilde{I}(U)$, we have $U(J) \supseteq U(\tilde{I}(U)) = U$ by Lemma \ref{lem: ghostly ideas are largest}(2). Since $U$ is maximal, then either $U(J) = \beta X$ or $U(J) = U$. If $U(J)= \beta X$, it follows from Lemma \ref{prop: geometric ideals are smallest} that $J$ contains $I(U(J)) = C^*_u(X)$, which implies that $J=C^*_u(X)$. If $U(J) = U$, then it follows from Lemma \ref{lem: ghostly ideas are largest}(1) that $J \subseteq \tilde{I}(U(J)) = \tilde{I}(U)$, which implies that $J=\tilde{I}(U)$. This concludes that $\tilde{I}(U)$ is maximal.

Conversely for any maximal ideal $I$ in $C^*_u(X)$, we have $U(I) \neq \beta X$. For any open invariant subset $V \neq \beta X$ containing $U$, we have $I \subseteq \tilde{I}(U) \subseteq \tilde{I}(V)$ by Theorem \ref{prop: ideals containment}, and $\tilde{I}(V) \neq C^*_u(X)$. Hence due to the maximality of $I$, we obtain that $I = \tilde{I}(U) = \tilde{I}(V)$. This implies that $U=V$ and also $I = \tilde{I}(U)$ as required.
\end{proof}

Taking complements, we obtain the following:

\begin{cor}\label{cor:maximal ideals using minimal closed subsets}
For any minimal invariant closed subset $K \subseteq \beta X$, the ghostly ideal $\tilde{I}(\beta X \setminus K)$ is a maximal ideal in the uniform Roe algebra $C^*_u(X)$. Moreover, every maximal ideal in $C^*_u(X)$ arises in this form.
\end{cor}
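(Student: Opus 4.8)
The plan is to obtain Corollary \ref{cor:maximal ideals using minimal closed subsets} purely formally from Proposition \ref{lem:maximal ideals using maximal open subsets} by the elementary observation that the complement operation $U \mapsto \beta X \setminus U$ is an order-reversing bijection between the invariant open subsets and the invariant closed subsets of $\beta X$. Indeed, recall from Section \ref{ssec:groupoids} that a subset $Y \subseteq \beta X$ is invariant precisely when $\r^{-1}(Y) = \s^{-1}(Y)$, and since $\r^{-1}$ and $\s^{-1}$ commute with complementation, $Y$ is invariant if and only if $\beta X \setminus Y$ is invariant. Combined with the topological fact that $U$ is open iff $\beta X \setminus U$ is closed, this shows that $U \mapsto \beta X \setminus U$ restricts to an inclusion-reversing bijection from the poset of invariant open subsets onto the poset of invariant closed subsets.

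The first step is therefore to check that under this bijection, maximal invariant open subsets correspond exactly to minimal invariant closed subsets, which is immediate from Definition \ref{defn:maximal subset}: $U$ is maximal means $U \neq \beta X$ (i.e. $\beta X \setminus U \neq \emptyset$) and $U$ is not properly contained in a proper invariant open subset, which, upon taking complements, says precisely that $\beta X \setminus U$ is nonempty and does not properly contain a nonempty invariant closed subset, i.e. $\beta X \setminus U$ is minimal. The second step is then a direct translation: given a minimal invariant closed subset $K \subseteq \beta X$, the set $U := \beta X \setminus K$ is a maximal invariant open subset, so Proposition \ref{lem:maximal ideals using maximal open subsets} yields that $\tilde{I}(\beta X \setminus K) = \tilde{I}(U)$ is a maximal ideal in $C^*_u(X)$. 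Conversely, given any maximal ideal $I$ in $C^*_u(X)$, Proposition \ref{lem:maximal ideals using maximal open subsets} provides a maximal invariant open subset $U(I)$ with $I = \tilde{I}(U(I))$; setting $K := \beta X \setminus U(I)$, which is minimal by the correspondence, we get $I = \tilde{I}(\beta X \setminus K)$, so every maximal ideal arises in the stated form.

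There is essentially no obstacle here: the content of the corollary is entirely carried by Proposition \ref{lem:maximal ideals using maximal open subsets}, and the only thing to verify is the stability of invariance under complementation, which is a one-line observation. If anything requires a moment of care it is simply making sure that the order-reversal sends "maximal among proper open invariant subsets" to "minimal among nonempty closed invariant subsets" with the endpoint conventions matching ($\beta X \leftrightarrow \emptyset$), but this is handled transparently by the definitions. Thus the proof is a short formal argument and can be written in a few lines.

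\begin{proof}
Since $\r^{-1}$ and $\s^{-1}$ commute with taking complements, a subset $Y \subseteq \beta X$ is invariant if and only if $\beta X \setminus Y$ is invariant. Together with the fact that $U$ is open if and only if $\beta X \setminus U$ is closed, this shows that $U \mapsto \beta X \setminus U$ is an inclusion-reversing bijection between the set of invariant open subsets of $\beta X$ and the set of invariant closed subsets of $\beta X$. Comparing Definition \ref{defn:maximal subset}, this bijection carries maximal invariant open subsets to minimal invariant closed subsets and vice versa.

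Now let $K \subseteq \beta X$ be a minimal invariant closed subset. Then $U := \beta X \setminus K$ is a maximal invariant open subset, so by Proposition \ref{lem:maximal ideals using maximal open subsets} the ghostly ideal $\tilde{I}(\beta X \setminus K) = \tilde{I}(U)$ is a maximal ideal in $C^*_u(X)$. Conversely, let $I$ be a maximal ideal in $C^*_u(X)$. By Proposition \ref{lem:maximal ideals using maximal open subsets}, the invariant open subset $U(I)$ is maximal and $I = \tilde{I}(U(I))$. Setting $K := \beta X \setminus U(I)$, the above correspondence shows that $K$ is a minimal invariant closed subset, and $I = \tilde{I}(\beta X \setminus K)$. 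Hence every maximal ideal in $C^*_u(X)$ arises in the stated form.
\end{proof}
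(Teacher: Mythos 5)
Your proposal is correct and follows exactly the paper's route: the paper obtains this corollary from Proposition \ref{lem:maximal ideals using maximal open subsets} simply by ``taking complements,'' which is precisely the order-reversing bijection argument you spell out (including the observation that invariance, defined by $\r^{-1}(Y)=\s^{-1}(Y)$, is preserved under complementation). Your write-up just makes explicit what the paper leaves as a one-line remark.
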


Therefore, in order to describe maximal ideals in the uniform Roe algebra, it suffices to study minimal invariant closed subsets of the unit space $\beta X$. Recall from Lemma \ref{lem:limit space via coarse groupoids} that for each $\omega \in \partial_\beta X$, the limit space $X(\omega)$ is the smallest invariant subset of $\beta X$ containing $\omega$. However, note that $X(\omega)$ might not be closed in general.

\begin{defn}\label{defn:minimal point}
A point $\omega \in \partial_\beta X$ is called \emph{minimal} if the closure of the limit space $X(\omega)$ in $\beta X$ is minimal in the sense of Definition \ref{defn:maximal subset}.
\end{defn}

The following result is straightforward, hence we omit the proof. It suggests us to study minimal points in the boundary.

\begin{lem}\label{lem:property of minimal points}
For a minimal invariant closed subset $K \subseteq \beta X$, there exists a minimal point $\omega \in \partial_\beta X$ such that $K= \overline{X(\omega)}$. Conversely for any minimal point $\omega \in \partial_\beta X$, the set $\overline{X(\omega)}$ is minimal.
\end{lem}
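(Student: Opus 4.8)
The plan is to prove the two assertions separately. The converse one --- that $\overline{X(\omega)}$ is minimal for every minimal point $\omega$ --- is exactly Definition \ref{defn:minimal point}, so nothing needs to be done there; all the content is in the forward assertion, which I would argue as follows.

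Let $K \subseteq \beta X$ be a minimal invariant closed subset. First I would observe that $K \subseteq \partial_\beta X$: since $X \times X \subseteq G(X)$, the whole of $X$ is a single $G(X)$-orbit, so if $K$ met $X$ it would contain all of $X$ by invariance, hence $K = \overline{X}^{\beta X} = \beta X$; but $\beta X$ is not minimal, because it properly contains the non-empty invariant closed subset $\partial_\beta X$ (invariance of $X$, hence of $\partial_\beta X$, being immediate from the orbit decomposition of Corollary \ref{cor:char for G(X)}), contradicting the minimality of $K$. Next I would pick any $\omega \in K$; then $\omega \in \partial_\beta X$, and by Lemma \ref{lem:limit space via coarse groupoids} the limit space $X(\omega)$ is the smallest invariant subset of $\beta X$ containing $\omega$, so $X(\omega) \subseteq K$ and therefore $\overline{X(\omega)} \subseteq K$ since $K$ is closed. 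As $\overline{X(\omega)}$ is a non-empty invariant closed subset of $\beta X$, minimality of $K$ forces $\overline{X(\omega)} = K$; in particular $\overline{X(\omega)}$ is minimal, so $\omega$ is a minimal point, and $K = \overline{X(\omega)}$, as required.

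The only step that is not purely formal is the assertion, used above, that the closure in $\beta X$ of an invariant set is again invariant. I would deduce this from the \'{e}taleness of $G(X)$: given $\gamma \in G(X)$ with $\s(\gamma)$ in the closure $\overline{Y}$ of an invariant set $Y$, choose an open bisection $V \ni \gamma$, so that $\varphi := (\r|_V) \circ (\s|_V)^{-1}$ is a homeomorphism of the open neighbourhood $\s(V)$ of $\s(\gamma)$ onto the open set $\r(V)$ with $\varphi(\s(\gamma)) = \r(\gamma)$; invariance of $Y$ gives $\varphi(Y \cap \s(V)) \subseteq Y$, and approximating $\s(\gamma)$ by a net in $Y$ lying eventually in $\s(V)$ and applying $\varphi$ shows $\r(\gamma) \in \overline{Y}$. (Alternatively one can argue directly from the decomposition $G(X) = (X \times X) \sqcup \bigsqcup_{\omega \in \partial_\beta X}\big(X(\omega) \times X(\omega)\big)$ of Corollary \ref{cor:char for G(X)}, using that a partial translation $t \colon D \to R$ extends to a homeomorphism $\overline{D}^{\beta X} \to \overline{R}^{\beta X}$ between clopen subsets of $\beta X$.) I expect this to be the main --- indeed the only --- obstacle; everything else is routine bookkeeping with the correspondences already established in the excerpt.
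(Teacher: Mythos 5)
Your proof is correct, and in fact the paper omits the argument altogether as ``straightforward''; what you have written is exactly the intended routine argument: minimality forces $K \subseteq \partial_\beta X$, any $\omega \in K$ gives $\overline{X(\omega)} \subseteq K$ via Lemma \ref{lem:limit space via coarse groupoids} and the closedness of $K$, and minimality of $K$ upgrades the inclusion to equality, while the converse direction is immediate from Definition \ref{defn:minimal point}. You also correctly isolate the one step that is not purely formal --- that the closure of an invariant subset is again invariant --- and your open-bisection (equivalently, partial-translation) argument settles it; the only implicit hypothesis is that $X$ is infinite, so that $\partial_\beta X \neq \emptyset$ when you rule out $K=\beta X$, which is the standing situation throughout the paper.
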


One might wonder whether every $\omega \in \partial_\beta X$ is minimal. However, things become very complicated after taking closures and we will show later that this does not hold even in the case of $X=\ZZ$. Firstly, we notice the following:

\begin{lem}\label{lem:existence of min point}
Let $K$ be an invariant closed subset of $\beta X$. Then $K$ contains a minimal point. In particular, for any $\omega \in \partial_\beta X$ there exists a minimal point $\omega'$ such that $\omega' \in \overline{X(\omega)}$.
\end{lem}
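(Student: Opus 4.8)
The plan is to invoke Zorn's lemma in the lattice of non-empty invariant closed subsets of $K$, ordered by \emph{reverse} inclusion, and then extract a minimal point from a minimal element. First I would set $\mathcal{F}:=\{L \subseteq K: L \text{ is a non-empty invariant closed subset of } \beta X\}$, which is non-empty since $K \in \mathcal{F}$. Given a chain $\{L_i\}_{i\in \mathcal{I}}$ in $\mathcal{F}$ (totally ordered by inclusion), I would take $L:=\bigcap_i L_i$; this is closed and invariant (invariance passes to arbitrary intersections since the condition $\r^{-1}(L)=\s^{-1}(L)$ is preserved under intersection), and it is non-empty by the finite intersection property, because each $L_i$ is a non-empty closed subset of the compact space $\beta X$ and the chain has the finite intersection property. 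Hence $L \in \mathcal{F}$ is a lower bound, so by Zorn's lemma $\mathcal{F}$ has a minimal element $K_0$, which is by construction a minimal invariant closed subset of $\beta X$ contained in $K$.

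Next I would apply Lemma \ref{lem:property of minimal points} to $K_0$: since $K_0$ is a minimal invariant closed subset, there exists a minimal point $\omega \in \partial_\beta X$ with $K_0 = \overline{X(\omega)}$. (Here one should note $K_0 \subseteq \partial_\beta X$ automatically: if $K_0$ contained a point of $X$, then by Lemma \ref{lem: inv. open contains X} applied suitably — or directly, since for $x \in X$ and any $y\in X$ the pair $(x,y)\in G(X)$ — invariance would force $X \subseteq K_0$, and then $\overline{X} = \beta X \subseteq K_0 \subseteq K$, contradicting that $K$ may be proper; in any case the existence of \emph{some} minimal point in $K$ is all that is claimed.) Since $\omega \in K_0 \subseteq K$, this proves the first assertion that $K$ contains a minimal point.

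For the ``in particular'' clause, given $\omega \in \partial_\beta X$, I would apply the first part to the invariant closed set $K:=\overline{X(\omega)}$, which is invariant (the closure of an invariant set is invariant, since $\r,\s$ are continuous and $G(X)$ is étale — or simply cite that $X(\omega)$ is invariant by Lemma \ref{lem:limit space via coarse groupoids} and invariance is closed under closure) and non-empty. The first part yields a minimal point $\omega' \in K = \overline{X(\omega)}$, as desired.

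The only genuine subtlety is verifying that an arbitrary intersection of invariant closed subsets is again invariant; this is routine because the invariance condition $\r^{-1}(L) = \s^{-1}(L)$ can be rephrased as: $\gamma \in G(X)$ with $\s(\gamma) \in L$ implies $\r(\gamma)\in L$, together with the symmetric statement, and both implications survive intersection. The compactness argument for non-emptiness of the intersection over a chain is the standard finite-intersection-property argument. So I do not expect any real obstacle here; the lemma is essentially a packaging of Zorn's lemma plus Lemma \ref{lem:property of minimal points}, and the proof is short enough that the authors reasonably state it and move on.
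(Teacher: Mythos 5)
Your proposal is correct and follows exactly the argument the paper has in mind: the paper's proof simply says the lemma ``follows directly from Zorn's lemma together with the fact that $\beta X$ is compact,'' leaving the details (the chain/finite-intersection-property argument, the passage through Lemma \ref{lem:property of minimal points}, and the invariance of $\overline{X(\omega)}$) to the reader, all of which you have filled in correctly.
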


\begin{proof}
This follows directly from the Zorn's lemma together with the fact that $\beta X$ is compact. Details are left to readers.
\end{proof}

Consequently, we obtain:

\begin{cor}\label{cor:char for min point}
For $\omega \in \partial_\beta X$, $\omega$ is minimal \emph{if and only if} for any $\alpha \in \overline{X(\omega)}$, we have $\overline{X(\alpha)} = \overline{X(\omega)}$. Writing $\overline{X(\omega)} = \bigsqcup_{\lambda \in \Lambda}X(\omega_\lambda)$ for certain $\omega_\lambda \in \partial_\beta X$, then $\omega$ is minimal \emph{if and only if} $\overline{X(\omega_\lambda)} = \overline{X(\omega)}$ for any $\lambda \in \Lambda$.
\end{cor}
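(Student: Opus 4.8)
The plan is to reduce the corollary to two inputs and then do pure orbit-chasing. The first input is that the closure of an invariant subset of $\beta X$ is again invariant, so that $\overline{X(\omega)}$ is a legitimate invariant closed set to which Definition \ref{defn:maximal subset} applies; the second is Lemma \ref{lem:limit space via coarse groupoids}, which says $X(\alpha)$ is the \emph{smallest} invariant subset of $\beta X$ containing $\alpha$, together with the fact $X(\alpha)=X(\omega)$ for every $\alpha\in X(\omega)$ (\cite[Proposition 3.9]{SW17}). Granting these, everything is bookkeeping with orbits and closures.

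For the first equivalence, suppose $\omega$ is minimal, so $\overline{X(\omega)}$ contains no non-empty proper invariant closed subset, and fix $\alpha\in\overline{X(\omega)}$. Since $\overline{X(\omega)}$ is invariant and contains $\alpha$, minimality of $X(\alpha)$ gives $X(\alpha)\subseteq\overline{X(\omega)}$, hence $\overline{X(\alpha)}\subseteq\overline{X(\omega)}$; as $\overline{X(\alpha)}$ is a non-empty invariant closed set, minimality of $\overline{X(\omega)}$ forces $\overline{X(\alpha)}=\overline{X(\omega)}$. Conversely, assume $\overline{X(\alpha)}=\overline{X(\omega)}$ for every $\alpha\in\overline{X(\omega)}$, and let $K\subseteq\overline{X(\omega)}$ be non-empty invariant closed. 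Choosing $\alpha\in K$ we get $X(\alpha)\subseteq K$ by minimality of $X(\alpha)$, hence $\overline{X(\omega)}=\overline{X(\alpha)}\subseteq K$ and therefore $K=\overline{X(\omega)}$; thus $\overline{X(\omega)}$ is minimal, i.e. $\omega$ is minimal.

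For the second equivalence, I would first note that by Corollary \ref{cor:char for G(X)} and \cite[Proposition 3.9]{SW17} the space $\beta X$ is partitioned into the orbits $X(\alpha)$, so the invariant set $\overline{X(\omega)}$ is a disjoint union of full orbits, which is exactly the decomposition $\overline{X(\omega)}=\bigsqcup_{\lambda\in\Lambda}X(\omega_\lambda)$. If $\omega$ is minimal, then $\omega_\lambda\in\overline{X(\omega)}$ for each $\lambda$, so the first equivalence gives $\overline{X(\omega_\lambda)}=\overline{X(\omega)}$; conversely, if $\overline{X(\omega_\lambda)}=\overline{X(\omega)}$ for all $\lambda$, then every $\alpha\in\overline{X(\omega)}$ lies in some $X(\omega_\lambda)$, whence $\overline{X(\alpha)}=\overline{X(\omega_\lambda)}=\overline{X(\omega)}$, and the first equivalence applies. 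The step I expect to require the most care is the one non-formal ingredient, namely that $\overline{X(\omega)}$ is invariant: I would obtain this from the openness of the range map $\r\colon G(X)\to\beta X$, which holds because $G(X)$ is \'etale by \cite[Proposition 3.2]{STY02}, so that $\r^{-1}(\overline{A})=\overline{\r^{-1}(A)}$ for every $A\subseteq\beta X$ (and similarly for $\s$); combining this with the invariance $\r^{-1}(X(\omega))=\s^{-1}(X(\omega))$ yields $\r^{-1}(\overline{X(\omega)})=\overline{\r^{-1}(X(\omega))}=\overline{\s^{-1}(X(\omega))}=\s^{-1}(\overline{X(\omega)})$, as needed.
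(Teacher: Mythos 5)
Your argument is correct, and it is essentially the bookkeeping the paper has in mind: the paper states the corollary without a written proof (``Consequently, we obtain\dots''), implicitly resting on Lemma \ref{lem:limit space via coarse groupoids} ($X(\alpha)$ is the smallest invariant set containing $\alpha$, and $X(\alpha)=X(\omega)$ for $\alpha\in X(\omega)$) and on Lemma \ref{lem:existence of min point}. Two small points of comparison. First, your converse direction in the first equivalence (pick $\alpha\in K$, get $X(\alpha)\subseteq K$, take closures) makes Lemma \ref{lem:existence of min point} --- and hence Zorn's lemma --- unnecessary for this corollary, which is a mild simplification over the route the paper's phrasing suggests. Second, you correctly isolate and prove the one ingredient the paper leaves tacit, namely that $\overline{X(\omega)}$ is invariant: your argument via openness of $\r$ and $\s$ (local homeomorphisms, since $G(X)$ is \'{e}tale), giving $\r^{-1}(\overline{A})=\overline{\r^{-1}(A)}$ and likewise for $\s$, is valid and is the standard reason closures of invariant sets are invariant in \'{e}tale groupoids. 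The only cosmetic caveat is that Lemma \ref{lem:limit space via coarse groupoids} is stated for non-principal ultrafilters, so one should note that $\overline{X(\omega)}\subseteq\partial_\beta X$ (as $X(\omega)\subseteq\partial_\beta X$ and $\partial_\beta X$ is closed) before applying it to an arbitrary $\alpha\in\overline{X(\omega)}$; this is immediate and does not affect the proof.
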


As promised, now we study the case of $X=\ZZ$ and show that it admits a number of non-minimal points. The following is the main result:

\begin{thm}\label{thm:non-min point for Z}
For the integer group $\ZZ$ with the usual metric, there exist non-minimal points in the boundary $\partial_\beta \ZZ$. More precisely, for any sequence $\{h_n\}_{n\in \NN}$ in $\ZZ$ tending to infinity such that $|h_n -h_m| \to +\infty$ when $n+m \to \infty$ and $n\neq m$, and any $\omega \in \partial_\beta \ZZ$ with $\omega(\{h_n\}_{n\in \NN})=1$, then $\omega$ is \emph{not} a minimal point.
\end{thm}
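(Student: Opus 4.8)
The plan is to apply the minimality criterion of Corollary~\ref{cor:char for min point}: to prove $\omega$ is \emph{not} minimal it suffices to produce a single point $\alpha\in\overline{X(\omega)}$ with $\overline{X(\alpha)}\neq\overline{X(\omega)}$, and the cleanest way to force the inequality is to arrange $\omega\notin\overline{X(\alpha)}$, while trivially $\omega\in\overline{X(\omega)}$ since $\omega=\rho_0(\omega)\in X(\omega)$. Here and below write $H:=\{h_n:n\in\NN\}$, so the hypothesis reads $\omega(H)=1$, and let $\rho_k\colon\ZZ\to\ZZ$, $m\mapsto m+k$; recall from Lemma~\ref{lem:limit space for group case} that $X(\eta)=\{\rho_k(\eta):k\in\ZZ\}$ for every non-principal $\eta\in\beta\ZZ$.

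First I would construct $\alpha$ as a ``diagonal'' ultralimit of translates of $\omega$ by the sparse amounts $-h_n$: fix any non-principal ultrafilter $\mathcal{U}$ on $\NN$, put $\alpha_n:=\rho_{-h_n}(\omega)\in X(\omega)$, and set $\alpha:=\lim_{n\to\mathcal{U}}\alpha_n$. As a cluster point of the family $(\alpha_n)_n$, this $\alpha$ automatically lies in $\overline{X(\omega)}$ (and in $\partial_\beta\ZZ$, since $\overline{X(\omega)}\subseteq\partial_\beta\ZZ$). Note that a principal choice of $\mathcal{U}$ would merely return a point of $X(\omega)$, so non-principality of $\mathcal{U}$ is essential here.

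The heart of the argument is the estimate $\alpha(H-k)=0$ for every $k\in\ZZ$. It rests on the elementary observation that, for $c\in\ZZ$, one has $\omega(H+c)=1$ \emph{only when} $c=0$: since $H\in\omega$ this amounts to asking whether $H\cap(H+c)\in\omega$, and for $c\neq0$ the hypothesis $|h_n-h_m|\to+\infty$ forces $H\cap(H+c)=\{h_m:\exists\,j,\ h_m-h_j=c\}$ to be finite, hence absent from the non-principal $\omega$. Granting this, and using $\rho_{-h_n}(\omega)(A)=\omega(A+h_n)$ together with the definition of the $\mathcal{U}$-limit, $\alpha(H-k)=1$ would force $\{n:\omega(H+(h_n-k))=1\}\in\mathcal{U}$, i.e.\ $\{n:h_n=k\}\in\mathcal{U}$; but that set is finite since $h_n\to\infty$, so it is not in $\mathcal{U}$, and the estimate follows. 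To conclude, the set $\widehat H:=\{\beta\in\beta\ZZ:H\in\beta\}$ is clopen in $\beta\ZZ$ and contains $\omega$, whereas the estimate says $H\notin\rho_k(\alpha)$ for all $k$, so $X(\alpha)=\{\rho_k(\alpha):k\in\ZZ\}$---and hence its closure---is disjoint from $\widehat H$; therefore $\omega\notin\overline{X(\alpha)}$, so $\overline{X(\alpha)}\neq\overline{X(\omega)}$ while $\alpha\in\overline{X(\omega)}$, and Corollary~\ref{cor:char for min point} yields that $\omega$ is not a minimal point.

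I expect the main obstacle to be conceptual rather than computational: finding the correct ``escaping'' sequence. Translating $\omega$ by a fixed amount leaves its orbit closure unchanged, so one has to move by the \emph{unbounded, spread-out} amounts $h_n$ and then take an ultralimit---this smears $\omega$ over the difference set $H-H$ and destroys concentration on $H$---and the sparseness hypothesis is used in exactly one place, to make $\omega(H+c)=1$ fail for every $c\neq0$. Everything else is routine bookkeeping with the ultrafilter constructions recorded in Appendix~\ref{app:ultrafilters}. This is the ``topological'' proof promised after the statement; a $C^*$-algebraic alternative, via limit operators and Roch's theorem on $\mathbb Z$, would presumably proceed along rather different lines.
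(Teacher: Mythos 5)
Your proof is correct, and it follows the same overall strategy as the paper: exhibit a point $\alpha\in\overline{\ZZ(\omega)}$ with $\omega\notin\overline{\ZZ(\alpha)}$ and conclude via Corollary \ref{cor:char for min point}, with $\alpha$ obtained (in effect in both arguments) as a secondary ultralimit of translates of $\omega$ along an auxiliary non-principal ultrafilter. The execution differs, though. The paper translates by $g_n=n$ and builds $\alpha$ by hand: it localises $\omega$ to subsets $B_n\subseteq H$, arranges the pieces $B_n+g_n$ to be disjoint, glues them with Lemmas \ref{lem:localisation ultrafilter}, \ref{lem:ultraunion} and \ref{lem:ultraextension}, and then verifies both $\alpha\in\overline{\ZZ(\omega)}$ and $\omega\notin\overline{\ZZ(\alpha)}$ through Lemma \ref{lem:char for closure of limit space}, the latter by a contradiction resting on finiteness of $(g_1+H)\cap(g_2+H)$. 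You translate by $-h_n$ instead, which makes $\alpha\in\overline{\ZZ(\omega)}$ automatic (a cluster point of the points $\rho_{-h_n}(\omega)\in\ZZ(\omega)$) and yields the stronger statement $\alpha(H-k)=0$ for every $k\in\ZZ$, so that $\overline{\ZZ(\alpha)}$ is separated from $\omega$ by the clopen set $\overline{H}$; this bypasses the gluing construction and Lemma \ref{lem:char for closure of limit space} entirely. The sparseness hypothesis enters both arguments in the same way: your observation that $\omega(H+c)=1$ forces $c=0$ is exactly the paper's finiteness of intersections of distinct translates of $H$, and your remark that $\alpha$ is non-principal (so that Lemma \ref{lem:limit space for group case} identifies $X(\alpha)$ with $\{\rho_k(\alpha):k\in\ZZ\}$) closes the only delicate point. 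The paper's route buys an explicit combinatorial description of the ultrafilter $\alpha$; yours buys brevity and a direct clopen-separation argument.
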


To prove Theorem \ref{thm:non-min point for Z}, we need some preparations. For future use, we record the following result in the context of a countable discrete group $\Gamma$ equipped with a left-invariant word length metric.

\begin{lem}\label{lem:char for closure of limit space}
For $\omega, \alpha \in \partial_\beta \Gamma$, then $\alpha \in \overline{\Gamma(\omega)}$ \emph{if and only if} for any $S \subseteq \Gamma$ with $\alpha(S)=1$, there exists $g_S \in \Gamma$ such that $\omega(S \cdot g_S^{-1}) =1$.
\end{lem}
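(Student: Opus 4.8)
The statement concerns when $\alpha \in \overline{\Gamma(\omega)}$ for $\omega, \alpha \in \partial_\beta \Gamma$, and the criterion is in terms of translates: for any $S \subseteq \Gamma$ with $\alpha(S) = 1$, there exists $g_S \in \Gamma$ with $\omega(S \cdot g_S^{-1}) = 1$. The natural approach is to unwind the description of $\Gamma(\omega)$ from Lemma \ref{lem:limit space for group case}: every element of $\Gamma(\omega)$ has the form $\rho_g(\omega) = \lim_\omega \rho_g$ for some $g \in \Gamma$, where $\rho_g(h) = hg$. Since $\alpha \in \overline{\Gamma(\omega)}$ means $\alpha$ lies in the $\beta\Gamma$-closure of $\{\rho_g(\omega) : g \in \Gamma\}$, I would translate this topological closure condition into a statement about which subsets $\alpha$ ``sees'', using the standard fact (recalled in the ultrafilter appendix) that for $\alpha \in \beta\Gamma$ and $A \subseteq \Gamma$, one has $\alpha \in \overline{A}^{\beta\Gamma}$ iff $\alpha(A) = 1$ when $A \subseteq \Gamma$, and more generally that a basis of neighbourhoods of $\alpha$ in $\beta\Gamma$ is given by $\{\overline{S}^{\beta\Gamma} : \alpha(S) = 1\}$.

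First I would prove the forward direction. Suppose $\alpha \in \overline{\Gamma(\omega)}$ and fix $S \subseteq \Gamma$ with $\alpha(S) = 1$. Then $\overline{S}^{\beta\Gamma}$ is a neighbourhood of $\alpha$, so it meets $\Gamma(\omega)$; hence there exists $g \in \Gamma$ with $\rho_g(\omega) \in \overline{S}^{\beta\Gamma}$. Now $\rho_g(\omega) = \overline{\rho_g}(\omega)$, and $\overline{\rho_g}: \beta\Gamma \to \beta\Gamma$ is the continuous extension of right multiplication by $g$, which one checks is precisely the homeomorphism of $\beta\Gamma$ extending $h \mapsto hg$; under this map $\overline{\rho_g}^{-1}(\overline{S}^{\beta\Gamma}) = \overline{Sg^{-1}}^{\beta\Gamma}$. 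So $\rho_g(\omega) \in \overline{S}^{\beta\Gamma}$ is equivalent to $\omega \in \overline{Sg^{-1}}^{\beta\Gamma}$, i.e. $\omega(Sg^{-1}) = 1$; take $g_S := g$. For the converse, suppose the translate condition holds. To show $\alpha \in \overline{\Gamma(\omega)}$ it suffices to show every basic neighbourhood $\overline{S}^{\beta\Gamma}$ of $\alpha$ (so $\alpha(S) = 1$) meets $\Gamma(\omega)$; by hypothesis pick $g_S$ with $\omega(Sg_S^{-1}) = 1$, i.e. $\omega \in \overline{Sg_S^{-1}}^{\beta\Gamma}$, which by the same computation gives $\rho_{g_S}(\omega) \in \overline{S}^{\beta\Gamma}$, and $\rho_{g_S}(\omega) \in \Gamma(\omega)$ by definition. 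Hence $\alpha$ is in the closure.

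The one genuinely delicate point I anticipate is the identification $\overline{\rho_g}(\omega) \in \overline{S}^{\beta\Gamma} \iff \omega \in \overline{Sg^{-1}}^{\beta\Gamma}$: I need that the continuous extension of the right-translation bijection $\rho_g : \Gamma \to \Gamma$ to $\beta\Gamma$ is a homeomorphism sending the clopen set $\overline{A}^{\beta\Gamma}$ to $\overline{\rho_g(A)}^{\beta\Gamma} = \overline{Ag}^{\beta\Gamma}$ for every $A \subseteq \Gamma$. This is a routine consequence of functoriality of Stone--\v{C}ech and the fact that clopen subsets of $\beta\Gamma$ are exactly the closures of subsets of $\Gamma$, but it should be stated carefully (or cited from the appendix) since the whole proof pivots on it. Everything else — translating ``meets $\Gamma(\omega)$'' via the neighbourhood basis, and the bookkeeping $\rho_g(h) \in S \iff h \in Sg^{-1}$ — is elementary. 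A minor care point: one should use a left-invariant word length metric so that each $\rho_g$ is indeed a partial translation (bounded displacement), which is exactly the running hypothesis on $\Gamma$, so Lemma \ref{lem:limit space for group case} applies and $\Gamma(\omega) = \{\rho_g(\omega) : g \in \Gamma\}$.
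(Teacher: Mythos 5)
Your proposal is correct and follows essentially the same route as the paper: identify $\Gamma(\omega)=\{\rho_g(\omega):g\in\Gamma\}$ via Lemma \ref{lem:limit space for group case}, use the clopen sets $\overline{S}$ with $\alpha(S)=1$ as a neighbourhood basis at $\alpha$, and translate $\rho_g(\omega)\in\overline{S}$ into $\omega(Sg^{-1})=1$ (the paper does this directly via the pushforward identity $\rho_g(\omega)(S)=\omega(\rho_g^{-1}(S))=\omega(Sg^{-1})$, which is the same fact you establish through the extended homeomorphism $\overline{\rho_g}$). No gaps; the "delicate point" you flag is exactly this routine identity.
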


\begin{proof}
Recall from Lemma \ref{lem:limit space for group case} that $\Gamma(\omega) = \{\rho_g(\omega): g\in \Gamma\}$, and from Appendix \ref{app:ultrafilters} that $\{\overline{S}: S \subseteq X\}$ forms a basis for $\beta X$. Hence by definition, we obtain that $\alpha \in \overline{X(\omega)}$ if and only if for any $S \subseteq X$ with $\alpha \in \overline{S}$, there exists $g_S \in \Gamma$ such that $\rho_{g_S}(\omega) \in \overline{S}$. Equivalently, this means that for any $S \subseteq X$ with $\alpha(S)=1$, there exists $g_S \in \Gamma$ such that $\rho_{g_S}(\omega)(S) = \omega(S \cdot g_S^{-1}) =1$, which concludes the proof.
\end{proof}

%

Now we return to the case of $\Gamma=\ZZ$ and prove Theorem \ref{thm:non-min point for Z}:

\begin{proof}[Proof of Theorem \ref{thm:non-min point for Z}]
Fix a subset $H=\{h_n\}_{n\in \NN} \subseteq \ZZ$ tending to infinity such that $|h_n -h_m| \to +\infty$ when $n+m \to \infty$ and $n\neq m$. For any non-zero $g\in \ZZ$, note that $h\in (g+H) \cap H$ if and only if there exists $h' \in H$ such that $h-h'=g$. Since $g$ is fixed and distances between different points in $H$ tend to infinity, we obtain that $(g+H) \cap H$ is finite. Hence for any $g_1 \neq g_2$ in $\ZZ$, $(g_1+H) \cap (g_2 + H)$ is finite.

Fixing a non-principal ultrafilter $\omega \in \partial_\beta \ZZ$ with $\omega(H)=1$, we denote 
\[
\U:=\{B \subseteq H: \omega(B)=1\}. 
\]
We claim that for each $n\in \NN$, there exists $g_n \in \ZZ$ and $B_n \in \U$ such that $\{B_n+g_n\}_{n\in \NN}$ are mutually disjoint. Indeed, we take $g_0=0$ and $B_0 = H$. Set $g_1=1$ and $B_1:=H \setminus (H-g_1)$. Since $H \cap (H-g_1)$ is finite by the previous paragraph, then $\omega(B_1)=1$, \emph{i.e.}, $B_1 \in \U$. Similarly for each $n \in \NN$, we take $g_n=n$ and $B_n:=H \setminus \big( (H-g_1) \cup (H-g_2) \cup \cdots \cup (H-g_{n}) \big)$, which concludes the claim.

Consider $\widetilde{H}:=\bigsqcup_{n\in \NN} (B_n + g_n)$, and denote $\U_n:=\{B \subseteq B_n: \omega(B)=1\}$ for each $n\in \NN$. By Lemma \ref{lem:localisation ultrafilter}, $\U_n$ is an ultrafilter on $B_n$. Choose a non-principal ultrafilter $\omega_0$ on $\NN$, and we consider:
\[
\widetilde{\U}:=\big\{ \bigsqcup_{n\in \NN} (A_n + g_n) \subseteq \bigsqcup_{n\in \NN} (B_n + g_n) = \widetilde{H}: \exists~J \subseteq \NN \text{ with } \omega_0(J)=1 \text{ s.t. } \forall n\in J, A_n \in \U_n \big\}.
\]
Lemma \ref{lem:ultraunion} implies that $\widetilde{\U}$ is an ultrafilter on $\widetilde{H}$. We define a function $\alpha: \P(\ZZ) \to \{0,1\}$ by setting $\alpha(S)=1$ if and only if $S \cap \widetilde{H} \in \widetilde{\U}$, which is indeed an ultrafilter on $\ZZ$ thanks to Lemma \ref{lem:ultraextension}. Also note that $\alpha$ is non-principal and $\alpha(\widetilde{H})=1$.

Now we show that $\alpha \in \overline{\ZZ(\omega)}$ while $\omega \notin \overline{\ZZ(\alpha)}$, and hence conclude the proof. To see that $\alpha \in \overline{\ZZ(\omega)}$, we will consult Lemma \ref{lem:char for closure of limit space}. For any $S \subseteq \ZZ$ with $\alpha(S)=1$, by definition we have $S \cap \widetilde{H} \in \widetilde{\U}$. Writing $S \cap \widetilde{H} = \bigsqcup_{n\in \NN}(A_n + g_n)$ with $A_n \subseteq B_n$, then there exists $J \in \omega_0$ such that for any $n\in J$ we have $A_n \in \U_n$. Hence for any $n \in J$, we have $S \supseteq S \cap \widetilde{H} \supseteq A_n + g_n$ and $\omega(A_n)=1$, which implies that $\omega(S - g_n)=1$. Applying Lemma \ref{lem:char for closure of limit space}, we conclude that $\alpha \in \overline{\ZZ(\omega)}$.

Finally, it remains to check that $\omega \notin \overline{\ZZ(\alpha)}$. Assume the contrary, then Lemma \ref{lem:char for closure of limit space} implies that there exists $g\in \ZZ$ and $\widetilde{B} \subseteq \widetilde{H}$ with $\alpha(\widetilde{B})=1$ such that $H \supseteq \widetilde{B} - g$. Writing $\widetilde{B} = \bigsqcup_{n\in \NN} (A_n + g_n)$ with $A_n \subseteq B_n$, then there exists $J \in \omega_0$ such that for any $n\in J$, $\omega(A_n) = 1$. This implies that
\[
H + g \supseteq \widetilde{B} \supseteq \bigsqcup_{n\in J} (A_n + g_n) \supseteq A_{n_0} + g_{n_0}
\]
for some $n_0 \in J$ with $g_{n_0} \neq g$ (this can be achieved since $J$ is infinite). However, it follows from the first paragraph that $(H + g) \cap (H + g_{n_0})$ is finite. While this intersection contains $A_{n_0} + g_{n_0}$, which is infinite since $\omega(A_{n_0}) =1$. Therefore, we reach a contradiction and conclude the proof.
\end{proof}

\subsection{Maximal ideals via limit operators}

We already see that maximal ideals in the uniform Roe algebra are related to minimal points in the Stone-\v{C}ech boundary of the underlying space and in Section \ref{ssec:min points}, we use topological methods to show the existence of non-minimal points. Now we turn to a $C^*$-algebraic viewpoint, and use the tool of limit operators to provide an alternative description for these ideals.

First recall from Corollary \ref{cor:maximal ideals using minimal closed subsets} and Lemma \ref{lem:property of minimal points} that maximal ideals in $C^*_u(X)$ arise in the form of $\tilde{I}(\beta X \setminus \overline{X(\omega)})$ for some boundary point $\omega \in \partial_\beta X$. Moreover according to Proposition \ref{prop: char for Ker qU}, $\tilde{I}(\beta X \setminus \overline{X(\omega)})$ is the kernel of the following surjective homomorphism:
\[
q_{\beta X \setminus \overline{X(\omega)}}:  C^*_r(G(X)) \longrightarrow C^*_r(G(X)_{\overline{X(\omega)}}).
\]
Hence we obtain the following:

\begin{cor}\label{cor:simple char for max ideal}
A point $\omega \in \partial_\beta X$ is minimal \emph{if and only if} $C^*_r(G(X)_{\overline{X(\omega)}})$ is simple.
\end{cor}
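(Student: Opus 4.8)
The plan is to obtain the statement by assembling three ingredients already in hand: the correspondence between minimal points and maximal ghostly ideals encoded in Proposition \ref{lem:maximal ideals using maximal open subsets}, the identification in Proposition \ref{prop: char for Ker qU} of $\tilde{I}(\beta X \setminus \overline{X(\omega)})$ with the kernel of the surjection $q_{\beta X \setminus \overline{X(\omega)}} \colon C^*_r(G(X)) \to C^*_r(G(X)_{\overline{X(\omega)}})$, and the elementary $C^*$-algebraic fact that a proper closed two-sided ideal $J$ of a $C^*$-algebra $A$ is maximal if and only if $A/J$ is simple.

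First I would unwind the definition of minimality. By Definition \ref{defn:minimal point}, together with the observation that an invariant closed set is minimal precisely when its complement is a maximal invariant open set, the point $\omega$ is minimal if and only if $V := \beta X \setminus \overline{X(\omega)}$ is a maximal invariant open subset of $\beta X$. I would then argue that this is equivalent to $\tilde{I}(V)$ being a maximal ideal in $C^*_u(X)$: the forward implication is exactly the first assertion of Proposition \ref{lem:maximal ideals using maximal open subsets}, while for the converse, if $\tilde{I}(V)$ is maximal then the second assertion of Proposition \ref{lem:maximal ideals using maximal open subsets} gives that $U(\tilde{I}(V))$ is maximal, and $U(\tilde{I}(V)) = V$ by Lemma \ref{lem: ghostly ideas are largest}(2).

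Next I would translate ``$\tilde{I}(V)$ maximal'' into simplicity of the quotient. Since $\overline{X(\omega)}$ is a nonempty invariant closed subset of $\beta X$ (it contains $\omega$), the reduction $G(X)_{\overline{X(\omega)}}$ contains its unit space, so $C^*_r(G(X)_{\overline{X(\omega)}}) \neq 0$; by Proposition \ref{prop: char for Ker qU} this algebra is isomorphic, via $q_V$, to $C^*_u(X)/\tilde{I}(V)$, and in particular $\tilde{I}(V)$ is a proper ideal. Invoking the standard correspondence between closed ideals of a quotient and closed ideals containing the kernel, $\tilde{I}(V)$ is a maximal (proper) ideal of $C^*_u(X)$ if and only if $C^*_r(G(X)_{\overline{X(\omega)}}) \cong C^*_u(X)/\tilde{I}(V)$ is simple. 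Chaining this with the previous paragraph yields the corollary.

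I do not expect a genuine obstacle here: the argument is essentially bookkeeping, combining results already established. The only points that need a moment's care are the properness of $\tilde{I}(V)$ (equivalently, the nonvanishing of $C^*_r(G(X)_{\overline{X(\omega)}})$), which I would deduce from $\omega \in \overline{X(\omega)} \neq \emptyset$, and the converse half of the first equivalence, which relies on $U(\cdot)$ recovering the defining invariant open set of a ghostly ideal via Lemma \ref{lem: ghostly ideas are largest}(2).
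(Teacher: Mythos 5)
Your argument is correct and follows essentially the same route as the paper, which likewise combines the correspondence between minimal invariant closed sets (equivalently, maximal invariant open sets, via Proposition \ref{lem:maximal ideals using maximal open subsets} and Corollary \ref{cor:maximal ideals using minimal closed subsets}) and maximal ghostly ideals with the identification of $\tilde{I}(\beta X \setminus \overline{X(\omega)})$ as the kernel of the surjection onto $C^*_r(G(X)_{\overline{X(\omega)}})$ from Proposition \ref{prop: char for Ker qU}, and then invokes the standard fact that a proper ideal is maximal precisely when the quotient is simple. Your extra care about properness of $\tilde{I}(V)$ and the use of Lemma \ref{lem: ghostly ideas are largest}(2) in the converse direction are just the bookkeeping the paper leaves implicit.
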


\begin{ex}\label{ex:group case minimal}
Consider the case of a countable discrete group $\Gamma$. For a point $\omega \in \partial_\beta \Gamma$, it follows from Lemma \ref{lem:limit space for group case} that the limit space $\Gamma(\omega)$ is identical to $\Gamma \omega$, and hence $C^*_r(G(\Gamma)_{\overline{\Gamma(\omega)}})$ is $C^*$-isomorphic to the reduced crossed product $C(\overline{\Gamma \omega}) \rtimes \Gamma$. Thanks to Corollary \ref{cor:simple char for max ideal}, we obtain that $\omega$ is minimal if and only if $C(\overline{\Gamma \omega}) \rtimes \Gamma$ is simple. Moreover, note that the action of $\Gamma$ on $\beta \Gamma$ is free (which is also a consequence of Lemma \ref{lem:limit space for group case}). Hence it follows from \cite[Corollary 4.6]{Ren91} that $C(\overline{\Gamma \omega}) \rtimes \Gamma$ is simple if and only if the action of $\Gamma$ on $\overline{\Gamma \omega}$ is minimal. In conclusion, we reach the following:

\begin{cor}
In the case of a countable discrete group $\Gamma$, a point $\omega \in \partial_\beta \Gamma$ is minimal \emph{if and only if} the action of $\Gamma$ on $\overline{\Gamma \omega}$ is minimal.
\end{cor}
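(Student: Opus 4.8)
The plan is to reduce, via the groupoid picture developed above, to a classical simplicity criterion for reduced crossed products. First I would unwind the limit-space data in the group setting: by Lemma \ref{lem:limit space for group case}, for a countable discrete group $\Gamma$ with left-invariant word length metric and any non-principal ultrafilter $\omega \in \partial_\beta \Gamma$, the limit space $\Gamma(\omega)$ is precisely the orbit $\Gamma\omega = \{\rho_g(\omega) : g\in\Gamma\}$ of $\omega$ under the (right-)translation action of $\Gamma$ on $\beta\Gamma$. Hence $\overline{\Gamma(\omega)} = \overline{\Gamma\omega}$ is an invariant closed subset of $\beta\Gamma$, and Corollary \ref{cor:simple char for max ideal} tells us that $\omega$ is minimal if and only if $C^*_r(G(\Gamma)_{\overline{\Gamma\omega}})$ is simple.

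Next I would identify this reduced groupoid $C^*$-algebra with a crossed product. Combining Corollary \ref{cor:char for G(X)}, Lemma \ref{lem:limit space via coarse groupoids} and Lemma \ref{lem:limit space for group case}, one sees that $G(\Gamma)$ is isomorphic, as a locally compact Hausdorff \'{e}tale groupoid, to the transformation groupoid $\beta\Gamma \rtimes \Gamma$ of the translation action, under which groupoid-invariant subsets of the unit space $\beta\Gamma$ correspond to $\Gamma$-invariant subsets. Restricting to $\overline{\Gamma\omega}$ therefore yields $G(\Gamma)_{\overline{\Gamma\omega}} \cong \overline{\Gamma\omega} \rtimes \Gamma$, so that $C^*_r(G(\Gamma)_{\overline{\Gamma\omega}}) \cong C(\overline{\Gamma\omega}) \rtimes_r \Gamma$; consequently $\omega$ is minimal if and only if this reduced crossed product is simple.

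Finally I would invoke freeness together with Renault's theorem. The translation action of $\Gamma$ on $\beta\Gamma$ is free --- again a consequence of Lemma \ref{lem:limit space for group case}, since $\rho_g(\omega)=\omega$ forces $g=e$ --- hence so is its restriction to $\overline{\Gamma\omega}$. By \cite[Corollary 4.6]{Ren91}, a reduced crossed product $C(Z) \rtimes_r \Gamma$ coming from a free action of a discrete group on a compact Hausdorff space $Z$ is simple exactly when the action on $Z$ is minimal. Under the identification of the previous paragraph, the closed $\Gamma$-invariant subsets of $\overline{\Gamma\omega}$ are exactly the invariant closed subsets of $\beta\Gamma$ contained in $\overline{\Gamma\omega}$, so minimality of the action on $\overline{\Gamma\omega}$ is the same as $\overline{X(\omega)}$ being a minimal invariant closed set in the sense of Definition \ref{defn:maximal subset} --- that is, $\omega$ being a minimal point (Definition \ref{defn:minimal point}). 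Chaining the three equivalences proves the corollary.

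The step I expect to be the genuine obstacle is the middle one: establishing carefully the isomorphism $C^*_r(G(\Gamma)_{\overline{\Gamma\omega}}) \cong C(\overline{\Gamma\omega}) \rtimes_r \Gamma$. One has to check that the \'{e}tale groupoid $G(\Gamma) \subseteq \beta\Gamma \times \beta\Gamma$ inherited from the pair groupoid really is the transformation groupoid of right translation --- matching range and source maps, composition, and the weak topology --- and that this isomorphism of topological groupoids descends to the reduced $C^*$-level and behaves well under restriction to the invariant closed set $\overline{\Gamma\omega}$; everything else is a substitution of quoted results. This is essentially bookkeeping with Section \ref{ssec:coarse groupoid} and \cite[Appendix B]{SW17}, but it is the only point that is not purely formal.
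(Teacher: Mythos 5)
Your proposal is correct and follows essentially the same route as the paper: identify $\Gamma(\omega)$ with the orbit $\Gamma\omega$ via Lemma \ref{lem:limit space for group case}, reduce minimality of $\omega$ to simplicity of $C^*_r(G(\Gamma)_{\overline{\Gamma\omega}}) \cong C(\overline{\Gamma\omega}) \rtimes_r \Gamma$ using Corollary \ref{cor:simple char for max ideal}, and then apply \cite[Corollary 4.6]{Ren91} together with freeness of the translation action. The only difference is that you spell out the groupoid isomorphism $G(\Gamma) \cong \beta\Gamma \rtimes \Gamma$, which the paper treats as known background from \cite{STY02}.
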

\end{ex}

Corollary \ref{cor:simple char for max ideal} suggests an approach to distinguish minimal points via the simplicity of the reduced groupoid $C^*$-algebra. However, this $C^*$-algebra is still not easy to handle since it requires to consider all points in the $\overline{X(\omega)}$-direction (see Proposition \ref{prop: geometric char for ghostly ideals}). 
Now we show that this can be simplified by merely considering the $\omega$-direction:

\begin{lem}\label{lem:simplification of maximal ideal}
For $\omega \in \partial_\beta X$, an operator $T\in C^*_u(X)$ belongs to the ideal $\tilde{I}(\beta X \setminus \overline{X(\omega)})$ \emph{if and only if} $T$ vanishes in the $\omega$-direction, \emph{i.e.}, $\Phi_\omega(T)=0$.
\end{lem}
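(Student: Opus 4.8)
The plan is to show the two implications separately, using the characterisation of the ghostly ideal in terms of limit operators from Proposition \ref{prop: geometric char for ghostly ideals}, namely that $T \in \tilde{I}(\beta X \setminus \overline{X(\omega)})$ if and only if $\Phi_\alpha(T) = 0$ for every $\alpha \in \overline{X(\omega)}$. With this in hand, one direction is immediate: if $\Phi_\alpha(T) = 0$ for all $\alpha \in \overline{X(\omega)}$, then in particular $\Phi_\omega(T) = 0$ since $\omega \in X(\omega) \subseteq \overline{X(\omega)}$.

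The substantive direction is the converse: assuming $\Phi_\omega(T) = 0$, I must deduce $\Phi_\alpha(T) = 0$ for every $\alpha \in \overline{X(\omega)}$. First I would handle the case $\alpha \in X(\omega)$: by Lemma \ref{lem:limit space via coarse groupoids} we have $X(\alpha) = X(\omega)$, and $\Phi_\omega(T)_{\beta\gamma} = \overline{T}(\beta, \gamma)$ for $\beta, \gamma \in X(\omega)$ by Lemma \ref{lem:limit operator using extension}; since this extension formula is symmetric in the sense that $\Phi_\alpha(T)_{\beta\gamma} = \overline{T}(\beta,\gamma) = \Phi_\omega(T)_{\beta\gamma}$ for all $\beta,\gamma$ in the common limit space, vanishing of $\Phi_\omega(T)$ forces vanishing of $\Phi_\alpha(T)$. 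For general $\alpha \in \overline{X(\omega)}$, I would argue by a continuity/density argument: the condition $\Phi_\alpha(T) = 0$ is equivalent, via Lemma \ref{lem:limit operator using extension} again, to $\overline{T}(\beta,\gamma) = 0$ for all $\beta,\gamma \in X(\alpha)$, i.e.\ to $\overline{T}$ vanishing on $G(X)_\alpha \times \{\alpha\}$-type fibres — more precisely on $G(X)_{X(\alpha)}$. Since $X(\alpha) \subseteq \overline{X(\omega)}$ (by Lemma \ref{lem:limit space via coarse groupoids}, $X(\alpha)$ being the smallest invariant set containing $\alpha$, and $\overline{X(\omega)}$ being invariant and closed), it suffices to show $\overline{T}$ vanishes on all of $G(X)_{\overline{X(\omega)}}$. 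Now $G(X)_{\overline{X(\omega)}} = G(X) \cap \s^{-1}(\overline{X(\omega)})$, and by Lemma \ref{lem:closure of entourage} every element of this set lies in some $\overline{E_S}$; the set $\{\tilde\omega \in G(X) : \overline{T}(\tilde\omega) = 0\}$ is closed, and it contains $G(X)_{X(\omega)} \cap \overline{E_S}$ for each $S$ (this is the case already treated, together with $X(\omega)$ being dense in its closure and $\overline{E_S} \cap G(X)_{\overline{X(\omega)}}$ being the closure of $\overline{E_S} \cap G(X)_{X(\omega)}$ — using that $X$ is dense in $\beta X$ so $X(\omega)$ is dense in $\overline{X(\omega)}$, hence the corresponding slices of the entourage closures match up). Taking closures gives $\overline{T} \equiv 0$ on $G(X)_{\overline{X(\omega)}}$, which is precisely condition (2) of Proposition \ref{prop: geometric char for ghostly ideals} for $U = \beta X \setminus \overline{X(\omega)}$, completing the proof.

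Alternatively, and perhaps more cleanly, I would phrase the converse via the regular representations: by Lemma \ref{lem:limit operators via regular representation}, $\Phi_\omega(T) = 0$ is equivalent to $\lambda_\omega(\Theta^{-1}(T)) = 0$ in $C^*_r(G(X))$. The element $\Theta^{-1}(T)$ lies in $C^*_r(G(X))$, and the set of $x \in \beta X$ with $\lambda_x(\Theta^{-1}(T)) = 0$ is easily seen to be invariant (since $\lambda_x$ and $\lambda_{x'}$ are unitarily equivalent whenever $x, x'$ lie in the same orbit, i.e.\ $(x,x') \in G(X)$) and closed (by lower semicontinuity of the norm field $x \mapsto \|\lambda_x(\cdot)\|$ for a reduced groupoid $C^*$-algebra of an étale groupoid). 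Hence it contains $\overline{X(\omega)}$ as soon as it contains $\omega$, and then $T$ vanishes in every direction in $\overline{X(\omega)}$, which by Proposition \ref{prop: geometric char for ghostly ideals} places $T$ in $\tilde{I}(\beta X \setminus \overline{X(\omega)})$.

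The main obstacle I anticipate is the continuity/semicontinuity step — justifying that the set of boundary points $x$ at which the limit operator of $T$ (equivalently, the regular representation applied to $\Theta^{-1}(T)$) vanishes is closed in $\beta X$. This requires either invoking lower semicontinuity of the norm map $x \mapsto \|\lambda_x(a)\|$ on the unit space of an étale groupoid $C^*$-algebra (a standard but not entirely trivial fact), or else the more hands-on argument that $\{\tilde\omega \in G(X) : \overline{T}(\tilde\omega) = 0\}$ is closed because $\overline{T} \in C_0(G(X))$ is continuous, combined with a careful verification that the entourage-closure slices $\overline{E_S} \cap G(X)_{X(\omega)}$ are dense in $\overline{E_S} \cap G(X)_{\overline{X(\omega)}}$; the density claim is the delicate point and is where I would spend the most care, drawing on Lemma \ref{lem:closure of entourage} and the density of $X$ in $\beta X$.
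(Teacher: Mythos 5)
Your overall strategy coincides with the paper's: reduce via Proposition \ref{prop: geometric char for ghostly ideals} to showing $\Phi_\alpha(T)=0$ for every $\alpha\in\overline{X(\omega)}$, handle $\alpha\in X(\omega)$ through the identity $\Phi_\alpha(T)_{\gamma_1\gamma_2}=\overline{T}(\gamma_1,\gamma_2)$, and pass to the closure by continuity of $\overline{T}$. However, the step you yourself flag as delicate is a genuine gap, and it is exactly the content of the paper's argument. To go from vanishing of $\overline{T}$ on $G(X)_{X(\omega)}$ to vanishing on $G(X)_{\overline{X(\omega)}}$ you need that each arrow $(\gamma_1,\gamma_2)$ with $\gamma_1,\gamma_2\in X(\alpha)$, $\alpha\in\overline{X(\omega)}$, is a limit of arrows in $G(X)_{X(\omega)}$. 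Density of $X(\omega)$ in $\overline{X(\omega)}$ is trivial, and Lemma \ref{lem:closure of entourage} is only a set-theoretic description of $\overline{E_S}$; neither of these, nor density of $X$ in $\beta X$, lets you lift a net $\omega_\lambda\to\alpha$ in the unit space to nets in the fibres over $(\gamma_1,\gamma_2)$. The missing ingredient is \'{e}taleness: the paper takes $\omega_\lambda\to\alpha$ with $\omega_\lambda\in X(\omega)$ and uses that $G(X)$ is \'{e}tale (a bisection through $(\gamma_i,\alpha)$ on which $\s$ is a homeomorphism) to produce $\gamma_{i,\lambda}\in X(\omega_\lambda)=X(\omega)$ with $\gamma_{i,\lambda}\to\gamma_i$, and then concludes $\Phi_\alpha(T)_{\gamma_1\gamma_2}=\lim_\lambda\overline{T}(\gamma_{1,\lambda},\gamma_{2,\lambda})=0$ by Lemma \ref{lem:limit operator using extension}. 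Your density claim is true, but its proof is precisely this lifting argument (equivalently: decompose $E_S$ into graphs of partial translations and use that $\s$ is a homeomorphism on the closure of each graph, the closures of the domains being clopen); as written, your first route asserts the conclusion of that argument rather than supplying it.

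Your second route is genuinely different from the paper's and would work, but it hinges on the unproved assertion that $x\mapsto\|\lambda_x(a)\|$ is lower semicontinuous on $\beta X$, so that the (indeed invariant) set $\{x\in\beta X:\lambda_x(\Theta^{-1}(T))=0\}$ is closed. This is true for Hausdorff \'{e}tale groupoids, but it is neither stated in the paper nor trivial: the standard proof moves finitely supported vectors between nearby fibres along bisections --- the same \'{e}tale lifting you sidestepped in the first route --- and then approximates a general element of $C^*_r(G(X))$ by elements of $C_c(G(X))$ in the reduced norm. So on either route the \'{e}taleness input must be made explicit; once it is, both close the gap, with the paper's net-lifting argument being the more elementary and self-contained of the two.
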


\begin{proof}
We assume that $\Phi_\omega(T)=0$, and it suffices to show that $\Phi_\alpha(T)=0$ for any $\alpha \in \overline{X(\omega)}$. Fixing such an $\alpha$, we take a net $\{\omega_\lambda\}_{\lambda \in \Lambda}$ in $X(\omega)$ such that $\omega_\lambda \to \alpha$ and it follows that $\Phi_{\omega_\lambda}(T)=0$. For any $\gamma_1, \gamma_2 \in X(\alpha)$, Lemma \ref{lem:limit space via coarse groupoids} and the fact that the coarse groupoid $G(X)$ is \'{e}tale imply that there exist $\gamma_{1,\lambda}$ and $\gamma_{2,\lambda}$ in $X(\omega_\lambda)$ for each $\lambda \in \Lambda$ such that $\gamma_{1,\lambda} \to \gamma_1$ and $\gamma_{2,\lambda} \to \gamma_2$. Now Lemma \ref{lem:limit operator using extension} implies that
\[
\Phi_\alpha(T)_{\gamma_1\gamma_2} = \overline{T}((\gamma_1,\gamma_2)) = \lim_{\lambda \in \Lambda} \overline{T}((\gamma_{1,\lambda},\gamma_{2,\lambda})) = \lim_{\lambda \in \Lambda} \Phi_{\omega_\lambda}(T)_{\gamma_{1,\lambda}\gamma_{2,\lambda}} = 0,
\]
which concludes the proof.
\end{proof}

Hence for $\omega \in \partial_\beta X$, Lemma \ref{lem:simplification of maximal ideal} implies that the associated ideal $\tilde{I}(\beta  \setminus \overline{X(\omega)})$ coincides with the kernel of the following limit operator homomorphism (see also \cite[Theorem 4.10]{SW17}):
\begin{equation}\label{EQ:limit op homo}
\Phi_\omega: C^*_u(X) \longrightarrow C^*_u(X(\omega)), \quad T \mapsto \Phi_\omega(T).
\end{equation}
Consequently, we reach the following:

\begin{cor}\label{cor:simple char for max ideal2}
A point $\omega \in \partial_\beta X$ is minimal \emph{if and only if} the image $\mathrm{Im}(\Phi_\omega)$ is simple. Hence when $X(\omega)$ is infinite and $\Phi_\omega$ is surjective, the point $\omega$ is \emph{not} minimal.
\end{cor}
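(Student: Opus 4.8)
The plan is to recast the statement in terms of quotient $C^*$-algebras and use that maximal (closed, two-sided) ideals are exactly the kernels of surjections onto simple $C^*$-algebras. First I would recall from Lemma~\ref{lem:simplification of maximal ideal} (combined with Proposition~\ref{prop: char for Ker qU}) that the ideal attached to $\omega$, namely $\tilde{I}(\beta X \setminus \overline{X(\omega)})$, is precisely the kernel of the limit operator homomorphism $\Phi_\omega \colon C^*_u(X) \to C^*_u(X(\omega))$. Since $\Phi_\omega$ is a $\ast$-homomorphism of $C^*$-algebras, its image $\mathrm{Im}(\Phi_\omega)$ is automatically a $C^*$-subalgebra of $C^*_u(X(\omega))$, and the first isomorphism theorem gives a $\ast$-isomorphism $\mathrm{Im}(\Phi_\omega) \cong C^*_u(X)/\ker\Phi_\omega$.

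Next I would assemble the chain of equivalences characterising minimality. By Definition~\ref{defn:minimal point} and Lemma~\ref{lem:property of minimal points}, $\omega$ is minimal if and only if $\overline{X(\omega)}$ is a minimal invariant closed subset of $\beta X$, equivalently if and only if $\beta X \setminus \overline{X(\omega)}$ is a maximal invariant open subset. Using the first half of Proposition~\ref{lem:maximal ideals using maximal open subsets} for one direction, and its second half together with the identity $U(\tilde{I}(V)) = V$ from Lemma~\ref{lem: ghostly ideas are largest}(2) for the converse (so that a maximal ideal of the form $\tilde{I}(V)$ forces $V$ to be maximal), this is in turn equivalent to $\tilde{I}(\beta X \setminus \overline{X(\omega)})$ being a maximal ideal of $C^*_u(X)$; Corollary~\ref{cor:maximal ideals using minimal closed subsets} is exactly this packaging. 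Combining the two paragraphs, $\omega$ is minimal $\iff$ $\ker\Phi_\omega$ is a maximal ideal $\iff$ $C^*_u(X)/\ker\Phi_\omega \cong \mathrm{Im}(\Phi_\omega)$ is simple, which is the first assertion.

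For the concluding clause, if $\Phi_\omega$ is surjective then $\mathrm{Im}(\Phi_\omega) = C^*_u(X(\omega))$, and when $X(\omega)$ is infinite we have $\beta X(\omega) \neq X(\omega)$, so the geometric ideal $I(X(\omega)) = \K(\ell^2(X(\omega)))$ from Example~\ref{eg: compact ideal} is a nonzero proper ideal of $C^*_u(X(\omega))$; hence $C^*_u(X(\omega))$ is not simple, and by the first part $\omega$ is not minimal. I do not expect any genuine obstacle here, as this is a formal consequence of the preceding results; the only point deserving care is the converse direction of the ``maximal ideal $\Leftrightarrow$ maximal invariant open set'' equivalence, where one must invoke $U(\tilde{I}(V)) = V$ rather than reprove maximality from scratch.
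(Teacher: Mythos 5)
Your proposal is correct and follows essentially the same route as the paper: identifying $\ker\Phi_\omega$ with the ghostly ideal $\tilde{I}(\beta X\setminus\overline{X(\omega)})$ via Lemma \ref{lem:simplification of maximal ideal} and Proposition \ref{prop: char for Ker qU}, translating minimality of $\omega$ into maximality of that ideal through Proposition \ref{lem:maximal ideals using maximal open subsets}, Corollary \ref{cor:maximal ideals using minimal closed subsets} and Lemma \ref{lem: ghostly ideas are largest}(2), and then using the compact operators as a nonzero proper ideal of $C^*_u(X(\omega))$ for the final clause, exactly as in the paper's one-line proof.
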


\begin{proof}
For the last statement, it suffices to note that the ideal of compact operators is always contained in $C^*_u(X(\omega))$, which concludes the proof.
\end{proof}

\begin{rem}
We remark that ideals of the form $\tilde{I}(\beta  \setminus \overline{X(\omega)})$ already appeared in the work of Georgescu \cite{Geo11} (with the notation $\mathscr{G}_\omega(X)$) to study the limit operator theory on general metric spaces.
\end{rem}

Thanks to Corollary \ref{cor:simple char for max ideal2}, a special case of Theorem \ref{thm:non-min point for Z} can also be deduced from a recent work by Roch \cite{Roc22}. More precisely, combining \cite[Proposition B.6]{SW17} with \cite[Lemma 2.1]{Roc22}, we have the following:

\begin{prop}\label{prop:Roc22}
Let $\{h_n\}_{n\in \NN}$ be a sequence in $\ZZ^N$ tending to infinity such that 
\[
\|h_n - h_k\|_\infty \geq 3k \quad \text{for any} \quad k>n. 
\]
Then for any $\omega \in \partial_\beta \ZZ^N$ with $\omega(\{h_n\}_{n\in \NN}) =1$, the map $\Phi_\omega: C^*_u(\ZZ^N) \longrightarrow C^*_u(\ZZ^N(\omega))$ is surjective.
\end{prop}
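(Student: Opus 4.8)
The plan is to prove surjectivity by exhibiting, for each finite‑propagation operator on the limit space, an explicit preimage under $\Phi_\omega$ assembled from translated and truncated copies of that operator placed along the sparse sequence $\{h_n\}$. This is essentially the construction underlying \cite[Lemma 2.1]{Roc22}; via the identification of the \v{S}pakula--Willett limit operators with the classical band‑dominated ones for $\ZZ^N$ furnished by \cite[Proposition B.6]{SW17}, one could also simply quote those two results, but I will describe the self‑contained argument. First I would reduce: since $\Phi_\omega\colon C^*_u(\ZZ^N)\to C^*_u(\ZZ^N(\omega))$ is a $\ast$‑homomorphism of $C^*$‑algebras (see (\ref{EQ:limit op homo})), its image is a $C^*$‑subalgebra and in particular is norm‑closed, so it suffices to hit a dense subset of the target. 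By Lemma \ref{lem:limit space for group case} the limit space $\ZZ^N(\omega)$ is isometric to $\ZZ^N$ via $g\mapsto \rho_g(\omega)$, with compatible family $\{\rho_g\}_{g\in\ZZ^N}$, $\rho_g(x)=x+g$; thus it is enough to show that every $S\in\CC_u^R[\ZZ^N]$ lies in $\mathrm{Im}(\Phi_\omega)$, for arbitrary $R\ge 0$. Finally, since $\omega$ is non‑principal it assigns mass $0$ to every finite set, so $\omega(\{h_n\}_{n\in\NN})=1$ forces the $h_n$ to be pairwise distinct and every cofinite subset of $\{h_n\}_{n\in\NN}$ to belong to $\omega$.

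For the construction, fix $S\in\CC_u^R[\ZZ^N]$ and put $r_n:=n$. Define $S_n\in\B(\ell^2(\ZZ^N))$ by $S_n(x,y):=S(x-h_n,y-h_n)$ when $x,y\in B_{\ZZ^N}(h_n,r_n)$ and $S_n(x,y):=0$ otherwise. Each $S_n$ is a compression of a translate of $S$, hence $\|S_n\|\le\|S\|$ and $\supp(S_n)\subseteq E_R$, and $S_n$ is supported on $B_{\ZZ^N}(h_n,r_n)\times B_{\ZZ^N}(h_n,r_n)$. The hypothesis gives $\|h_n-h_k\|_\infty\ge 3\max(n,k)>r_n+r_k$ for all $n\ne k$, so the balls $B_{\ZZ^N}(h_n,r_n)$ are pairwise disjoint; consequently the operators $S_n$ have pairwise orthogonal domains and ranges, the sum $T:=\sum_n S_n$ is a well‑defined bounded operator with $\|T\|=\sup_n\|S_n\|\le\|S\|$, and $\supp(T)\subseteq E_R$. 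Therefore $T\in\CC_u^R[\ZZ^N]\subseteq C^*_u(\ZZ^N)$.

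It then remains to compute the limit operator. Fix $g,h\in\ZZ^N$. For every $n\ge\max(\|g\|_\infty,\|h\|_\infty)$ we have $h_n+g,\ h_n+h\in B_{\ZZ^N}(h_n,r_n)$, so $S_n(h_n+g,h_n+h)=S(g,h)$; and $S_m(h_n+g,h_n+h)=0$ for every $m\ne n$, because $h_n+g\in B_{\ZZ^N}(h_n,r_n)$ and the balls are disjoint. Hence $T(h_n+g,h_n+h)=S(g,h)$ for all such $n$. Since $\{h_n: n\ge\max(\|g\|_\infty,\|h\|_\infty)\}$ is cofinite in $\{h_n\}$, it lies in $\omega$, and Definition \ref{defn:limit operator} yields
\[
\Phi_\omega(T)_{gh}=\lim_{x\to\omega}T(x+g,x+h)=S(g,h).
\]
As $g,h$ were arbitrary, $\Phi_\omega(T)=S$ under the identification above; this shows $\CC_u^R[\ZZ^N]\subseteq\mathrm{Im}(\Phi_\omega)$ for all $R$, and surjectivity follows.

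The main obstacle is the quantitative balancing in the construction: the truncation radii $r_n$ must grow fast enough that, for each fixed entry $(g,h)$, almost every copy is still ``complete'' (which needs $r_n\to\infty$), yet slowly enough relative to the gaps $\|h_n-h_k\|_\infty$ that distinct copies stay disjoint (so that $T$ is bounded of finite propagation and all cross terms vanish). The choice $r_n=n$ threads this needle precisely because the gap hypothesis grows linearly with constant $3>2$; a slower growth, or a constant $\le 2$, would require a more delicate construction or fail outright, which is exactly why \cite[Lemma 2.1]{Roc22} is phrased with the factor $3k$. Everything else is routine verification.
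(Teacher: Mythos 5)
Your argument is correct, but it takes a genuinely different route from the paper: the paper proves Proposition \ref{prop:Roc22} by pure citation, invoking \cite[Proposition B.6]{SW17} to identify the \v{S}pakula--Willett limit operators on $\ZZ^N$ with the classical ones and then quoting \cite[Lemma 2.1]{Roc22} for surjectivity, whereas you give a self-contained construction entirely inside the coarse-geometric framework of Section \ref{sec:limit sp. and op.}. Your reduction (closed image of a $\ast$-homomorphism, the identification $C^*_u(\ZZ^N(\omega))\cong C^*_u(\ZZ^N)$ via Lemma \ref{lem:limit space for group case} with the compatible family $\{\rho_g\}$, hence it suffices to hit each $\CC_u^R[\ZZ^N]$) is sound, and the block-diagonal operator $T=\bigoplus_n S_n$ with radii $r_n=n$ does the job: the hypothesis gives $r_n+r_k\le 2\max(n,k)<3\max(n,k)\le\|h_n-h_k\|_\infty$ for $n\neq k$, so the balls are disjoint, $T\in\CC_u^R[\ZZ^N]$ with $\|T\|\le\|S\|$, and since $T(h_n+g,h_n+h)=S(g,h)$ for all but finitely many $n$ while $\omega$ is non-principal with $\omega(\{h_n\})=1$, Definition \ref{defn:limit operator} yields $\Phi_\omega(T)_{gh}=S(g,h)$. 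What your approach buys is independence from \cite{Roc22} and from the translation step \cite[Proposition B.6]{SW17} (in effect you reprove the needed special case of Roch's lemma), plus the extra information that the preimage can be chosen with the same propagation and no larger norm; what the paper's route buys is brevity and the explicit bridge to the classical limit-operator literature, which is the point the Remark following Corollary \ref{cor:Thm reproved} is making. One small quibble with your closing commentary, not affecting the proof: the constant $3$ is not sharp for this construction --- with gaps $\geq ck$ for any $c>0$ one can take $r_n=\varepsilon n$ with $2\varepsilon<c$ and run the identical argument, so the factor $3$ is an artefact of Roch's formulation rather than a genuine threshold.
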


Note that the limit space $\ZZ^N(\omega)$ is bijective to $\ZZ^N$ by Lemma \ref{lem:limit space for group case}, and hence infinite. Therefore applying Corollary \ref{cor:simple char for max ideal2}, we obtain the following (when $N=1$, it partially recovers Theorem \ref{thm:non-min point for Z}).

\begin{cor}\label{cor:Thm reproved}
For any sequence $\{h_n\}_{n\in \NN}$ in $\ZZ^N$ tending to infinity such that $\|h_n - h_k\|_\infty \geq 3k$ for any $k>n$, and any $\omega \in \partial_\beta \ZZ^N$ with $\omega(\{h_n\}_{n\in \NN})=1$, then $\omega$ is \emph{not} a minimal point.
\end{cor}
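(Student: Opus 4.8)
The plan is to obtain this as an immediate consequence of Proposition \ref{prop:Roc22} combined with Corollary \ref{cor:simple char for max ideal2}, so that the only thing left to verify is that the hypotheses line up and that the relevant limit space is infinite.

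First I would record that $\ZZ^N$ is a countable discrete group equipped with a left-invariant word length metric, so Lemma \ref{lem:limit space for group case} supplies an isometric bijection $b_\omega \colon \ZZ^N \to \ZZ^N(\omega)$; in particular the limit space $\ZZ^N(\omega)$ is infinite, and hence the $C^*$-algebra $C^*_u(\ZZ^N(\omega))$ contains the nonzero proper ideal $\K(\ell^2(\ZZ^N(\omega)))$, so it is \emph{not} simple. Next, the standing assumption $\|h_n - h_k\|_\infty \geq 3k$ for all $k > n$ is exactly the hypothesis of Proposition \ref{prop:Roc22}, and $\omega(\{h_n\}_{n\in\NN})=1$ by hypothesis; that proposition therefore tells us the limit operator homomorphism $\Phi_\omega \colon C^*_u(\ZZ^N) \to C^*_u(\ZZ^N(\omega))$ from (\ref{EQ:limit op homo}) is surjective. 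Consequently $\mathrm{Im}(\Phi_\omega) = C^*_u(\ZZ^N(\omega))$ is not simple, and the last assertion of Corollary \ref{cor:simple char for max ideal2} yields that $\omega$ is not a minimal point, which is the claim. (For $N=1$ this gives back the conclusion of Theorem \ref{thm:non-min point for Z} for those sequences satisfying the stronger spacing condition $|h_n-h_k|\geq 3k$.)

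Since every step is a verbatim invocation of an earlier result, there is no genuine obstacle in this argument; all the work sits inside Proposition \ref{prop:Roc22}, whose proof combines \cite[Proposition B.6]{SW17} with Roch's \cite[Lemma 2.1]{Roc22}, and inside the chain Lemma \ref{lem:simplification of maximal ideal} $\Rightarrow$ Corollary \ref{cor:simple char for max ideal} $\Rightarrow$ Corollary \ref{cor:simple char for max ideal2} (which itself rests on Proposition \ref{prop: char for Ker qU}). The only point deserving a moment's attention is that Proposition \ref{prop:Roc22} is stated for $\ZZ^N$ with its usual metric and the spacing condition is phrased in the $\|\cdot\|_\infty$-norm; since changing to an equivalent word length metric is a coarse equivalence and does not affect any of the invariants involved, this causes no difficulty, and I would simply remark on it in passing.
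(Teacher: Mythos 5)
Your proposal is correct and is essentially the paper's own argument: the paper likewise deduces the corollary by combining Proposition \ref{prop:Roc22} (surjectivity of $\Phi_\omega$) with the fact that $\ZZ^N(\omega)$ is infinite via Lemma \ref{lem:limit space for group case}, and then applying Corollary \ref{cor:simple char for max ideal2}. No gaps.
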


\begin{rem}
We notice from the discussion above that for $\omega \in \partial_\beta X$, there is a $C^*$-monomorphism:
\begin{equation}\label{EQ:embedding}
C^*_r(G(X)_{\overline{X(\omega)}}) \cong C^*_u(X) / \tilde{I}(\beta X \setminus \overline{X(\omega)}) \longrightarrow C^*_u(X(\omega))
\end{equation}
where the first comes from Proposition \ref{prop: char for Ker qU} and the second comes from (\ref{EQ:limit op homo}) together with Lemma \ref{lem:simplification of maximal ideal}. 

Now we provide another explanation for this map in terms of groupoids. Firstly, Corollary \ref{cor:char for G(X)} implies that $G(X)_{X(\omega)} = X(\omega) \times X(\omega)$ and hence $G(X)_{\overline{X(\omega)}} = \overline{X(\omega) \times X(\omega)}^{G(X)}$. Now we have
\[
G(X)_{\overline{X(\omega)}} = \bigcup_{S>0} \big( \overline{E_S}^{G(X)} \cap \overline{X(\omega) \times X(\omega)}^{G(X)}\big) = \bigcup_{S>0}  \overline{\overline{E_S}^{G(X)} \cap (X(\omega) \times X(\omega))}^{G(X)},
\]
where the last inequality is due to the fact that $\overline{E_S}^{G(X)}$ is clopen in $G(X)$. By Lemma \ref{lem:closure of entourage}, we have
\[
\overline{E_S}^{G(X)} \cap (X(\omega) \times X(\omega)) = \{(\alpha, \gamma) \in X(\omega) \times X(\omega): d_\omega(\alpha, \gamma) \leq S\}=:E_S(X(\omega), d_\omega)
\]
\emph{i.e.}, the $S$-entourage in the limit space $(X(\omega), d_\omega)$. Therefore, we conclude that
\[
G(X)_{\overline{X(\omega)}} = \bigcup_{S>0} \overline{E_S(X(\omega), d_\omega)}^{G(X)}.
\]
On the other hand, we have (by definition) that
\[
G(X(\omega)) = \bigcup_{S>0} \overline{E_S(X(\omega), d_\omega)}^{\beta (X(\omega) \times X(\omega))}.
\]
By the universal property of the Stone-\v{C}ech compactification, there is a surjective continuous map $\overline{E_S(X(\omega), d_\omega)}^{\beta (X(\omega) \times X(\omega))} \longrightarrow \overline{E_S(X(\omega), d_\omega)}^{G(X)}$, which induces an injective map $C_c(G(X)_{\overline{X(\omega)}}) \longrightarrow  C_c(G(X(\omega)))$. Moreover, it is routine to check that it induces a $C^*$-monomorphism
\[
C^*_r(G(X)_{\overline{X(\omega)}}) \longrightarrow  C^*_r(G(X(\omega))) \cong C^*_u(X(\omega)),
\]
which can be verified to coincide with the map (\ref{EQ:embedding}). Details are left to readers. 

In particular, we consider a countable discrete group $X =\Gamma$. Fixing a point $\omega \in \partial_\beta \Gamma$, we mentioned in Example \ref{ex:group case minimal} that $C^*_r(G(\Gamma)_{\overline{\Gamma(\omega)}}) \cong C(\overline{\Gamma \omega}) \rtimes \Gamma$. On the other hand, we know from Lemma \ref{lem:limit space for group case} that $C^*_u(\Gamma(\omega)) \cong \ell^\infty(\Gamma \omega) \rtimes \Gamma$. In this case, one can check that the map (\ref{EQ:embedding}) is induced by the natural embedding $C(\overline{\Gamma \omega}) \hookrightarrow \ell^\infty(\Gamma \omega)$.
\end{rem}

\section{Geometric ideals vs ghostly ideals}\label{sec:geometric vs ghostly ideals}

In this section, we return to the lattice $\mathfrak{I}_U$ in (\ref{EQ:set of ideals}). Recall from Theorem \ref{prop: ideals containment} that for an invariant open subset $U \subseteq \beta X$, the geometric ideal $I(U)$ and the ghostly ideal $\tilde{I}(U)$ are the smallest and the largest elements in $\mathfrak{I}_U$. Also as noticed before, generally $\mathfrak{I}_U$ consists of more than one element. 

Now we would like to study when $I(U)=\tilde{I}(U)$ or equivalently, when $\mathfrak{I}_U$ consists of a single element. Moreover, we will discuss their $K$-theories and provide a sufficient condition to ensure $K_\ast(I(U))=K_\ast(\tilde{I}(U))$ for $\ast=0,1$.


First recall from Proposition \ref{prop: char for Ker qU} that we already have a characterisation for $I(U) = \tilde{I}(U)$ using short exact sequences, while the condition therein still seems hard to check. Now we aim to search for a more practical criterion to ensure $I(U) = \tilde{I}(U)$.
We start with the following result, which combines \cite[Theorem 4.4]{CW05} and \cite[Theorem 1.3]{RW14}. 

\begin{prop}\label{prop: property A case}
For a space $(X,d)$, the following are equivalent:
\begin{enumerate}
 \item $X$ has Property A;
 \item $G(X)$ is amenable;
 \item $G(X)_{\partial_\beta X}$ is amenable;
 \item $I(U)=\tilde{I}(U)$ for any invariant open subset $U \subseteq \beta X$;
 \item $I(X) = \tilde{I}(X)$.
\end{enumerate}
\end{prop}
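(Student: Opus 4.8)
The plan is to establish the equivalence as a cycle of implications, drawing on the groupoid characterisations already available in the excerpt. The implication (1)$\Leftrightarrow$(2) is exactly Proposition \ref{prop:char for amen and a-T-men}(1), so nothing is needed there. For (2)$\Rightarrow$(3), I would observe that $G(X)_{\partial_\beta X}$ is the reduction of $G(X)$ to the closed invariant subset $\partial_\beta X = \beta X \setminus X$; since closed subgroupoids of amenable \'{e}tale groupoids are amenable (recalled in Section \ref{ssec: amenability}), amenability of $G(X)$ passes to $G(X)_{\partial_\beta X}$. For (3)$\Rightarrow$(2), I would use the decomposition $G(X) = (X \times X) \sqcup G(X)_{\partial_\beta X}$ coming from Corollary \ref{cor:char for G(X)}, where $X \times X$ is the open (amenable, even with amenable reduction $C^*$-algebra the compacts) part and $G(X)_{\partial_\beta X}$ is the closed part; amenability of an \'{e}tale groupoid is preserved under extensions by an open amenable piece and a closed amenable piece, so $G(X)$ is amenable. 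This is the step where I expect to need to be slightly careful about which permanence property is being invoked — the cleanest route is to note that the pair groupoid $X \times X$ is amenable, $G(X)_{\partial_\beta X}$ is amenable, and $G(X)$ sits in the corresponding groupoid extension, then cite \cite[Section 5]{ADR00}.

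For the $C^*$-algebraic equivalences, the key tool is Proposition \ref{prop: char for Ker qU}, which identifies $\tilde{I}(U)$ with $\mathrm{Ker}(q_U)$ and says that (\ref{EQ: short exact seq. for red. alg.}) is exact at the middle if and only if $I(U) = \tilde{I}(U)$. The heart of the matter is: \emph{if $G(X)_{\partial_\beta X \setminus U}$ is amenable, then the reduced sequence (\ref{EQ: short exact seq. for red. alg.}) is exact.} Indeed, when the quotient groupoid $G(X)_{U^c}$ is amenable, its reduced and maximal $C^*$-algebras agree by Proposition \ref{prop: etale amen}, so the already-exact maximal sequence (\ref{EQ: short exact seq. for max. alg.}) has the same right-hand term as the reduced one; a standard diagram chase (the quotient map $C^*_{\max}(G(X)) \to C^*_r(G(X))$ intertwines the two sequences, and $C^*_{\max}(G(X)_{U^c}) = C^*_r(G(X)_{U^c})$) then forces exactness of the reduced sequence in the middle. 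Applying this with $U = X$ gives (3)$\Rightarrow$(4) directly, since $G(X)_{\partial_\beta X} = G(X)_{\partial_\beta X \setminus X}$ amenable implies $I(U) = \tilde{I}(U)$ for \emph{every} invariant open $U$ — here I should note that amenability of $G(X)_{\partial_\beta X}$ passes to its closed (invariant-in-$\partial_\beta X$) reduction $G(X)_{\partial_\beta X \setminus U}$ for any such $U$. Alternatively, and perhaps more transparently, (2)$\Rightarrow$(4) follows because amenability of $G(X)$ gives amenability of the closed subgroupoid $G(X)_{U^c}$, hence exactness of (\ref{EQ: short exact seq. for red. alg.}) for that particular $U$.

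The implication (4)$\Rightarrow$(5) is trivial (specialise to $U = X$). The remaining and most substantial implication is (5)$\Rightarrow$(1): from $I(X) = \tilde{I}(X)$ deduce that $X$ has Property A. Here I would invoke \cite[Theorem 1.3]{RW14} (cited in the excerpt), which states precisely that $X$ has Property A if and only if the compact operators coincide with the ghost ideal, i.e. $I(X) = \K(\ell^2(X)) = I_G = \tilde{I}(X)$ — using Example \ref{eg: compact ideal} ($I(X) = \K(\ell^2(X))$) and Example \ref{eg: ghost ideal} ($\tilde{I}(X) = I_G$) to translate between the two formulations. This closes the cycle $(1)\Rightarrow(2)\Rightarrow(3)\Rightarrow(4)\Rightarrow(5)\Rightarrow(1)$, with (2)$\Leftrightarrow$(3) established separately.

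\textbf{Main obstacle.} The genuinely non-trivial input is (5)$\Rightarrow$(1), which is not proved here but imported wholesale from \cite{RW14}; short of re-deriving that theorem, the only real work internal to this paper is the diagram chase establishing ``$G(X)_{U^c}$ amenable $\Rightarrow$ (\ref{EQ: short exact seq. for red. alg.}) exact'', which hinges on matching the maximal and reduced completions via Proposition \ref{prop: etale amen} and checking that the comparison maps commute with the inclusion and restriction homomorphisms at the chain level of $C_c$. I expect this to be routine but worth spelling out, since it is the engine behind both (3)$\Rightarrow$(4) and, suitably relativised, the later Proposition \ref{prop: criteria to ensure I(U) = tilde I(U)}.
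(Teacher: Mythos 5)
Your proposal is correct and follows essentially the same route as the paper: (1)$\Leftrightarrow$(2) from \cite[Theorem 5.3]{STY02}, (2)$\Leftrightarrow$(3) via permanence of amenability and amenability of the pair groupoid, the max-vs-reduced commutative diagram with Proposition \ref{prop: etale amen} and Proposition \ref{prop: char for Ker qU} to get (4), and (5)$\Rightarrow$(1) imported from \cite[Theorem 1.3]{RW14} together with Examples \ref{eg: compact ideal} and \ref{eg: ghost ideal}. Your observation that the diagram chase only needs amenability of the quotient groupoid $G(X)_{U^c}$ is exactly the refinement the paper records separately as Proposition \ref{prop: criteria to ensure I(U) = tilde I(U)}.
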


We remark that ``(1) $\Rightarrow$ (4)'' was originally proved in \cite{CW05} using approximations by kernels. Here we will take a shortcut, and the idea will also be used later.

\begin{proof}[Proof of Proposition \ref{prop: property A case}]
``(1) $\Leftrightarrow$ (2)'' was proved in \cite[Theorem 5.3]{STY02}. ``(2) $\Leftrightarrow$ (3)'' is due to the permanence properties of amenability (see Section \ref{ssec: amenability}) together with the fact that $G(X)_X \cong X \times X$ is always amenable.


``(2) $\Rightarrow$ (4)'': Let $U \subseteq \beta X$ be an invariant open subset. As open/closed subgroupoids, both $G(X)_U$ and $G(X)_{U^c}$ are amenable as well. Consider the following commutative diagram coming from (\ref{EQ: short exact seq. for max. alg.}) and (\ref{EQ: short exact seq. for red. alg.}):
\[ 
\xymatrix{
		0 \ar[r] & C^*_{{\rm max}}(G(X)_U) \ar[r] \ar[d] & C^*_{{\rm max}}(G(X)) \ar[r] \ar[d] & C^*_{{\rm max}}(G(X)_{U^c}) \ar[r] \ar[d] & 0\\
		0 \ar[r] & C^*_{r}(G(X)_U) \ar[r] & C^*_{r}(G(X)) \ar[r] & C^*_{r}(G(X)_{U^c}) \ar[r] & 0.
	}
\]
By Proposition \ref{prop: etale amen}, all three vertical lines are isomorphisms. Hence the exactness of the first row implies that the second row is exact as well. Therefore, we conclude (4) thanks to Proposition \ref{prop: char for Ker qU}.

``(4) $\Rightarrow$ (5)'' holds trivially, and ``(5) $\Rightarrow$ (1)'' comes from \cite[Theorem 1.3]{RW14} together with Example \ref{eg: compact ideal} and Example \ref{eg: ghost ideal}. Hence we conclude the proof.
\end{proof}

Proposition \ref{prop: property A case} provides a coarse geometric characterisation for $I(X) = \tilde{I}(X)$ using Property A. However, we notice that assuming Property A is often too strong to ensure that $I(U) = \tilde{I}(U)$ for merely a \emph{specific} invariant open subset $U \subseteq \beta X$. A trivial example is that $I(\beta X) = \tilde{I}(\beta X)$ holds for any space $X$. This suggests us to explore a weaker criterion for $I(U) = \tilde{I}(U)$, and we reach the following:

\begin{prop}\label{prop: criteria to ensure I(U) = tilde I(U)}
Let $(X,d)$ be a space and $U$ be an invariant open subset of $\beta X$. If the canonical quotient map $C^*_{{\rm max}}(G(X)_{U^c}) \to C^*_r(G(X)_{U^c})$ is an isomorphism, then $I(U)=\tilde{I}(U)$. In particular, if the groupoid $G(X)_{U^c}$ is amenable then $I(U)=\tilde{I}(U)$.
\end{prop}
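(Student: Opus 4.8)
The statement to prove is Proposition~\ref{prop: criteria to ensure I(U) = tilde I(U)}: if the canonical quotient $C^*_{\mathrm{max}}(G(X)_{U^c}) \to C^*_r(G(X)_{U^c})$ is an isomorphism, then $I(U) = \tilde{I}(U)$; and the amenable case follows. The strategy is to run the same diagram chase that was used in the proof of ``(2)$\Rightarrow$(4)'' in Proposition~\ref{prop: property A case}, but now assuming the max-equals-reduced identification only for the quotient groupoid $G(X)_{U^c}$ rather than for all of $G(X)$.

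\begin{proof}
By Proposition~\ref{prop: char for Ker qU}, it suffices to show that the sequence (\ref{EQ: short exact seq. for red. alg.}),
\[
0 \longrightarrow C^*_r(G(X)_U) \stackrel{i_U}{\longrightarrow} C^*_r(G(X)) \stackrel{q_U}{\longrightarrow} C^*_r(G(X)_{U^c}) \longrightarrow 0,
\]
is exact, since $I(U) = \tilde{I}(U)$ precisely when this holds. We already know from the discussion after (\ref{EQ: short exact seq. for red. alg.}) that $i_U$ is injective, $q_U$ is surjective, and $q_U \circ i_U = 0$, so the only point at issue is exactness in the middle, i.e.\ $\mathrm{Ker}(q_U) \subseteq i_U(C^*_r(G(X)_U)) = I(U)$. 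Consider the commutative diagram with exact top row obtained from (\ref{EQ: short exact seq. for max. alg.}) and (\ref{EQ: short exact seq. for red. alg.}):
\[
\xymatrix{
0 \ar[r] & C^*_{\mathrm{max}}(G(X)_U) \ar[r] \ar[d]_-{\pi_U} & C^*_{\mathrm{max}}(G(X)) \ar[r] \ar[d]_-{\pi} & C^*_{\mathrm{max}}(G(X)_{U^c}) \ar[r] \ar[d]_-{\pi_{U^c}}^-{\cong} & 0 \\
0 \ar[r] & C^*_r(G(X)_U) \ar[r]^-{i_U} & C^*_r(G(X)) \ar[r]^-{q_U} & C^*_r(G(X)_{U^c}) \ar[r] & 0
}
\]
where the vertical maps are the canonical quotients; the top row is exact by \cite[Lemma 2.10]{MRW96}, and $\pi_{U^c}$ is an isomorphism by hypothesis. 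Now take $T \in \mathrm{Ker}(q_U) \subseteq C^*_r(G(X))$. Since $\pi$ is surjective, choose $\hat{T} \in C^*_{\mathrm{max}}(G(X))$ with $\pi(\hat{T}) = T$. Then $\pi_{U^c}$ applied to the image of $\hat{T}$ in $C^*_{\mathrm{max}}(G(X)_{U^c})$ equals $q_U(T) = 0$, and since $\pi_{U^c}$ is injective, $\hat{T}$ lies in the kernel of the top quotient map $C^*_{\mathrm{max}}(G(X)) \to C^*_{\mathrm{max}}(G(X)_{U^c})$. By exactness of the top row, $\hat{T}$ comes from some $\hat{S} \in C^*_{\mathrm{max}}(G(X)_U)$. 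Pushing down via the left vertical map and then $i_U$, and using commutativity of the left square, we get $T = \pi(\hat{T}) = i_U(\pi_U(\hat{S})) \in i_U(C^*_r(G(X)_U)) = I(U)$. Hence $\mathrm{Ker}(q_U) \subseteq I(U)$, so (\ref{EQ: short exact seq. for red. alg.}) is exact and $I(U) = \tilde{I}(U)$.

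For the last assertion, suppose $G(X)_{U^c}$ is amenable. It is a locally compact, Hausdorff and \'{e}tale groupoid (being the reduction of $G(X)$ by the closed invariant subset $U^c$), so Proposition~\ref{prop: etale amen} gives that $C^*_{\mathrm{max}}(G(X)_{U^c}) \to C^*_r(G(X)_{U^c})$ is an isomorphism, and the first part applies.
\end{proof}

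\noindent\textbf{Main obstacle.} There is essentially no serious obstacle here: the whole content is the diagram chase, and every ingredient is already in place (exactness of the full max sequence from \cite{MRW96}, the identification $i_U(C^*_r(G(X)_U)) = I(U)$ from Lemma~\ref{lem: groupoid C*-alg char for I(U)}, and Proposition~\ref{prop: char for Ker qU} translating exactness into $I(U)=\tilde I(U)$). The one point deserving a moment's care is the direction of the chase: we only have $\pi_{U^c}$ an isomorphism (not $\pi$ or $\pi_U$), so the argument must \emph{lift} an element of $\mathrm{Ker}(q_U)$ to the max level, use injectivity of $\pi_{U^c}$ to place the lift in the kernel of the top horizontal quotient, and only then push back down — rather than trying to chase in the reduced row directly, which would not close up. Verifying that $G(X)_{U^c}$ is itself a locally compact Hausdorff \'{e}tale groupoid, so that Propositions~\ref{prop: etale amen} and \cite[Lemma 2.10]{MRW96} apply to it, is routine given that $U^c$ is closed and invariant in $\beta X$.
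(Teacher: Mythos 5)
Your proof is correct and follows essentially the same route as the paper: the same commutative diagram relating the maximal and reduced sequences, exactness of the maximal row, the hypothesis on $\pi_{U^c}$, and a diagram chase, with Proposition \ref{prop: char for Ker qU} and Lemma \ref{lem: groupoid C*-alg char for I(U)} doing the translation into $I(U)=\tilde I(U)$. The only (cosmetic) difference is that the paper writes the bottom row with $\tilde I(U)$ in place of $C^*_r(G(X)_U)$ and phrases the conclusion as surjectivity of the left vertical map, whereas you chase an element of $\mathrm{Ker}(q_U)$ directly; these are the same argument.
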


\begin{proof}
We consider the following commutative diagram:
\begin{equation}\label{EQ: comm. diagram}
\xymatrix{
		0 \ar[r] & C^*_{{\rm max}}(G(X)_U) \ar[r] \ar[d]_{\pi_U} & C^*_{{\rm max}}(G(X)) \ar[r] \ar[d] & C^*_{{\rm max}}(G(X)_{U^c}) \ar[r] \ar[d] & 0\\
		0 \ar[r] & \tilde{I}(U) \ar[r] & C^*_{r}(G(X)) \ar[r] & C^*_{r}(G(X)_{U^c}) \ar[r] & 0.
	}
\end{equation}
Here the map $\pi_U$ is the composition:
\begin{equation}\label{EQ: composition}
C^*_{{\rm max}}(G(X)_U) \to C^*_r(G(X)_U) \cong I(U) \hookrightarrow \tilde{I}(U),
\end{equation}
where the middle isomorphism comes from Lemma \ref{lem: groupoid C*-alg char for I(U)}. Note that the top horizontal line is automatically exact, while the bottom one is also exact thanks to Proposition \ref{prop: char for Ker qU}. Also note that the middle vertical map is always surjective and by assumption, the right vertical map is an isomorphism. Hence via a diagram chasing argument, we obtain that the left vertical map is surjective. This concludes that $I(U)=\tilde{I}(U)$ thanks to (\ref{EQ: composition}).
\end{proof}

\begin{rem}\label{rem: special case of U=X}
When $U=X$, Proposition \ref{prop: criteria to ensure I(U) = tilde I(U)} recovers ``(3) $\Rightarrow$ (5)'' in Proposition \ref{prop: property A case}. Readers might wonder whether the converse of Proposition \ref{prop: criteria to ensure I(U) = tilde I(U)} holds as in the case of $U=X$. We manage to provide a partial answer in Section \ref{ssec:char for I(U)=tI(U)} below.
\end{rem}

Now we move to discuss the $K$-theory of geometric and ghostly ideals:

\begin{prop}\label{prop: criteria to ensure the same K-theory}
Let $X$ be a space which can be coarsely embedded into some Hilbert space. Then for any invariant open subset $U \subseteq \beta X$, we have an isomorphism
\[
(\iota_U)_\ast: K_\ast(I(U)) \longrightarrow K_\ast(\tilde{I}(U))
\]
for $\ast =0,1$, where $\iota_U$ is the inclusion map. Therefore for any ideal $I$ in $C^*_u(X)$, we have an injective homomorphism
\[
(\iota_I)_\ast: K_\ast(\overline{I \cap \CC_u[X]}) \longrightarrow K_\ast(I)
\]
for $\ast =0,1$, where $\iota_I$ is the inclusion map.
\end{prop}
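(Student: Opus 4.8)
The plan is to exploit the ladder of extensions (\ref{EQ: comm. diagram}) together with Tu's theorem that $\sigma$-compact a-T-menable \'{e}tale groupoids are $K$-amenable (Proposition \ref{prop: a-T-menable groupoids are K-amenable}), after a preliminary reduction that circumvents a $\sigma$-compactness issue. First, I would observe that the second (``therefore'') assertion is immediate once the first is known: for any ideal $I$ in $C^*_u(X)$ we have $\overline{I \cap \CC_u[X]} = I(U(I))$ by Lemma \ref{prop: geometric ideals are smallest}, and $I(U(I)) \subseteq I \subseteq \tilde I(U(I))$ by Theorem \ref{prop: ideals containment}; functoriality of $K$-theory then factors the isomorphism $(\iota_{U(I)})_\ast$ as $K_\ast(I(U(I))) \xrightarrow{(\iota_I)_\ast} K_\ast(I) \to K_\ast(\tilde I(U(I)))$, so $(\iota_I)_\ast$ is injective. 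Thus everything reduces to proving that $(\iota_U)_\ast \colon K_\ast(I(U)) \to K_\ast(\tilde I(U))$ is an isomorphism for every invariant open $U \subseteq \beta X$. Next I would record the a-T-menability inputs: since $X$ coarsely embeds into Hilbert space, $G(X)$ is a-T-menable by Proposition \ref{prop:char for amen and a-T-men}, and restricting a locally proper negative-type function to a subgroupoid shows that $G(X)_V$ and $G(X)_{V^c}$ are a-T-menable for every invariant open $V$. Moreover $G(X) = \bigcup_{n \in \NN} \overline{E_n}$ and $G(X)_{V^c} = \bigcup_{n\in\NN}\big(\overline{E_n} \cap \s^{-1}(V^c)\big)$ are $\sigma$-compact, being countable unions of compact sets (here one uses that $V^c$ is closed in the compact space $\beta X$), so Proposition \ref{prop: a-T-menable groupoids are K-amenable} shows that the canonical surjections $C^*_{\max}(G(X)) \to C^*_r(G(X))$ and $C^*_{\max}(G(X)_{V^c}) \to C^*_r(G(X)_{V^c})$ are $K$-equivalences, for every such $V$.

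Then I would first treat an invariant open subset of the special shape $U = U_Y = \bigcup_{R>0}\overline{\Nd_R(Y)}$ for $Y \subseteq X$ (Lemma \ref{lem: invariant subset}). In this case $G(X)_U = \bigcup_{R>0} G(\Nd_R(Y))$ by Lemma \ref{lem:dec for groupoids UA} is again a countable union of compact sets, hence $\sigma$-compact, and a-T-menable as above, so $C^*_{\max}(G(X)_U) \to C^*_r(G(X)_U) \cong I(U)$ is a $K$-equivalence too (Lemma \ref{lem: groupoid C*-alg char for I(U)}). I would then apply the six-term exact sequence in $K$-theory, naturally, to the diagram (\ref{EQ: comm. diagram}): its top row (\ref{EQ: short exact seq. for max. alg.}) is exact, its bottom row is exact by Proposition \ref{prop: char for Ker qU}, and its middle and right vertical maps are $K$-equivalences, so the five lemma forces $\pi_U \colon C^*_{\max}(G(X)_U) \to \tilde I(U)$ to be a $K$-equivalence. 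Since $\pi_U = \iota_U \circ p$ with $p\colon C^*_{\max}(G(X)_U) \to C^*_r(G(X)_U) \cong I(U)$ the canonical map (see (\ref{EQ: composition})) and $p$ a $K$-equivalence by the preceding sentence, we conclude that $(\iota_U)_\ast$ is an isomorphism whenever $U = U_Y$.

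Finally, for a general invariant open $U$ I would pass to an inductive limit. Let $\L = \L(U)$ be the corresponding ideal in $X$ (Proposition \ref{prop: 1-1 btw ideals in spaces and open inv subsets}), a family of subsets directed by inclusion with $U = \bigcup_{Z\in\L} U_Z$. Using that $\L$ is directed and that $\supp(\overline T)$ is compact for $T \in \CC_u[X]$ (Lemma \ref{lem: unif. Roe belongs to C0}) one gets $I(U) = \overline{\bigcup_{Z\in\L} I(U_Z)}$, and a truncation argument gives $\tilde I(U) = \overline{\bigcup_{Z\in\L}\tilde I(U_Z)}$: for $T \in \tilde I(U)$ the truncations $T_{1/n}$ lie in $\tilde I(U_{Z_n})$ for suitable $Z_n \in \L$, since $\overline{\r(\supp_{1/n}(T))}$ is compact and covered by the directed open family $\{U_Z\}$, and $T_{1/n} \to T$. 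Hence $\iota_U$ is the inductive limit of the inclusions $\iota_{U_Z}$, and continuity of $K$-theory together with the previous paragraph yields $(\iota_U)_\ast = \varinjlim_Z (\iota_{U_Z})_\ast$, an isomorphism.

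The \textbf{main obstacle} is precisely this last point: Tu's theorem requires $\sigma$-compactness, but $G(X)_U$ for a general invariant open $U$ need not be $\sigma$-compact (an open subset of $\beta X$ can fail to be $\sigma$-compact), so the diagram chase on (\ref{EQ: comm. diagram}) cannot be run directly with the middle left vertical map known to be a $K$-equivalence. The reduction to the $\sigma$-compactly generated sets $U_Z$, together with continuity of $K$-theory, is what makes the argument go through — and, a posteriori, explains why no countable-generatedness hypothesis on $U$ is needed here (cf.\ Remark \ref{rem:thanks to Kang}). The remaining bookkeeping (naturality of the six-term sequence, compatibility of the truncation argument with the maps $\iota_{U_Z}$, and the permanence of a-T-menability under restriction) is routine and I would only sketch it.
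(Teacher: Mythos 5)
Your reduction of the second statement to the first, your handling of the special case $U=U_Y$, and the five-lemma chase on the ladder (\ref{EQ: comm. diagram}) all match the paper's strategy and are sound. The genuine gap is the final inductive-limit step: the identity $\tilde{I}(U)=\overline{\bigcup_{Z\in\L}\tilde{I}(U_Z)}$ is false in general, and the truncation argument you offer for it cannot work. First, the supporting claim $T_{1/n}\to T$ in operator norm fails for general $T\in C^*_u(X)$: the truncations $T_\varepsilon$ always have finite propagation (Proposition \ref{prop: Thm 3.5 in CW04}), so if $T_{1/n}\to T$ held for every $T\in\tilde{I}(U)$ you would get $\tilde{I}(U)\subseteq\overline{\tilde{I}(U)\cap\CC_u[X]}=I(U)$, i.e.\ every ghostly ideal would be geometric --- contradicting $I_G\neq\K(\ell^2(X))$ on an expander. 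Second, the conclusion itself fails: in Example \ref{ex: Wan07} the projection $P$ lies in $\tilde{I}(U)$, but every $Z\in\L(U)$ satisfies $Z\subseteq B_n:=\bigsqcup_{i\le n,\,j}Y_{i,j}$ for some $n$ (compactness of $\overline{Z}$ inside $U=\bigcup_n\overline{B_n}$), hence $U_Z\subseteq U_{B_n}$; choosing a non-principal $\omega$ with $\omega\big(\bigsqcup_j Y_{n+1,j}\big)=1$, one checks $\Nd_R(B_n)\cap\bigsqcup_j Y_{n+1,j}$ is finite for every $R$, so $\omega\in U\setminus U_{B_n}$, and Proposition \ref{prop: geometric char for ghostly ideals} gives $\Phi_\omega(T)=0$ for all $T\in\tilde{I}(U_{B_n})$ while $\|\Phi_\omega(P)\|=1$. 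Thus $\|P-T\|\ge 1$ for every $T\in\bigcup_Z\tilde{I}(U_Z)$, so $P\notin\overline{\bigcup_Z\tilde{I}(U_Z)}$, and continuity of $K$-theory cannot be invoked on the ghostly side. (Morally: ghostly ideals are exactly the ideals \emph{not} determined by their finite-propagation, compactly supported parts, so they do not form an inductive system over $\L$.)

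The paper resolves the $\sigma$-compactness obstacle differently, and only where it is actually needed. The five lemma applied to (\ref{EQ: comm. diagram}) already shows that $(\pi_U)_\ast\colon K_\ast(C^*_{\max}(G(X)_U))\to K_\ast(\tilde{I}(U))$ is an isomorphism for an \emph{arbitrary} invariant open $U$, since only $G(X)$ and $G(X)_{U^c}$ need to be $\sigma$-compact and a-T-menable, which they always are. The sole remaining point is that $C^*_{\max}(G(X)_U)\to C^*_r(G(X)_U)$ is a $K$-equivalence; for this one writes $G(X)=\beta X\rtimes\G'$ with $\G'$ second countable, ample and a-T-menable (\cite[Lemma 3.3]{STY02}), so that $G(X)_U=U\rtimes\G'$ and $C_0(U)$ is an inductive limit of \emph{separable $\G'$-subalgebras of the coefficient algebra}. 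Both $\rtimes_{\max}$ and $\rtimes_r$ commute with such coefficient limits, so Tu's $K$-amenability result (\cite[Proposition 4.12]{Tu99}) applies termwise. That is the inductive limit you want --- taken on the geometric side only, where it is valid --- rather than one over the ghostly ideals $\tilde{I}(U_Z)$.
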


\begin{proof}
Fixing such a $U\subseteq \beta X$, we consider the commutative diagram (\ref{EQ: comm. diagram}) from the proof of Proposition \ref{prop: criteria to ensure I(U) = tilde I(U)}, where both of the horizontal lines are exact. This implies the following commutative diagram in $K$-theories:
\begin{scriptsize}
\[
\xymatrix{
		\cdots \ar[r] & K_\ast(C^*_{{\rm max}}(G(X)_U)) \ar[r] \ar[d]_{(\pi_U)_\ast} & K_\ast(C^*_{{\rm max}}(G(X))) \ar[r] \ar[d] & K_\ast(C^*_{{\rm max}}(G(X)_{U^c})) \ar[r] \ar[d] & K_{\ast+1}(C^*_{{\rm max}}(G(X)_U)) \ar[r] \ar[d]_{(\pi_U)_{\ast +1}} &\cdots\\
		\cdots \ar[r] & K_\ast(\tilde{I}(U)) \ar[r] & K_\ast(C^*_r(G(X))) \ar[r] & K_\ast(C^*_r (G(X)_{U^c})) \ar[r] & K_{\ast+1}(\tilde{I}(U)) \ar[r] &\cdots
	}
\]
\end{scriptsize}
where both horizontal lines are exact. Recall from \cite[Theorem 5.4]{STY02} that $X$ is coarsely embeddable if and only if $G(X)$ is a-T-menable. It follows directly from Definition \ref{defn: a-T-menable groupoids} that as subgroupoids, both $G(X)_U$ and $G(X)_{U^c}$ are a-T-menable. Moreover, it is clear that $G(X)$ and $G(X)_{U^c}$ are $\sigma$-compact by definition. Hence Proposition \ref{prop: a-T-menable groupoids are K-amenable} implies that $G(X)$ and $G(X)_{U^c}$ are $K$-amenable. This shows that the middle two vertical maps in the above diagram are isomorphisms, which implies that $(\pi_U)_\ast$ is an isomorphism by the Five Lemma. Therefore from (\ref{EQ: composition}), we obtain that the composition
\[
K_\ast(C^*_{{\rm max}}(G(X)_U)) \longrightarrow K_\ast(C^*_r(G(X)_U)) \cong K_\ast(I(U)) \stackrel{(\iota_U)_\ast}{\longrightarrow} K_\ast(\tilde{I}(U))
\]
is an isomorphism for $\ast=0,1$. 

On the other hand, we claim that $K_\ast(C^*_{{\rm max}}(G(X)_U)) \longrightarrow K_\ast(C^*_r(G(X)_U))$ is an isomorphism, and conclude that the map $(\iota_U)_\ast$ is an isomorphism for $\ast=0,1$. To see, recall from \cite[Lemma 3.3]{STY02} that there is a locally compact, second countable, ample and \'{e}tale groupoid $\G'$ such that $G(X) = \beta X \rtimes \G'$, which implies that $G(X)_U = U \rtimes \G'$. Hence $C^*_{{\rm max}}(G(X)_U)$ coincides with the maximal crossed product $C_0(U) \rtimes_{{\rm max}} \G'$, and $C^*_r(G(X)_U)$ with the reduced one $C_0(U) \rtimes_{r} \G'$. Note that the $\G'$-$C^*$-algebra $C_0(U)$ can be written as an inductive limit $\varinjlim_{i\in I} (A_i, \phi_i)$ of separable $\G'$-$C^*$-algebras. Moreover, it follows from \cite[Theorem 5.4]{STY02} that $\G'$ is a-T-menable since $G(X)$ is a-T-menable, and hence Proposition \ref{prop: a-T-menable groupoids are K-amenable} implies that $\G'$ is also $K$-amenable. It follows from \cite[Proposition 4.12]{Tu99} that the canonical map $K_\ast(A_i \rtimes_{{\rm max}} \G') \rightarrow K_\ast(A_i \rtimes_{r} \G')$ is an isomorphism for each $i \in I$. Taking limits, we obtain that $K_\ast(C_0(U) \rtimes_{{\rm max}} \G') \rightarrow K_\ast(C_0(U) \rtimes_{r} \G')$ is also an isomorphism, which concludes the claim.


For the last statement, we assume that $U=U(I)$. By Lemma \ref{prop: geometric ideals are smallest}, we have that $\overline{I \cap \CC_u[X]} = I(U)$. Also Lemma \ref{lem: ghostly ideas are largest}(1) shows that $I \subseteq \tilde{I}(U)$. Hence the inclusion map $\iota_U$ can be decomposed as follows:
\[
I(U) = \overline{I \cap \CC_u[X]} \stackrel{\iota_I}{\hookrightarrow} I \hookrightarrow \tilde{I}(U).
\]
Therefore, $(\iota_U)_\ast$ being an isomorphism implies that $(\iota_I)_\ast$ is injective for $\ast=0,1$. 
\end{proof}

\begin{rem}\label{rem:thanks to Kang}
In general, we do not know whether the restriction $G(X)_U$ is $\sigma$-compact, and hence we cannot apply Proposition \ref{prop: a-T-menable groupoids are K-amenable} to $G(X)_U$ directly. In an early version of Proposition \ref{prop: criteria to ensure the same K-theory}, we put the assumption of countably generatedness (see Definition \ref{defn:countably generated} below) to bypass the issue, while Kang Li pointed out to us that this is redundant by using the technique above. 
\end{rem}

Applying Proposition \ref{prop: criteria to ensure the same K-theory} to the case of $U=X$, we partially recover the following result by Finn-Sell (see \cite[Proposition 35]{Fin14}), which is crucial for the  counterexamples to the coarse Baum-Connes conjecture:
\begin{cor}\label{cor: FS's result}
Let $X$ be a space which can be coarsely embedded into some Hilbert space. Then the inclusion of $\K(\ell^2(X))$ into $I_G$ induces an isomorphism on the $K$-theory level.
\end{cor}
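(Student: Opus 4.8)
The plan is to simply specialise Proposition \ref{prop: criteria to ensure the same K-theory} to the invariant open subset $U = X$. First I would recall from Example \ref{eg: compact ideal} that the geometric ideal associated to $X$ is $I(X) = \K(\ell^2(X))$, and from Example \ref{eg: ghost ideal} that the ghostly ideal associated to $X$ is $\tilde{I}(X) = I_G$, the ideal of all ghost operators. Under these identifications (which come from Proposition \ref{prop: groupoid char for uniform Roe alg} and Lemma \ref{lem: groupoid C*-alg char for I(U)}), the inclusion map $\iota_X \colon I(X) \hookrightarrow \tilde{I}(X)$ appearing in the statement of Proposition \ref{prop: criteria to ensure the same K-theory} is precisely the inclusion $\K(\ell^2(X)) \hookrightarrow I_G$. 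Since $X$ is assumed to be coarsely embeddable into a Hilbert space, Proposition \ref{prop: criteria to ensure the same K-theory} applies verbatim and yields that $(\iota_X)_\ast \colon K_\ast(\K(\ell^2(X))) \to K_\ast(I_G)$ is an isomorphism for $\ast = 0,1$, which is exactly the assertion.

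There is essentially no obstacle here: all of the work is contained in Proposition \ref{prop: criteria to ensure the same K-theory}, whose proof already deals with the potential failure of $\sigma$-compactness of the restriction $G(X)_U$ by writing $G(X)_U = U \rtimes \G'$ and passing to an inductive limit of separable $\G'$-$C^*$-algebras, combined with Tu's $K$-amenability result (Proposition \ref{prop: a-T-menable groupoids are K-amenable}). In the case $U = X$ this subtlety does not even arise, since $G(X)_X \cong X \times X$ is the pair groupoid, which is $\sigma$-compact and amenable, so $C^*_{\mathrm{max}}(G(X)_X) = C^*_r(G(X)_X) = \K(\ell^2(X))$ directly by Proposition \ref{prop: etale amen}; but invoking the general statement is cleaner. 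The only point worth a line of verification is that the composition $I(X) = \overline{I_G \cap \CC_u[X]} \hookrightarrow I_G = \tilde{I}(X)$ from the last part of Proposition \ref{prop: criteria to ensure the same K-theory} indeed coincides with the inclusion of compacts into ghosts, which is immediate since $U(I_G) = X$ (Example \ref{eg: ghost ideal}) and hence $\overline{I_G \cap \CC_u[X]} = I(X) = \K(\ell^2(X))$ by Lemma \ref{prop: geometric ideals are smallest}.
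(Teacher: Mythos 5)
Your proposal is correct and follows exactly the paper's route: the paper obtains this corollary by applying Proposition \ref{prop: criteria to ensure the same K-theory} with $U=X$, using Example \ref{eg: compact ideal} and Example \ref{eg: ghost ideal} to identify $I(X)=\K(\ell^2(X))$ and $\tilde{I}(X)=I_G$. Your extra remarks (that $G(X)_X \cong X\times X$ makes the $\sigma$-compactness issue moot, and that $\iota_X$ is indeed the inclusion of compacts into ghosts) are accurate but not needed beyond what the paper does.
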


\begin{rem}
For a general ideal $I$ in $C^*_u(X)$, our method in the proof of Proposition \ref{prop: criteria to ensure the same K-theory} only provides the injectivity of the induced map $(\iota_I)_\ast$. We wonder whether this map is indeed an isomorphism under the same assumption (see Question \ref{Q2}).
\end{rem}

\section{Partial Property A and partial operator norm localisation property}\label{sec:partial A and partial ONL}

In the previous section, we find a sufficient condition (Proposition \ref{prop: criteria to ensure I(U) = tilde I(U)}) to ensure $\tilde{I}(U) = I(U)$ for a given invariant open subset $U \subseteq \beta X$. As promised in Remark \ref{rem: special case of U=X}, now we study its converse and show that this is indeed an equivalent condition under an extra assumption.

Our strategy is to follow the outline of the case that $U=X$. More precisely, we introduce a notion called \emph{partial Property A towards invariant subsets of the boundary $\partial_\beta X$}, and then consider its counterpart in the context of operator norm localisation property to provide the desired characterisation.

\subsection{Partial Property A}\label{ssec:partial property A}

Recall from Proposition \ref{prop: property A case} that a space $X$ has Property A if and only if the groupoid $G(X)_{\partial_\beta X}$ is amenable, which characterises $I(X) = \tilde{I}(X)$. Together with Proposition \ref{prop: criteria to ensure I(U) = tilde I(U)}, this inspires us to introduce the following:

\begin{defn}\label{defn:partial Property A}
Let $(X,d)$ be a space and $U \subseteq \beta X$ be an invariant open subset. We say that $X$ has \emph{partial Property A towards $\partial_\beta X \setminus U$} if $G(X)_{\partial_\beta X \setminus U}$ is amenable.
\end{defn}

It is clear from definition that $X$ has Property A if and only if it has partial Property A towards the whole boundary $\partial_\beta X$. On the other hand, it follows from Proposition \ref{prop: criteria to ensure I(U) = tilde I(U)} that if $X$ has partial Property A towards $\partial_\beta X \setminus U$, then we have $I(U) = \tilde{I}(U)$. The rest of this section is devoted to studying the converse.

Firstly, we aim to unpack the groupoid language and provide a concrete geometric description for partial Property A, which resembles the definition of Property A (see Definition \ref{defn: Property A}). The following is the main result:

\begin{prop}\label{prop:char for partial A}
Let $(X,d)$ be a space and $U \subseteq \beta X$ an invariant open subset. Then $X$ has partial Property A towards $U^c=\partial_\beta X \setminus U$ \emph{if and only if} for any $\varepsilon, R>0$, there exist $S>0$, a subset $D \subseteq X$ with $\overline{D} \supseteq U^c$ and a function $f: X \times X \to [0,1]$ satisfying:
\begin{enumerate}
  \item $\supp(f) \subseteq E_S$;
  \item for any $x\in D$, we have $\sum_{z\in X} f(z,x) =1$;
  \item for any $x, y\in D$ with $d(x,y) \leq R$, we have $\sum_{z\in X} |f(z,x) - f(z,y)| \leq \varepsilon$.
\end{enumerate}
\end{prop}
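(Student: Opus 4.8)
The plan is to translate the groupoid-theoretic definition of amenability of $G(X)_{U^c}$ (Definition \ref{defn: amenable groupoid}, in the normalised form of Lemma \ref{lem:elementary char for amenable groupoid}) into the asserted geometric statement about functions on $X \times X$, passing through the coarse groupoid picture. Throughout, recall from Corollary \ref{cor:char for G(X)} and Lemma \ref{lem:closure of entourage} the concrete description of $G(X)$ and its entourage closures, and that $C_c(G(X)_{U^c})$ consists of restrictions to $G(X)_{U^c}$ of functions supported on $\overline{E_S}$ for some $S>0$.

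\textbf{From partial Property A to the geometric condition.} Suppose $G(X)_{U^c}$ is amenable. Fix $\varepsilon, R>0$. The compact set $K := \overline{E_R}^{\,\beta X \times \beta X} \cap s^{-1}(U^c) = \overline{E_R}^{\,G(X)} \cap G(X)_{U^c}$ is a compact subset of $G(X)_{U^c}$, so Lemma \ref{lem:elementary char for amenable groupoid} yields $g \in C_c(G(X)_{U^c})$ with range in $[0,+\infty)$ such that $\big|\sum_{\alpha \in (G(X)_{U^c})_{\r(\gamma)}} g(\alpha) - 1\big| < \varepsilon'$ and $\sum_\alpha |g(\alpha) - g(\alpha\gamma)| < \varepsilon'$ for all $\gamma \in K$, where $\varepsilon'>0$ will be chosen small depending on $\varepsilon$. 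Extend $g$ to a function $\tilde g \in C_c(G(X))$ supported on some $\overline{E_S}$ (using that $G(X)_{U^c}$ is closed and $\overline{E_S}$ is clopen, so such an extension with support in $\overline{E_S}$ exists by a partition-of-unity / Tietze argument), and set $f := \theta(\tilde g)$, i.e.\ $f(x,y) = \tilde g(x,y)$ for $x,y \in X$, truncated into $[0,1]$ if necessary (a standard normalisation as in Lemma \ref{lem:char for A}). Then $\supp(f) \subseteq E_S$, giving (1). For (2) and (3): let $D := \{x \in X : |\sum_{z} f(z,x) - 1| < \varepsilon \text{ and } \forall y \text{ with } d(x,y)\le R,\ \sum_z|f(z,x)-f(z,y)|<\varepsilon\}$ (after a further normalisation to replace the inequality in (2) by an equality, dividing the $x$-column of $f$ by its sum, valid once that sum is close to $1$). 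One must check $\overline{D} \supseteq U^c$: for $\omega \in U^c$, the conditions defining $D$ are, in the limit along $\omega$, exactly the defining inequalities of $g$ at units near $\omega$ together with points $\gamma \in K$ with $\r(\gamma) = \omega$; continuity of $\tilde g$ on $G(X)$ together with Lemma \ref{lem:limit operator using extension}-type reasoning (values of $f$ along a net converging to a boundary pair converge to the value of $\tilde g$) forces a neighbourhood basis of $\omega$ in $\beta X$ to meet $D$, i.e.\ $\omega \in \overline{D}$.

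\textbf{From the geometric condition to partial Property A.} Conversely, given for each $\varepsilon,R$ the data $(S, D, f)$ as in the statement, we build $g \in C_c(G(X)_{U^c})$: take the continuous extension $\overline{f} \in C_c(G(X))$ (which exists since $\supp f \subseteq E_S$, by Lemma \ref{lem: unif. Roe belongs to C0}) and restrict it to $G(X)_{U^c}$. Given a compact $K \subseteq G(X)_{U^c}$, it is contained in $\overline{E_R}^{\,G(X)} \cap G(X)_{U^c}$ for some $R$; for $\gamma = (\alpha, \omega) \in K$ with $\omega \in U^c$, choose a net $x_\lambda \to \omega$ in $X$; since $\omega \in \overline{D}$ we may take $x_\lambda \in D$, and then properties (2), (3) of $f$ at $x_\lambda$ pass in the limit (using the local coordinate picture of Proposition \ref{prop:local geometry}, which identifies finite portions of $\G^\omega$ with finite portions of $\G^{x_\lambda}$) to the two required inequalities for $g$ at $\gamma$, up to $\varepsilon$. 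Quantifying over $K$ with $R$ large enough, this is exactly Lemma \ref{lem:elementary char for amenable groupoid}, so $G(X)_{U^c}$ is amenable.

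\textbf{Main obstacle.} The routine normalisation steps (truncating into $[0,1]$, turning the approximate-unit condition (2) into an exact one, handling the $\ell^1$ versus $\ell^\infty$ norms) are standard (cf.\ Lemmas \ref{lem:char for A} and \ref{lem:elementary char for amenable groupoid}) and I would not dwell on them. The genuine difficulty is the \emph{transfer of the local summation/variation estimates between the boundary fibres of $G(X)$ and the columns of $f$ over points of $X$}, and in particular verifying the precise relationship $\overline{D} \supseteq U^c$ rather than just $D$ hitting every point of $U^c$: one must show that the set $D \subseteq X$ on which $f$ behaves well is large enough that its closure swallows all of $U^c$, which is where one invokes the description of $G(X)$ via partial translations (Lemmas \ref{lem:compatibility via coarse groupoid}, \ref{lem:closure of entourage}) and the local coordinate systems of Proposition \ref{prop:local geometry} to match up $\sum_{z \in X} f(z,x)$ with $\sum_{\alpha \in (G(X)_{U^c})_{\omega}} g(\alpha)$ in the limit $x \to \omega$. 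Care is also needed that the support bound $E_S$ for $f$ corresponds correctly to a \emph{compact} subset of $G(X)$ after restriction (this uses that $\overline{E_S}$ is compact in $G(X)$, from \cite{STY02}).
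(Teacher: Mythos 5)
Your route is essentially the paper's: the reformulation of amenability of $G(X)_{U^c}$ in terms of the continuous extension $\overline{f}$ of a kernel supported in some $E_S$ is exactly the paper's intermediate step (Lemma \ref{lem:char for partial A}), with your Tietze-type extension of $g$ playing the role of the surjectivity of the restriction map $C_c(G(X)) \to C_c(G(X)_{U^c})$ from (\ref{EQ: short exact seq. for alg.}); and the transfer of the summation/variation estimates between boundary fibres and columns of $f$, via compatible families and the local coordinate systems of Proposition \ref{prop:local geometry}, together with the definition of $D$ as the set of ``good'' points and the verification $\omega(D)=1$ for $\omega \in U^c$, is how the paper argues as well.

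The genuine gap is in your converse direction (geometric condition $\Rightarrow$ partial Property A). Hypothesis (3) controls $\sum_z |f(z,x)-f(z,y)|$ only when \emph{both} $x$ and $y$ lie in $D$. In your limit computation at $\gamma=(\alpha,\omega)$ you take a net $x_\lambda \in D$ converging to $\omega$, but the point you must compare against is $t_\alpha(x_\lambda)$, and nothing in your sketch puts $t_\alpha(x_\lambda)$ into $D$; as written, (3) does not apply to the pair $(x_\lambda, t_\alpha(x_\lambda))$, so the required bound on $\sum_{\gamma'}|\overline f(\gamma',\alpha)-\overline f(\gamma',\omega)|$ does not follow. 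The paper removes exactly this obstruction at the start of its sufficiency argument by a retraction trick: choose $p\colon \Nd_R(D)\to D$ with $p|_D=\mathrm{id}$ and $d(p(y),y)\le R$, replace $f(x,y)$ by $f(x,p(y))$ (running the hypothesis at parameter $3R$ and enlarging $S$ to $S+R$), after which the variation estimate holds for $x\in D$ and \emph{arbitrary} $y$ with $d(x,y)\le R$, and only then passes to ultralimits. Alternatively your argument can be repaired in place: since $U^c$ is invariant, $\alpha\in U^c\subseteq\overline D$, hence $\omega\big(\{x: t_\alpha(x)\in D\}\big)=1$, so for the finitely many relevant $\alpha\in B(\omega,R+S)$ one may run the limit along $D\cap\bigcap_\alpha t_\alpha^{-1}(D)$; but one of these two fixes must be made explicit. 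A smaller point of the same kind in your forward direction: showing $\overline D\supseteq U^c$ needs not just continuity of $\tilde g$ but the full ball-matching of Proposition \ref{prop:local geometry} (in particular the surjectivity $\{t_\alpha(x):\alpha\in B(\omega,R)\}=B(x,R)$), since your $D$ quantifies over all $y\in B(x,R)$; you cite the right tool, so this is only a matter of writing it out.
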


Comparing Proposition \ref{prop:char for partial A} with Definition \ref{defn: Property A}, it is clear that Property A implies partial Property A towards any invariant closed subset of $\partial_\beta X$.

To prove Proposition \ref{prop:char for partial A}, we start with the following lemma:

\begin{lem}\label{lem:char for partial A}
With the same notation as above, $X$ has partial Property A towards $U^c$ \emph{if and only if} for any $\varepsilon, R>0$, there exist $S>0$ and a function $f: X \times X \to [0,1]$ satisfying:
\begin{enumerate}
  \item $\supp(f) \subseteq E_S$;
  \item for any $\omega\in U^c$, we have $\sum_{\alpha \in X(\omega)} \overline{f}(\alpha, \omega) =1$;
  \item for $\omega\in U^c$ and $\alpha \in X(\omega)$ with $d_\omega(\alpha, \omega) \leq R$, then $\sum_{\gamma\in X(\omega)} |\overline{f}(\gamma, \alpha) - \overline{f}(\gamma, \omega)| \leq \varepsilon$,
\end{enumerate}
where $\overline{f} \in C_0(G(X))$ is the continuous extension from Lemma \ref{lem: unif. Roe belongs to C0}.
\end{lem}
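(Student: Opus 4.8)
The plan is to treat Lemma~\ref{lem:char for partial A} as a pure translation between the topological amenability of the \'etale groupoid $G(X)_{U^c}$ (Definition~\ref{defn: amenable groupoid}) and the three displayed conditions, via the dictionary between functions on $G(X)$ and finite propagation operators together with the explicit descriptions of the fibres and composition of $G(X)$. First I would record a few facts. Since $G(X)=\bigcup_{r>0}\overline{E_r}$ with each $\overline{E_r}$ clopen and compact in $G(X)$, and the family $\{\overline{E_r}\}_{r>0}$ is increasing, every compact $K\subseteq G(X)$ lies in some $\overline{E_R}$. Since $E_S\subseteq X\times X$ is a subset of a discrete set and $(\r,\s)$ is injective on $G(X)$ by \cite[Lemma~2.7]{STY02} (see Section~\ref{ssec:coarse groupoid}), the clopen set $\overline{E_S}$ is canonically homeomorphic to $\beta E_S$; hence a continuous $[0,1]$-valued function on $\overline{E_S}$ is exactly the restriction $\overline f\vert_{\overline{E_S}}$ of some $f\colon E_S\to[0,1]$, and, extending $f$ by $0$ to $X\times X$, one has $\supp(f)\subseteq E_S$ and $\overline f=0$ off $\overline{E_S}$. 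Finally, for $\omega\in U^c$ (an invariant \emph{closed} subset of $\partial_\beta X$, so $X(\omega)\subseteq U^c$) Lemma~\ref{lem:limit space via coarse groupoids} identifies $(G(X)_{U^c})_\omega=\s^{-1}(\omega)$ with $X(\omega)$ via $\alpha\mapsto(\alpha,\omega)$, with $X(\alpha)=X(\omega)$ for $\alpha\in X(\omega)$ by \cite[Proposition~3.9]{SW17}, composition $(\gamma',\alpha)(\alpha,\omega)=(\gamma',\omega)$ in the pair groupoid, the membership $(\alpha,\omega)\in\overline{E_R}$ equivalent to $d_\omega(\alpha,\omega)\le R$ by Lemma~\ref{lem:closure of entourage}, and $\overline f(\alpha,\gamma)$ the value of the extension in the sense of Lemma~\ref{lem:limit operator using extension}.

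\emph{Necessity.} Assuming $G(X)_{U^c}$ amenable, fix $\varepsilon,R>0$ and apply Definition~\ref{defn: amenable groupoid} to the compact set $K:=\overline{E_R}\cap\s^{-1}(U^c)\subseteq G(X)_{U^c}$: this produces $h\in C_c(G(X)_{U^c})$ with range in $[0,1]$ satisfying $\sum_{\beta\in\G_{\r(\gamma)}}h(\beta)=1$ and $\sum_{\beta\in\G_{\r(\gamma)}}|h(\beta)-h(\beta\gamma)|<\varepsilon$ for every $\gamma\in K$, where $\G=G(X)_{U^c}$. Choose $S>0$ with $\supp h\subseteq\overline{E_S}$; since $\overline{E_S}\cap G(X)_{U^c}$ is closed in the compact Hausdorff space $\overline{E_S}$, Tietze's theorem extends $h\vert_{\overline{E_S}\cap G(X)_{U^c}}$ to a continuous $g\colon\overline{E_S}\to[0,1]$, and by the dictionary above $g=\overline f\vert_{\overline{E_S}}$ for some $f\colon X\times X\to[0,1]$ with $\supp(f)\subseteq E_S$ and $h=\overline f\vert_{G(X)_{U^c}}$ (both vanish off $\overline{E_S}$); this gives (1). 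As $E_R$ contains the diagonal, every unit $\omega\in U^c$ lies in $K$; taking $\gamma=\omega$ and using the fibre identification yields $\sum_{\alpha\in X(\omega)}\overline f(\alpha,\omega)=1$, which is (2). For (3), if $\omega\in U^c$ and $\alpha\in X(\omega)$ with $d_\omega(\alpha,\omega)\le R$, then $\gamma:=(\alpha,\omega)\in K$ with $\r(\gamma)=\alpha$; writing $\beta=(\gamma',\alpha)$ so that $\beta\gamma=(\gamma',\omega)$, the second inequality becomes $\sum_{\gamma'\in X(\omega)}|\overline f(\gamma',\alpha)-\overline f(\gamma',\omega)|<\varepsilon$.

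\emph{Sufficiency.} Conversely, suppose (1)--(3) hold for all $\varepsilon,R>0$. Given $\varepsilon>0$ and a compact $K\subseteq G(X)_{U^c}$, choose $R>0$ with $K\subseteq\overline{E_R}$ and apply the hypothesis to $\varepsilon,R$, obtaining $S>0$ and $f\colon X\times X\to[0,1]$ as in (1)--(3); set $h:=\overline f\vert_{G(X)_{U^c}}$, which by the dictionary lies in $C_c(G(X)_{U^c})$, has range in $[0,1]$, and has $\supp h\subseteq\overline{E_S}$. Any $\gamma\in K$ is of the form $(\alpha,\omega)$ with $\omega=\s(\gamma)\in U^c$ (so $\gamma$ lies over a limit space, $\gamma$ being in the non-principal part of $G(X)$), hence $\alpha=\r(\gamma)\in X(\omega)\subseteq U^c$ and $d_\omega(\alpha,\omega)\le R$; then $\sum_{\beta\in\G_{\r(\gamma)}}h(\beta)=\sum_{\gamma'\in X(\alpha)}\overline f(\gamma',\alpha)=1$ by (2) applied at $\alpha$ together with $X(\alpha)=X(\omega)$, and $\sum_{\beta\in\G_{\r(\gamma)}}|h(\beta)-h(\beta\gamma)|=\sum_{\gamma'\in X(\omega)}|\overline f(\gamma',\alpha)-\overline f(\gamma',\omega)|\le\varepsilon$ by (3); units $\gamma=\omega\in U^c$ are covered by (2) directly. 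Hence $G(X)_{U^c}$ is amenable. (One could equally invoke the $[0,\infty)$-valued reformulation of Lemma~\ref{lem:elementary char for amenable groupoid} here, but it is unnecessary since (2) already gives equalities.)

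\emph{Main obstacle.} The one genuinely delicate point is the extension/restriction dictionary used in the necessity part: one must exploit compactness of $\supp h$ to pin down a single $S$, invoke Tietze on $\overline{E_S}$ \emph{preserving} the range $[0,1]$, and identify $\overline{E_S}$ with $\beta E_S$ so that a continuous function on it really does come from a bounded function on $E_S$ (extended by $0$). Everything else -- matching the source-fibres $\G_{\r(\gamma)}$ with the limit-space indexing, tracking the composition rule $(\gamma',\alpha)(\alpha,\omega)=(\gamma',\omega)$, and handling units -- is routine bookkeeping built on Lemma~\ref{lem:limit space via coarse groupoids} and Lemma~\ref{lem:closure of entourage}.
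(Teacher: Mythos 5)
Your proposal is correct and follows essentially the same route as the paper: unpack Definition \ref{defn: amenable groupoid} for $G(X)_{U^c}$, exhaust compacta by $\overline{E_R}\cap G(X)_{U^c}$, and translate the fibrewise conditions via Lemma \ref{lem:closure of entourage} and Lemma \ref{lem:limit space via coarse groupoids}, with your Tietze extension on the clopen compact $\overline{E_S}$ playing exactly the role of the paper's appeal to the surjectivity of the restriction map $C_c(G(X))\to C_c(G(X)_{U^c})$ from (\ref{EQ: short exact seq. for alg.}) (and being slightly more careful about preserving the range $[0,1]$). The only cosmetic point is the strict inequality in Definition \ref{defn: amenable groupoid} versus the ``$\leq\varepsilon$'' in condition (3), which is handled by running the hypothesis with $\varepsilon/2$.
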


\begin{proof}
By definition, $X$ has partial Property A towards $U^c$ if and only if for any $\varepsilon>0$ and compact $K \subseteq G(X)_{U^c}$, there exists $g \in C_c(G(X)_{U^c})$ with range in $[0,1]$ such that for any $\gamma \in K$ we have
\[
\sum_{\alpha \in \G_{\r(\gamma)}} g(\alpha) =1  \quad \mbox{and} \quad \sum_{\alpha \in \G_{\r(\gamma)}} |g(\alpha) - g(\alpha\gamma)| < \varepsilon.
\]
Recall from (\ref{EQ: short exact seq. for alg.}) that the restriction map $C_c(G(X)) \to C_c(G(X)_{U^c})$ is surjective, and hence $g$ can be regarded as a function in $C_c(G(X))$. Taking $f$ to be the restriction of $g$ on $X \times X$, then $f \in \ell^\infty(X \times X)$ and there exists $S>0$ such that $\supp(f) \subseteq E_S$ for some $S>0$. Using the notation from Lemma \ref{lem: unif. Roe belongs to C0}, we have $g = \overline{f}$. Note that $G(X)_{U^c} = \bigcup_{R>0} (\overline{E_R} \cap G(X)_{U^c})$, and hence compact subsets of $G(X)_{U^c}$ are always contained in those of the form $\overline{E_R} \cap G(X)_{U^c}$. Furthermore, Lemma \ref{lem:closure of entourage} implies 
\[
\overline{E_R} \cap G(X)_{U^c} = \bigcup_{\omega \in U^c} \{(\alpha, \gamma)\in X(\omega) \times X(\omega): d_\omega(\alpha, \gamma) \leq R\}.
\]
Combining with Lemma \ref{lem:limit space via coarse groupoids}, we conclude the proof.
\end{proof}

As a direct corollary (together with Lemma \ref{lem:closure of entourage}), we obtain:
\begin{cor}\label{cor:uniform A}
Assume that $X$ has partial Property A towards $U^c$. Then the family of metric spaces $\{(X(\omega), d_\omega)\}_{\omega \in U^c}$ has uniform Property A in the sense that the parameters in Definition \ref{defn: Property A} can be chosen uniformly. 
\end{cor}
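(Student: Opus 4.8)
The plan is to produce, for each prescribed pair $\varepsilon, R > 0$, a \emph{single} Property~A function on $X \times X$ witnessing partial Property~A towards $U^c$, and then restrict it simultaneously to every limit space $(X(\omega), d_\omega)$ with $\omega \in U^c$; the point is that the parameter $S$ it produces depends only on $\varepsilon$ and $R$, not on $\omega$, which is exactly the uniformity asserted.

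Concretely, I would first invoke Lemma \ref{lem:char for partial A}: given $\varepsilon, R > 0$, fix $S > 0$ and $f \colon X \times X \to [0,1]$ with $\supp(f) \subseteq E_S$ such that $\sum_{\alpha \in X(\omega)} \overline{f}(\alpha, \omega) = 1$ for every $\omega \in U^c$, and $\sum_{\gamma \in X(\omega)} |\overline{f}(\gamma, \alpha) - \overline{f}(\gamma, \omega)| \leq \varepsilon$ whenever $\omega \in U^c$ and $\alpha \in X(\omega)$ with $d_\omega(\alpha, \omega) \leq R$. By Corollary \ref{cor:char for G(X)} the set $X(\omega) \times X(\omega)$ sits inside $G(X)$, so for each $\omega \in U^c$ I can define $f_\omega \colon X(\omega) \times X(\omega) \to [0,1]$ by $f_\omega(\alpha, \gamma) := \overline{f}(\alpha, \gamma)$ and check the three conditions of Definition \ref{defn: Property A} for $f_\omega$ on $(X(\omega), d_\omega)$. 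For the support condition, $\supp(\overline{f}) = \overline{\supp(f)} \subseteq \overline{E_S}$, and Lemma \ref{lem:closure of entourage} identifies $\overline{E_S} \cap (X(\omega) \times X(\omega))$ with the $S$-entourage of $(X(\omega), d_\omega)$, so $\supp(f_\omega)$ lies in that entourage. The crucial input for the remaining two conditions is that every $\alpha \in X(\omega)$ again lies in $U^c$ — since $U^c$ is invariant and $X(\omega)$ is the smallest invariant subset of $\beta X$ containing $\omega$ by Lemma \ref{lem:limit space via coarse groupoids} — and that $(X(\alpha), d_\alpha) = (X(\omega), d_\omega)$ as metric spaces by \cite[Proposition 3.9]{SW17}. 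Applying condition~(2) of Lemma \ref{lem:char for partial A} with basepoint $\alpha$ gives $\sum_{\gamma \in X(\omega)} f_\omega(\gamma, \alpha) = 1$ for every $\alpha \in X(\omega)$; and applying condition~(3) with basepoint $\beta \in X(\omega) \subseteq U^c$ to any $\alpha \in X(\beta) = X(\omega)$ with $d_\omega(\alpha, \beta) = d_\beta(\alpha, \beta) \leq R$ yields $\sum_{\gamma \in X(\omega)} |f_\omega(\gamma, \alpha) - f_\omega(\gamma, \beta)| \leq \varepsilon$. Letting $\beta$ range over $X(\omega)$ covers all pairs at distance $\leq R$, so $f_\omega$ witnesses Property~A for $(X(\omega), d_\omega)$ with parameter $S = S(\varepsilon, R)$ independent of $\omega$.

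The only genuinely delicate step is the change of basepoint in the last display: Lemma \ref{lem:char for partial A} only controls $\overline{f}$ along pairs one of whose coordinates is the distinguished basepoint $\omega$, whereas Property~A of $(X(\omega), d_\omega)$ requires control along an arbitrary pair $(\alpha, \beta)$. Re-running the estimate with $\beta$ in the role of the basepoint is legitimate precisely because $X(\beta) \subseteq U^c$ and $X(\beta) = X(\omega)$ (isometrically, hence literally the same subset of $\beta X$ with the same metric), so the restriction of the single global function $\overline{f}$ does not change. Everything else is routine bookkeeping, and — this being the whole content of the corollary — no $\omega$-dependent choices enter, since $f$ is one fixed function on $X \times X$.
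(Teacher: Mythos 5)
Your argument is correct and is exactly the route the paper intends: the paper states the corollary as a direct consequence of Lemma \ref{lem:char for partial A} together with Lemma \ref{lem:closure of entourage}, and your restriction of the single function $\overline{f}$ to each $X(\omega)\times X(\omega)$, with the change of basepoint justified by the invariance of $U^c$ and the fact that $X(\alpha)=X(\omega)$ as metric spaces, is the natural (and intended) way to fill in the details. The uniformity of $S=S(\varepsilon,R)$ comes out just as you say, so nothing is missing.
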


\begin{rem}
It is unclear to us whether the converse of Corollary \ref{cor:uniform A} holds. Note that $\overline{X(\omega)}$ might contain points outside $X(\omega)$ as discussed in Theorem \ref{thm:non-min point for Z}, hence we do not know whether functions on $X(\omega) \times X(\omega)$ can be glued together to provide a continuous function on $G(X)_{U^c}$.
\end{rem}

%

\begin{proof}[Proof of Proposition \ref{prop:char for partial A}]
\emph{Sufficiency:} For any $\varepsilon, R>0$, choose $S>0$, $D \subseteq X$ and a function $g: X \times X \to [0,1]$ satisfying the conditions (1)-(3) for $\varepsilon$ and $3R$. Take a map $p: \Nd_R(D) \to D$ such that the restriction of $p$ on $D$ is the identity map and $d(p(x), x) \leq R$. Now we define:
\[
f(x,y) = 
\begin{cases}
~g(x,p(y)), & y\in \Nd_R(D); \\
~g(x,y), & \text{otherwise}.
\end{cases}
\] 
It is clear that $\supp(f) \subseteq E_{R+S}$. Moreover for any $y_1, y_2 \in \Nd_R(D)$ with $d(y_1, y_2) \leq R$, we have $d(p(y_1),p(y_2)) \leq 3R$ and hence
\[
\sum_{x\in X} |f(x,y_1) - f(x,y_2)| = \sum_{x\in X} |g(x,p(y_1)) - g(x,p(y_2))| \leq \varepsilon.
\]
Therefore (enlarging $S$ to $S+R$) we obtain that for any $\varepsilon, R>0$, there exist $S>0$, a subset $D \subseteq X$ with $\overline{D} \supseteq U^c$ and a function $f: X \times X \to [0,1]$ satisfying:
\begin{enumerate}
  \item $\supp(f) \subseteq E_S$;
  \item for any $x\in D$, we have $\sum_{z\in X} f(z,x) =1$;
  \item for any $x\in D$ and $y\in X$ with $d(x,y) \leq R$, we have $\sum_{z\in X} |f(z,x) - f(z,y)| \leq \varepsilon$.
\end{enumerate}
Now we fix $\varepsilon, R>0$ and take such $S, D$ and function $f$.

Given $\omega \in U^c$, we have $\omega(D)=1$. Choose $\{t_\alpha:D_\alpha \to R_\alpha\}$ to be a compatible family for $\omega$. Applying Proposition \ref{prop:local geometry}, there exists $Y_\omega \subseteq X$ with $\omega(Y)=1$ and a local coordinate system $\{h_y: B(\omega, R+S) \to B(y,R+S)\}_{y\in Y_\omega}$ such that the map 
\[
h_y: B(\omega, R+S) \to B(y,R+S), \quad \alpha \mapsto t_\alpha(y)
\]
is a surjective isometry for each $y\in Y_\omega$. Replacing $Y$ by $Y_\omega \cap D$, we assume that $Y_\omega \subseteq D$. 
Note that $\supp(f) \subseteq E_S$, and hence applying Lemma \ref{lem:closure of entourage} we have
\begin{align}
\sum_{\alpha \in X(\omega)} \overline{f}(\alpha, \omega) & = \sum_{\alpha \in B(\omega, S)} \overline{f}(\alpha, \omega) = \sum_{\alpha \in B(\omega, S)} \lim_{x\to \omega} f(t_\alpha(x),x) = \lim_{x\to \omega, x\in Y_\omega} \sum_{\alpha \in B(\omega, S)} f(t_\alpha(x),x) \nonumber \\ 
&= \lim_{x\to \omega, x\in Y_\omega} \sum_{z\in B(x,S)} f(z,x) = \lim_{x\to \omega, x\in Y_\omega} \sum_{z\in X} f(z,x), \label{EQ:cal1}
\end{align}
where the last item equals $1$ thanks to the assumption.

On the other hand, for any $\alpha \in B(\omega, R)$ we have $\lim_{x\to \omega} d(t_\alpha(x),x) \leq R$. Hence shrinking $Y_\omega$ if necessary, we can assume that $d(t_\alpha(x),x) \leq R$ for any $x\in Y_\omega$. Therefore, we have
\begin{align}
\sum_{\gamma\in X(\omega)} |\overline{f}(\gamma, \alpha) - \overline{f}(\gamma, \omega)| &= \sum_{\gamma\in B(\omega, R+S)} |\overline{f}(\gamma, \alpha) - \overline{f}(\gamma, \omega)| \label{EQ:cal2}\\
&= \lim_{x\to \omega, x\in Y_\omega}\sum_{\gamma\in B(\omega, R+S)}  |f(t_\gamma(x), t_\alpha(x)) - f(t_\gamma(x), x)|\nonumber\\
&= \lim_{x\to \omega, x\in Y_\omega}\sum_{z\in B(x, R+S)}  |f(z, t_\alpha(x)) - f(z, x)| \nonumber\\
&= \lim_{x\to \omega, x\in Y_\omega}\sum_{z\in X}  |f(z, t_\alpha(x)) - f(z, x)|, \nonumber
\end{align}
where the last item is no more than $\varepsilon$ by assumption. Therefore applying Lemma \ref{lem:char for partial A}, we conclude the sufficiency.

\emph{Necessity:} Given $\varepsilon, R>0$, Lemma \ref{lem:char for partial A} provides $S>0$ and a function $f: X \times X \to [0,1]$ satisfying the conditions (1)-(3) therein. Fix an $\omega \in U^c$ and we choose $\{t_\alpha:D_\alpha \to R_\alpha\}$ to be a compatible family for $\omega$. Applying Proposition \ref{prop:local geometry}, there exists $Y_\omega \subseteq X$ with $\omega(Y)=1$ and a local coordinate system $\{h_y: B(\omega, R+2S) \to B(y,R+S)\}_{y\in Y_\omega}$ such that the map 
\[
h_y: B(\omega, R+S) \to B(y,R+S), \quad \alpha \mapsto t_\alpha(y)
\]
is a surjective isometry for each $y\in Y_\omega$. By the calculations in (\ref{EQ:cal1}), we obtain that
\[
\lim_{x\to \omega, x\in Y_\omega} \sum_{z\in X} f(z,x) =1.
\]
Hence for the given $\varepsilon$, there exists $Y'_\omega \subseteq Y_\omega$ with $\omega(Y'_\omega)=1$ such that for any $x\in Y'_\omega$ we have 
\[
\sum_{z\in X} f(z,x) \in (1-\varepsilon, 1+\varepsilon).
\]

On the other hand, for any $\alpha \in B(\omega, R)$ we apply the calculations in (\ref{EQ:cal2}) and obtain:
\[
\lim_{x\to \omega, x\in Y'_\omega}\sum_{z\in X}  |f(z, t_\alpha(x)) - f(z, x)| = \sum_{\gamma\in X(\omega)} |\overline{f}(\gamma, \alpha) - \overline{f}(\gamma, \omega)| \leq \varepsilon.
\]
Note that for $x\in Y'_\omega$, Proposition \ref{prop:local geometry} implies that $\{t_\alpha(x): \alpha \in B(\omega, R)\} = B(x,R)$. Hence there exists $Y''_\omega \subseteq Y'_\omega$ with $\omega(Y''_\omega)=1$ such that for any $x \in Y''_\omega$ and $y\in B(x,R)$, we have
\[
\sum_{z\in X}  |f(z, y) - f(z, x)| < 2\varepsilon.
\]

Taking $D:=\bigcup_{\omega \in U^c} Y''_\omega$, then it is clear that $\overline{D} \supseteq U^c$. Moreover, the analysis above shows that:
\begin{itemize}
 \item for any $x\in D$ we have $\sum_{z\in X} f(z,x) \in (1-\varepsilon, 1+\varepsilon)$;
 \item for any $x\in D$ and $y\in B(x,R)$, we have $\sum_{z\in X}  |f(z, y) - f(z, x)| < 2\varepsilon$.
\end{itemize}
Finally using a standard normalisation argument (or equivalently, applying Lemma \ref{lem:elementary char for amenable groupoid} and modifying Lemma \ref{lem:char for partial A} accordingly), we conclude the proof.
\end{proof}

Setting $\xi_y(x):=f(x,y)$ for the function $f$ in Proposition \ref{prop:char for partial A}, we can rewrite Proposition \ref{prop:char for partial A} as follows:

\begin{propbis}{prop:char for partial A}\label{prop:char for partial A II}
Let $(X,d)$ be a space and $U \subseteq \beta X$ be an invariant open subset. Then $X$ has partial Property A towards $U^c$ \emph{if and only if} for any $\varepsilon, R>0$, there exist $S>0$, a subset $D \subseteq X$ with $\overline{D} \supseteq U^c$ and a function $\xi: D \to \ell^1(X)_{1,+}, x \mapsto \xi_x$ satisfying:
\begin{enumerate}
  \item $\supp(\xi_x) \subseteq B(x,S)$ for any $x\in D$;
  \item for any $x, y\in D$ with $d(x,y) \leq R$, we have $\|\xi_x - \xi_y\|_1 \leq \varepsilon$.
\end{enumerate}
\end{propbis}

\begin{rem}\label{rem:mod for char}
We remark that the function $\xi$ in Proposition \ref{prop:char for partial A II} can be made such that $\xi_x \in \ell^1(D)_{1,+}$. In fact, this is the same trick as in the case of Property A (see, \emph{e.g.}, \cite[Proposition 4.2.5]{NY12}). Moreover, we can further replace $\ell^1(D)_{1,+}$ by $\ell^2(D)_{1,+}$ using the Mazur map (see, \emph{e.g.}, \cite[Proposition 1.2.4]{Wil09} for the same trick).
\end{rem}

Now we provide an alternative picture for Proposition \ref{prop:char for partial A} using the notion of ideals in spaces (see Definition \ref{defn: ideals in space}). Recall that for an ideal $\L$ in $X$, we denote $U(\L):= \bigcup_{Y \in \L} \overline{Y}$. We need the following auxiliary lemma:

\begin{lem}\label{lem:relating ideals and D}
Let $\L$ be an ideal in $X$ and $D \subseteq X$. Then $\overline{D} \supseteq U(\L)^c$ \emph{if and only if} there exists $Y \in \L$ such that $D \supseteq Y^c$.
\end{lem}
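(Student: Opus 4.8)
The plan is to reduce the claim to a one-line compactness argument in $\beta X$, using the standard facts (see Appendix \ref{app:ultrafilters} and Corollary \ref{cor:subspace in SC comp.}) that for any $A\subseteq X$ the closure $\overline{A}^{\beta X}$ is clopen, that $\overline{A\cup B}^{\beta X}=\overline{A}^{\beta X}\cup\overline{B}^{\beta X}$, that $\overline{X\setminus A}^{\beta X}=\beta X\setminus\overline{A}^{\beta X}$, and consequently that $\overline{A}^{\beta X}\cap X=A$. Throughout I write $A^c:=X\setminus A$ for $A\subseteq X$ and $U(\L)^c:=\beta X\setminus U(\L)$, and I recall that $\emptyset\in\L$.

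For the ``if'' direction I would argue directly: if $D\supseteq Y^c$ for some $Y\in\L$, then $\overline{D}^{\beta X}\supseteq\overline{Y^c}^{\beta X}=\beta X\setminus\overline{Y}^{\beta X}$, while $\overline{Y}^{\beta X}\subseteq U(\L)$ by the definition of $U(\L)$; hence $U(\L)^c\subseteq\beta X\setminus\overline{Y}^{\beta X}\subseteq\overline{D}^{\beta X}$, as required.

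For the ``only if'' direction, assume $\overline{D}^{\beta X}\supseteq U(\L)^c$. Taking complements in $\beta X$, and using $U(\L)^c=\bigcap_{Z\in\L}\big(\beta X\setminus\overline{Z}^{\beta X}\big)$ together with $\beta X\setminus\overline{D}^{\beta X}=\overline{D^c}^{\beta X}$, this rewrites as
\[
\overline{D^c}^{\beta X}\ \subseteq\ \bigcup_{Z\in\L}\overline{Z}^{\beta X}.
\]
Now $\overline{D^c}^{\beta X}$ is closed in the compact space $\beta X$, hence compact, while each $\overline{Z}^{\beta X}$ is open; so finitely many $\overline{Z_1}^{\beta X},\dots,\overline{Z_n}^{\beta X}$ already cover it, i.e. $\overline{D^c}^{\beta X}\subseteq\overline{Z_1\cup\dots\cup Z_n}^{\beta X}$. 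Setting $Y:=Z_1\cup\dots\cup Z_n$, which lies in $\L$ by axiom (3) of Definition \ref{defn: ideals in space}, and intersecting this inclusion with $X$ gives $D^c\subseteq Y$, i.e. $Y^c\subseteq D$, which is the desired conclusion. (Alternatively one may take $Y:=D^c$ directly, since $D^c\subseteq Z_1\cup\dots\cup Z_n\in\L$ forces $D^c\in\L$ by axiom (1).)

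There is no serious obstacle here; the only point requiring a moment's care is the passage to complements in the ``only if'' direction, which relies precisely on the clopenness of closures of subsets of the discrete space $X$ inside $\beta X$, so that $\beta X\setminus\overline{D}^{\beta X}$ is again of the form $\overline{D^c}^{\beta X}$ and intersecting with $X$ recovers $D^c$ exactly.
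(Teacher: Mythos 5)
Your argument is correct and is essentially the paper's own proof: the same direct computation with clopen closures for the ``if'' direction, and the same compactness argument in $\beta X$ (covering $\beta X\setminus\overline{D}=\overline{D^c}$ by finitely many $\overline{Z_i}$ and using axiom (3) of Definition \ref{defn: ideals in space}) for the ``only if'' direction. The only cosmetic difference is your closing alternative of taking $Y=D^c$ via axiom (1), which the paper does not need.
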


\begin{proof}
Assume $Y \in \L$ such that $D \supseteq Y^c$. Note that $\overline{Y} \cap \overline{Y^c} = \emptyset$ and $\overline{Y} \cup \overline{Y^c} = \beta X$. Hence we have $\overline{D} \supseteq \overline{Y^c} = \beta X \setminus \overline{Y} \supseteq \beta X \setminus U(\L) = U(\L)^c$.

Conversely, assume that $\overline{D} \supseteq U(\L)^c$. Then $(\overline{D})^c \subseteq U(\L) = \bigcup_{Y \in \L} \overline{Y}$. Since $\overline{D}$ is clopen in the compact space $\beta X$, the set $(\overline{D})^c$ is compact as well. Hence there exists $Y_1, \cdots, Y_n \in \L$ such that $(\overline{D})^c \subseteq \overline{Y_1} \cup \cdots \cup \overline{Y_n} = \overline{Y_1 \cup \cdots \cup Y_n}$. Since $\L$ is an ideal, the set $Y:=Y_1 \cup \cdots \cup Y_n \in \L$. Then we have $(\overline{D})^c \subseteq \overline{Y}$, which implies that $\overline{D} \supseteq (\overline{Y})^c = \overline{Y^c}$. Finally we obtain $D = \overline{D} \cap X \supseteq \overline{Y^c} \cap X = Y^c$, which conclude the proof.
\end{proof}

Thanks to Lemma \ref{lem:relating ideals and D}, now we can rewrite Proposition \ref{prop:char for partial A} (combining with Remark \ref{rem:mod for char}) as follows:

\begin{propbiss}{prop:char for partial A}
Let $(X,d)$ be a space, $U \subseteq \beta X$ an invariant open subset and $\L=\L(U)$ the associated ideal in $X$. Then $X$ has partial Property A towards $U^c$ \emph{if and only if} for any $\varepsilon, R>0$, there exist $S>0$, a subset $Y \in \L(U)$ and a function $\xi: Y^c \to \ell^2(Y^c)_{1,+}, x \mapsto \xi_x$ satisfying:
\begin{enumerate}
  \item $\supp(\xi_x) \subseteq B(x,S)$ for any $x\in Y^c$;
  \item for any $x, y\in Y^c$ with $d(x,y) \leq R$, we have $\|\xi_x - \xi_y\|_1 \leq \varepsilon$.
\end{enumerate}
\end{propbiss}

Similar to the proof in the case of Property A, we also have the following:

\begin{cor}\label{cor:char for partial A}
Let $(X,d)$ be a space, $U \subseteq \beta X$ an invariant open subset and $\L=\L(U)$ the associated ideal in $X$. Then $X$ has partial Property A towards $U^c$ \emph{if and only if} for any $R>0$ and $\varepsilon>0$, there exist $S>0$, a subset $Y \in \L(U)$ and a kernel $k: Y^c \times Y^c \to \RR$ of positive type satisfying:
\begin{enumerate}
 \item for $x,y\in Y^c$, we have $k(x,y)=k(y,x)$ and $k(x,x) =1$;
 \item for $x,y\in Y^c$ with $d(x,y) \geq S$, we have $k(x,y)=0$;
 \item for $x,y\in Y^c$ with $d(x,y) \leq R$, we have $|1-k(x,y)| \leq \varepsilon$.
\end{enumerate}
\end{cor}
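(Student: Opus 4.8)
The plan is to mimic the standard equivalence between Property A (stated with normalised $\ell^1$-functions) and the positive-type kernel formulation (Lemma \ref{lem:char for A}), but carried out relative to the complement $Y^c$ of a set $Y \in \L(U)$. I would invoke Proposition \ref{prop:char for partial A}$''$ as the working characterisation of partial Property A towards $U^c$, so that only a purely quantitative, ``within the space'' argument remains; no groupoid language is needed at this stage.

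For the forward direction (partial Property A $\Rightarrow$ the kernel condition), I would start from Proposition \ref{prop:char for partial A}$''$ applied to $\varepsilon' = \varepsilon/2$ (or some convenient function of $\varepsilon$) and the given $R$, obtaining $S>0$, a set $Y \in \L(U)$ and a map $\xi : Y^c \to \ell^2(Y^c)_{1,+}$ with $\supp(\xi_x) \subseteq B(x,S)$ and $\|\xi_x - \xi_y\|_2 \le \varepsilon'$ for $d(x,y)\le R$ (using the $\ell^2$-version from Remark \ref{rem:mod for char}, after converting the $\ell^1$-bound via the Mazur map as indicated there). Then I would set $k(x,y) := \langle \xi_x, \xi_y\rangle_{\ell^2(Y^c)}$. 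Positive type is immediate from $\sum_{i,j}\lambda_i\lambda_j\langle \xi_{x_i},\xi_{x_j}\rangle = \|\sum_i \lambda_i \xi_{x_i}\|^2 \ge 0$; symmetry and $k(x,x)=\|\xi_x\|_2^2=1$ are clear; the support condition $k(x,y)=0$ for $d(x,y)\ge 2S$ (enlarge $S$ to $2S$) follows because $\supp(\xi_x)\subseteq B(x,S)$ and $\supp(\xi_y)\subseteq B(y,S)$ are then disjoint; and $|1-k(x,y)| = |\langle \xi_x,\xi_x-\xi_y\rangle| \le \|\xi_x\|_2\|\xi_x-\xi_y\|_2 \le \varepsilon'$ when $d(x,y)\le R$, so choosing $\varepsilon'\le \varepsilon$ closes it. For the converse, given a positive-type kernel $k$ on $Y^c\times Y^c$ satisfying (1)--(3), a Kolmogorov/GNS-type construction produces a Hilbert space $\HH$ and a map $x\mapsto v_x \in \HH$ with $\langle v_x, v_y\rangle = k(x,y)$, hence $\|v_x\|=1$ and $\|v_x - v_y\|^2 = 2 - 2k(x,y) \le 2\varepsilon$ for $d(x,y)\le R$; the finite-propagation condition (2) gives $v_x \perp v_y$ when $d(x,y)\ge S$, which — exactly as in the classical proof (see \cite[Proposition 4.2.5]{NY12} or \cite[Proposition 1.2.4]{Wil09}) — lets one replace the $v_x$ by finitely-supported vectors $\xi_x \in \ell^2(Y^c)_{1,+}$ with $\supp(\xi_x)\subseteq B(x,S')$ for some $S'$ depending only on $S,R,\varepsilon$, recovering the hypothesis of Proposition \ref{prop:char for partial A}$''$.

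I expect the main obstacle to be bookkeeping rather than anything conceptual: one must be careful that every object in the argument ($\xi_x$, the kernel $k$, the vectors $v_x$) lives over $Y^c$ rather than over all of $X$, and that the chosen $Y$ is genuinely a member of $\L(U)$ at each step — in particular, when one enlarges $S$ or shrinks the domain, the relevant $Y$ must not change (which is fine, since $\L(U)$ is closed under the needed operations and $Y$ is fixed once produced). The only mildly delicate point is the passage from the abstract vectors $v_x$ in $\HH$ to finitely supported $\ell^2(Y^c)$-vectors: one uses the standard truncation-and-renormalisation argument, but has to check that the error introduced is controlled uniformly and that the renormalised vectors remain in the positive cone. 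Since this is verbatim the classical argument restricted to the subset $Y^c$, I would simply cite the Property A case and indicate the modification, rather than reproducing the computation.
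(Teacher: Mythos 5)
Your proposal is correct and matches the paper's intent exactly: the paper gives no proof beyond the remark that this is ``similar to the proof in the case of Property A,'' and what you write out — the forward direction via $k(x,y)=\langle \xi_x,\xi_y\rangle$ starting from Proposition \ref{prop:char for partial A II} and Remark \ref{rem:mod for char}, and the converse via the Kolmogorov/GNS construction followed by the standard truncation back to finitely supported positive vectors — is precisely the classical equivalence of Lemma \ref{lem:char for A} relativised to the bounded-geometry subspace $Y^c$. The bookkeeping points you flag (that $Y$ may depend on $(R,\varepsilon)$, and that all objects live over $Y^c$) are the only genuine modifications, and you handle them correctly.
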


\subsection{Partial operator norm localisation property}

Recall that the notion of operator norm localisation property (ONL) was introduced by Chen, Tessera, Wang and Yu in \cite{CTWY08}, and proved by Sako in \cite{Sak14} that ONL is equivalent to Property A. Here we introduce a partial version of ONL, parallel to Definition \ref{defn:partial Property A}.

Let $\nu$ be a positive locally finite Borel measure on $X$ and $\H$ be a separable infinite-dimensional Hilbert space. For an operator $T \in \B(L^2(X,\nu) \otimes \H)$, we can also define its propagation as in Section \ref{ssec:uniform Roe alg}. We introduce the following:

\begin{defn}\label{defn:partial ONL}
Let $(X,d)$ be a space and $U \subseteq \beta X$ an invariant open subset. We say that $X$ has \emph{partial operator norm localisation property (partial ONL) towards $U^c=\partial_\beta X \setminus U$} if there exists $c \in (0,1]$ such that for any $R>0$ there exist $S>0$ and $D \subseteq X$ with $\overline{D} \supseteq U^c$ satisfying the following: for any positive locally finite Borel measure $\nu$ on $X$ with $\supp(\nu) \subseteq D$ and any $a \in \B(L^2(X,\nu) \otimes \H)$ with propagation at most $R$, there exists a non-zero $\zeta \in L^2(X,\nu) \otimes \H$ with $\diam(\supp(\zeta)) \leq S$ such that $c\|a\| \cdot \|\zeta\| \leq \|a \zeta\|$. 
\end{defn}

The aim of the rest of this subsection is to show that partial ONL is equivalent to partial Property A. We will follow the outline of \cite{Sak14}. 

To simplify the statement, denote $\CC^R_u[X;\H]:=\{T \in \B(\ell^2(X) \otimes \H): \ppg(a) \leq R\}$ for $R \geq 0$. For a subspace $Y \subseteq X$, it is clear that $\CC^R_u[Y] \cong \chi_Y\CC^R_u[X] \chi_Y$ (resp. $\CC^R_u[Y;\H]\cong \chi_Y\CC^R_u[X;\H] \chi_Y$), and hence can be regarded as a subset of $\CC^R_u[X]$ (resp. $\CC^R_u[X;\H]$) with support in $Y \times Y$. Similarly, $C^*_u(Y) \cong \chi_Y C^*_u(X) \chi_Y$ can be regarded as a $C^*$-subalgebra in $C^*_u(X)$. Hence we will not tell the difference in the sequel. For $S>0$, denote
\[
B^Y_S:=\prod_{x\in X} \B(\ell^2(B(x,S) \cap Y)),
\]
whose elements will be written as $b=([b^{x}(y,z)]_{y,z\in B(x,S) \cap Y})_{x\in X}$. We also consider the map
\[
\psi_S^Y: C^*_u(Y) \cong \chi_Y C^*_u(X) \chi_Y \longrightarrow B^Y_S \quad \text{by} \quad  a \mapsto ([a(y,z)]_{y,z \in B(x,S) \cap Y})_{x\in X}.
\]

Recall the notions of completely positive map and completely bounded map:
\begin{itemize}
  \item A self-adjoint closed subspace $F$ of a unital $C^*$-algebra $B$ such that $1_B \in F$ is called an \emph{operator system}. 
  \item A linear map $\phi$ from $F$ to a $C^*$-algebra $C$ is said to be \emph{completely positive} if the map $\phi^{(n)} = \phi \otimes \mathrm{id}: F \otimes M_n(\CC) \to C \otimes M_n(\CC)$ is positive for every $n$.
  \item A linear map $\theta: F \to C$ is said to be \emph{completely bounded} if the sequence $\{\|\theta^{(n)}:F \otimes M_n(\CC) \to C \otimes M_n(\CC)\|\}$ is bounded. Denote $\|\theta\|_{\cb}:= \sup_{n\in \NN}\|\theta^{(n)}\|$.
\end{itemize}

We have the following characterisation for partial ONL, which is analogous to \cite[Proposition 3.1]{Sak14}. The proof is almost identical, hence omitted.

\begin{lem}\label{lem:char for partial ONL}
Let $(X,d)$ be a space and $U \subseteq \beta X$ an invariant open subset. Then the following are equivalent:
\begin{enumerate}
 \item $X$ has partial ONL towards $U^c$;
 \item there exists $c\in (0,1]$ such that for any $R>0$ there exist $S>0$ and $D \subseteq X$ with $\overline{D} \supseteq U^c$ satisfying condition $(\alpha)$: for any $a\in \CC^R_u[D;\H]$ there exists a non-zero $\zeta \in \ell^2(X) \otimes \H$ with $\diam(\supp(\zeta)) \leq S$ and $c\|a\| \cdot \|\zeta\| \leq \|a \zeta\|$;
 \item for any $c\in (0,1)$ and $R>0$, there exist $S>0$ and $D \subseteq X$ with $\overline{D} \supseteq U^c$ satisfying  condition $(\alpha)$;
 \item for any $c\in (0,1)$ and $R>0$, there exist $S>0$ and $D \subseteq X$ with $\overline{D} \supseteq U^c$ satisfying  condition $(\beta)$: for any $a \in \CC^R_u[D]$ there exists a non-zero $\xi \in \ell^2(X)$ with $\diam(\supp(\xi)) \leq S$ and $c\|a\| \cdot \|\xi\| \leq \|a \xi\|$;
 \item for any $\varepsilon, R>0$ there exist $S>R$ and $D \subseteq X$ with $\overline{D} \supseteq U^c$ such that
 \[
 \|(\psi_S^D|_{\CC^R_u[D]})^{-1}: \psi_S^D(\CC^R_u[D]) \longrightarrow \CC^R_u[D]\| < 1+\varepsilon;
 \]
 \item for any $\varepsilon, R>0$ there exist $S>R$ and $D \subseteq X$ with $\overline{D} \supseteq U^c$ such that
 \[
 \|(\psi_S^D|_{\CC^R_u[D]})^{-1}: \psi_S^D(\CC^R_u[D]) \longrightarrow \CC^R_u[D]\|_{\cb} < 1+\varepsilon;
 \]
\end{enumerate}
\end{lem}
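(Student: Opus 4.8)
The plan is to mimic the proof of \cite[Proposition 3.1]{Sak14} with the subset $D$ carried along as an extra bookkeeping datum, checking that every step there is compatible with restricting attention to operators supported on $D \times D$ and with the freedom to enlarge $D$ (equivalently, to shrink $Y = D^c$ inside the ideal $\L(U)$). I would organize the argument around the cycle of implications $(1) \Leftrightarrow (2)$, then $(3) \Rightarrow (2)$ and $(2) \Rightarrow (4) \Rightarrow (3)$, and finally the reformulations $(4) \Leftrightarrow (5) \Leftrightarrow (6)$; the point in each case is that all constructions are ``local'' in the sense that they do not move supports far, so the constraint $\overline{D} \supseteq U^c$ is preserved (possibly after enlarging $S$ and keeping $D$ unchanged, or replacing $D$ by a larger set still satisfying $\overline{D} \supseteq U^c$, which is harmless since enlarging $D$ only weakens the requirement).

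First I would record the equivalence $(1) \Leftrightarrow (2)$: the passage from an arbitrary positive locally finite Borel measure $\nu$ with $\supp(\nu) \subseteq D$ to the counting measure on a subset of $D$ is exactly the standard ``discretisation'' step in \cite{CTWY08, Sak14}, using bounded geometry to replace $L^2(X,\nu)$ by $\ell^2(Z)$ for a suitable $Z \subseteq D$; since $Z \subseteq D$, one still has $\overline{Z} \subseteq \overline{D}$, and the condition $\overline{D} \supseteq U^c$ is untouched. The amplification trick (tensoring with $\H$) and the removal of $\H$ passing from $(\alpha)$ to $(\beta)$ are the same Ramsey/averaging arguments as in Sako's Lemma, applied verbatim to operators in $\CC^R_u[D;\H]$ rather than $\CC^R_u[X;\H]$; the key observation is that $\CC^R_u[D] \cong \chi_D \CC^R_u[X] \chi_D$, so ``playing inside $D$'' is literally playing inside a corner, and all the estimates are internal to that corner. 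For $(2) \Rightarrow (4) \Rightarrow (3)$ one scales the constant $c$ and iterates, exactly as in \cite{Sak14}; here $S$ grows but $D$ can be kept fixed, so the boundary condition survives.

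The reformulations $(4) \Leftrightarrow (5) \Leftrightarrow (6)$ in terms of the map $\psi^D_S \colon C^*_u(D) \to B^D_S$ are the heart of the dictionary with completely bounded maps. I would unwind the definition: a non-zero $\xi \in \ell^2(X)$ with $\diam(\supp(\xi)) \le S$ witnessing $c\|a\|\cdot\|\xi\| \le \|a\xi\|$ for every $a \in \CC^R_u[D]$ is exactly the statement that $\psi^D_S$ does not shrink norms on $\CC^R_u[D]$ by more than a factor $c$, i.e. $\|(\psi^D_S|_{\CC^R_u[D]})^{-1}\| \le c^{-1}$, because a vector supported in some ball $B(x,S)$ only ``sees'' the block $a^x$; the equivalence of the plain norm bound $(5)$ and the completely bounded bound $(6)$ is the operator-system argument of Sako (passing through matrix amplifications $a \otimes e_{ij}$, which remain in $\CC^R_u[D] \otimes M_n(\CC)$ with support still in $D \times D$), so it transfers unchanged. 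Throughout, the only genuinely new point to verify—and the one I expect to be the main obstacle—is that the set $D$ can be chosen \emph{uniformly in a way that respects $\overline{D} \supseteq U^c$} at every stage: whenever the proof in \cite{Sak14} passes to a subset (discretisation) we must stay inside $D$, and whenever it enlarges a radius we must not be forced to shrink $D$. Since all moves are either ``restrict the support'' (which keeps us inside $D$) or ``enlarge $S$'' (which leaves $D$ alone), this causes no trouble, but it is the step that must be spelled out carefully; everything else is word-for-word Sako. Hence the proof is almost identical to \cite[Proposition 3.1]{Sak14}, and I would simply indicate these modifications and omit the routine details.
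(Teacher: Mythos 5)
Your proposal takes exactly the route the paper intends: the paper itself omits the proof of this lemma, stating only that it is ``almost identical'' to \cite[Proposition 3.1]{Sak14}, and your plan---running Sako's argument verbatim inside the corner $\chi_D \CC_u[X]\chi_D$ while checking that discretisation keeps supports inside $D$ and that radius enlargements leave $D$ untouched---is precisely the adaptation being invoked. The only bookkeeping you gloss over (that $a\xi$ for $\xi$ supported in $B(x,S)$ lands in $B(x,S+R)$, so the block maps $\psi^D_S$ must be taken with $S>R$ suitably enlarged) is routine and consistent with the statement's requirement $S>R$ in (5) and (6).
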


We record the following, which comes directly from Lemma \ref{lem:relating ideals and D} and \ref{lem:char for partial ONL}.


\begin{lem}\label{lem:simple char for partial ONL}
Let $(X,d)$ be a space, $U \subseteq \beta X$ an invariant open subset and $\L=\L(U)$ the associated ideal in $X$. Then $X$ has partial ONL towards $U^c$ \emph{if and only if} for any $c \in (0,1)$ and $R>0$ there exist $S>0$ and $Y \in \L(U)$ satisfying the following: for any $a \in \CC^R_u[Y^c]$ there exists a non-zero $\xi \in \ell^2(X)$ with $\diam(\supp(\xi)) \leq S$ and $c\|a\| \cdot \|\xi\| \leq \|a \xi\|$.
\end{lem}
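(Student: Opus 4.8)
The strategy is to show that the two formulations are essentially the same condition, translated through the dictionary between subsets $D \subseteq X$ with $\overline{D} \supseteq U^c$ and sets $Y \in \L(U)$ with $D \supseteq Y^c$, which is exactly the content of Lemma \ref{lem:relating ideals and D}. Concretely, I would invoke the equivalence ``(1) $\Leftrightarrow$ (4)'' in Lemma \ref{lem:char for partial ONL}: $X$ has partial ONL towards $U^c$ if and only if for any $c \in (0,1)$ and $R>0$ there exist $S>0$ and $D \subseteq X$ with $\overline{D} \supseteq U^c$ satisfying condition $(\beta)$, namely that for any $a \in \CC^R_u[D]$ there is a non-zero $\xi \in \ell^2(X)$ with $\diam(\supp(\xi)) \leq S$ and $c\|a\| \cdot \|\xi\| \leq \|a\xi\|$. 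So it suffices to match condition $(\beta)$ with the displayed condition in the present lemma.

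For the forward direction, suppose $X$ has partial ONL towards $U^c$. Fix $c \in (0,1)$ and $R>0$, and take $S>0$ and $D \subseteq X$ with $\overline{D} \supseteq U^c$ satisfying $(\beta)$. By Lemma \ref{lem:relating ideals and D} applied to the ideal $\L = \L(U)$ (so that $U(\L) = U$), there exists $Y \in \L(U)$ with $D \supseteq Y^c$. Since $\CC^R_u[Y^c] \subseteq \CC^R_u[D]$ under the identification $\CC^R_u[Y^c] \cong \chi_{Y^c}\CC^R_u[X]\chi_{Y^c}$ (as explained before Lemma \ref{lem:char for partial ONL}, because $Y^c \subseteq D$), any $a \in \CC^R_u[Y^c]$ is in particular an element of $\CC^R_u[D]$, and condition $(\beta)$ furnishes the desired $\xi$. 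This gives the stated condition with the same $c$, $S$ and the set $Y$.

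For the reverse direction, suppose the displayed condition holds. Fix $c \in (0,1)$ and $R>0$, take $S>0$ and $Y \in \L(U)$ as provided. Set $D := Y^c$; then by Lemma \ref{lem:relating ideals and D} (the easy direction: $D = Y^c$ certainly satisfies $D \supseteq Y^c$) we have $\overline{D} \supseteq U(\L)^c = U^c$. Now $\CC^R_u[D] = \CC^R_u[Y^c]$, so the hypothesis says precisely that for every $a \in \CC^R_u[D]$ there is a non-zero $\xi$ with $\diam(\supp(\xi)) \leq S$ and $c\|a\| \cdot \|\xi\| \leq \|a\xi\|$, which is condition $(\beta)$. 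Hence by the equivalence ``(1) $\Leftrightarrow$ (4)'' of Lemma \ref{lem:char for partial ONL}, $X$ has partial ONL towards $U^c$. The only mild subtlety — and the one point I would be careful about — is the bookkeeping of which inclusion of operator algebras is being used ($\CC^R_u[Y^c] \subseteq \CC^R_u[D]$ versus equality when $D = Y^c$), but this is routine given the identifications already set up in the text, so there is no real obstacle here; the lemma is essentially a restatement.
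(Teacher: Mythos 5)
Your proposal is correct and follows exactly the route the paper intends: the paper omits the proof, noting the lemma ``comes directly from Lemma \ref{lem:relating ideals and D} and \ref{lem:char for partial ONL}'', and your argument is precisely this translation, using Lemma \ref{lem:relating ideals and D} to convert between subsets $D$ with $\overline{D} \supseteq U^c$ and sets $Y \in \L(U)$ with $D \supseteq Y^c$, together with the equivalence (1) $\Leftrightarrow$ (4) of Lemma \ref{lem:char for partial ONL}. The bookkeeping of the inclusion $\CC^R_u[Y^c] \subseteq \CC^R_u[D]$ in the forward direction and the equality when $D = Y^c$ in the reverse direction is handled correctly.
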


%

Finally, we can mimic the proof of \cite[Theorem 4.1]{Sak14} using Proposition \ref{prop:char for partial A II}, Remark \ref{rem:mod for char}, Corollary \ref{cor:char for partial A} and Lemma \ref{lem:char for partial ONL} instead, and reach the following. The proof is almost identical, and hence omitted.

\begin{prop}\label{prop:partial A = partial ONL}
Let $(X,d)$ be a space and $U \subseteq \beta X$ be an invariant open subset. Then $X$ has partial Property A towards $U^c$ \emph{if and only if} $X$ has partial ONL towards $U^c$.
\end{prop}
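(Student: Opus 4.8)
The plan is to follow the template of Sako's theorem that Property A is equivalent to ONL (\cite{Sak14}), transplanting each step into the ``partial'' setting where everything is controlled by the parameter set $D\subseteq X$ with $\overline D\supseteq U^c$ (equivalently, by $Y^c$ for $Y\in\L(U)$). The two directions will be handled separately, and in each direction the key point is that the relevant quantitative estimates — the Property A function $f$, the positive-type kernel $k$, or the localisation vector $\xi$ — only ever need to be constructed on, or tested against, data supported in $D$ (resp.\ $Y^c$), so Sako's arguments apply verbatim once we fix such a $D$.

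First I would prove that partial Property A implies partial ONL. Fix $c\in(0,1)$ and $R>0$. Using Proposition~\ref{prop:char for partial A II} (or rather its kernel reformulation Corollary~\ref{cor:char for partial A}), choose $S_0>0$, $Y\in\L(U)$ and a positive-type kernel $k\colon Y^c\times Y^c\to\RR$ supported in $E_{S_0}$ with $k(x,x)=1$ and $|1-k(x,y)|$ small for $d(x,y)\le R$. From $k$ one builds, exactly as in Sako's proof, a unit vector field $x\mapsto \xi_x\in\ell^2(Y^c)_{1,+}$ with $\supp(\xi_x)\subseteq B(x,S)$ (for suitable $S$) and $\|\xi_x-\xi_y\|_2$ small for $d(x,y)\le R$ (this is where the Mazur map from Remark~\ref{rem:mod for char} enters). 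Then, given $a\in\CC^R_u[Y^c]$, pick a unit vector $\eta\in\ell^2(Y^c)$ with $\|a\eta\|$ close to $\|a\|$; the vector $\zeta:=\sum_{x} \eta(x)\,\delta_x\otimes\xi_x$ (viewed appropriately in $\ell^2(X)$, or a small-diameter truncation of it) has diameter of support at most roughly $S$, and the near-invariance of $x\mapsto\xi_x$ under the $R$-propagation operator $a$ forces $\|a\zeta\|\ge c\|a\|\cdot\|\zeta\|$. By Lemma~\ref{lem:simple char for partial ONL} this is exactly partial ONL towards $U^c$. The only thing to check beyond Sako's computation is the bookkeeping that everything lives on $Y^c$, which is immediate since $k$, $a$ and the $\xi_x$ are all supported there.

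For the converse, partial ONL implies partial Property A, I would run Sako's operator-system argument. Fix $\varepsilon,R>0$. By Lemma~\ref{lem:char for partial ONL}(6) there exist $S>R$ and $D\subseteq X$ with $\overline D\supseteq U^c$ such that the inverse of $\psi_S^D$ restricted to $\CC^R_u[D]$ has completely bounded norm $<1+\varepsilon$. One then applies a Stinespring/Wittstock-type dilation and an averaging (Namioka-type) argument to the completely bounded — hence completely positive up to a perturbation — map $(\psi_S^D|_{\CC^R_u[D]})^{-1}$ composed with the conditional expectation, producing a unital completely positive map $\CC^R_u[D]\to B^D_S$ close to $\psi_S^D$; extracting its ``matrix coefficients'' yields, for each $x\in D$, a state and thence a function $\xi_x\in\ell^1(D)_{1,+}$ with $\supp(\xi_x)\subseteq B(x,S)$ and $\|\xi_x-\xi_y\|_1\le\varepsilon$ for $x,y\in D$ with $d(x,y)\le R$. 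That is precisely the condition in Proposition~\ref{prop:char for partial A II} (with $D$ in place of the ambient space), so $X$ has partial Property A towards $U^c$.

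The main obstacle is not any single new idea but verifying that Sako's machinery is genuinely ``local'' in the sense required: every estimate, dilation and averaging step must be performed inside $\CC^R_u[D]$ (equivalently $\chi_{Y^c}\CC^R_u[X]\chi_{Y^c}$) and must not secretly require summing or testing over points of $X\setminus D$. Concretely, the delicate point is that the conditional-expectation/averaging step in Sako's proof uses an invariant mean on a group of ``translations'', and one must confine this to translations preserving $D$ up to bounded error — which is exactly what the hypothesis $\supp(a)\subseteq D\times D$ and the flexibility in choosing $D$ (any set with $\overline D\supseteq U^c$, enlargeable by bounded neighbourhoods) allows, via Lemma~\ref{lem:relating ideals and D}. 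Once this localisation is pinned down, the proof is, as stated, ``almost identical'' to \cite[Theorem~4.1]{Sak14}, and I would present it by quoting Sako's argument and indicating only the modifications at the steps flagged above.
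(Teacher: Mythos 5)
Your proposal is correct and follows essentially the same route as the paper: the paper omits the proof precisely because it is obtained by mimicking \cite[Theorem 4.1]{Sak14} with the partial characterisations (Proposition \ref{prop:char for partial A II}, Remark \ref{rem:mod for char}, Corollary \ref{cor:char for partial A} and Lemma \ref{lem:char for partial ONL}) substituted for their global counterparts, which is exactly the strategy you describe. Your localisation bookkeeping (working inside $\CC^R_u[Y^c]$ via Lemma \ref{lem:relating ideals and D} and Lemma \ref{lem:simple char for partial ONL}) is the only genuinely new checking required, and you have identified it correctly.
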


To end this subsection, we study a permanence property of partial ONL, which will help to prove the main result in the next subsection. 

Let $X$ be a space and $\L$ an ideal in $X$. Assume that $X$ can be decomposed into $X = X_1 \cup X_2$. Consider 
\begin{equation}\label{EQ:restriction of ideals}
\L_i:=\{Y \cap X_i: Y \in \L\} \quad \text{for} \quad i=1,2.
\end{equation}
Then it is routine to check that $\L_i$ is an ideal in $X_i$ for $i=1,2$, and 
\[
\L=\{Y_1 \cup Y_2: Y_i \in \L_i, i=1,2\}.
\]
With respect to the decomposition above, we now show that partial ONL is preserved under finite unions.

\begin{prop}\label{prop:permanence for partial ONL}
With the notation as above, assume that $X_i$ has partial ONL towards $\beta X_i \setminus U(\L_i)$ for $i=1,2$. Then $X$ has partial ONL towards $\beta X \setminus U(\L)$.
\end{prop}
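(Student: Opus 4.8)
The plan is to work directly with the geometric characterisation of partial ONL provided by Lemma~\ref{lem:simple char for partial ONL}, using the ideals-in-space language so we never have to leave $X$. Fix $c \in (0,1)$ and $R > 0$; we must produce $S > 0$ and $Y \in \L(U(\L))= \L$ so that every $a \in \CC^R_u[Y^c]$ has a ``localised'' unit vector witnessing $c\|a\| \cdot \|\xi\| \le \|a\xi\|$ with $\diam(\supp(\xi)) \le S$. The natural first move is to apply the hypothesis to each $X_i$: pick an auxiliary constant, say $c' \in (c,1)$, and get $S_i > 0$ and $Y_i \in \L_i$ so that every $a_i \in \CC^{R}_u[X_i \setminus Y_i]$ has a witness vector of diameter $\le S_i$ supported in $X_i \setminus Y_i$. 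Set $Y := Y_1 \cup Y_2 \in \L$ (using the description $\L = \{Y_1 \cup Y_2 : Y_i \in \L_i\}$), so that $Y^c = X_1^c \cap X_2^c$ where $X_i^c$ here abbreviates $X_i \setminus Y_i$; note $Y^c \subseteq X_i \setminus Y_i$ for each $i$, which is the containment that lets us feed truncations of $a$ into the $X_i$-data.

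The core of the argument is then the standard ``union of two ONL spaces'' trick from Sako's work: given $a \in \CC^R_u[Y^c]$ with $\|a\| = 1$, one wants to find a large vector $\xi$ where $a$ is close to norm-attaining. Write $Y^c = Z_1 \sqcup Z_2$ for some partition with $Z_i \subseteq X_i \setminus Y_i$ (e.g. $Z_1 = Y^c \cap X_1$, $Z_2 = Y^c \setminus X_1$); compress $a$ appropriately. The delicate point is that $a$ need not decompose along this partition, but it has propagation $\le R$, so it only couples $Z_1$ and $Z_2$ within an $R$-collar. One then runs the following dichotomy: either a substantial part of the ``mass'' of an approximate norming vector for $a$ already lives at distance $> R$ from one of the pieces — in which case a compression of $a$ to (a neighbourhood of) the other piece still has norm close to $1$ and we invoke that piece's ONL witness directly — or else the norming behaviour is genuinely spread across the $R$-collar, in which case one localises to a bounded neighbourhood of the collar, whose geometry is controlled, and produces the witness there. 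Quantitatively this requires choosing $S$ large relative to $S_1, S_2, R$ and the constants, and tracking how the norm loss accumulates; this bookkeeping is exactly parallel to Sako's original union lemma, so I would state it as ``the proof is almost identical to \cite[\dots]{Sak14}'' and only indicate the modifications, namely that all compressions are taken inside $X_i \setminus Y_i$ rather than inside $X_i$, and that the resulting large vector is automatically supported in $Y^c$.

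The main obstacle I anticipate is purely organisational rather than conceptual: keeping the ideals straight. One has to verify that $\L_i$ as defined in \eqref{EQ:restriction of ideals} is genuinely an ideal in $X_i$ (closure under subsets and under $R$-neighbourhoods \emph{taken inside $X_i$}, and under finite unions), that $\L = \{Y_1 \cup Y_2 : Y_i \in \L_i\}$, and crucially that $U(\L_i) = \overline{X_i} \cap U(\L)$ so that ``$X_i$ has partial ONL towards $\beta X_i \setminus U(\L_i)$'' is the correct hypothesis to use. There is a subtlety because the closure of $X_i$ inside $\beta X$ is $\beta X_i$ (Corollary~\ref{cor:subspace in SC comp.}), and $\overline{Y \cap X_i}^{\beta X}$ need not equal $\overline{Y}^{\beta X} \cap \overline{X_i}^{\beta X}$ in general; one must check that for ideals the union $\bigcup_{Y \in \L}$ smooths this over, which follows since enlarging $Y$ to $\Nd_1(Y)$ lands us back inside $\L$. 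A secondary annoyance is that partial ONL as defined in Definition~\ref{defn:partial ONL} involves measures and the auxiliary Hilbert space $\H$, but Lemma~\ref{lem:char for partial ONL} already reduces everything to the clean condition $(\beta)$ on $\CC^R_u[D]$, so I would work exclusively with the reformulation in Lemma~\ref{lem:simple char for partial ONL} and never touch the measure-theoretic version. Once the ideal bookkeeping is in place, the analytic heart is a verbatim port of Sako's finite-union argument, so the write-up can be kept short.
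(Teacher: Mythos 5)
You are taking the direct route that the paper mentions but deliberately does not follow: the paper proves this proposition by converting partial ONL into partial Property A (Proposition \ref{prop:partial A = partial ONL}) and establishing the permanence at the groupoid level (Lemma \ref{lem:permanence for partial A}), where the troublesome boundary region is absorbed by working with the neighbourhoods $\Nd_R(X_i)$ and Lemma \ref{lem:dec for groupoids UA}; the direct analytic union argument is attributed there to \cite[Lemma 3.3]{CWW09}, not to \cite{Sak14}, which only supplies the A $\Leftrightarrow$ ONL equivalence. Your reduction to Lemma \ref{lem:simple char for partial ONL}, the choice $Y=Y_1\cup Y_2$ with the partition $Z_1=Y^c\cap X_1$, $Z_2=Y^c\setminus X_1$, and the ideal bookkeeping (Lemma \ref{lem:transfer ideals to open subsets} plus routine checks) are fine; note, though, that your asserted containment $Y^c\subseteq X_i\setminus Y_i$ is false in general, and only $Y^c\cap X_i\subseteq X_i\setminus Y_i$ holds.

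The genuine gap is in the core localisation step. Your hypothesis only localises operators supported in $(X_i\setminus Y_i)\times(X_i\setminus Y_i)$, whereas the compressions you propose, ``to (a neighbourhood of) the other piece'', live on $\Nd_R(Z_i)\cap Y^c$, which in general leaves $X_i$; and compressing only inside $X_i\setminus Y_i$ fails, since $\chi_{X_1\setminus Y_1}\,a\,\chi_{Z_1}$ may vanish (for instance when $a$ maps $\ell^2(Z_1)$ into $\ell^2(Z_2)$) even though $\|a\chi_{Z_1}\|$ is close to $\|a\|$. The second branch of your dichotomy is not a valid step as stated: the $R$-collar is an unbounded subspace, and ``controlled geometry'' of a bounded-width neighbourhood of it does not by itself produce a localised almost-norming vector; you would again need partial ONL for that region, which is not among your hypotheses. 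The standard repair removes both problems: take a unit vector $\eta$ with $\|a\eta\|\geq(1-\delta)\|a\|$, choose $i$ with $\|a\chi_{Z_i}\eta\|\geq\tfrac{1}{2}\|a\eta\|$, and apply the $X_i$-hypothesis with propagation parameter $2R$ to $b:=\chi_{Z_i}a^*a\chi_{Z_i}\in\CC^{2R}_u[X_i\setminus Y_i]$, which genuinely is supported in $(X_i\setminus Y_i)\times(X_i\setminus Y_i)$. A localised $\xi$ (which one may take supported in $Z_i$) with $\|b\xi\|\geq c'\|b\|\,\|\xi\|$ yields $\|a\xi\|=\|a\chi_{Z_i}\xi\|\geq c'\|a\chi_{Z_i}\|\,\|\xi\|\geq\tfrac{c'(1-\delta)}{2}\,\|a\|\,\|\xi\|$; the constant is only about $\tfrac12$, but that suffices thanks to the equivalence of the ``there exists $c$'' and ``for all $c$'' formulations in Lemma \ref{lem:char for partial ONL}. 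Alternatively, you could first prove that partial ONL passes to $R$-neighbourhoods with the transported ideal---this is exactly the point the paper's own proof handles via $\Nd_R(X_i)$---but as written your sketch neither proves nor cites such a statement.
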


One way to prove Proposition \ref{prop:permanence for partial ONL} is to follow the proof of \cite[Lemma 3.3]{CWW09} with minor changes. Here we choose another approach using Proposition \ref{prop:partial A = partial ONL}.

Recall from Corollary \ref{cor:subspace in SC comp.} that for a subset $Z \subseteq X$, the closure $\overline{Z}$ in $\beta X$ is homeomorphic to $\beta Z$. Hence we will regard them as the same object in the sequel. Now we can easily transfer the restriction of ideals in (\ref{EQ:restriction of ideals}) to that of invariant open subsets (the proof is straightforward, hence omitted):

\begin{lem}\label{lem:transfer ideals to open subsets}
Let $X = X_1 \cup X_2$, $\L$ be an ideal in $X$ and $U = U(\L) \subseteq \beta X$ be the associated invariant open subset of $\beta X$. Then $U \cap \overline{X_i}$ is an invariant open subset of $\overline{X_i} = \beta X_i$, which corresponds to the ideal $\L_i$ in (\ref{EQ:restriction of ideals}) for $i=1,2$.
\end{lem}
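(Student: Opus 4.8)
The plan is to reduce the entire statement to a single set-theoretic identity, namely $U(\L) \cap \overline{X_i} = U(\L_i)$ inside $\beta X$, after which the topological content (openness, invariance) and the ``corresponds to $\L_i$'' assertion come for free. Indeed, since $X_i$ with the induced metric is again a space, Proposition~\ref{prop: 1-1 btw ideals in spaces and open inv subsets} applied to $X_i$ tells us that $U(\L_i) = \bigcup_{Z \in \L_i}\overline{Z}^{\beta X_i}$ is \emph{by construction} an invariant open subset of $\beta X_i$ and is precisely the one corresponding to the ideal $\L_i$. Thus, once the identity is established, there is nothing left to prove.

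First I would fix the identifications. Using Corollary~\ref{cor:subspace in SC comp.}, the closure $\overline{X_i}^{\beta X}$ is homeomorphic to $\beta X_i$, and for any $Z \subseteq X_i$ the closure $\overline{Z}^{\beta X}$ agrees with $\overline{Z}^{\beta X_i}$ under this identification; in particular $\overline{X_i}$ is clopen in $\beta X$, so $U(\L) \cap \overline{X_i}$ is automatically open in $\overline{X_i}$. Throughout I would work with the ultrafilter description from Appendix~\ref{app:ultrafilters}: points of $\beta X$ are ultrafilters on $X$, and for $A \subseteq X$ one has $\overline{A}^{\beta X} = \{\omega \in \beta X : \omega(A) = 1\}$. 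In these terms $U(\L) = \{\omega : \omega(Y)=1 \text{ for some } Y \in \L\}$, $\overline{X_i} = \{\omega : \omega(X_i)=1\}$, and $\beta X_i$ is identified with $\{\omega \in \beta X : \omega(X_i)=1\}$.

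The identity then follows from two short ultrafilter computations. If $\omega \in U(\L)\cap\overline{X_i}$, pick $Y \in \L$ with $\omega(Y)=1$; since also $\omega(X_i)=1$ we get $\omega(Y\cap X_i)=1$, and $Y\cap X_i \in \L_i$ by definition, so $\omega \in U(\L_i)$. Conversely, if $\omega \in U(\L_i)$, then $\omega(X_i)=1$ and $\omega(Z)=1$ for some $Z = Y\cap X_i$ with $Y \in \L$; since $Z \subseteq Y$ we get $\omega(Y)=1$, whence $\omega \in U(\L) \cap \overline{X_i}$. This proves $U(\L)\cap\overline{X_i} = U(\L_i)$, and combined with the correspondence for $X_i$ recalled above it yields the lemma. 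The only point requiring care — and the place I expect to have to be explicit — is the compatibility of closures and of the notion of invariance between $\beta X$ and the sub-object $\beta X_i$ (equivalently, that invariance of $U(\L_i)$ is understood with respect to the coarse groupoid $G(X_i)$); but this is handled entirely by Corollary~\ref{cor:subspace in SC comp.} together with Proposition~\ref{prop: 1-1 btw ideals in spaces and open inv subsets} for $X_i$, so no genuinely new difficulty arises.
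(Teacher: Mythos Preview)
Your proposal is correct and is precisely the kind of ``straightforward'' verification the paper has in mind (the paper omits the proof entirely). The reduction to the single identity $U(\L)\cap\overline{X_i}=U(\L_i)$, followed by the two-line ultrafilter computation and the appeal to Proposition~\ref{prop: 1-1 btw ideals in spaces and open inv subsets} for $X_i$, is exactly the intended argument, and your care about which groupoid the invariance refers to is appropriate.
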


Although $U \cap \overline{X_i}$ is invariant in $\beta X_i$, generally it is not invariant in $\beta X$. This coincides with the fact that $\chi_{X_i} C^*_u(X) \chi_{X_i} \cong C^*_u(X_i)$ is a just subalgebra in $C^*_u(X)$ rather than an ideal. Instead, we consider the spatial ideal $I_{X_i}$ recalled in Section \ref{sec:geometric ideals},
%
and prove the following permanence property for partial Property A: 

\begin{lem}\label{lem:permanence for partial A}
Let $X = X_1 \cup X_2$, $U \subseteq \beta X$ be an invariant open subset and $U_i=U \cap \overline{X_i}$ for $i=1,2$. If $X_i$ has partial Property A towards $\beta X_i \setminus U_i$ for $i=1,2$, then $X$ has partial Property A towards $\beta X \setminus U$.
\end{lem}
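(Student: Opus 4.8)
The plan is to deduce Lemma \ref{lem:permanence for partial A} from the corresponding permanence property for partial ONL (Proposition \ref{prop:permanence for partial ONL}) via the equivalence between partial Property A and partial ONL established in Proposition \ref{prop:partial A = partial ONL}. This is the cleanest route, since partial ONL is a $C^*$-algebraic/Hilbert-space condition that behaves well under the kind of cut-and-paste argument required for a finite decomposition $X = X_1 \cup X_2$, whereas arguing directly with the functions $\xi_x$ from Proposition \ref{prop:char for partial A II} would force us to manage overlaps of $X_1$ and $X_2$ and to glue partial translations across the boundary, which is exactly the sort of technical gap the authors flag in Remark \ref{rem:importance of ideals in space}.

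Concretely, first I would set up the bookkeeping. Let $\L := \L(U)$ be the ideal in $X$ associated to $U$ via Proposition \ref{prop: 1-1 btw ideals in spaces and open inv subsets}, and let $\L_1, \L_2$ be its restrictions to $X_1, X_2$ as in \eqref{EQ:restriction of ideals}. By Lemma \ref{lem:transfer ideals to open subsets}, $U_i = U \cap \overline{X_i}$ is precisely the invariant open subset of $\beta X_i = \overline{X_i}$ corresponding to $\L_i$, so $U_i = U(\L_i)$, and the hypothesis says $X_i$ has partial Property A towards $\beta X_i \setminus U_i = \beta X_i \setminus U(\L_i)$. Applying Proposition \ref{prop:partial A = partial ONL} to each $X_i$, we get that $X_i$ has partial ONL towards $\beta X_i \setminus U(\L_i)$ for $i = 1, 2$. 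Now Proposition \ref{prop:permanence for partial ONL} applies verbatim (its hypothesis is exactly this), and yields that $X$ has partial ONL towards $\beta X \setminus U(\L)$. Finally, since $U(\L) = U$, another application of Proposition \ref{prop:partial A = partial ONL}, this time to $X$ itself, converts partial ONL back to partial Property A: $X$ has partial Property A towards $\beta X \setminus U$, which is the claim.

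The main obstacle — and really the only nontrivial point — is making sure the identification $U(\L_i) = U \cap \overline{X_i}$ and the resulting compatibility of ``partial Property A towards $\beta X_i \setminus U_i$'' with the hypothesis of Proposition \ref{prop:permanence for partial ONL} is airtight; this is handled by Lemma \ref{lem:transfer ideals to open subsets} together with the homeomorphism $\overline{X_i} \cong \beta X_i$ from Corollary \ref{cor:subspace in SC comp.}, but one should be careful that ``partial ONL of $X_i$ towards $\beta X_i \setminus U(\L_i)$'' is literally the input Proposition \ref{prop:permanence for partial ONL} expects. Modulo that routine verification, the proof is a three-line chain of implications, so I would write it compactly:

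\begin{proof}
Let $\L = \L(U)$ be the ideal in $X$ corresponding to $U$, and let $\L_1, \L_2$ be its restrictions to $X_1, X_2$ as in \eqref{EQ:restriction of ideals}. By Lemma \ref{lem:transfer ideals to open subsets}, $U_i = U \cap \overline{X_i}$ is the invariant open subset of $\beta X_i = \overline{X_i}$ associated to $\L_i$, so $U_i = U(\L_i)$ for $i=1,2$. By hypothesis $X_i$ has partial Property A towards $\beta X_i \setminus U_i = \beta X_i \setminus U(\L_i)$, hence by Proposition \ref{prop:partial A = partial ONL} it has partial ONL towards $\beta X_i \setminus U(\L_i)$ for $i=1,2$. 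Applying Proposition \ref{prop:permanence for partial ONL}, we conclude that $X$ has partial ONL towards $\beta X \setminus U(\L) = \beta X \setminus U$. Finally, another application of Proposition \ref{prop:partial A = partial ONL}, now to $X$, shows that $X$ has partial Property A towards $\beta X \setminus U$.
\end{proof}
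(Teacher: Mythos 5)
There is a genuine gap: your argument is circular relative to the way Proposition \ref{prop:permanence for partial ONL} is established in the paper. That proposition is stated before Lemma \ref{lem:permanence for partial A}, but it is never given an independent proof; the paper explicitly says it will \emph{not} follow the localisation argument of \cite[Lemma 3.3]{CWW09} and instead concludes Proposition \ref{prop:permanence for partial ONL} at the end of the subsection by ``combining Proposition \ref{prop:partial A = partial ONL} and Lemma \ref{lem:permanence for partial A}'' --- i.e.\ by using exactly the lemma you are trying to prove. So invoking Proposition \ref{prop:permanence for partial ONL} ``verbatim'' as your only substantive input makes the chain partial A $\Rightarrow$ partial ONL $\Rightarrow$ partial ONL for $X$ $\Rightarrow$ partial A for $X$ rest on a statement whose proof is the lemma itself. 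Your bookkeeping (identifying $U_i = U(\L_i)$ via Lemma \ref{lem:transfer ideals to open subsets} and Corollary \ref{cor:subspace in SC comp.}) is fine; the missing piece is an actual proof of the ONL permanence statement. To make your route legitimate you would have to carry out the adaptation of \cite[Lemma 3.3]{CWW09} to the partial setting (cutting a finitely propagating operator along the decomposition, localising on each piece over the complement of a set in $\L_i$, and controlling the norm loss), which is a nontrivial argument that your proposal does not supply.

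For comparison, the paper proves Lemma \ref{lem:permanence for partial A} directly at the groupoid level, with no detour through ONL: for each $R>0$ it enlarges $X_i$ to $\Nd_R(X_i)$ and uses Proposition \ref{prop:char for partial A} to see that $\Nd_R(X_i)$ has partial Property A towards $\overline{\Nd_R(X_i)}\setminus U_i(R)$, hence the groupoid $G(\Nd_R(X_i))_{\overline{\Nd_R(X_i)}\setminus U}$ is amenable as a subgroupoid; it then identifies
\[
G(X)_{\bigcup_R \overline{\Nd_R(X_i)}\setminus U} = \bigcup_{R>0} G(\Nd_R(X_i))_{\overline{\Nd_R(X_i)} \setminus U}
\]
via Lemma \ref{lem:dec for groupoids UA}, and finally uses the permanence of amenability under increasing unions and extensions/covers, together with $\bigcup_R \overline{\Nd_R(X_1)} \cup \bigcup_R \overline{\Nd_R(X_2)} = \beta X$, to conclude that $G(X)_{\beta X \setminus U}$ is amenable. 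If you want to keep your ONL-based strategy, you must first give an independent proof of Proposition \ref{prop:permanence for partial ONL}; otherwise the direct amenability argument is the way to go.
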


\begin{proof}
For any $R>0$ and $i=1,2$, set $U_i(R):=\overline{\bigcup_{Y \in \L(U_i)} \Nd_R(Y)}$. Clearly, $U_i(R)$ is an invariant open subset of $\overline{\Nd_R(X_i)} = \beta (\Nd_R(X_i))$. Moreover, it follows from Proposition \ref{prop:char for partial A} that $\Nd_R(X_i)$ has partial Property A towards $\overline{\Nd_R(X_i)} \setminus U_i(R)$. By definition, this means that the groupoid $G(\Nd_R(X_i))_{\overline{\Nd_R(X_i)} \setminus U_i(R)}$ is amenable. Hence as a subgroupoid, $G(\Nd_R(X_i))_{\overline{\Nd_R(X_i)} \setminus U}$ is amenable, which further implies that the groupoid $\bigcup_{R>0}G(\Nd_R(X_i))_{\overline{\Nd_R(X_i)} \setminus U}$ is amenable.

Note from Lemma \ref{lem: invariant subset} that $\bigcup_R \overline{\Nd_R(X_i)}\setminus U$ is invariant in $\beta X$ and Lemma \ref{lem:dec for groupoids UA} implies that 
\[
G(X)_{\bigcup_R \overline{\Nd_R(X_i)}\setminus U} = \bigcup_{R>0} G(\Nd_R(X_i))_{\overline{\Nd_R(X_i)} \setminus U},
\]
which is hence amenable. Note that $\bigcup_R \overline{\Nd_R(X_1)} \cup \bigcup_R \overline{\Nd_R(X_2)} = \beta X$, and hence due to the extension property we obtain that
\[
G(X)_{\beta X \setminus U} = G(X)_{\bigcup_R \overline{\Nd_R(X_1)}\setminus U} \cup G(X)_{\bigcup_R \overline{\Nd_R(X_2)}\setminus U}
\]
is amenable as required.
\end{proof}

Combining Proposition \ref{prop:partial A = partial ONL} and Lemma \ref{lem:permanence for partial A}, we conclude Proposition \ref{prop:permanence for partial ONL}.

\subsection{Countable generatedness}

Here we introduce the extra assumption we need to characterise $\tilde{I}(U) = I(U)$ later. 

\begin{defn}\label{defn:countably generated}
For a set $\S$ of subsets of $X$, denote $\L(\S)$ the smallest ideal in $X$ containing $\S$, and we say that $\L(\S)$ is \emph{generated by $\S$}. An ideal $\L$ in $X$ is called \emph{countably generated} if there exists a countable set $\S$ such that $\L=\L(\S)$. 

An invariant open subset $U \subseteq \beta X$ is called \emph{countably generated} if the associated ideal $\L(U)$ is countably generated.
\end{defn}

\begin{ex}\label{ex:countably generated}
For a space $X$ and a subspace $A \subseteq X$, it follows from Lemma \ref{lem: invariant subset} that the spatial ideal $I_A$ is countably generated. On the other hand, it follows from Lemma \ref{lem: U(I) for principal ideals} that principal ideals are always countably generated. In particular, the ideal $\langle P \rangle$ considered in \cite[Section 3]{Wan07} (see also Example \ref{ex: Wan07}) is countably generated.
\end{ex}

The property of countable generatedness leads to the following:

\begin{lem}\label{lem:description of countably generated}
Let $\L$ be a countably generated ideal in $X$. Then there exists a countable subset $\{Y_1, Y_2, \cdots, Y_n, \cdots\}$ in $\L$ such that 
\[
\L=\{Z\subseteq X: ~\exists~n\in \NN \text{ such that } Z \subseteq Y_n\}.
\]
\end{lem}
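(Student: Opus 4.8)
The plan is to build the required sequence by simultaneously exhausting the two sources of growth in a generated ideal: larger finite unions of generators, and larger neighbourhood radii. Concretely, I would fix a countable generating set $\S = \{S_n\}_{n\in\NN}$ with $\L = \L(\S)$ (padding with copies of $\emptyset$, or with repetitions of the last set, if $\S$ happens to be finite), and then set $Y_n := \Nd_n(S_1 \cup \cdots \cup S_n)$ for each $n$. The first routine observations are that $Y_n \in \L$ --- this is immediate from axioms (3) and (2) of Definition \ref{defn: ideals in space} applied to the generators $S_1,\dots,S_n \in \L$ --- and that the $Y_n$ form a nested sequence $Y_1 \subseteq Y_2 \subseteq \cdots$, since both the unions $S_1\cup\cdots\cup S_n$ and the radii $n$ increase with $n$.

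Next I would introduce $\L' := \{Z \subseteq X : Z \subseteq Y_n \text{ for some } n\in\NN\}$ and aim to prove $\L' = \L$, which is precisely the assertion of the lemma. The inclusion $\L' \subseteq \L$ falls out of $Y_n \in \L$ together with downward closure (axiom (1)). For $\L \subseteq \L'$ I would use that $\L = \L(\S)$ is by definition the \emph{smallest} ideal containing $\S$, so it suffices to verify (a) that $\L'$ is itself an ideal in the sense of Definition \ref{defn: ideals in space}, and (b) that $\S \subseteq \L'$. Part (b) is trivial since $S_n \subseteq Y_n$. Part (a) is a short check of the three axioms for $\L'$: downward closure is obvious; closure under $\Nd_R$ uses the elementary inclusion $\Nd_R(\Nd_n(A)) \subseteq \Nd_{R+n}(A)$ together with nestedness to land $\Nd_R(Z)$ inside some $Y_m$ with $m$ chosen above both $n$ and $R+n$; and closure under finite unions follows because nestedness gives $Y_{n_1} \cup Y_{n_2} \subseteq Y_{\max(n_1,n_2)}$. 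Combining (a) and (b) yields $\L \subseteq \L'$, hence $\L = \L'$.

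There is essentially no serious obstacle here --- the statement is a soft, structural fact, and the whole argument is bookkeeping. The only points requiring mild care are the index juggling in the neighbourhood-closure step (one must pick a single $m$ dominating both $n$ and $R+n$, which is exactly why letting one parameter $n$ control \emph{both} the union of generators and the radius is the right design choice), and the degenerate case where the generating set is finite or empty, which I would dispose of at the very start by padding $\S$ into a sequence indexed by $\NN$.
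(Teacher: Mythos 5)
Your proof is correct and rests on the same underlying argument as the paper's: the paper simply invokes Lemma \ref{lem:construction of ideals}, which identifies $\L(\S)$ with the collection of subsets contained in some $\Nd_k(S_{i_1}\cup\cdots\cup S_{i_n})$, and then observes that this family $\S^{(2)}$ is countable, whereas you inline exactly that minimality-plus-ideal-axioms verification for the diagonal sequence $Y_n=\Nd_n(S_1\cup\cdots\cup S_n)$. The difference is only in packaging: your $Y_n$ form a nested sequence (a harmless bonus not required by the statement), while the paper enumerates $\S^{(2)}$ directly, and your index bookkeeping in the $\Nd_R$-closure step is precisely what the paper's auxiliary lemma handles in general.
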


To prove Lemma \ref{lem:description of countably generated}, we need an auxiliary result on the structure of $\L(\S)$:

\begin{lem}\label{lem:construction of ideals}
For a set $\S$ of subsets of $X$, denote $\S^{(1)}:=\{A_1 \cup \cdots \cup A_n: A_i \in \S, n\in \NN\}$ and $\S^{(2)}:=\{\Nd_k(\tilde{A}): \tilde{A} \in \S^{(1)}, k \in \NN\}$. Then we have:
\[
\L(\S) = \{Z: \exists~Y \in \S^{(2)} \text{ such that } Z \subseteq Y\}.
\]
\end{lem}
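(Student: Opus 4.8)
The statement to prove is Lemma~\ref{lem:construction of ideals}: for a set $\S$ of subsets of $X$, with $\S^{(1)}=\{A_1\cup\cdots\cup A_n: A_i\in\S, n\in\NN\}$ and $\S^{(2)}=\{\Nd_k(\tilde A):\tilde A\in\S^{(1)}, k\in\NN\}$, one has $\L(\S)=\{Z: \exists\,Y\in\S^{(2)}\text{ with }Z\subseteq Y\}$. The plan is to prove the two inclusions separately, the easy one being $\supseteq$ (every set appearing on the right genuinely lies in any ideal containing $\S$) and the more substantive one being $\subseteq$, for which I would show that the right-hand side is itself an ideal in $X$ containing $\S$, and hence contains the smallest such ideal $\L(\S)$.

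First I would denote by $\mathcal{J}$ the collection on the right-hand side, i.e. $\mathcal{J}:=\{Z\subseteq X:\exists\,Y\in\S^{(2)}\text{ with }Z\subseteq Y\}$. For the inclusion $\mathcal{J}\subseteq\L(\S)$: let $\L$ be any ideal containing $\S$. Each $A_i\in\S\subseteq\L$, so by axiom (3) of Definition~\ref{defn: ideals in space} any finite union $A_1\cup\cdots\cup A_n\in\L$, hence $\S^{(1)}\subseteq\L$; then by axiom (2), $\Nd_k(\tilde A)\in\L$ for $\tilde A\in\S^{(1)}$ and $k\in\NN$, so $\S^{(2)}\subseteq\L$; finally by axiom (1), any $Z\subseteq Y$ with $Y\in\S^{(2)}$ lies in $\L$, so $\mathcal{J}\subseteq\L$. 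Taking $\L=\L(\S)$ gives $\mathcal{J}\subseteq\L(\S)$.

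For the reverse inclusion $\L(\S)\subseteq\mathcal{J}$, it suffices to check that $\mathcal{J}$ is an ideal containing $\S$; minimality of $\L(\S)$ then finishes it. Clearly $\S\subseteq\mathcal{J}$ since $A\subseteq\Nd_0(A)=A\in\S^{(1)}\subseteq\S^{(2)}$ (taking $n=1$, $k=0$). Axiom (1) is immediate from the definition of $\mathcal{J}$ by transitivity of $\subseteq$. For axiom (3), if $Z_1\subseteq Y_1$ and $Z_2\subseteq Y_2$ with $Y_i=\Nd_{k_i}(\tilde A_i)$, $\tilde A_i\in\S^{(1)}$, then $Z_1\cup Z_2\subseteq \Nd_{\max(k_1,k_2)}(\tilde A_1\cup\tilde A_2)$ and $\tilde A_1\cup\tilde A_2\in\S^{(1)}$, so $Z_1\cup Z_2\in\mathcal{J}$. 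For axiom (2), given $Z\subseteq Y=\Nd_k(\tilde A)$ and $R\geq 0$, pick an integer $m\geq R$; then $\Nd_R(Z)\subseteq\Nd_R(\Nd_k(\tilde A))\subseteq\Nd_{m+k}(\tilde A)\in\S^{(2)}$, using the triangle-inequality fact that $\Nd_R(\Nd_k(A))\subseteq\Nd_{R+k}(A)$. Hence $\mathcal{J}$ is an ideal, completing the proof.

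I do not expect a genuine obstacle here: the argument is a routine verification of the ideal axioms plus an induction-free double inclusion. The only point requiring a moment's care is the nesting $\Nd_R(\Nd_k(A))\subseteq\Nd_{R+k}(A)$ and the replacement of the real parameter $R$ by an integer $m\geq R$ so that the result lands in $\S^{(2)}$ (whose neighbourhoods are indexed by $\NN$); this is harmless since $\Nd_R(Y)\subseteq\Nd_m(Y)$ whenever $m\geq R$. Everything else is bookkeeping with the three axioms of Definition~\ref{defn: ideals in space}.
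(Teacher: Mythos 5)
Your proof is correct and takes essentially the same approach as the paper's: both show that the right-hand collection is itself an ideal containing $\S$ (hence contains $\L(\S)$ by minimality) and is contained in any ideal containing $\S$ (hence lies in $\L(\S)$). The only differences are cosmetic — you bound the union by $\Nd_{\max(k_1,k_2)}(\tilde A_1\cup\tilde A_2)$ where the paper uses $\Nd_{k_1+k_2}$, and you spell out the passage from a real radius $R$ to an integer $m\geq R$, which the paper leaves implicit.
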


\begin{proof}
Denote $\S^{(3)}:=\{Z: \exists~Y \in \S^{(2)} \text{ such that } Z \subseteq Y\}$. It is clear that $\S^{(3)} \subseteq \L(\S)$, and any ideal containing $\S$ must contain $\S^{(3)}$. Hence it remains to check that $\S^{(3)}$ is an ideal.

For $Z \in \S^{(3)}$ and $W \subseteq Z$, there exists $Y \in \S^{(2)}$ such that $Z \subseteq Y$. Hence $W \subseteq Y$, which implies that $W \in \S^{(3)}$. Also note that $Y \in \S^{(2)}$ implies that there exists $\tilde{A} \in \S^{(1)}$ and $k'\in \NN$ such that $Y=\Nd_{k'}(\tilde{A})$. Hence for any $k\in \NN$, we have $\Nd_k(Z) \subseteq \Nd_k(Y) \subseteq \Nd_{k+k'}(\tilde{A})$, which implies that $\Nd_k(Z) \in \S^{(3)}$.

Finally for $Z_1, Z_2 \in \S^{(3)}$, there exist $\tilde{A}_1, \tilde{A}_2 \in \S^{(1)}$ and $k_1, k_2 \in \NN$ such that $Z_i \subseteq \Nd_{k_i}(\tilde{A}_i)$ for $i=1,2$. Hence 
\[
Z_1 \cup Z_2 \subseteq \Nd_{k_1}(\tilde{A}_1) \cup \Nd_{k_2}(\tilde{A}_2) \subseteq \Nd_{k_1 + k_2}(\tilde{A}_1 \cup \tilde{A}_2) \in \S^{(2)},
\]
which implies that $Z_1 \cup Z_2 \in \S^{(3)}$. Therefore, we conclude the proof.
\end{proof}

\begin{proof}[Proof of Lemma \ref{lem:description of countably generated}]
By assumption, there exists a countable $\S$ such that $\L= \L(\S)$. Using the notation of Lemma \ref{lem:construction of ideals}, the set $\S^{(2)}$ is countable as well. Hence the first statement follows directly from Lemma \ref{lem:construction of ideals}. 
\end{proof}

The following example shows that \emph{not every} ideal in a space is countably generated.

\begin{ex}\label{ex:noncountably generated ideal}
Let $X = \NN \times \NN$, equipped with the metric induced from the Euclidean metric $d_E$ on $\RR^2$. For each $\theta \in [0, \frac{\pi}{2}]$ and $k\in \NN$, we define 
\[
\ell_\theta:=\{(x,y) \in \RR \times \RR: y=\tan(\theta)x\}
\]
and
\[
S_{\theta,k}:=\{(x,y) \in X: d_E((x,y), \ell_\theta) \leq k\} = \Nd_k(\ell_\theta) \cap X.
\]
Consider $\S:=\{S_{\theta,k}: \theta \in [0, \frac{\pi}{2}], k\in \NN\}$ and set $\S^{(1)}:=\{A_1 \cup \cdots \cup A_n: A_i \in \S, n\in \NN\}$ as in Lemma \ref{lem:construction of ideals}. For any $R>0$ and $A= A_1 \cup \cdots \cup A_n\in \S^{(1)}$ where $A_i \in \S$, we have $\Nd_R(A) = \Nd_R(A_1) \cup \cdots \cup \Nd_R(A_n)$, which is contained in some element in $\S$. Hence applying Lemma \ref{lem:construction of ideals}, the ideal $\L(\S)$ generated by $\S$ is:
\begin{equation}\label{EQ:proof}
\L(\S)= \{Z: \exists~Y \in \S^{(1)} \text{ such that } Z \subseteq Y\}.
\end{equation}

We claim that $\L(\S)$ is \emph{not} countably generated. Otherwise, there exists a countable subset $\S'$ generating $\L(\S)$. Moreover, according to (\ref{EQ:proof}) we can assume that
\[
\S' = \{Y_n: n\in \NN\} \quad \text{where} \quad Y_n=S_{\theta_{n,1}, k_n} \cup \cdots \cup S_{\theta_{n,p_n}, k_{n}} \in \S^{(1)}.
\]
Choose $\theta \in [0,\frac{\pi}{2}] \setminus \{\theta_{n,i}: i=1,2,\cdots, p_n; n\in \NN\}$, and consider $Y=S_{\theta,1}$. Since $\S'$ generates $\L(\S)$, it follows from Lemma \ref{lem:construction of ideals} that there exist $R>0$ and $Y_{m_1}, \cdots, Y_{m_l} \in \S'$ such that 
\[
S_{\theta,1}  =Y \subseteq \Nd_R(Y_{m_1}\cup \cdots\cup Y_{m_l}).
\]
Note that the right hand side is contained in a finite union of some $R'$-neighbourhood of lines (in $\RR^2$ crossing the origin) with slopes in the set 
\[
\big\{\tan(\theta_{n,i}): i=1,2,\cdots, p_n; n\in \NN\big\}.
\] 
This leads to a contradiction due to the choice of $\theta$, which concludes that $\L(\S)$ cannot be countably generated. 
\end{ex}

\subsection{Characterisation for $I(U) = \tilde{I}(U)$}\label{ssec:char for I(U)=tI(U)}

Having established all the necessary ingredients above, now we present the main result of this section:

\begin{thm}\label{thm:I(U)=tI(U)}
Let $(X,d)$ be a space as in Section \ref{ssec:notions from coarse geometry} and $U \subseteq \beta X$ be a countably generated invariant open subset.
Then the following are equivalent:
\begin{enumerate}
 \item $X$ has partial Property A towards $\beta X \setminus U$;
 \item $\tilde{I}(U) = I(U)$;
 \item the ideal $I_G$ of all ghost operators is contained in $I(U)$.
\end{enumerate}
\end{thm}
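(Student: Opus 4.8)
The implication $(1)\Rightarrow(2)$ is already available: partial Property A towards $\partial_\beta X\setminus U$ means $G(X)_{U^c}$ is amenable, and Proposition \ref{prop: criteria to ensure I(U) = tilde I(U)} then gives $I(U)=\tilde I(U)$ (this needs no countable generatedness). The implication $(2)\Rightarrow(3)$ is also immediate: by Example \ref{eg: ghost ideal} the ideal $I_G$ equals $\tilde I(X)$, and since $X\subseteq U$ (Lemma \ref{lem: inv. open contains X}) we have $\tilde I(X)\subseteq\tilde I(U)=I(U)$. So the entire content — and the place where countable generatedness is used — is the implication $(3)\Rightarrow(1)$.

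For $(3)\Rightarrow(1)$, the plan is to follow the outline of the proof of the case $U=X$ in \cite{RW14}, transported into the ``partial'' framework set up in this section. First I would use countable generatedness: by Lemma \ref{lem:description of countably generated}, write $\L(U)=\{Z\subseteq X: \exists\, n,\ Z\subseteq Y_n\}$ for an increasing sequence $Y_1\subseteq Y_2\subseteq\cdots$ in $\L(U)$; replacing $Y_n$ by $\Nd_n(Y_1\cup\cdots\cup Y_n)$ we may assume the $Y_n$ are ``large enough'' at each scale. The goal, via Proposition \ref{prop:partial A = partial ONL} and Lemma \ref{lem:simple char for partial ONL}, is to produce for each $c\in(0,1)$ and $R>0$ some $S>0$ and some $Y\in\L(U)$ (i.e. some $Y_n$) such that every $a\in\CC^R_u[Y_n^c]$ admits a nonzero $\xi\in\ell^2(X)$ with $\diam(\supp\xi)\le S$ and $c\|a\|\cdot\|\xi\|\le\|a\xi\|$. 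Suppose this fails. Then there are $c,R>0$ so that for every $n$ there is $a_n\in\CC^R_u[Y_n^c]$ with $\|a_n\|=1$ for which no such localised vector exists at scale $n$. The standard trick is to assemble the $a_n$ into a single operator: since $Y_n^c\supseteq Y_{n+1}^c$ and these complements shrink, arrange (after passing to a subsequence and translating the supports apart using the metric) a block-diagonal-like operator, or rather an operator $a\in C^*_u(X)$ whose ``$\omega$-behaviour'' for suitable boundary points $\omega\in U^c$ reproduces the $a_n$. Concretely, one picks boundary points $\omega$ in $\overline{Y_n^c}$ for all $n$ — this is where $\overline{D}\supseteq U^c$ in the ONL definition is matched against $U^c=\bigcap_n\overline{Y_n^c}$ — and builds, using the local coordinate systems of Proposition \ref{prop:local geometry}, an operator $a$ with $\ppg(a)\le R$ such that the limit operators $\Phi_\omega(a)$ fail the localisation estimate, forcing $a$ (or an associated ghost projection built from the spectral data of $a^*a$) to lie in $\tilde I(X)=I_G$ but not in $I(U)$ — contradicting (3).

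The main obstacle, and the step I would spend the most care on, is exactly this assembly step: turning a sequence of ``bad'' finite-propagation operators $a_n$ living near $Y_n^c$ into a genuine element of $C^*_u(X)$ (or a genuine ghost) that witnesses the failure of $I_G\subseteq I(U)$, while keeping track of the geometry so that the resulting operator's $\varepsilon$-supports have closures inside $U$ precisely when they should. This is where the ``ideals in spaces'' formalism (Definition \ref{defn: ideals in space}, Proposition \ref{prop: 1-1 btw ideals in spaces and open inv subsets}) is essential — working with the $Y_n$ inside $X$ rather than with $U^c\subseteq\beta X$ directly, as the authors emphasise in the introduction and in Remark \ref{rem:importance of ideals in space} — and where the permanence property Proposition \ref{prop:permanence for partial ONL} is used to reduce to pieces on which the local coordinate systems behave well. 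A secondary technical point is to confirm that the negation of partial ONL, stated via Lemma \ref{lem:simple char for partial ONL} in terms of the $Y_n\in\L(U)$, is genuinely equivalent to producing a ghost operator outside $I(U)$; this uses Proposition \ref{prop: geometric char for ghostly ideals} (characterising $\tilde I(U)$ via vanishing of limit operators in the $U^c$-direction) together with the characterisation $I(U(I))=\overline{I\cap\CC_u[X]}$ from Lemma \ref{prop: geometric ideals are smallest} to detect that the constructed ghost is not approximable by finite-propagation operators supported near $U^c$.
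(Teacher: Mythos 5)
Your treatment of the easy implications is exactly the paper's: (1)$\Rightarrow$(2) is Proposition \ref{prop: criteria to ensure I(U) = tilde I(U)}, (2)$\Rightarrow$(3) is immediate from $I_G=\tilde{I}(X)\subseteq\tilde{I}(U)$, and the road map for (3)$\Rightarrow$(1) (negate partial ONL via Proposition \ref{prop:partial A = partial ONL} and Lemma \ref{lem:simple char for partial ONL}, use the countable cofinal family $\{Y_n\}$ from Lemma \ref{lem:description of countably generated}, and manufacture a ghost outside $I(U)$) is also the paper's. But the two steps you defer — the ``assembly'' and the verification that the witness is a ghost not in $I(U)$ — are precisely the content of the paper's Lemma \ref{lem:pre for I(U)=tI(U)} and the final argument, and your sketch of them would not go through as stated. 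There is no ``translating the supports apart'' available in a general space; the paper's device is to split $X$ into two unions of annuli $\{x: m^2\le d(x,x_0)<(m+1)^2\}$ ($m$ even/odd) and invoke the permanence of partial ONL under finite unions (Proposition \ref{prop:permanence for partial ONL}) so that, without loss of generality, one half — which is a disjoint union of finite, pairwise $R$-separated blocks $X_m$ — already fails partial ONL towards the restricted ideal. On such a subspace every propagation-$R$ operator is automatically block diagonal, so from each bad operator $a_n\in\CC^R_u[X^{(1)}\setminus Y_n]$ one extracts a nearly norm-attaining finite block $T_n$ supported in $B_n\subseteq X^{(1)}\setminus Y_n$, with the $B_n$ pairwise disjoint and the localisation failure surviving at scales $S_n\to\infty$. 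Cofinality of $(Y_n)$ then gives the crucial avoidance property (condition (f) of Lemma \ref{lem:pre for I(U)=tI(U)}): every $Y\in\L(U)$ misses some $B_n$. This is where countable generatedness actually bites, and your plan never secures it; note also that Proposition \ref{prop:permanence for partial ONL} is used for this two-colouring reduction, not to make ``local coordinate systems behave well'' (limit spaces and local coordinates play no role in this direction).

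The second gap is the mechanism making the witness a ghost outside $I(U)$. The paper's witness is not the assembled operator but a spectral cutoff $f(T)$ of $T=\bigoplus_n T_n$ with $f$ supported in $[\tfrac{1+\kappa}{2},1]$, $f(1)=1$; ghostliness follows from the quantitative argument of \cite{RW14} using the failure of localisation at scales $S_n\to\infty$ (condition (e)), whereas your proposed route via ``limit operators $\Phi_\omega(a)$ failing the localisation estimate'' does not work: failure of a norm-localisation estimate for $a$ does not make any limit operator vanish, and membership in $I_G$ requires \emph{all} limit operators of the cutoff to vanish (Corollary \ref{cor:char for ghost}). Non-membership in $I(U)$ then comes from condition (f) together with the ideals-in-spaces description (\ref{EQ: I(U(L))}): for any finite propagation $T'$ with $\supp(T')\subseteq Y\times Y$, $Y\in\L(U)$, choosing $n$ with $B_n\cap Y=\emptyset$ gives $\|f(T)-T'\|\ge\|f(T_n)\|=1$. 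Your closing sentence has this backwards — elements of $I(U)$ are approximated by operators supported in $Y\times Y$ with $\overline{Y}\subseteq U$, not ``near $U^c$'' — and without the avoidance property (f) you have no way to rule out that the constructed ghost accidentally lies in $I(U)$. So the proposal is the right outline, but the missing constructions are exactly the theorem's substance.
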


Note that $U=X$ is countably generated, and hence Theorem \ref{thm:I(U)=tI(U)} recovers \cite[Theorem 1.3]{RW14} (see Example \ref{eg: compact ideal} and Example \ref{eg: ghost ideal}). 
Borrowing the language of \cite{RW14}, condition (3) in Theorem \ref{thm:I(U)=tI(U)} says that all the ghosts can be busted in the geometric ideal $I(U)$.

We follow the outline of the proof for \cite[Theorem 1.3]{RW14}. Firstly, we need a modified version of \cite[Lemma 4.2]{RW14}:

\begin{lem}\label{lem:pre for I(U)=tI(U)}
Let $(X,d)$ be a space, $U \subseteq \beta X$ be a countably generated invariant open subset and $\L=\L(U)$ the associated ideal in $X$. Assume that $X$ does not have partial ONL towards $U^c$. Then there exist $\kappa \in (0,1)$, $R>0$, a sequence $(T_n)$ in $\CC_u[X]$, a sequence $(B_n)$ of finite subsets of $X$ and a sequence $(S_n)$ of positive real numbers such that:
\begin{enumerate}[(a)]
 \item $(S_n)$ is an increasing sequence tending to infinity as $n \to \infty$;
 \item each $T_n$ is positive and has norm $1$;
 \item for $n\neq m$, then $B_n \cap B_m = \emptyset$;
 \item each $T_n$ is supported in $B_n \times B_n$;
 \item for each $n$ and $\xi \in \ell^2(X)$ with $\|\xi\|=1$ and $\diam(\supp \xi) \leq S_n$, then $\|T_n \xi\| \leq \kappa$;
 \item for each $Y \in \L(U)$, there exists $n$ such that $B_n \cap Y = \emptyset$.
\end{enumerate}
\end{lem}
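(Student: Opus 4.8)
## Proof Proposal

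The plan is to negate partial ONL and carefully extract the data $(T_n), (B_n), (S_n)$ by a diagonal/exhaustion argument, making essential use of the countable generatedness of $U$. By Lemma \ref{lem:simple char for partial ONL}, the failure of partial ONL towards $U^c$ means: there exists $c_0 \in (0,1)$ and $R>0$ such that for \emph{every} $S>0$ and \emph{every} $Y \in \L(U)$, there is an operator $a \in \CC^R_u[Y^c]$ admitting no non-zero $\xi \in \ell^2(X)$ with $\diam(\supp(\xi)) \leq S$ and $c_0\|a\|\cdot\|\xi\| \leq \|a\xi\|$; equivalently (after normalising $\|a\|=1$), for all such $\xi$ we have $\|a\xi\| < c_0 \|\xi\|$. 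I would set $\kappa := c_0$ and keep this $R$ throughout.

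First I would fix, using Lemma \ref{lem:description of countably generated}, a countable increasing exhausting family $Y_1 \subseteq Y_2 \subseteq \cdots$ in $\L(U)$ with $\L(U) = \{Z \subseteq X : \exists n,\ Z \subseteq Y_n\}$ (replace the given $\{Y_n\}$ by partial unions $Y_1 \cup \cdots \cup Y_n$ to make it increasing). Then I build the sequences inductively. Having chosen $T_1,\dots,T_{n-1}$ supported in disjoint finite sets $B_1,\dots,B_{n-1}$, put $F := B_1 \cup \cdots \cup B_{n-1}$, choose $S_n > S_{n-1}+1$ larger than, say, $n$, and apply the negation of partial ONL with $S := S_n$ and with the set $Y := Y_n \cup \Nd_R(F) \in \L(U)$ (this is in $\L(U)$ since $Y_n \in \L(U)$, $F$ is finite, and $\L(U)$ is closed under $R$-neighbourhoods and finite unions). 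This yields $a_n \in \CC^R_u[(Y_n \cup \Nd_R(F))^c]$ with $\|a_n\|=1$ and $\|a_n \xi\| < \kappa\|\xi\|$ for every non-zero $\xi$ with $\diam(\supp(\xi)) \leq S_n$. Replace $a_n$ by the positive operator $T_n := a_n^* a_n$ — wait, this changes the norm bound; instead I would use $T_n := (a_n a_n^*)^{1/2}$ or simply note $\|a_n\xi\| = \|(a_n^*a_n)^{1/2}\xi\|$, so $T_n := (a_n^* a_n)^{1/2}$ is positive, has norm $1$, propagation at most $2R$ (so rename $R \mapsto 2R$ at the outset), satisfies $\|T_n\xi\| < \kappa\|\xi\|$ for $\diam(\supp \xi)\leq S_n$, and is supported in $(Y_n \cup \Nd_R(F))^c \times (Y_n \cup \Nd_R(F))^c$. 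Since $T_n \in \CC_u[X]$, its support meets only finitely many points; intersecting with a large enough finite ball and observing that truncation does not increase the relevant quantities, I take $B_n$ to be a finite set containing $\supp(T_n)$'s relevant footprint with $B_n \subseteq (Y_n)^c$ and $B_n \cap F = \emptyset$. This gives (a)–(e) immediately, and (f) follows because any $Y \in \L(U)$ lies in some $Y_m$, and then $B_n \cap Y \subseteq B_n \cap Y_m = \emptyset$ for all $n \geq m$.

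The main obstacle I expect is twofold. First, the bookkeeping in (d): the operator $a_n$ produced by the negation of ONL has full (though finite) support in $\CC^R_u[X]$, and I must truncate it to a genuinely finite block $B_n \times B_n$ without destroying either the norm-$1$ normalisation or the spectral estimate (e). The honest route is to note that $a_n$, being finite propagation, has $\supp(a_n) \subseteq \Nd_R(\mathrm{supp}_{\mathrm{col}}(a_n))$ and that by replacing $a_n$ with $\chi_B a_n \chi_B$ for a suitable finite $B$ one only shrinks $\|a_n\|$ and the class of test vectors, so after renormalising the bound (e) still holds with the same $\kappa$ — this requires a short but careful argument that shrinking the domain does not create new good test vectors, which is where I would be most careful. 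Second, threading the disjointness condition (c) through the induction forces the choice of $a_n$ to avoid the already-used finite region $F$; this is exactly why I feed $Y_n \cup \Nd_R(F)$ (rather than just $Y_n$) into the negated ONL statement, using the $R$-neighbourhood closure of the ideal $\L(U)$ — precisely the feature that makes working with ideals in the space, rather than with invariant open subsets of $\beta X$ directly, convenient here (cf. Remark \ref{rem:importance of ideals in space}). Once these points are handled the construction is routine, and the lemma follows.
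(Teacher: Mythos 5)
Your route is genuinely different from the paper's and, once a few claims are repaired, it does work. The paper first decomposes $X=X^{(1)}\cup X^{(2)}$ into widely separated finite annular blocks, invokes the permanence result (Proposition \ref{prop:permanence for partial ONL}) to pass to one half, and then exploits that every finite-propagation operator on $X^{(1)}$ is automatically block-diagonal over $R$-separated finite blocks $X_m$: picking a block of near-maximal norm gives the finite set $B_n$ for free, and choosing $S_{n+1}$ larger than the diameter of the blocks already used forces the block indices to increase, which yields the disjointness (c). You instead stay on $X$, obtain disjointness by feeding $Y_n\cup\Nd_R(F)$ into the negated ONL statement (legitimate because finite sets lie in $\L(U)$ — note this uses $U\neq\emptyset$, hence $X\subseteq U$ by Lemma \ref{lem: inv. open contains X}; the degenerate case $U=\emptyset$ needs a separate word), and you obtain finiteness of $B_n$ by compressing to a finite set. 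This bypasses Proposition \ref{prop:permanence for partial ONL} entirely, which is a real simplification; condition (f) then follows exactly as you say from the increasing exhaustion provided by Lemma \ref{lem:description of countably generated}.

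Two of your intermediate claims are false as stated, though both are repairable. First, ``since $T_n\in\CC_u[X]$, its support meets only finitely many points'' is wrong: finite propagation does not imply finite support (the identity has propagation $0$). The correct repair is the compression argument: for any bounded positive $T'$ of norm one supported in $Z\times Z$ one can find a finite $B$ with $\|\chi_B T'\chi_B\|>\tfrac{1+c_0}{2}$ (truncate a nearly norm-attaining vector), and since $\chi_B T'\chi_B\xi=\chi_B T'(\chi_B\xi)$ with $\chi_B\xi$ again a test vector of support diameter $\leq S_n$, the localisation estimate survives the compression; setting $B_n:=B\cap Z$ and renormalising gives (b)--(e) — but then you cannot keep ``the same $\kappa$'': you must take, say, $\kappa:=\tfrac{2c_0}{1+c_0}<1$, exactly as the paper does. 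Second, $(a_n^*a_n)^{1/2}$ does \emph{not} have propagation at most $2R$; taking square roots destroys finite propagation. Either use $a_n^*a_n$ itself (propagation $\leq 2R$ and $\|a_n^*a_na_n\xi\|$ is not needed — simply $\|a_n^*a_n\xi\|\leq\|a_n\xi\|<c_0\|\xi\|$), or keep the square root but note that its propagation is irrelevant: what matters is that it remains supported in $Z\times Z$ (true by functional calculus on the block decomposition $\ell^2(Z)\oplus\ell^2(Z^c)$) and that $\|(a_n^*a_n)^{1/2}\xi\|=\|a_n\xi\|$; the final $T_n$ is supported in the finite block $B_n\times B_n$ and so lies in $\CC_u[X]$ automatically. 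With these corrections your proof is complete, and your worry that shrinking the domain might ``create new good test vectors'' is unfounded for the reason just given.
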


\begin{proof}
Fixing a basepoint $x_0 \in X$, consider the decomposition $X = X^{(1)} \cup X^{(2)}$ with
\[
X^{(1)}:= \bigsqcup_{m \text{ even}} \{x\in X: m^2 \leq d(x, x_0) < (m+1)^2\}\]
and
\[
X^{(2)}:= \bigsqcup_{m \text{ odd}} \{x\in X: m^2 \leq d(x, x_0) < (m+1)^2\}.
\]
Set $\L_i:=\{Y \cap X^{(i)}: Y \in \L\}$ for $i=1,2$. By assumption, $X$ does not have partial Property A towards $U^c$. Hence without loss of generality, we can assume that $X^{(1)}$ does not have partial Property A towards $\beta X^{(1)} \setminus U(\L_1)$ thanks to Proposition \ref{prop:permanence for partial ONL}. Note that this implies that $U(\L_1) \neq \beta X^{(1)}$. 
It is clear that $\L_1$ is also countably generated, and hence according to Lemma \ref{lem:description of countably generated} there exists a countable subset $\{Y_1, Y_2, \cdots, Y_n, \cdots\}$ of $\L_1$ such that 
\[
\L_1=\{Z\subseteq X^{(1)}: ~\exists~n\in \NN \text{ such that } Z \subseteq Y_n\}.
\]
In the sequel, we fix such a sequence $\{Y_1, Y_2, \cdots, Y_n, \cdots\}$.

Due to Lemma \ref{lem:simple char for partial ONL}, we know that there exist $c\in (0,1)$ and $R>0$ such that for any $Y \in \L_1$ and $S>0$, there exists $T \in \CC^R_u[X^{(1)} \setminus Y]$ with $\|T\|=1$ satisfying: for any $\xi \in \ell^2(X^{(1)})$ with $\diam(\supp \xi) \leq S$ and $\|\xi\|=1$, then $\|T \xi\| < c$. We call such an operator \emph{$(R,c,S,Y)$-localised}. Replacing $T$ by $T^*T$ (and $R$ by $2R$ and $c$ by $\sqrt{c}$), we see that there exist $c\in (0,1)$ and $R>0$ such that for any $Y \in \L_1$ and $S>0$, there exists a positive $(R,c,S,Y)$-localised operator  of norm one. Let us fix such $c$ and $R$ in the rest of the proof, and set $\kappa:=\frac{2c}{1+c} < 1$. 

Note that $X^{(1)}$ can be decomposed into:
\[
X^{(1)}:= \bigsqcup_{m\in \NN} X_m
\]
where each $X_m$ is finite and $d(X_m, X_n) > R$ for any $n \neq m$. Hence each $T \in \CC^R_u[X^{(1)}]$ splits as a block diagonal sum of finite rank operators $T = \bigoplus_m T^{(m)}$ where $T^{(m)} \in \B(\ell^2(X_m))$, with respect to this decomposition.

Take $S_1=1$. By assumption, there exists a positive $(R,c,S_1,Y_1)$-localised operator $T \in \CC^R_u[X^{(1)} \setminus Y_1]$ with norm $1$. Note that $\|T\| = \sup_m \|T^{(m)}\|$, and then there exists $m_1 \in \NN$ such that $\|T^{(m_1)}\| > \frac{1+c}{2}$. We set $T_1 := T^{(m_1)} / \|T^{(m_1)}\|$ and denote $B_1:=X_{m_1} \cap (X^{(1)} \setminus Y_1)$, which is nonempty since $T$ has support in $B_1 \times B_1$ by assumption. Then for any $\xi \in \ell^2(X^{(1)})$ with $\|\xi\|=1$ and $\diam(\supp(\xi)) \leq S_1$, we have 
\[
\|T_1 \xi\| \leq \frac{2c}{1+c} = \kappa <1.
\]

Now we take $S_2> \max\big\{\diam\big( \bigsqcup_{k \leq m_1} X_k \big), 2\big\}$. By assumption, there exists a positive $(R,c,S_2,Y_2)$-localised operator $T \in \CC^R_u[X^{(1)} \setminus Y_2]$ with norm $1$. Again there exists $m_2$ such that $\|T^{(m_2)}\| > \frac{1+c}{2}$, which forces $m_2 > m_1$. We set $T_2 := T^{(m_2)} / \|T^{(m_2)}\|$ and denote $B_2:=X_{m_2} \cap (X^{(1)} \setminus Y_2)$, which is nonempty since $T$ has support in $B_2 \times B_2$ by assumption. Similarly for any $\xi \in \ell^2(X^{(1)})$ with $\|\xi\|=1$ and $\diam(\supp(\xi)) \leq S_2$, we have $\|T_2 \xi\| \leq \kappa$.

Inductively, we can construct a sequence $(T_n)$ in $\CC_u[X^{(1)}] \subseteq \CC_u[X]$, a sequence $(B_n)$ of finite subsets of $X^{(1)} \subseteq X$ and a sequence $(S_n)$ of positive real numbers satisfying condition (a)-(e) in the statement. Furthermore for each $Z \in \L(U)$, there exists $Y_n$ containing $Z \cap X^{(1)}$ for some $n$. By construction, we know that $B_n\cap Y_n = \emptyset$ and $B_n \subseteq X^{(1)}$. Hence we have:
\[
B_n \cap Z = B_n \cap X^{(1)} \cap Z \subseteq B_n \cap Y_n = \emptyset,
\]
which provides condition (f) and concludes the proof.
\end{proof}

\begin{rem}\label{rem:importance of ideals in space}
Comparing Lemma \ref{lem:pre for I(U)=tI(U)} with \cite[Lemma 4.2]{RW14}, we note that condition (f) is the only extra condition added in Lemma \ref{lem:pre for I(U)=tI(U)}. It seems hard to write condition (f) in the language of the invariant open subset $U$ instead of the ideal $\L(U)$, which indicates the importance of using the notion of ideals in spaces as mentioned in Section \ref{sec:intro}.
\end{rem}

Now we are in the position to prove Theorem \ref{thm:I(U)=tI(U)}.

\begin{proof}[Proof of Theorem \ref{thm:I(U)=tI(U)}]
``(1) $\Rightarrow$ (2)'' is contained in Proposition \ref{prop: criteria to ensure I(U) = tilde I(U)}, and ``(2) $\Rightarrow$ (3)'' holds trivially since $I_G = \tilde{I}(X) \subseteq \tilde{I}(U)$. Hence it suffices to show ``(3) $\Rightarrow$ (1)'', and we follow the outline of the proof for \cite[Theorem 1.3]{RW14}. 

Assume that $X$ does not have partial Property A towards $U^c$, then it follows from Proposition \ref{prop:partial A = partial ONL} that $X$ does not have partial ONL towards $U^c$. Then from Lemma \ref{lem:pre for I(U)=tI(U)}, there exist $\kappa \in (0,1)$, $R>0$, a sequence $(T_n)$ in $\CC_u[X]$, a sequence $(B_n)$ of finite subsets of $X$ and a sequence $(S_n)$ of positive real numbers satisfying condition (a)-(f) therein. Now we consider the operator 
\[
T :=\bigoplus_n T_n,
\]
which is a positive operator in $\CC_u[X]$ of norm one. 

Now we take a continuous function $f: [0,1] \to [0,1]$ such that $\supp f \subseteq [\frac{1+\kappa}{2},1]$ and $f(1)=1$. Consider the operator $f(T) \in C^*_u(X)$, which is positive, norm one, and admits a decomposition
\[
f(T) = \bigoplus_n f(T_n),
\]
where each $f(T_n) \in \B(\ell^2(B_n))$. We will show that $f(T) \in \tilde{I}(X)\setminus I(U)$, and hence conclude a contradiction. 

First we show that $f(T) \notin I(U)$. Recall from (\ref{EQ: I(U(L))}) that 
\[
I(U)=\overline{\{T' \in \CC_u[X]: \supp(T') \subseteq Y \times Y \mbox{~for~some~}Y \in \L(U)\}},
\]
Now for any $T' \in \CC_u[X]$ with $\supp(T') \subseteq Y \times Y$ for some $Y \in \L(U)$, condition (f) in Lemma \ref{lem:pre for I(U)=tI(U)} implies that there exists $n$ such that $B_n \cap Y = \emptyset$. Hence we have:
\[
\|f(T) - T'\| \geq \|\chi_{B_n} f(T) \chi_{B_n} - \chi_{B_n} T' \chi_{B_n}\|  = \|f(T_n) - 0\| =1,
\]
which implies that $f(T) \notin I(U)$.

On the other hand, using the same argument as for \cite[Theorem 1.3]{RW14} (since Lemma \ref{lem:pre for I(U)=tI(U)} provides all the conditions required in \cite[Lemma 4.2]{RW14}), we obtain that $f(T)$ is a ghost operator. Hence according to Example \ref{eg: ghost ideal}, we have $f(T) \in \tilde{I}(X)$. Therefore, we conclude the proof.
\end{proof}

\section{Open questions}\label{sec:open questions}

Here we collect several open questions around this topic.

First recall from Theorem \ref{prop: ideals containment} that for a space $(X,d)$, any ideal $I$ in the uniform Roe algebra $C^*_u(X)$ must lie between $I(U)$ and $\tilde{I}(U)$ for $U=U(I)$. However, the structure of the lattice
\[
\mathfrak{I}_{U}=\{I \mbox{~is~an~ideal~in~} C^*_u(X): U(I)=U\}
\]
in (\ref{EQ:set of ideals}) is still unclear. Note that for any invariant open subset $V \supseteq U$ of $\beta X$, the ideal $I(V) \cap \tilde{I}(U)$ belongs to the lattice $\mathfrak{I}_{U}$. Unfortunately, we do not know whether these ideals can bust every element in $\mathfrak{I}_{U}$. Hence we pose the following:

\begin{Q}\label{Q1}
Let $(X,d)$ be a space and $U \subseteq \beta X$ be an invariant open subset. Can we describe elements in the lattice $\mathfrak{I}_{U}=\{I \mbox{~is~an~ideal~in~} C^*_u(X): U(I)=U\}$ in details? For $I \in \mathfrak{I}_{U}$, can we find an invariant open subset $V \supseteq U$ such that $I = I(V) \cap \tilde{I}(U)$?
\end{Q}

Note that an answer to the above question together with Theorem \ref{prop: ideals containment} will provide a full description for the ideal structure of the uniform Roe algebra.

Our next question concerns minimal points discussed in Section \ref{ssec:min points}. Recall that minimal points in the Stone-\v{C}ech boundary correspond to maximal ideals in the uniform Roe algebra. However as shown in Theorem \ref{thm:non-min point for Z}, there exist a number of non-minimal points in the boundary. Hence it would be interesting to explore a practical approach to distinguish minimal points. 

\begin{Q}
Given a space $(X,d)$, can we find a practical approach to distinguish minimal points in the Stone-\v{C}ech boundary $\partial_\beta X$? 
\end{Q}

Note from Theorem \ref{thm:non-min point for Z} that the answer might not be easy even in the elementary case that $X= \ZZ$.

The next question concerns the discussion on $K$-theory in Section \ref{sec:geometric vs ghostly ideals}. Recall that in the second part of Proposition \ref{prop: criteria to ensure the same K-theory}, we prove that for an ideal $I$ in $C^*_u(X)$, $(\iota_I)_\ast: K_\ast(\overline{I \cap \CC_u[X]}) \longrightarrow K_\ast(I)$ is injective when $X$ is coarsely embeddable. Hence we pose the following:

\begin{Q}\label{Q2}
For an ideal $I$ in $C^*_u(X)$ with $X$ coarsely embeddable, is the map $(\iota_I)_\ast: K_\ast(\overline{I \cap \CC_u[X]}) \longrightarrow K_\ast(I)$ surjective for $\ast =0,1$?
\end{Q}

Our final question is designed for Theorem \ref{thm:I(U)=tI(U)}. Recall that the assumption of countably generatedness plays an important role in Lemma \ref{lem:pre for I(U)=tI(U)}, which is in turn crucial in the proof of Theorem \ref{thm:I(U)=tI(U)}. We wonder whether this assumption is necessary, and ask the following:

\begin{Q}
Let $(X,d)$ be a space and $U \subseteq \beta X$ be an invariant open subset. If $\tilde{I}(U) = I(U)$, can we deduce that $X$ has partial Property A towards $\beta X \setminus U$ without the assumption of $U$ being countably generated?
\end{Q}

\appendix

\section{Ultrafilters}\label{app:ultrafilters}

Here we collect some basic knowledge on ultrafilters, which is used throughout the paper. The material should be fairly well-known (see, \emph{e.g.}, \cite[Appendix A]{BO08}, \cite[Chapter 7.4]{Roe03} or \cite[Appendix A]{SW17}). While some of the results might not be standard and we have not dug into the reference, we include the proofs for convenience to readers.

\begin{defn}\label{defn:ultrafilter}
Let $X$ be a set and $\P(X)$ be its power set. An \emph{ultrafilter} on $X$ is a family $\U \subseteq \P(X)$ satisfying the following:
\begin{enumerate}
 \item[(I.1)] $\emptyset \notin \U$;
 \item[(I.2)] for $A, B \in \U$, then $A \cap B \in \U$;
 \item[(I.3)] for $A \in \U$ and $A \subseteq B$, then $B \in \U$;
 \item[(I.4)] for any $A \subseteq X$, either $A \in \U$ or $X \setminus A \in \U$.
\end{enumerate}
\end{defn}

For an ultrafilter $\U$ on $X$, we can associate a function $\omega: \P(X) \to \{0,1\}$ by setting $\omega(A) =1$ if and only if $A \in \U$. It follows from (I.1)-(I.4) above that $\omega$ is a finitely additive $\{0,1\}$-valued probability measure on $\P(X)$. Conversely for such a function $\omega$ on $\P(X)$, we can associate a family $\U_\omega:=\{A \subseteq X: \omega(A)=1\}$. It is clear that $\U_\omega$ is an ultrafilter on $X$, and these two procedures are inverse to each other. Therefore throughout the paper, we slide between these two notions freely without further explanation.

For $a \in X$, it is clear that the family $\{A \in \P(X): a\in A\}$ is an ultrafilter on $X$. Such an ultrafiler is called \emph{principal}. An ultrafilter which is not principal is called \emph{non-principal}. An argument using Zorn’s lemma shows that non-principal ultrafilters always exist whenever $X$ is infinite.

The following is well-known (see, \emph{e.g.}, \cite[Lemma A.2]{SW17}):

\begin{lem}\label{lem:ultralimit}
Let $\omega$ be an ultrafilter on a set $X$, and $D \subseteq X$ with $\omega(D)=1$. Let $f: D \to Y$ be a function from $D$ to a compact Hausdorff topological space $Y$. Then there exists a unique point $y\in Y$ such that for any open neighbourhood $U$ of $y$, we have $\omega(f^{-1}(U))=1$. 
\end{lem}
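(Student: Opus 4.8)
The statement is the classical existence and uniqueness of an ultralimit along $\omega$ into a compact Hausdorff space, so the plan is to prove it by a standard compactness-plus-Hausdorff argument rather than anything specific to the coarse-geometry setting. First I would reduce to the case $D = X$: since $\omega(D)=1$, replacing $X$ by $D$ and $\omega$ by its restriction (which is still an ultrafilter on $D$, as every subset of $X$ of full measure meets $D$ in a full-measure subset) changes nothing, so we may assume $f$ is defined on all of $X$.

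For existence, the plan is to argue by contradiction using the defining property (I.4) of an ultrafilter together with compactness of $Y$. Suppose no such point exists, i.e. for every $y \in Y$ there is an open neighbourhood $U_y$ of $y$ with $\omega(f^{-1}(U_y)) = 0$; equivalently, by (I.4), $\omega(f^{-1}(Y \setminus U_y)) = 1$, i.e. $\omega(X \setminus f^{-1}(U_y)) = 1$. The family $\{U_y\}_{y\in Y}$ is an open cover of $Y$, so by compactness there is a finite subcover $U_{y_1}, \dots, U_{y_n}$. Then $X = \bigcup_{i=1}^n f^{-1}(U_{y_i})$, so $\bigcap_{i=1}^n \big(X \setminus f^{-1}(U_{y_i})\big) = \emptyset$. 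But each set in this finite intersection has $\omega$-measure $1$, and by (I.2) (finite intersections of full-measure sets have full measure) the intersection has $\omega$-measure $1$, contradicting (I.1) since $\emptyset \notin \U$. Hence a point $y$ with the stated property exists.

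For uniqueness, I would use that $Y$ is Hausdorff. If $y \neq y'$ both had the property, choose disjoint open neighbourhoods $U \ni y$ and $U' \ni y'$. Then $\omega(f^{-1}(U)) = \omega(f^{-1}(U')) = 1$, so by (I.2) $\omega\big(f^{-1}(U) \cap f^{-1}(U')\big) = \omega\big(f^{-1}(U \cap U')\big) = 1$; but $U \cap U' = \emptyset$, so $f^{-1}(U\cap U') = \emptyset$, again contradicting (I.1). This forces $y = y'$.

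I do not expect any genuine obstacle here; the only point requiring a moment's care is the reduction from $D$ to $X$ (checking the restricted function $\omega|_{\P(D)}$ is still an ultrafilter on $D$), and making sure that throughout one uses finite additivity / closure under finite intersections rather than any countable property, since $\omega$ is only finitely additive. Since this lemma is stated to be well-known and a reference is given, a short self-contained argument along these lines suffices.
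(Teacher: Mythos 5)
Your proof is correct: the compactness argument for existence (finite subcover plus closure of the ultrafilter under finite intersections) and the Hausdorff argument for uniqueness are exactly the standard proof of this fact, and the reduction from $D$ to $X$ via the restricted ultrafilter is handled properly (it is the same observation as Lemma \ref{lem:localisation ultrafilter}). The paper itself gives no proof, citing \cite[Lemma A.2]{SW17} instead, and your self-contained argument is the one that reference uses, so there is nothing further to compare.
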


\begin{defn}\label{defn:ultralimit}
The unique point in Lemma \ref{lem:ultralimit} is called the \emph{ultralimit} of $f$ along $\omega$ or the \emph{$\omega$-limit} of $f$, denoted by $\lim_\omega f$ or $\lim_{a\to \omega} f(a)$. 
\end{defn}

We record the following localisation result:

\begin{lem}\label{lem:localisation ultrafilter}
Let $\U$ be an ultrafilter on a set $X$, and $A \subseteq X$ with $A \in \U$. Then we have:
\begin{enumerate}
 \item $\{S \cap A: S \in \U\} = \{S \subseteq A: S \in \U\}$ is an ultrafilter on $A$, denoted by $\U_A$.
 \item $\U=\{S \subseteq X: S \cap A \in \U_A\} = \{S \subseteq X: \exists~ S' \in \U_A \text{ such that } S' \subseteq S\}$.
\end{enumerate}
\end{lem}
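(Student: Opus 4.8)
\textbf{Proof proposal for Lemma \ref{lem:localisation ultrafilter}.}

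The plan is to verify each of the two statements directly against the axioms (I.1)--(I.4) in Definition \ref{defn:ultrafilter}, since $\U_A$ is defined as a family of subsets of the set $A$ and there is nothing deep going on — it is a matter of checking that restriction to an element $A\in\U$ sends ultrafilters to ultrafilters and is undone by extension. I do not expect any serious obstacle; the only mild subtlety is keeping straight the distinction between an ultrafilter \emph{on $A$} (a family of subsets of $A$) and an ultrafilter \emph{on $X$} concentrated on $A$, and checking the set-equalities carefully.

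For part (1): first I would observe that the two displayed descriptions of $\U_A$ agree. If $S\in\U$ then $S\cap A\in\U$ by (I.2) since $A\in\U$, and $S\cap A\subseteq A$; conversely any $S\subseteq A$ with $S\in\U$ equals $S\cap A$, so $\{S\cap A: S\in\U\}=\{S\subseteq A: S\in\U\}$. Then I would check (I.1)--(I.4) for $\U_A$ as a family of subsets of $A$. Axiom (I.1): $\emptyset\notin\U$ by hypothesis, so $\emptyset\notin\U_A$. Axiom (I.2): if $S_1, S_2\subseteq A$ are in $\U$, then $S_1\cap S_2\in\U$ by (I.2) for $\U$, and it is a subset of $A$. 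Axiom (I.3): if $S\in\U_A$ and $S\subseteq T\subseteq A$, then $T\in\U$ by (I.3) for $\U$ and $T\subseteq A$, so $T\in\U_A$. Axiom (I.4): given any $B\subseteq A$, axiom (I.4) for $\U$ applied to $B$ gives either $B\in\U$ (hence $B\in\U_A$, as $B\subseteq A$) or $X\setminus B\in\U$; in the latter case $(X\setminus B)\cap A = A\setminus B\in\U$ by (I.2), and $A\setminus B$ is the complement of $B$ inside $A$, so the complement of $B$ in $A$ lies in $\U_A$. Hence $\U_A$ is an ultrafilter on $A$.

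For part (2): I would show the three sets coincide. Write $\mathcal{V}_1:=\{S\subseteq X: S\cap A\in\U_A\}$ and $\mathcal{V}_2:=\{S\subseteq X: \exists\, S'\in\U_A \text{ with } S'\subseteq S\}$. If $S\in\U$, then $S\cap A\in\U$ by (I.2), and since $S\cap A\subseteq A$ we get $S\cap A\in\U_A$, so $\U\subseteq\mathcal{V}_1$. Conversely if $S\cap A\in\U_A$, then in particular $S\cap A\in\U$, hence $S\in\U$ by (I.3) since $S\cap A\subseteq S$; thus $\mathcal{V}_1\subseteq\U$, giving $\U=\mathcal{V}_1$. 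For $\mathcal{V}_1=\mathcal{V}_2$: if $S\cap A\in\U_A$, take $S':=S\cap A$, which witnesses $S\in\mathcal{V}_2$; conversely if $S'\subseteq S$ with $S'\in\U_A$, then $S'\subseteq S\cap A$ (as $S'\subseteq A$) so $S\cap A\in\U_A$ by (I.3) for $\U_A$, giving $\mathcal{V}_2\subseteq\mathcal{V}_1$. This closes the proof.
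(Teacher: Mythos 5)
Your proposal is correct and follows essentially the same route as the paper: the paper verifies (I.1)--(I.3) for $\U_A$ by noting the two descriptions coincide, handles (I.4) via the same dichotomy ($B \in \U$ or $X \setminus B \in \U$, whence $A \setminus B = A \cap (X\setminus B) \in \U_A$), and omits part (2) as straightforward, which you simply spell out in full.
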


\begin{proof}
(1). It follows from Definition \ref{defn:ultrafilter} that $\{S \cap A: S \in \U\} = \{S \subseteq A: S \in \U\}$, and hence (I.1)-(I.3) hold for $\U_A$. Concerning (I.4): given $B \subseteq A$, if $B \in \U$ then $B \in \U_A$ as well; if $B \notin \U$ then $X \setminus B \in \U$, and hence $A \setminus B = A \cap (X \setminus B) \in \U_A$.


(2). This is straightforward, hence omitted.
\end{proof}

We can also extend an ultrafilter on a subset to the whole space. The proof is straightforward, hence omitted.

\begin{lem}\label{lem:ultraextension}
Let $Y$ be a subset of a set $X$, and $\U_0$ an ultrafilter on $Y$. Define 
\[
\U:=\{S \subseteq X: S \cap Y \in \U_0\}.
\]
Then $\U$ is an ultrafilter on $X$.
\end{lem}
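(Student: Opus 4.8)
\textbf{Plan for the proof of Lemma \ref{lem:ultraextension}.}

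The statement asserts that given an ultrafilter $\U_0$ on a subset $Y\subseteq X$, the family $\U:=\{S\subseteq X: S\cap Y\in \U_0\}$ is an ultrafilter on $X$. The plan is simply to verify the four axioms (I.1)--(I.4) from Definition \ref{defn:ultrafilter} one at a time, using that intersection with $Y$ is a Boolean homomorphism $\P(X)\to\P(Y)$ and that $Y\cap Y=Y\in\U_0$ (which holds because $\U_0$ is an ultrafilter on $Y$, so $\emptyset\notin\U_0$ forces $Y\neq\emptyset$ and $Y\in\U_0$ by (I.3) applied in $\U_0$, or directly since $\U_0\subseteq\P(Y)$ is nonempty and any member is contained in $Y$).

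First I would check (I.1): if $\emptyset\in\U$ then $\emptyset\cap Y=\emptyset\in\U_0$, contradicting (I.1) for $\U_0$. Next (I.2): for $A,B\in\U$ we have $A\cap Y\in\U_0$ and $B\cap Y\in\U_0$, hence $(A\cap B)\cap Y=(A\cap Y)\cap(B\cap Y)\in\U_0$ by (I.2) for $\U_0$, so $A\cap B\in\U$. For (I.3): if $A\in\U$ and $A\subseteq B$, then $A\cap Y\subseteq B\cap Y$ and $A\cap Y\in\U_0$, so $B\cap Y\in\U_0$ by (I.3) for $\U_0$, giving $B\in\U$. Finally (I.4): for arbitrary $A\subseteq X$, apply (I.4) for $\U_0$ to the set $A\cap Y\subseteq Y$; either $A\cap Y\in\U_0$, in which case $A\in\U$, or $Y\setminus(A\cap Y)=(X\setminus A)\cap Y\in\U_0$, in which case $X\setminus A\in\U$. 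This exhausts the axioms.

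There is no real obstacle here — every step is an immediate translation of the corresponding axiom for $\U_0$ through the map $S\mapsto S\cap Y$. The only point worth stating explicitly is the Boolean identity $(X\setminus A)\cap Y = Y\setminus(A\cap Y)$ used in the verification of (I.4), which makes the complementation axiom transfer cleanly. Since the paper already flags this as straightforward and omits the proof, I would keep the write-up to a few lines covering exactly these four verifications.

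\begin{proof}
We verify the axioms (I.1)--(I.4) of Definition \ref{defn:ultrafilter} for $\U$, using that $\U_0$ is an ultrafilter on $Y$ and that for any $A\subseteq X$ one has $(X\setminus A)\cap Y = Y\setminus(A\cap Y)$.

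(I.1): If $\emptyset\in\U$, then $\emptyset = \emptyset\cap Y\in\U_0$, contradicting (I.1) for $\U_0$.

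(I.2): If $A,B\in\U$, then $A\cap Y\in\U_0$ and $B\cap Y\in\U_0$, so $(A\cap B)\cap Y = (A\cap Y)\cap(B\cap Y)\in\U_0$ by (I.2) for $\U_0$; hence $A\cap B\in\U$.

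(I.3): If $A\in\U$ and $A\subseteq B$, then $A\cap Y\in\U_0$ and $A\cap Y\subseteq B\cap Y$, so $B\cap Y\in\U_0$ by (I.3) for $\U_0$; hence $B\in\U$.

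(I.4): Let $A\subseteq X$. Since $A\cap Y\subseteq Y$, axiom (I.4) for $\U_0$ gives either $A\cap Y\in\U_0$ or $Y\setminus(A\cap Y)\in\U_0$. In the first case $A\in\U$; in the second case $(X\setminus A)\cap Y = Y\setminus(A\cap Y)\in\U_0$, so $X\setminus A\in\U$.

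Thus $\U$ is an ultrafilter on $X$.
\end{proof}
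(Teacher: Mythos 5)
Your proof is correct, and since the paper omits the argument as straightforward, your direct verification of axioms (I.1)--(I.4) via the map $S \mapsto S \cap Y$ (with the identity $(X\setminus A)\cap Y = Y\setminus(A\cap Y)$ for (I.4)) is exactly the intended argument.
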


The following result provides an approach to combine a family of ultrafilters into a single one:

\begin{lem}\label{lem:ultraunion}
Let $\{X_i\}_{i\in I}$ be a family of sets, and $\U_i$ be an ultrafilter on $X_i$ for each $i\in I$. Let $\omega_0$ be an ultrafilter on $I$. Consider the set $X:=\bigsqcup_{i\in I} X_i$ and define:
\[
\U:=\big\{ \bigsqcup_{i\in I} A_i \subseteq \bigsqcup_{i\in I} X_i: \exists~J \subseteq I \text{ with } \omega_0(J)=1 \text{ such that } \forall i\in J, A_i \in \U_i \big\}.
\]
Then $\U$ is an ultrafilter on $X$.
\end{lem}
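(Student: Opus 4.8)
The claim is that the family $\U$ defined by ``fibrewise membership on an $\omega_0$-large index set'' is an ultrafilter on $X = \bigsqcup_{i\in I} X_i$. The plan is to verify the four axioms (I.1)--(I.4) of Definition \ref{defn:ultrafilter} in order, taking advantage of the fact that $\omega_0$ is itself an ultrafilter on $I$ and each $\U_i$ is an ultrafilter on $X_i$. A preliminary remark to fix conventions: every subset $A \subseteq X$ decomposes uniquely as $A = \bigsqcup_{i\in I} A_i$ with $A_i := A \cap X_i$, so the condition defining $\U$ can be restated as: $A \in \U$ iff $\omega_0\big(\{i \in I : A_i \in \U_i\}\big) = 1$. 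I will use this reformulation throughout, since it makes the axiom checks almost mechanical.

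First, (I.1): if $A = \emptyset$ then $A_i = \emptyset \notin \U_i$ for every $i$, so $\{i : A_i \in \U_i\} = \emptyset$, which has $\omega_0$-measure $0$; hence $\emptyset \notin \U$. Next, (I.3) (monotonicity): if $A \in \U$ and $A \subseteq B$, then $A_i \subseteq B_i$ for all $i$, so $\{i : A_i \in \U_i\} \subseteq \{i : B_i \in \U_i\}$ by monotonicity of each $\U_i$; monotonicity of $\omega_0$ then gives $B \in \U$. For (I.2) (closure under intersection): if $A, B \in \U$, let $J_A = \{i : A_i \in \U_i\}$ and $J_B = \{i : B_i \in \U_i\}$, both $\omega_0$-large. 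For $i \in J_A \cap J_B$ we have $A_i, B_i \in \U_i$, hence $A_i \cap B_i = (A\cap B)_i \in \U_i$ since $\U_i$ is closed under intersection; thus $J_A \cap J_B \subseteq \{i : (A\cap B)_i \in \U_i\}$, and $J_A \cap J_B$ is $\omega_0$-large because $\omega_0$ is a filter, so $A \cap B \in \U$.

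The one step requiring the full strength of the ultrafilter hypotheses is (I.4) (the dichotomy $A \in \U$ or $X \setminus A \in \U$). Given $A \subseteq X$, set $J^+ = \{i : A_i \in \U_i\}$ and $J^- = \{i : X_i \setminus A_i \in \U_i\}$. Since each $\U_i$ is an ultrafilter, for every $i$ exactly one of $A_i \in \U_i$ or $X_i \setminus A_i \in \U_i$ holds, so $J^+$ and $J^-$ partition $I$, i.e.\ $J^- = I \setminus J^+$. Because $\omega_0$ is an ultrafilter on $I$, either $\omega_0(J^+) = 1$ or $\omega_0(I \setminus J^+) = \omega_0(J^-) = 1$. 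In the first case $A \in \U$; in the second case, noting that $(X \setminus A)_i = X_i \setminus A_i$, we get $X \setminus A \in \U$. This exhausts the cases and completes the verification; the only mild subtlety worth a sentence in the writeup is the bookkeeping that the fibre decomposition is compatible with complements and intersections, which is immediate from $X$ being the disjoint union. No deeper obstacle arises: the lemma is a routine ``ultrafilter of ultrafilters'' construction, and the proof is short enough that it could reasonably be left to the reader, as the paper does.
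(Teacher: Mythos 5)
Your proof is correct and follows essentially the same route as the paper: verifying (I.1)--(I.4) directly, handling intersections via $J_A\cap J_B$ and the dichotomy (I.4) by applying the ultrafilter property fibrewise and then invoking $\omega_0$ on the resulting partition of $I$. The only cosmetic difference is your upfront reformulation $A\in\U \Leftrightarrow \omega_0(\{i: A_i\in\U_i\})=1$, which is equivalent to the paper's definition by monotonicity of $\omega_0$ and streamlines the bookkeeping.
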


\begin{proof}
Firstly, it is clear that $\emptyset \notin \U$. Assume that $\bigsqcup_{i\in I} A_i$ and $\bigsqcup_{i\in I} B_i \in \U$, \emph{i.e.}, there exist $J_A, J_B \subseteq I$ with $\omega_0(J_A) = \omega_0(J_B)=1$ such that $A_i \in \U_i$ for any $i \in J_A$ and $B_i \in \U_i$ for any $i\in J_B$. Consider $(\bigsqcup_{i\in I} A_i) \cap (\bigsqcup_{i\in I} B_i) = \bigsqcup_{i\in I} (A_i \cap B_i)$ and $J=J_A \cap J_B$. Then $\omega_0(J) = 1$ and for each $i\in J$, $A_i$ and $B_i$ are in $\U_i$. This implies that $A_i \cap B_i \in \U_i$, which concludes (I.2).

It is clear that (I.3) holds for $\U$ and finally, we consider (I.4). Given $A=\bigsqcup_{i\in I} A_i \subseteq X$, denote $J:=\{i \in I: A_i \in \U_i\}$. If $\omega_0(J)=1$, then it follows that $A \in \U$. Otherwise, assume that $\omega_0(J)=0$. Then we consider $X \setminus A = \bigsqcup_{i\in I} (X_i \setminus A_i)$. Then $I \setminus J = \{i\in I: X_i \setminus A_i \in \U_i\}$ and $\omega_0(I \setminus J) =1$, which implies that $X \setminus A \in \U$ and concludes the proof. 
\end{proof}

Recall that ultrafilters can also be characterised by the Stone-\v{C}ech compactification. More precisely, we have the following (see, \emph{e.g.}, \cite[Chapter 7.4]{Roe03}):

\begin{lem}\label{lem:ultrafilter to SC comp.}
Let $X$ be a set and $\beta X$ be the Stone-\v{C}ech compactification of $X$. 
\begin{enumerate}
 \item Given $\omega \in \beta X$, the family $\{A \subseteq X: \omega \in \overline{A}\}$ is an ultrafilter on $X$.
 \item Given an ultrafilter $\U$ on $X$, the intersection $\bigcap_{A \in \U} \overline{A}$ consists of a single point.
\end{enumerate}
The procedures above are inverse to each other, and hence $\beta X$ can be characterised by ultrafilters on $X$. Moreover, points in $\partial_\beta X$ correspond to non-principal ultrafilters.
\end{lem}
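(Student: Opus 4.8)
The statement to prove is Lemma~\ref{lem:ultrafilter to SC comp.}, the correspondence between ultrafilters on $X$ and points of $\beta X$. The plan is to verify the two stated maps are well-defined, mutually inverse, and then check the remark about non-principal ultrafilters corresponding to boundary points.

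\textbf{Step 1: The map $\beta X \to \{\text{ultrafilters}\}$.} Fix $\omega \in \beta X$ and set $\U_\omega := \{A \subseteq X : \omega \in \overline{A}^{\beta X}\}$. I would verify axioms (I.1)--(I.4) of Definition~\ref{defn:ultrafilter}. For (I.1), $\overline{\emptyset} = \emptyset$ so $\emptyset \notin \U_\omega$. For (I.3), monotonicity of closure gives $A \subseteq B \Rightarrow \overline{A} \subseteq \overline{B}$. For (I.2) and (I.4), the key point is that $X$ is discrete, so every $A \subseteq X$ is clopen in $X$; its closure $\overline{A}^{\beta X}$ is then clopen in $\beta X$ (this uses that in $\beta X$ of a discrete space the closures of subsets form a basis of clopen sets, as recorded in Appendix~\ref{app:ultrafilters} and used, e.g., in Lemma~\ref{lem:char for closure of limit space}). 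Clopenness gives $\overline{A} \cap \overline{B} = \overline{A \cap B}$ (the nontrivial inclusion $\overline{A}\cap\overline{B} \subseteq \overline{A\cap B}$ follows because $\overline{A}$ is open, so $\overline{A}\cap\overline{B}$ is the closure of $\overline{A}\cap B = A \cap B$ intersected appropriately — more cleanly: $A$ and $B$ being clopen means $\chi_A, \chi_B$ extend continuously and $\chi_A\chi_B = \chi_{A\cap B}$ extends to $\overline{\chi_A}\,\overline{\chi_B}$), yielding (I.2); and $\overline{A} \sqcup \overline{X\setminus A} = \beta X$ since $\chi_A + \chi_{X\setminus A} = 1$, yielding (I.4).

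\textbf{Step 2: The map $\{\text{ultrafilters}\} \to \beta X$.} Given an ultrafilter $\U$, consider $\bigcap_{A \in \U}\overline{A}^{\beta X}$. Nonemptiness: the family $\{\overline{A} : A \in \U\}$ has the finite intersection property because $A_1 \cap \cdots \cap A_n \in \U$ (by (I.2)) is nonempty (by (I.1)), and $\overline{A_1\cap\cdots\cap A_n} \subseteq \bigcap \overline{A_i}$; compactness of $\beta X$ then forces the total intersection to be nonempty. Uniqueness: if $\omega \neq \omega'$ both lie in the intersection, pick $A \subseteq X$ with $\omega \in \overline{A}$ and $\omega' \notin \overline{A}$ (possible since the $\overline{A}$ form a basis and $\beta X$ is Hausdorff); then $X \setminus A \in \U$ by (I.4) applied together with the observation that $A \in \U$ would force $\omega' \in \overline{A}$, so $\omega \in \overline{X\setminus A}$, contradicting $\omega \in \overline{A}$ and disjointness $\overline{A}\cap\overline{X\setminus A}=\emptyset$. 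Hence the intersection is a single point, call it $\omega_\U$.

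\textbf{Step 3: The two maps are mutually inverse, and the boundary statement.} Starting from $\omega$, forming $\U_\omega$, then $\omega_{\U_\omega} = \bigcap_{\omega \in \overline{A}} \overline{A}$: this contains $\omega$, and by uniqueness in Step 2 it equals $\{\omega\}$. Conversely, starting from $\U$, one checks $\U_{\omega_\U} = \U$: if $A \in \U$ then $\omega_\U \in \overline{A}$ by definition of the intersection, so $A \in \U_{\omega_\U}$; and if $A \notin \U$ then $X\setminus A \in \U$, so $\omega_\U \in \overline{X\setminus A}$, hence $\omega_\U \notin \overline{A}$, i.e. $A \notin \U_{\omega_\U}$. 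Finally, for $a \in X$ the principal ultrafilter $\{A : a \in A\}$ corresponds to the point $a \in X \subseteq \beta X$ (since $\bigcap_{a \in A}\overline{A} = \{a\}$, as $\{a\}$ is itself one of the sets $A$ and is closed), and conversely a point of $X$ gives a principal ultrafilter; therefore $\omega \in \partial_\beta X = \beta X \setminus X$ if and only if $\U_\omega$ is non-principal. I do not anticipate a serious obstacle here — the only mildly delicate point is justifying $\overline{A\cap B} = \overline{A}\cap\overline{B}$ for subsets of a discrete space, which is exactly where discreteness (hence clopenness of $\overline{A}$ in $\beta X$) is essential, and this is standard and already implicitly used elsewhere in the paper.
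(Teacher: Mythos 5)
Your proof is correct: the verification of (I.1)--(I.4) via clopenness of the sets $\overline{A}$, the finite-intersection-property argument with compactness for existence, the separation argument for uniqueness, and the check that the two assignments are mutually inverse (with principal ultrafilters matching points of $X$) is the standard argument, and the one delicate ingredient you flag — $\overline{A\cap B}=\overline{A}\cap\overline{B}$ and $\overline{A}\sqcup\overline{X\setminus A}=\beta X$ via extensions of characteristic functions — is handled adequately and is consistent with the clopen/basis facts the paper records in Appendix \ref{app:ultrafilters}. Note that the paper itself gives no proof of this lemma, citing \cite[Chapter 7.4]{Roe03} instead, so your write-up simply supplies the standard argument that the paper delegates to the reference.
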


Thanks to Lemma \ref{lem:ultrafilter to SC comp.}, we will also use ultrafilters and points in the Stone-\v{C}ech compactification freely without further explanation throughout the paper. 

For convenience, we also record that for $D \subseteq X$, its closure $\overline{D}$ in $\beta X$ satisfies:
\[
\overline{D} = \{\omega \in \beta X: \omega(D)=1 \}
\]
and $\overline{D}$ is clopen in $\beta X$. The topology of $\beta X$ is generated by $\{\overline{D}: D \subseteq X\}$.

Finally we recall the following, which can be proved either directly using the universal property of the Stone-\v{C}ech compactification or deduced directly from Lemma \ref{lem:localisation ultrafilter}, \ref{lem:ultraextension} and \ref{lem:ultrafilter to SC comp.}:

\begin{cor}\label{cor:subspace in SC comp.}
For a subset $Z \subseteq X$, the closure $\overline{Z}$ in $\beta X$ is homeomorphic to $\beta Z$.
\end{cor}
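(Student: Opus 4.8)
The final statement to prove is Corollary \ref{cor:subspace in SC comp.}: for a subset $Z \subseteq X$, the closure $\overline{Z}$ in $\beta X$ is homeomorphic to $\beta Z$.

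\medskip

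The plan is to invoke the universal property of the Stone-\v{C}ech compactification directly, since $Z$ carries the discrete topology as a subspace of the discrete set $X$. First I would observe that the inclusion $\iota\colon Z \hookrightarrow X$ composed with the canonical embedding $X \hookrightarrow \beta X$ is a continuous map from the discrete space $Z$ into the compact Hausdorff space $\beta X$; by the universal property of $\beta Z$, this extends uniquely to a continuous map $\bar{\iota}\colon \beta Z \to \beta X$. Since the image of $\beta Z$ is compact hence closed, and contains $Z$, it contains $\overline{Z}$; conversely $\bar\iota(\beta Z)$ is contained in $\overline{Z}$ because $\beta Z$ is the closure of $Z$ and $\bar\iota$ is continuous. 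Thus $\bar\iota$ corestricts to a continuous surjection $\beta Z \to \overline{Z}$.

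\medskip

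Next I would show this corestriction is injective, which combined with compactness of $\beta Z$ and Hausdorffness of $\overline{Z}$ gives that it is a homeomorphism. The cleanest route is through ultrafilters, using Lemma \ref{lem:ultrafilter to SC comp.}: points of $\beta Z$ correspond to ultrafilters on $Z$, and the map $\bar\iota$ sends an ultrafilter $\U_0$ on $Z$ to the point of $\beta X$ determined by the ultrafilter $\U := \{S \subseteq X : S \cap Z \in \U_0\}$ on $X$ from Lemma \ref{lem:ultraextension}. If two ultrafilters $\U_0, \U_0'$ on $Z$ extend to the same ultrafilter $\U$ on $X$, then for any $A \subseteq Z$ we have $A \in \U_0 \iff A \in \U \iff A \in \U_0'$ (using that $A \cap Z = A$), so $\U_0 = \U_0'$; this gives injectivity. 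Alternatively one argues topologically: distinct points of $\beta Z$ are separated by sets of the form $\overline{A}^{\beta Z}$ with $A \subseteq Z$, and $\bar\iota(\overline{A}^{\beta Z}) = \overline{A}^{\beta X} \cap \overline{Z}$, which still separates the images.

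\medskip

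I do not anticipate a serious obstacle here — the statement is genuinely routine and follows formally from the universal property together with the fact that a continuous bijection from a compact space to a Hausdorff space is a homeomorphism. The only point requiring a modicum of care is verifying that the image of $\bar\iota$ is exactly $\overline{Z}$ (not something larger or smaller), but this is immediate from continuity and density of $Z$ in $\beta Z$. Since the paper explicitly flags that this can be deduced either from the universal property or from Lemmas \ref{lem:localisation ultrafilter}, \ref{lem:ultraextension} and \ref{lem:ultrafilter to SC comp.}, I would present the short ultrafilter argument for concreteness and leave the purely categorical variant as a remark.
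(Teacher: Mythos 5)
Your proposal is correct and follows exactly the two routes the paper itself indicates (the universal property for constructing the extension $\bar\iota\colon \beta Z \to \beta X$, and the ultrafilter lemmas for identifying its image with $\overline{Z}$ and verifying injectivity); the paper leaves the proof to the reader, and your argument fills it in faithfully. The only detail worth making explicit is that $\bar\iota$ sends an ultrafilter $\U_0$ on $Z$ to the point of $\beta X$ determined by the extension $\{S \subseteq X : S \cap Z \in \U_0\}$, which follows from the description of ultralimits against the basic clopen sets $\overline{S}$, and then injectivity and the compact-to-Hausdorff argument close the proof as you say.
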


\bibliographystyle{plain}
\bibliography{bib_ideals}

\end{document}